\newcommand{\zero}{\textcolor{red}{0}}
\newcommand{\nerve}{\mathcal{N}}
\newcommand{\bch}{\star}
\newcommand{\mfg}{\mathfrak{g}}
\newcommand{\mfS}{\mathfrak{S}}
\newcommand{\mfoc}{\mathfrak{o}_\Sigma}
\newcommand{\mfocp}{\mathfrak{o}_{\Sigma'}}
\newcommand{\mfo}{\mathfrak{o}^1}
\newcommand{\mfp}{\mathfrak{o}^0}
\newcommand{\mfm}{\mathfrak{m}}
\newcommand{\Aff}{\mathbb{A}}
\newcommand{\ZZ}{\mathbb{Z}}
\newcommand{\KK}{\mathbb{K}}
\newcommand{\RR}{\mathbb{R}}
\newcommand{\PP}{\mathbb{P}}
\newcommand{\QQ}{\mathbb{Q}}
\newcommand{\NN}{\mathbb{N}}
\newcommand{\FF}{\mathbb{F}}
\newcommand{\CO}{\mathcal{O}}
\newcommand{\bw}{{\bf w}}
\newcommand{\bu}{{\bf u}}
\newcommand{\bv}{{\bf v}}
\newcommand{\T}{\mathcal{T}}
\newcommand{\F}{\mathcal{F}}
\newcommand{\mcK}{\mathcal{K}}
\newcommand{\mcL}{\mathcal{L}}
\newcommand{\mcP}{\mathcal{P}}
\newcommand{\mcU}{\mathcal{U}}
\newcommand{\A}{\mathcal{A}}
\newcommand{\B}{\mathcal{B}}
\newcommand{\D}{\mathcal{D}}
\newcommand{\V}{\mathcal{V}}
\newcommand{\rT}{\mathrm{T}}
\newcommand{\sing}{\mathrm{sing}}
\newcommand{\wt}{\widetilde}
\newcommand{\p}{\partial}
\newcommand{\rF}{\mathrm{F}}
\newcommand{\rG}{\mathrm{G}}
\newcommand{\sU}{\mathscr{U}}
\newcommand{\sV}{\mathscr{V}}
\DeclareMathOperator{\Pic}{Pic}
\DeclareMathOperator{\conv}{conv}
\DeclareMathOperator{\cone}{Cone}
\DeclareMathOperator{\spec}{Spec}
\DeclareMathOperator{\Cl}{Cl}
\DeclareMathOperator{\id}{id}
\DeclareMathOperator{\Hom}{Hom}
\DeclareMathOperator{\Def}{Def}
\DeclareMathOperator{\cDef}{Def}
\DeclareMathOperator{\Comp}{\mathbf{Comp}}
\DeclareMathOperator{\Art}{\mathbf{Art}}
\DeclareMathOperator{\Set}{\mathbf{Set}}
\DeclareMathOperator{\codim}{codim}
\DeclareMathOperator{\sgn}{sgn}
\DeclareMathOperator{\Star}{star}
\DeclareMathOperator{\Tot}{Tot}
\newtheorem{thm}[equation]{Theorem}
\newtheorem{prop}[equation]{Proposition}
\newtheorem{lemma}[equation]{Lemma}
\newtheorem{cor}[equation]{Corollary}
\theoremstyle{definition}
\newtheorem{defn}[equation]{Definition}
\newtheorem{example}[equation]{Example}
\newtheorem{rem}[equation]{Remark}
\newtheorem{convention}[equation]{Convention}
\numberwithin{equation}{subsection}
\title[Locally Trivial Deformations of Toric Varieties]{Locally Trivial Deformations of Toric Varieties}
\author{Nathan Ilten}
\address{Department of Mathematics, Simon Fraser University,
	8888 University Drive, Burnaby BC V5A1S6, Canada}
\email{nilten@sfu.ca}
\author{Sharon Robins}
\address{Department of Mathematical Sciences, Carnegie Mellon University, 5000 Forbes Avenue, Pittsburgh, PA 15213
}
\email{srobins@andrew.cmu.edu}
\begin{document}
\pagenumbering{arabic}
\begin{abstract}
	We study locally trivial deformations of toric varieties from a combinatorial point of view. For any fan $\Sigma$, we construct a deformation functor $\cDef_\Sigma$ by considering \v{C}ech zero-cochains on certain simplicial complexes. We show that under appropriate hypotheses, $\cDef_\Sigma$ is isomorphic to $\Def'_{X_\Sigma}$, the functor of locally trivial deformations for the toric variety $X_\Sigma$ associated to $\Sigma$. In particular, for any complete toric variety $X$ that is smooth in codimension $2$ and $\QQ$-factorial in codimension $3$, there exists a fan $\Sigma$ such that $\cDef_\Sigma$ is isomorphic to $\Def_X$, the functor of deformations of $X$. We apply these results to give a new criterion for a smooth complete toric variety to have unobstructed deformations, and to compute formulas for higher order obstructions, generalizing a formula of Ilten and Turo for the cup product. We use the functor $\cDef_\Sigma$ to explicitly compute the deformation spaces for a number of toric varieties, and provide examples exhibiting previously unobserved phenomena. In particular, we classify exactly which toric threefolds arising as iterated $\PP^1$-bundles have unobstructed deformation space.
\end{abstract}
\maketitle

\section{Introduction}
\subsection{Background and motivation}\label{sec:background}
Let $\KK$ be an algebraically closed field of characteristic zero and $X$ a variety over $\KK$. The functor $\Def_X$ of isomorphism classes of (infinitesimal) deformations of $X$ provides useful information on how $X$ might fit into a moduli space. 
In the setting where $X$ is smooth, $\Def_X$ coincides with the functor $\Def_X'$ of isomorphism classes of locally trivial deformations of $X$.
In general, locally trivial first order deformations are described by $H^1(X,\T_X)$ and obstructions to lifting locally trivial deformations live in $H^2(X,\T_X)$. In fact, all locally trivial deformations of $X$ are controlled by the \v{C}ech complex of the tangent sheaf $\T_X$, see \S\ref{sec:approach} below.

Despite this seemingly concrete description of $\Def_X'$, it is still quite challenging to explicitly understand $\Def_X'$ for specific examples.
In this paper, we will give a purely \emph{combinatorial} description of the deformation functor $\Def_X'$ when $X$ is a $\QQ$-factorial complete toric variety. 
First order deformations of smooth complete toric varieties were described combinatorially by N.~Ilten in \cite{ilten1}. In \cite{ilten2} Ilten and R.~Vollmert showed that homogeneous first order deformations can be extended to one-parameter families over $\Aff^1$, providing a sort of skeleton of the versal deformation (see also work by A.~Mavlyutov \cite{mavlyutov1,mavlyutov2} and A.~Petracci \cite{petracci}). However, it turns out that in general there are obstructions to combining these one-parameter families. This was observed by Ilten and C.~Turo in \cite{ilten3}, which contains a combinatorial description of the cup product.

These results place the deformation theory of smooth complete toric varieties in a strange liminal space: although they can be obstructed, unlike say smooth Calabi-Yau varieties \cite{tian,todorov} or smooth Fano varieties, they do \emph{not} satisfy ``Murphy's law'' \cite{vakil}, that predicts deformation spaces will have arbitrarily bad singularities. It thus remains an interesting challenge to determine exactly what spaces can occur as the deformation space of a smooth complete toric variety.

\subsection{Main results}
We now summarize the main results of the paper. Let $X=X_\Sigma$ be a $\QQ$-factorial toric variety\footnote{Throughout the paper, all toric varieties are assumed to be normal.}  with corresponding fan $\Sigma$.  See \S\ref{sec:toricprelim} for notation and details on toric varieties.
Our starting point is the combinatorial description of $H^1(X,\T_X)$ and $H^2(X,\T_X)$ from \cite{ilten1,ilten3}.
Indeed, when $X$ is complete, for every $k\geq 1$ there are isomorphisms
  \[ H^k(X,\T_X)\cong  \bigoplus_{\substack{\rho, \bu}} \widetilde{H}^{k-1}(V_{\rho,\bu},\KK),\]
  see \thref{prop:cohom2}.
  Here, $\rho$ ranges over rays of $\Sigma$, $\bu$ ranges over characters of the torus,  and $V_{\rho,\bu}$ is a particular simplicial complex contained in $\Sigma$, see \S\ref{sec:divisor}.

By the above, locally trivial first order deformations of $X$ and obstructions to lifting locally trivial deformations can be described via cohomology of the simplicial complexes $V_{\rho,\bu}$.
Hence, it is natural to try to completely understand the functor $\Def_X'$ in terms of \v{C}ech complexes for the $V_{\rho,\bu}$.
The fan $\Sigma$ induces a closed cover $\sV_{\rho,\bu}$ of each $V_{\rho,\bu}$ and we will consider the \v{C}ech complex $\check{C}^\bullet(\sV_{\rho,\bu},\KK)$ with respect to this cover.
Alternatively, one may interpret $\check{C}^\bullet(\sV_{\rho,\bu},\KK)$ as the complex of simplicial cochains with coefficients in $\KK$ on the nerve $\nerve(\sV_{\rho,\bu})$ of the cover $\sV_{\rho,\bu}$, see Remark \ref{rem:nerve}.

For any local Artinian $\KK$-algebra $A$ with residue field $\KK$ and maximal ideal $\mfm_A$, we will define a natural map
\[
\mfoc:	\bigoplus_{\rho,\bu} \check{C}^0(\sV_{\rho,\bu},\mfm_A)\to \bigoplus_{\rho,\bu} \check{C}^1(\sV_{\rho,\bu},\mfm_A).
\]
We then set
 	\begin{align*}
		\cDef_{\Sigma}(A)&= \{\alpha \in \bigoplus_{\rho, \bu}\check{C}^0(\sV_{\rho,\bu},\mfm_A): \mfoc(\alpha)=0 \} /\sim 
	\end{align*}
	where $\sim$ is a certain equivalence relation. See Definition \ref{defn:cdef} for details. This can be made functorial in the obvious way; we call the resulting functor the \emph{combinatorial deformation functor}.

Our primary result is the following:

\begin{thm}[See \thref{thm:combiso}]\thlabel{thm:main}
Let $X=X_\Sigma$ be a $\QQ$-factorial toric variety without any torus factors and assume that $H^1(X,\CO_X)=H^2(X,\CO_X)=0$, for example, $X$ is a smooth complete toric variety. Then the functor $\Def'_X$ of locally trivial deformations of $X$ is isomorphic to the combinatorial deformation functor $\cDef_\Sigma$.
\end{thm}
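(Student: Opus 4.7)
The plan is to realize both functors as Maurer–Cartan elements modulo gauge equivalence in quasi-isomorphic DGLAs, and then verify that the combinatorial operator $\mfoc$ encodes the Maurer–Cartan equation on the combinatorial side. Choose the torus-invariant affine cover $\{U_\sigma\}_{\sigma\in\Sigma_{\max}}$ of $X=X_\Sigma$; each $U_\sigma$ is affine, so the \v{C}ech complex $\check{C}^\bullet(\{U_\sigma\},\T_X)$ computes $H^\bullet(X,\T_X)$ and carries a DGLA structure from the Lie bracket on vector fields. By standard deformation theory its Maurer–Cartan elements modulo gauge represent $\Def'_X(A)$. Because $\T_X$ is torus-equivariant, the complex decomposes by character weight $\check{C}^\bullet=\bigoplus_\bu \check{C}^\bullet_\bu$, and the bracket preserves this grading. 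The hypotheses $H^1(X,\CO)=H^2(X,\CO)=0$ kill the weight-zero contribution in the degrees relevant for deformations and obstructions, so the problem reduces to the nonzero graded pieces.

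Next, using the description of $\T_{X,\bu}$ on each $U_\sigma$ in terms of rays (from \cite{ilten1}), identify each graded piece with a shifted combinatorial \v{C}ech complex: at the level of graded vector spaces I expect an isomorphism
\[
\bigoplus_\bu \check{C}^{\bullet+1}_\bu(\{U_\sigma\},\T_X) \;\cong\; \bigoplus_{\rho,\bu}\check{C}^\bullet(\sV_{\rho,\bu},\KK),
\]
which in cohomology recovers \thref{prop:cohom2}. Transport the DGLA structure across this identification: since the Lie bracket on $\T_X$ respects weights and both sides are governed by the same ray/cone incidences, one obtains an induced bracket on the combinatorial complex. The Maurer–Cartan equation $d\alpha+\tfrac12[\alpha,\alpha]=0$ for $\alpha\in\bigoplus_{\rho,\bu}\check{C}^0(\sV_{\rho,\bu},\mfm_A)$ must then be shown to translate exactly into $\mfoc(\alpha)=0$, with gauge equivalence becoming the relation $\sim$ of Definition \ref{defn:cdef}.

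The main obstacle is controlling the higher-order structure. A priori the \v{C}ech DGLA has nontrivial higher $L_\infty$-operations (Massey products), whereas $\mfoc$ is manifestly only quadratic. One approach is to exhibit a quasi-isomorphism to a \emph{formal} combinatorial model in which higher brackets vanish; alternatively, one can argue order-by-order in $\mfm_A$, showing that any combinatorial solution $\alpha$ lifts to a genuine Maurer–Cartan element of $\check{C}^\bullet(\{U_\sigma\},\T_X)$ uniquely up to gauge. The cup-product computation of Ilten–Turo \cite{ilten3} provides the first inductive step; the technical challenge will be verifying that all higher-order obstructions, which live in $H^2(X,\T_X)=\bigoplus_{\rho,\bu}\widetilde{H}^1(V_{\rho,\bu},\KK)$, can always be absorbed into $\mfoc(\alpha)$ after suitable adjustment of $\alpha$ within its combinatorial gauge class.
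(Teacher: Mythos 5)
Your proposal diverges from the paper's proof in an essential way, and the point at which it diverges is exactly where the serious gap lies. The paper does not work with the DGLA of $\T_X$ at all. Instead, it first passes through the Euler sequence: the map $\eta\colon\bigoplus_\rho\CO(D_\rho)\to\T_X$ is a morphism of \emph{sheaves of Lie algebras} (\thref{thm:liebracket}), and the cohomological hypotheses together with the comparison theorem (\thref{compa}) show $\rF_{\bigoplus\CO(D_\rho)}\cong\rF_{\T_X}\cong\Def_X'$. The crucial feature of $\mcL=\bigoplus_\rho\CO(D_\rho)$ — which $\T_X$ does not obviously enjoy — is that it includes into the constant sheaf $\mcK=\mcL|_{T_N}$. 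It is precisely this inclusion that produces the homotopy-fiber-analogue functor $\widehat\rF_{\mcL\hookrightarrow\mcK}$ (\thref{functornew2}), and Theorems \ref{compa2} and \ref{prop:isosecfunctor} show this agrees with $\rF_\mcL$; the identification of $\widehat\rF_{\mcL\hookrightarrow\mcK}$ with $\cDef_\Sigma$ is then a matter of unwinding \thref{exactseqlocal}. Your proposal omits the Euler sequence step entirely, and without it there is no natural bracket to transport to $\bigoplus_{\rho,\bu}\check{C}^\bullet(\sV_{\rho,\bu},\KK)$: the isomorphism you posit at the cochain level does not hold — what exists is the short exact sequence \eqref{exact}, and the combinatorial side is a quotient, not a direct summand, of the Lie-algebra-valued cochain complex.

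The ``main obstacle'' you flag — higher $L_\infty$-operations — is real for your approach but is never actually resolved in your proposal; the two escape routes you sketch (a formal combinatorial model, or order-by-order lifting) are left as assertions, and it is far from clear either one goes through. The paper sidesteps the issue entirely by working with the Baker--Campbell--Hausdorff framework rather than DGLAs, so the only bracket ever needed is a genuine (binary) Lie bracket. Relatedly, you mischaracterize $\mfoc$ as ``manifestly only quadratic''; it is $\lambda\circ\mfp\circ s$ where $\mfp(a)_{ij}=-a_i\bch a_j$, so it already contains all higher-order BCH terms — the nonlinear structure you are worried about having to reconstruct is built in from the start. Finally, your discussion of gauge equivalence is too coarse: the equivalence relation in Definition \ref{defn:cdef} involves both a zero-cochain in $\mcL$ and the section $s$, and it is the group action in \thref{alternatedefn} (by $\check C^0(\sU,\mcL)\otimes\mfm_A$ together with $\check H^0(\sU,\mcK)\otimes\mfm_A$) that makes the quotient a deformation functor; ordinary DGLA gauge equivalence for $\T_X$ does not transport to this relation without the intermediate Euler-sequence comparison.
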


\noindent By utilizing a comparison theorem, we obtain a similar combinatorial description of $\Def_X$ for complete toric varieties that are smooth in codimension two and $\QQ$-factorial in codimension three, see \thref{mildsing} for the precise statement.

Our main motivation in introducing the combinatorial deformation functor $\cDef_\Sigma$ was to be able to effectively compute the hull of $\Def_X$  when $X=X_\Sigma$ is a toric variety with sufficiently mild singularities. Using the combinatorial deformation functor, we introduce the \emph{combinatorial deformation equation} and show that one can compute a hull of $\Def_X$ by solving this equation to higher and higher order through a combinatorial process, see \S\ref{sec:cde}. We also show that in some cases, this process can be simplified further by removing certain maximal cones from the fan $\Sigma$ (\thref{thm:samehull}) and limiting those pairs of maximal cones on which we must consider obstruction terms (\thref{prop:closure} and \thref{prop:codimone}).

Given two first order deformations of $X=X_\Sigma$ over $\KK[t_1]/t_1^2$ and $\KK[t_2]/t_2^2$, the cup product computes the obstruction to combining them to a deformation over $\KK[t_1,t_2]/\langle t_1^2,t_2^2\rangle$. This cup product has been described in combinatorial terms by Ilten and Turo in \cite{ilten3}. Using the functor $\cDef_\Sigma$, we are able provide explicit combinatorial formulas for the obstructions to lifting deformations to arbitrary order, see \thref{thm:obsformula}. In particular, we easily recover the cup product formula of \cite{ilten3}. In contrast to the situation of the cup product, our higher order obstruction formulas include not only first order combinatorial deformation data, but also higher order data. 

The combinatorial deformation functor $\cDef_\Sigma$ exhibits a large amount of structure. Utilizing this structure, we provide non-trivial conditions guaranteeing that $X_\Sigma$ has unobstructed deformations:
\begin{thm}(See \thref{thm:unobstructed})
	Let $X_{\Sigma}$ be a complete toric variety that is smooth in codimension 2 and $\QQ$-factorial in codimension 3. Let $\A$ consist of all pairs $(\rho,\bu)$ of rays and characters for which $\widetilde{H}^0(V_{\rho,\bu},\KK)$ does not vanish.
	Set 
	\begin{align*}
		\B:= \left\{ (\rho,\bu+\bv)\in \Sigma(1)\times M\ \Big|
			(\rho,\bu)\in \mathcal{A};\  
			\bv\in \displaystyle\sum_{(\rho',\bu')\in \mathcal{A}} \ZZ_{\geq0}\cdot \bu' 
		\right\}.
	\end{align*}
	If ${H}^{1}(V_{\rho,\bu},\KK)=0$ for all pairs $(\rho,\bu)\in \B$, then $X_{\Sigma}$ is unobstructed.
\end{thm}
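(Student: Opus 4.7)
The plan is to work entirely through the combinatorial deformation functor $\cDef_\Sigma$, exploiting its natural $M$-grading. By \thref{mildsing}, the singularity hypotheses on $X_\Sigma$ give an isomorphism $\Def_{X_\Sigma} \cong \cDef_\Sigma$, so it suffices to show that $\cDef_\Sigma$ is smooth. I would prove this by inductively producing, for every $n\geq 1$, a solution $\alpha_n$ to the combinatorial deformation equation $\mfoc(\alpha_n)=0$ modulo order $n+1$. Introducing formal variables $t_\gamma$ for $\gamma=(\rho,\bu)\in\A$, one starts from $\alpha_1 = \sum_{\gamma\in\A} t_\gamma \alpha_\gamma$ with $\alpha_\gamma$ a $0$-cochain representative of the cohomology class indexed by $\gamma$, and the existence of a formal solution $\alpha$ with this initial term gives the smoothness of $\cDef_\Sigma$.

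The crucial structural observation is that $\mfoc$ respects an $M$-grading: assigning $t_\gamma$ the weight $\bu$ when $\gamma=(\rho,\bu)$, the $(\rho^*,\bu^*)$-component of $\mfoc(\alpha)$ is homogeneous of weight $\bu^*$, and its higher-order pieces are multilinear operations that add weights. I would verify, generalizing the cup-product description of \cite{ilten3} and the higher-order formulas of \thref{thm:obsformula}, two closure facts about these operations: a monomial $t_{\gamma_1}\cdots t_{\gamma_k}$ with $\gamma_i=(\rho_i,\bu_i)$ can contribute to the $(\rho^*,\bu^*)$-summand of $\mfoc(\alpha)$ only when $\bu^* = \bu_1+\cdots+\bu_k$ and $\rho^* \in \{\rho_1,\dots,\rho_k\}$. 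Choosing $i$ with $\rho^*=\rho_i$ makes $(\rho^*,\bu_i)\in\A$ and $\bu^*-\bu_i = \sum_{j\neq i}\bu_j$ a non-negative combination of characters from $\A$, so $(\rho^*,\bu^*)\in\B$. Thus the obstruction at every order is supported in components indexed by $\B$.

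With this in hand, the induction is immediate. Given $\alpha_n$ and an arbitrary lift $\widetilde{\alpha}_n$ to order $n+1$, the order-$(n+1)$ part of $\mfoc(\widetilde{\alpha}_n)$ lies in $\bigoplus_{(\rho,\bu)\in\B}\check{C}^1(\sV_{\rho,\bu},\KK)$ and is a $1$-cocycle in each graded piece, via the standard Maurer--Cartan-style identity satisfied by the equation. By hypothesis $H^1(V_{\rho,\bu},\KK)=0$ for every pair in $\B$, so this cocycle is a coboundary and we may modify $\widetilde{\alpha}_n$ by a $0$-cochain of the correct weight to obtain $\alpha_{n+1}$ with $\mfoc(\alpha_{n+1})=0$, completing the inductive step.

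The main obstacle I expect is verifying the two closure properties of the higher-order pieces of $\mfoc$ --- weight-additivity and ray-preservation --- in full generality. Weight-additivity should be visible directly from Definition~\ref{defn:cdef} and the construction of $\mfoc$ in \S\ref{sec:cde}, but ray-preservation is more delicate: for the quadratic (cup-product) part it was established in \cite{ilten3}, and for higher-order terms it should fall out of the explicit formulas of \thref{thm:obsformula}, but this bookkeeping is where the real content sits. Once both closure properties are in place, the inductive lifting argument above finishes the proof.
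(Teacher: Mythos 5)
Your proposal takes essentially the right route and reaches the correct conclusion, but it re-derives by hand the machinery the paper has already packaged into a reusable lemma. The paper's proof is short: after reducing to $\cDef_{\Sigma}$ via \thref{mildsing} (as you do), it invokes \thref{thm:samehull} to conclude that $\cDef_{\Sigma}$ has the same hull as the restricted functor $\cDef_{\Sigma',\Gamma}$, whose obstruction space is $\bigoplus_{(\rho,\bu)\in\Gamma} H^{1}(V_{\rho,\bu},\KK)$; then one observes $\Gamma\subseteq\B$ and that the obstruction space vanishes. The set $\Gamma$ of Definition~\ref{defn:A} is exactly the abstraction of the ``weight-additivity and ray-preservation'' closure properties you identify: it is the smallest set containing $\A$ closed under the index-combination patterns that the Lie bracket of Definition~\ref{bilinear} can actually produce, and the check that $\mcL_{\Gamma}=\bigoplus_{(\rho,\bu)\in\Gamma}\CO(D_{\rho})_{\bu}$ is stable under the bracket is precisely the verification you defer. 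So what you flag as the ``real content'' is not a gap in the overall logic but a piece the paper handles once, abstractly, by noting that the bracket $[\chi^{\bu}f_{\rho},\chi^{\bu'}f_{\rho'}]$ has $f$-components only along $\rho,\rho'$ and character $\bu+\bu'$, which then propagates to all iterated brackets by \thref{prop:iteratedlie}.

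Two remarks on the comparison. First, your iterative route solves the deformation equation term by term, which implicitly reproves a special case of \thref{thm:samehull} and \thref{hull}; the paper's proof is shorter because it never unwinds the BCH expansion and simply observes the obstruction \emph{space} is zero, so the functor is automatically smooth. Second, the paper's closure set $\Gamma$ is in general strictly smaller than $\B$ (because the recursive steps in Definition~\ref{defn:A} require $\rho'(\bu)\neq 0$ or $\rho(\bu')\neq\rho(\bu)$, conditions that reflect when the bracket actually contributes rather than just degree bookkeeping), which is why the theorem is stated with the $\Gamma$-hypothesis first and then deduces the $\B$-version by $\Gamma\subseteq\B$. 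Your argument works directly with $\B$, which suffices for the statement as given but silently gives away the stronger $\Gamma$-form. Finally, note that your inductive step relies on $\rT^{1}\cDef_{\Sigma}$ being finite-dimensional so that the order-by-order lifting converges to a hull; this holds here since $X_{\Sigma}$ is complete, but it is worth stating.
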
	
\noindent
In Example \ref{ex:unobstructed2}, we give an example of a smooth toric threefold whose unobstructedness follows from our conditions but cannot be deduced by degree reasons alone.

Finally, we put our machinery to use to calculate the hull of $\Def_X$ for numerous examples.
It makes sense to start with examples of low Picard rank.
By analyzing $H^1(X,\T_X)$ and $H^2(X,\T_X)$ for $X$ a smooth complete toric variety of Picard rank two we show:
\begin{thm}[See \thref{thm:ranktwo}]
Let $X$ be a smooth complete toric variety of Picard rank one or two. Then $X$ has unobstructed deformations. Furthermore, $X$ is rigid unless it is the projectivization of a direct sum of line bundles on $\PP^1$ such that the largest and smallest degrees differ by at least two.
\end{thm}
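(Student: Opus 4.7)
The plan is to invoke Kleinschmidt's classification of smooth complete toric varieties of Picard rank at most two and then analyze the combinatorial cohomology via \thref{prop:cohom2}. The Picard rank one case is $X \cong \PP^n$, which is classically rigid. For Picard rank two, Kleinschmidt's theorem yields $X \cong \PP_{\PP^m}(\CO \oplus \CO(a_1) \oplus \cdots \oplus \CO(a_r))$ for some $m,r \geq 1$ and integers $0 \leq a_1 \leq \cdots \leq a_r$. I would write down the fan $\Sigma$ explicitly: $m+1$ base rays $\rho_0,\ldots,\rho_m$ from the fan of $\PP^m$ and $r+1$ fiber rays, one of which is twisted by the $a_i$, with maximal cones indexed by pairs consisting of one omitted base ray and one omitted fiber ray.

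The core step is a case analysis of the simplicial complexes $V_{\rho,\bu}$ depending on whether $\rho$ is a base or a fiber ray and on the signs of $\langle \bu, v \rangle$ on each ray generator $v$. Because the maximal cones of $\Sigma$ have a product structure, the constraints defining $V_{\rho,\bu}$ decouple into a ``base part'' and a ``fiber part'', and each $V_{\rho,\bu}$ turns out to be a join of full subsimplices, possibly broken into a disjoint union of such pieces. In particular $\widetilde{H}^k(V_{\rho,\bu},\KK) = 0$ for all $k \geq 1$, so via \thref{prop:cohom2} we obtain $H^2(X,\T_X) = 0$, giving unobstructedness directly without even invoking the refined criterion from the earlier unobstructedness theorem. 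For the rigidity claim, the same analysis identifies when $\widetilde{H}^0(V_{\rho,\bu},\KK) \neq 0$: non-trivial disconnectedness requires $\bu$ to partition the base rays into two non-empty groups by sign, which forces $m = 1$, and additionally requires the pairing of $\bu$ with the twisted fiber ray $\sigma_r$ to lie in a specific range, which forces two of the bundle degrees to differ by at least two. This matches the statement.

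The main obstacle is the combinatorial bookkeeping in the case analysis: although the fan is simple, there are many inequivalent sign patterns of $\bu$ on the $(m+1)+(r+1)$ ray generators, and the pairing of $\bu$ with the twisted fiber ray depends on both base and fiber components of $\bu$ through the $a_i$. The key simplification is that in Picard rank two the base and fiber fans are each the fan of a projective space, so each factor admits a non-trivial sign partition by $\bu$ only when the corresponding dimension equals one; this rigidity of projective-space fans is what ultimately forces both the vanishing of higher reduced cohomology and the dichotomy in the rigidity statement.
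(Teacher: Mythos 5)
There is a genuine gap in your proposed argument. You claim that each $V_{\rho,\bu}$ is ``a join of full subsimplices, possibly broken into a disjoint union of such pieces,'' and deduce directly that $\widetilde{H}^k(V_{\rho,\bu},\KK)=0$ for all $k\geq 1$, hence $H^2(X,\T_X)=0$ and unobstructedness. This is false. While the join decomposition of $V_{\rho,\bu}$ into a ``base part'' and a ``fiber part'' is correct, the base part is the induced subcomplex on those base rays with negative pairing, and if \emph{all} $s+1$ base rays pair negatively with $\bu$ this subcomplex is the \emph{boundary} of an $s$-simplex — a sphere $S^{s-1}$, not a full simplex. Concretely, take $s=2$, $r=1$, $a_1=3$, i.e.\ $X=\PP(\CO_{\PP^2}\oplus\CO_{\PP^2}(3))$, and $\bu=(-1,-1,-1)$, $\rho=\rho_3$: all three base rays $\rho_1,\rho_2,\rho_{n+2}$ pair to $-1$ with $\bu$, $\rho(\bu)=-1$, and the unique remaining fiber ray pairs positively. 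Then $V_{\rho,\bu}\cong\partial\Delta^2\cong S^1$, so $\widetilde{H}^1(V_{\rho,\bu},\KK)\neq 0$, and indeed $H^2(X,\T_X)\neq 0$.

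The way around this — and what the paper actually does — is to first establish rigidity whenever $s>1$: a disconnected $V_{\rho,\bu}$ forces the existence of a cardinality-two primitive collection whose two rays both pair negatively with $\bu$, which rules out $s>1$ (and also $r=1$ since the fiber primitive relation forces those two pairings to be negatives of each other). Rigidity makes unobstructedness trivial regardless of $H^2$. Only in the remaining non-rigid case, $s=1$, does one need to show $H^2(X,\T_X)=0$, and there the join decomposition works: the base part has at most two vertices, the fiber part is a genuine subsimplex of a single cone (using the relation $\sum n_{\rho_{s+j}}=0$ to find a non-negative fiber ray), and the join has contractible components. Your ``direct'' route skips the rigidity reduction and as a result asserts a vanishing that does not hold. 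Your rigidity analysis is otherwise in the right spirit, though it should be phrased in terms of 2-element primitive collections and the primitive relations rather than an informal ``sign partition of the base rays.''
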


The first interesting case of examples to consider is thus toric threefolds $X_\Sigma$ of Picard rank three (smooth toric surfaces are always unobstructed by \cite[Corollary 1.5]{ilten1}). We focus on the case where $\Sigma$ is a splitting fan, that is, $X_\Sigma$ is an iterated $\PP^1$-bundle. 
These $\PP^1$-bundles have the form 
\[
\PP(\CO_{\FF_e}\oplus \CO_{\FF_e}(aF+bH))
\]
for $e,a,b\in\ZZ$, $e,b\geq 0$
where $\FF_e=\PP(\CO_{\PP^1}\oplus \CO_{\PP^1}(e))$ is the $e$th Hirzebruch surface, and $F$ and $H$ respectively represent the classes in $\Pic(\FF_e)$ of the fiber and $\CO_{\mathbb{F}_e}(1)$ in the $\PP^1$-bundle fibration of $\FF_e$ over $\PP^1$. 
We discover that such threefolds may have obstruction equations whose lowest terms are quadratic or cubic, or they may be unobstructed:
\begin{thm}\thlabel{thm:obstructed}
	Let \[X= \PP(\CO_{\FF_e}\oplus \CO_{\FF_e}(aF+bH))\]
	with $e,b\geq 0$.
	Then $X$ is obstructed in exactly the following cases: 
	\begin{enumerate}[label={(\roman*)}]
		\item The case $e=1$, $a\leq -2$, and $b\geq 3-a$. In this case, the minimal degree of obstructions is three. \label{case:obstrcuted1}
		\item The case $e\geq 2$, $a\leq -e$, and $b \geq  1+\dfrac{2-a}{e}$. If $a\equiv 1 \mod e $, then the minimal degree of obstructions is three. In all other cases, the minimal degree of obstructions is two.
		
		\label{case:obstructed2} 
	\end{enumerate}
	
\end{thm}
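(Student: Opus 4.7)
The plan is to combine an explicit combinatorial description of the fan $\Sigma$ of $X=\PP(\CO_{\FF_e}\oplus\CO_{\FF_e}(aF+bH))$ with the machinery developed in the paper: the unobstructedness criterion \thref{thm:unobstructed} to handle all the unlisted cases, and the higher-order obstruction formulas of \thref{thm:obsformula} together with the reductions \thref{thm:samehull}, \thref{prop:closure}, \thref{prop:codimone} to pin down obstructions in the listed cases. Because $X$ is a splitting fan, $\Sigma$ has six rays organized into three ``pairs'' with slopes determined by $(e,a,b)$, and the maximal cones are products of pairs; this lets me index the simplicial complexes $V_{\rho,\bu}$ for $(\rho,\bu)\in\Sigma(1)\times M$ in closed form.

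I would first enumerate the pairs $(\rho,\bu)$ for which $\widetilde{H}^0(V_{\rho,\bu},\KK)\neq 0$, giving the first-order deformations and the set $\A$. By using the semigroup definition of $\B$ together with \thref{thm:samehull} and \thref{prop:codimone}, I can cut $\B$ down to a manageable list, and for each $(\rho,\bu)\in\B$ outside the listed cases (i) and (ii), directly check that the one-dimensional complex $V_{\rho,\bu}$ is either empty, a single point, or contractible, so that $H^1(V_{\rho,\bu},\KK)=0$. This establishes unobstructedness in all remaining cases via \thref{thm:unobstructed}.

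For the listed cases, I would construct explicit obstructions using \thref{thm:obsformula}. In case (ii) with $a\not\equiv 1\pmod{e}$, exhibit two first-order combinatorial deformation classes whose cup product, computed by the order-two part of \thref{thm:obsformula} (equivalently the Ilten--Turo formula), lies in a nonzero $\widetilde{H}^1(V_{\rho,\bu+\bu'},\KK)$. In case (i) and in the subcase $a\equiv 1\pmod{e}$ of (ii), first show that the cup product of any two first-order deformations vanishes — the sum degrees $\bu+\bu'$ land only in complexes with trivial $H^1$ — then use the order-three part of \thref{thm:obsformula} to construct a triple of first-order deformations whose cubic obstruction pairs nontrivially with a class in some $\widetilde{H}^1(V_{\rho,\bu+\bu'+\bu''},\KK)$.

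The main obstacle will be the minimal-degree verification in case (i) and in the divisibility subcase of (ii), where I must rule out \emph{any} quadratic obstruction before exhibiting a cubic one. This requires a systematic case analysis of all pairs of first-order classes, organized by the $M$-degree of the product; the role of the arithmetic condition $a\equiv 1\pmod{e}$ should emerge as precisely the congruence forcing sums of two ``obstructive'' characters to avoid complexes with nonvanishing $H^1$, while sums of three such characters can hit them. I expect the cubic obstruction argument to be the most delicate part, since the order-three combinatorial deformation equation couples first-order and second-order data, and I must produce a consistent second-order lift whose associated third-order \v Cech class is nontrivial.
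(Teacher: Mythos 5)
Your proposal follows the same high-level architecture as the paper's proof: eliminate the unlisted cases via \thref{thm:unobstructed} (the paper packages this as \thref{lemma:possibleobs}), rule out quadratic obstructions by degree constraints in case (i) and in the subcase $a\equiv 1\pmod e$ of (ii), and then exhibit a nonzero obstruction of the claimed minimal degree. Your plan to use \thref{thm:samehull} and \thref{prop:codimone} to cut down the combinatorics is also consistent with what the paper does (via \thref{lemma:SigmaD}). Where you diverge is in the last step: you propose to extract the obstruction directly from the general formula \thref{thm:obsformula}. The paper instead uses a \emph{reduction to explicit examples}: using the degree relations \eqref{eq:relation1}, \eqref{eq:relation2}, \eqref{eq:relation3}, it observes that the rays and simplicial complexes $V_{\rho,\bu}$ entering the minimal-degree obstruction are \emph{identical} to those in Examples \ref{example:1,-2,5}, \ref{example:2,-3,4}, and \ref{example:e,-e,b}, so the explicitly computed obstruction polynomials from those examples transfer verbatim to the general parameters $(e,a,b)$.

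The genuine gap in your plan is precisely the point you flag as delicate but leave unresolved: in case (i) and in the $a\equiv 1\pmod e$ subcase of (ii), one must show the cubic obstruction is nonzero \emph{for every} choice of second-order lift compatible with the vanishing of quadratic obstructions, i.e.\ that the coefficient of the relevant cubic monomial in the normal form of the obstruction polynomial is nonzero. The order-three terms of \thref{thm:obsformula} involve not only first-order cochains but also the second-order correction cochains $c^{(1,1)}_{\rho,i}$ that were produced when solving the second-order deformation equation, and these depend on the choice of the map $\psi$ and the monomial order. Verifying nonvanishing in closed form for all admissible $(e,a,b)$ would require tracking these corrections abstractly — a substantially harder computation than the Ilten--Turo cup product. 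The paper sidesteps this entirely by doing the full iterated deformation-equation computation in three concrete examples (Tables \ref{table:1,-2,5} and the analogues) and then observing that the same cochain-level data recurs in the general case. Without that reduction, or without carrying the abstract normal-form computation through all orders up to three, your minimality claim is not established.
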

By computing the hull of $\Def_X$ for a number of such examples, we find examples whose hulls exhibit the following behaviour:
\begin{enumerate}[label={(\roman*)}]
	\item The hull has a generically non-reduced component (Example \ref{example:1,-2,5}).
	\item The hull is irreducible but has a singularity  at the origin (Example \ref{example:3,-4,3}).
	\item The hull has a pair of irreducible components whose difference in dimension is arbitrarily large (Example \ref{example:e,-e,b}).
\end{enumerate}
None of these phenomena had previously been observed for deformation spaces of smooth toric varieties.

In \cite[Question 1.4]{ilten3}, it was asked if the deformation space of a smooth toric variety $X$ is cut out by quadrics. By \thref{thm:obstructed}, we see that the answer to this question is negative. In particular, any differential graded Lie algebra controlling $\Def_X$ cannot be formal.

\subsection{Our approach}\label{sec:approach}
As mentioned above, for any variety $X$ the functor $\Def_X'$ is controlled by the \v{C}ech complex
 $\check{C}^\bullet (\mcU,\T_X)$ for the tangent sheaf $\T_X$ with respect to an affine open cover $\mcU$ of $X$. Indeed, isomorphism classes of deformations of $X$ over a local Artinian $\KK$-algebra $A$ with residue field $\KK$ are given by
\[
\Def_X'(A)\cong\{x\in \check{C}^1(\mcU,\T_X)\otimes \mfm_A: x_{jk}\bch -x_{ik} \bch x_{ij} =0\}/\sim
\]
where $\mfm_A$ is the maximal ideal of $A$, $\bch$ is the Baker-Campbell-Hausdorff (BCH) product, and $\sim$ is an equivalence relation induced by an action of \v{C}ech zero-cochains, see \S\ref{sec:deffunctor} for details.

More generally, any sheaf of Lie algebras $\mcL$ on a topological space $X$ naturally gives rise to a deformation functor $\rF_\mcL$, see \thref{liefun1}.
The first step in proving \thref{thm:main} is to replace the tangent sheaf $\T_X$ by a simpler sheaf $\mcL$ of Lie algebras. For $\QQ$-factorial toric variety $X$, the generalized Euler sequence gives a surjection 
\[
	\mcL:=\bigoplus_{\rho} \CO(D_\rho)\to \T_X
\]
where the $D_\rho$ are the toric boundary divisors, see  \S\ref{seq:euler}. Moreover, the sheaf $\mcL$ has a natural bracket and the above map is a map of sheaves of Lie algebras (\thref{thm:liebracket}). This  induces a map of functors $\rF_\mcL\to \rF_{\T_X}\cong \Def_X'$ that is an isomorphism under appropriate cohomological vanishing conditions.

The Lie algebra structure on $\mcL$ may be seen as coming from the Cox torsor of $X$, see Remark \ref{rem:cox1}. In fact, this is a manifestation of the fact that, under the hypotheses of \thref{thm:main}, invariant deformations of the Cox torsor are equivalent to locally trivial deformations of $X$, see \thref{rem:cox2} for an even more general statement. This is very much inspired by the approach of J.~Christophersen and J.~Kleppe \cite{comp}.

The second step in proving \thref{thm:main} involves replacing the functor $\rF_{\mcL}$ by an equivalent functor $\widehat\rF_{\mcL}$ (\thref{functornew}) obtained by considering a quotient of the deformation functor controlled by the Thom-Whitney homotopy fiber of an inclusion of differential graded Lie algebras. See
the references in Remark \ref{rem:homotopy} for details on homotopy fibers in deformation theory.
In our specific setting of locally trivial deformations of a toric variety $X_\Sigma$, it turns out that the functor $\widehat{\rF}_{\mcL}$ is exactly the combinatorial deformation functor $\cDef_\Sigma$ from above, and our primary result follows.

\subsection{Other related literature and motivation}
K.~Altmann began a systematic study of the deformation theory of \emph{affine} toric singularities in the 1990's, giving combinatorial descriptions of first order deformations \cite{altmann1}, obstructions \cite{altmann2}, and homogeneous deformations \cite{altmann3}. For the special case of a Gorenstein toric threefold $X$ with an isolated singularity, he gave a combinatorial description of the hull of $\Def_X$ and its irreducible components \cite{altmann4}. This has recently been revisited by Altmann, A.~Constantinescu, and M.~Filip in \cite{altmann5} with similar results in a slightly more general setting using different methods. Dropping the isolated singularity assumption, Filip \cite{Fil1, Fil2} treats the deformation theory of affine Gorenstein toric pairs and relates Laurent polynomials to certain multi-parameter deformation families. Filip has also given a combinatorial description of the cup product in the affine case, see \cite{matej}. 

There are numerous motivations for studying deformations of toric varieties, both in the affine and in the global situations; we now mention several. Deformations of toric varieties are useful in mirror symmetry for studying deformations of embedded Calabi-Yau hypersurfaces \cite{mavlyutov1} and for classifying deformation families of Fano varieties \cite{coats,hilbert12}. Deformation theory of toric varieties has  been used to study singularities on the boundary of the K-moduli spaces of Fano varieties \cite{Pet2}, extremal metrics \cite{tipler}, the boundary of Gieseker moduli spaces \cite{rana}, and to show that deformations of log Calabi-Yau pairs can be obstructed \cite{FPR}.
We hope that our results here will find similar applications.

\subsection{Organization}
We now describe the organization of the remainder of this paper. Section \ref{sec:deffunctor} concerns itself with deformation functors governed by a sheaf of Lie algebras. We recall preliminaries on deformation functors (\S\ref{sec:defsetup}), Lie algebras and  Baker-Campbell-Hausdorff products (\S\ref{sec:bch}), and define the deformation functor controlled by a sheaf of Lie algebras (\S\ref{sec:deffunctornew}). Subsection \ref{sec:functor2} contains the definition of $\widehat\rF_{\mcL}$, a quotient of the functor controlled by a homotopy fiber. 

In Section \ref{sec:defeq} we describe a procedure to algorithmically construct the hull of our functor $\widehat\rF_{\mcL}$ by iteratively solving a deformation equation. The idea of constructing a hull via iterated lifting goes back at least to \cite{schlessinger} and is made more algorithmic in certain settings in \cite{stevens}, but our setting is distinct enough that we provide a thorough treatment. We set up the situation in \S\ref{sec:defeqsetup}, introduce the deformation equation in \S\ref{sec:deq}, and show in \S\ref{sec:defeqversal} that iteratively solving it produces a hull of our homotopy fiber analogue.

In Section \ref{sec:toric} we turn our attention to toric varieties. We recall preliminaries and set notation in \S\ref{sec:toricprelim}. We then discuss cohomology of the structure sheaf in \S\ref{sec:structuresheaf}, introduce the simplicial complexes $V_{\rho,\bu}$ and discuss their relation to boundary divisors in \S\ref{sec:divisor}, and introduce the Euler sequence in \S\ref{seq:euler}.

In Section \ref{sec:deftoric} we study  deformations of toric varieties. We define the combinatorial deformation functor $\cDef_\Sigma$ in \S\ref{sec:cdf} and prove our main result \thref{thm:main}. We also discuss connections to Cox torsors. In \S\ref{sec:cde} we specialize the discussion of \S\ref{sec:defeq} to the toric setting and discuss how to compute the hull of $\cDef_\Sigma$ by solving the combinatorial deformation equation. We discuss formulas for higher order obstructions in \S\ref{sec:obs}.  In \S\ref{sec:rc} we discuss how to further simplify computations by removing certain cones from the fan $\Sigma$.
We prove our criterion for unobstructedness in \S\ref{sec:unobstructed}.

In \S\ref{sec:Examples} we turn our attention to examples. In \S\ref{sec:prim} we introduce primitive collections and prove a sufficient criterion for rigidity (\thref{lemma:rigid}). We show that smooth complete toric varieties of Picard rank less than three are unobstructed in \S\ref{sec:ranktwo}. In \S\ref{sec:rank3}  we study toric threefolds $X$ that are iterated $\PP^1$-bundles, obtaining very explicit descriptions of $H^1(X,\T_X)$ and $H^2(X,\T_X)$. We continue this study in \S\ref{sec:rank3ctd}, providing several examples whose deformation spaces exhibit interesting behaviour and proving \thref{thm:obstructed}.

We conclude with two appendices. In Appendix \ref{sec:comp} we state a folklore theorem (\thref{CMiso}) comparing deformations of a scheme $X$ with deformations of an open subscheme $U$; for lack of a suitable reference we provide a proof. The theorem implies that in particular, for $X$ a Cohen-Macaulay variety that is smooth in codimension two, deformations of $X$ may be identified with deformations of the non-singular locus.
Finally, in Appendix \ref{ap:solve} we show that the deformation equation of \S\ref{sec:defeq} can in fact be iteratively solved.

For the reader who is interested in understanding the precise definition of the combinatorial deformation functor $\cDef_\Sigma$ and the statement of Theorem \ref{thm:main} as quickly as possible, we recommend reading \S\ref{sec:defsetup}, \S\ref{sec:bch}, \S\ref{sec:deffunctornew},  \S\ref{sec:toricprelim}, \S\ref{sec:divisor}, \S\ref{seq:euler}, and \S\ref{sec:cdf}.

\subsection*{Acknowledgements}
We thank A.~Petracci and F.~Meazzini for productive discussions. We further thank D.~Iacono for helping us understand the connection between $\widehat\rF_\mcL$ and Thom-Whitney homotopy fibers. Both authors were partially supported by NSERC. We thank the anonymous referee for a careful reading and helpful suggestions.

\section{Deformation functors}\label{sec:deffunctor}
\subsection{Setup}\label{sec:defsetup}
We assume that the reader is familiar with basic notions from deformation theory and functors of Artinian rings, see~e.g.~\cite{sernesi}.
We will always work over an algebraically closed field $\KK$ of characteristic zero. All tensor products are taken over $\KK$ unless otherwise specified. Let $\Comp$ be the category of complete local Noetherian $\KK$-algebras with the residue field $\KK$. For every $R\in \Comp$ we denote by $\mfm_R$ the maximal ideal of $R$. We also consider the subcategory $\Art$, which consists of local Artinian $\KK$-algebras with the residue field $\KK$. We denote by $\Set$ the category of sets.
For any deformation functor $\rF:\Art\to\Set$ \cite[Definition 3.2.5]{Lie}, we denote its \emph{tangent space} by
\[
	\rT^1 \rF:=\rF(\KK[t]/t^2).
\]

A \textit{small extension} in $\Art$ is an exact sequence  
	\begin{equation}\label{eqn:smallextension}
		0\to I \to A'\xrightarrow{\pi} A \to 0,\end{equation}
	where $\pi$ is a morphism in $\Art$ and $I$ is an ideal of $A'$ such that $\mfm_{A'}\cdot I=0$. The $A'$-module structure on $I$ induces a $\KK$-vector space structure on $I$. The above definition of a small extension is standard, but there is some inconsistency in the literature about the terminology. We opt to maintain consistency with the terminology used in \cite{Lie},\cite{Obstruction}.

Recall that a \emph{complete obstruction theory} for a deformation functor $\rF$ consists of a $\KK$-vector space $W$, called the \textit{obstruction space}, and a function $\phi$  that assigns to every small extension as in \eqref{eqn:smallextension}
	and any $\zeta\in \rF(A)$ an element $\phi(\zeta,A')\in W\otimes I$ such that $\phi(\zeta,A')=0$ if and only if $\zeta$
	lifts to $\rF(A')$. Moreover, $\phi$ satisfies a certain functoriality property, see \cite[Definition 3.6.1]{Lie}.

We will frequently make use of the \emph{standard smoothness criterion} to show that a morphism of functors is smooth.
\begin{thm}[{\cite[Theorem 3.6.5]{Lie}}] \thlabel{standardsmooth}
	Let $(\rF,W_{\rF}, \phi_{\rF})$ and $(\rG,W_{\rG},\phi_{\rG})$ be deformation functors together with associated complete obstruction theories, and let $f: \rF \to \rG$ be a morphism of functors. Assume that:
	\begin{enumerate}[label={(\roman*)}]
		\item  $\rT^1\rF \to \rT^1\rG$ is surjective; and
		\item There exists an injective map $ob_{f}: W_{\rF} \to W_{\rG}$ such that $ob_f \circ \phi_\rF=\phi_\rG\circ f$ for any small extension with $I\cong \KK$.
	\end{enumerate}
	 Then $f$ is smooth. In particular, $f$ is surjective.
\end{thm}  

\subsection{Lie algebras and the Baker-Campbell-Hausdorff product}\label{sec:bch}
Let $\mfg$ be a Lie algebra over $\KK$, not necessarily of finite dimension. By possibly embedding $\mfg$ into a universal enveloping algebras (see \cite[Theorem 9.7]{Hall}), we can always assume that the Lie bracket on $\mfg$ is given by the commutator in some associative algebra $\Lambda$. Through this embedding into an associative algebra $\Lambda$, for any $A\in \Comp$ and $x\in \mfg \otimes \mfm_A$ we define $\exp(x)$ in $\Lambda\otimes A$ using the formal power series of the exponential map.  These maps are clearly convergent in the $\mfm_A$-adic topology. 

The Baker-Campbell-Hausdorff (BCH) product $x\bch y$ of $x,y \in \mfg \otimes \mfm_{A}$ is defined by the relation
\[\exp(x)\cdot\exp(y)=\exp(x\bch y).\]
This product gives $\mfg\otimes \mfm_{\A}$ the structure of a group.
If $x$ and $y$ commute, then
\[
x\bch y=x+y.
\] 
In general, the non-linear terms of $x\bch y$ can be expressed as nested commutators of $x$ and $y$ with rational coefficients (see e.g. \cite{Hofst}). The expression in nested commutators is not unique, and we will call any expression of this form a BCH formula. The first few terms are easily calculated and well known:
\[x\bch y= x+y+\dfrac{1}{2}[x,y]+\dfrac{1}{12}[x,[x,y]]-\dfrac{1}{12}[y,[x,y]]+ (\text{Higher order terms}).\]
An explicit and complete description of a BCH formula is provided by \cite{Dynkin}

\begin{align}\label{dynkin}
	{\displaystyle x\bch y=\sum _{n=1}^{\infty }{\frac {(-1)^{n-1}}{n}}\sum _{\begin{smallmatrix}r_{1}+s_{1}>0\\\vdots \\r_{n}+s_{n}>0\end{smallmatrix}}{\frac {[x^{r_{1}}y^{s_{1}}x^{r_{2}}y^{s_{2}}\dotsm x^{r_{n}}y^{s_{n}}]}{\left(\sum _{j=1}^{n}(r_{j}+s_{j})\right)\cdot \prod _{i=1}^{n}r_{i}!s_{i}!}}.}
\end{align}
In this expression, the summation extends to all nonnegative integer values of $s_i$ and $r_i$, and the following notation is utilized:
\begin{align*}
	{\displaystyle [x^{r_{1}}y^{s_{1}}\dotsm x^{r_{n}}y^{s_{n}}]=[\overbrace {x,[x,\dotsm [x}^{r_{1}},[\overbrace {y,[y,\dotsm [y} ^{s_{1}},\,\dotsm \,[\overbrace {x,[x,\dotsm [x} ^{r_{n}},[\overbrace {y,[y,\dotsm y} ^{s_{n}}]]\dotsm ]]}
\end{align*}
with the definition $[x] := x$.

\subsection{Deformation functors controlled by a sheaf of Lie algebras}\label{sec:deffunctornew}
In this subsection, we will provide a brief review of deformation functors  controlled by a sheaf of Lie algebras.
This is a special case of the deformation functors controlled by semicosimplicial Lie algebras considered in  \cite{dgla1}, see also \cite[\S3.7]{Lie}.

Let $X$ be a topological space, and $\sU=\{U_i\}$ be an open or closed cover of $X$. For any sheaf of abelian groups $\F$ on $X$, we denote by $\check{C}^{\bullet}_{\sing}(\sU,\F)$ the \v{C}ech complex of singular cochains with respect to the cover $\sU$.\footnote{Recall that the abelian groups $\F(V)$ used in constructing the \v{C}ech complex in the case of a closed cover are defined as $\iota^{-1}(\F)(V)$, where $\iota:V\to X$ is the inclusion.
In our setting, we will only consider a closed cover when $\F$ is a constant sheaf with values in an abelian group $B$, and all intersections $V$ of sets in the cover are connected. In this case $\F(V)$ is either $0$ (if $V=\emptyset$) or $B$ (if $V\neq \emptyset$).}

It is often advantageous to consider the subcomplex $\check{C}^{\bullet}(\sU,\F) \subseteq \check{C}^{\bullet}_{\sing}(\sU,\F) $ of alternating \v{C}ech cochains. We will present our results in alternating cochains whenever possible. Let \[d:\check{C}^k(\sU,\F)\to \check{C}^{k+1}(\sU,\F)\] denote the \v{C}ech differential, let $\check{Z}^k(\sU,\F)$ denote the group of $k$-cocycles, and let $\check{H}^k(\sU,\F)$ denote $k$-th \v{C}ech cohomology group. In this section we will only consider \v{C}ech complexes for an open cover $\sU$, but in \S\ref{sec:divisor} we will also consider \v{C}ech complexes for a constant sheaf on a simplicial complex with respect to a closed cover.

For the remainder of this section, we take $\mcL$ to be a sheaf of Lie algebras on a topological space $X$, and $\sU=\{U_i\}$ to be an open cover of $X$. 
	Let $A\in \Comp$. We define a left action of the group $(\check{C}^0(\sU,\mcL)\otimes \mfm_A,\bch)$ on the set $\check{C}^1(\sU,\mcL)\otimes \mfm_A$ given by
	\begin{align*}
		\check{C}^0(\sU,\mcL)\otimes \mfm_A \times \check{C}^1(\sU,\mcL)\otimes \mfm_A &\to \check{C}^1(\sU,\mcL)\otimes \mfm_A\\
		(a,x)&\mapsto a\odot x
	\end{align*}
	via
	\[(a\odot x)_{ij}= a_i \bch x_{ij}\bch -a_j,\]
where we omit the restriction maps in the notation.	It is straightforward to verify that this action is well-defined.

\begin{defn}\thlabel{defn:omaps}
	Let $A\in \Comp$. We use the product $\bch$ to define the maps     
	\begin{align*}
		\mfp:\check{C}^0(\sU,\mcL)\otimes \mfm_{A}&\to \check{C}^1(\sU,\mcL)\otimes \mfm_{A},\\
		\mfo:\check{C}^1(\sU,\mcL)\otimes \mfm_{A}&\to \check{C}^2_{\sing}(\sU,\mcL)\otimes \mfm_{A}
	\end{align*}   
	via
	\begin{align*}
		\mfp(a)_{ij}&=-a_{i}\bch a_{j}, \\
		\mfo(x)_{ijk}&=x_{jk}\bch (-x_{ik}) \bch x_{ij}.
	\end{align*}  
\end{defn}

We remark that for every $a \in \check{C}^0(\sU,\mcL)\otimes \mfm_{A}$, we have 
	\[\mfo(\mfp(a))=0.\]  
This follows from a direct computation.
Furthermore, given a small extension as in \eqref{eqn:smallextension}, 
let $x \in \check{C}^i(\sU,\mcL)\otimes \mfm_{A'}$ for $i=0,1$. 

\begin{lemma}
For any $y\in \check{C}^i(\sU,\mcL)\otimes I$, we have \begin{equation}\label{eqn:odiff}
	\mathfrak{o}^i(x+y)=\mathfrak{o}^i(x)+d(y).
\end{equation}
\end{lemma}	
\begin{proof}
	This follows from a straightforward computation.
\end{proof}

\begin{defn}\thlabel{liefun1}
	Let $\mcL$ be a sheaf of Lie algebras on a topological space $X$, and let $\sU$ be an open cover of $X$.
	We define the functor $\rF_{\mcL,\sU}: \Art \to \Set$ on objects as follows:
	\[\rF_{\mcL,\sU}(A)= \{x \in \check{C}^1(\sU,\mcL)\otimes \mfm_A: \mfo(x)=0 \}/\sim  \]
	where $\sim$ is the equivalence relation induced by the action of $\check{C}^0(\sU,\mcL)\otimes \mfm_A$ on $\check{C}^1(\sU,\mcL)\otimes \mfm_A$.
	
	The functor sends a morphism
	$\pi': A' \to A$ to the map \[\rF_{\mcL,\sU}(\pi'): \rF_{\mcL,\sU}(A') \to \rF_{\mcL,\sU}(A)\]
	induced by the map $\check{C}^1(\sU,\mcL)\otimes \mfm_{A'}\to \check{C}^1(\sU,\mcL)\otimes \mfm_{A}$.
\end{defn}

	While we have endeavoured to describe our results in elementary terms, the proof of \thref{compa} below is most naturally done by working in the more general situation of semicosimplicial (differential graded) Lie algebras. Instead of recalling relevant notions here, we refer the reader to the excellent overview found in \cite[\S1]{Iacono3}. For a more detailed exposition, the reader may consult \cite{Lie}.

\begin{rem}\label{rem:semicosimplicial}
	After choosing an ordering on $\sU$, we may identify $\check{C}^\bullet(\sU,\mcL)$ with the ordered \v{C}ech complex. As such, it has the structure of a semicosimplicial Lie algebra, see e.g.~\cite[Example 2.6.3 and Remark 2.6.4]{Lie} for details. The functor $\rF_{\mcL,\sU}$ is exactly the functor controlled by the semicosimplicial Lie algebra $\check{C}^\bullet(\sU,\mcL)$ in the sense of \cite{dgla1}. 
	The tangent space of $\rF_{\mcL,\sU}$ may be identified with $\check{H}^1(\sU,\mcL)$ and a complete obstruction theory is given by $\check{H}^2(\sU,\mcL)$ with the map induced by $\mfo$, see \cite[Theorem 3.7.3]{Lie}.
\end{rem}
\begin{rem}\label{rem:dgla}
	To any semicosimplicial Lie algebra $\mfg$, one may associate a differential graded Lie algebra  $\Tot(\mfg)$ called its Thom-Whitney totalization  \cite[\S3]{Iacono2}. The underlying complexes of $\mfg$ and $\Tot(\mfg)$ are homotopy equivalent, and the deformation functor controlled by $\mfg$ is naturally isomorphic to the deformation functor $\Def_{\Tot(\mfg)}$  controlled by $\Tot(\mfg)$ \cite[Theorem 7.6]{Iacono2}. 
\end{rem}

\begin{thm}\thlabel{compa}
	Let $f:\mcL\to \mcK$ be a  morphism of  sheaves of Lie algebras on a topological space $X$. Let $\sU$ be an open cover of $X$, and let $\sV$ be a refinement of $\sU$.  Consider the induced morphism of \v{C}ech cohomology $f:
	\check{H}^i(\sU,\mcL) \to \check{H}^i(\sV,\mcK)$.
	\begin{enumerate}
		\item If this map is surjective for $i=1$ and injective for $i=2$,
	then the induced morphism of functors $f:\rF_{\mcL,\sU} \to \rF_{\mcK,\sV}$ is smooth.
		\item If this map is surjective for $i=0$, bijective for $i=1$, and injective for $i=2$,
	then the induced morphism of functors $f:\rF_{\mcL,\sU} \to \rF_{\mcK,\sV}$ is an isomorphism.
	\end{enumerate}
\end{thm}
\begin{proof}
	The first claim follows from \thref{standardsmooth} coupled with Remark \ref{rem:semicosimplicial}.
	For the second claim, $\rF_{\mcL,\sU}$ and $\rF_{\mcK,\sV}$ are isomorphic to the deformation functors controlled by 
	$\Tot(\check{C}^\bullet(\sU,\mcL))$ and $\Tot(\check{C}^\bullet(\sV,\mcK))$ (see Remark \ref{rem:dgla}, \cite[\S5]{dgla1}, or \cite[Corollary 7.6.6]{Lie}). Since these totalizations are homotopy equivalent to the respective \v{C}ech complexes, the claim follows from \cite[Theorem 6.6.2]{Lie}. 
	Alternatively, the second claim can be proved using an argument similar to the proof of \cite[Lemma 5.1]{comp}.
\end{proof}

As a consequence of the above theorem, we see that if $\sU$ is any open cover for which $\mcL$ is acyclic on all intersections of elements of $\sU$, $\rF_{\mcL,\sU}$ is independent of $\sU$. Indeed, in this situation, \v{C}ech cohomology coincides with sheaf cohomology. 
This is in particular the case if $X$ is a separated scheme, $\sU$ is an affine open cover, and $\mcL$ is a quasicoherent sheaf of Lie algebras.
We will thus frequently suppress $\sU$ and just write $\rF_{\mcL}$ for the functor $\rF_{\mcL,\sU}$.

The functors $\rF_{\mcL}$ are useful to study geometric deformation problems. 
As an example, let $X$ be a variety over $\KK$, and let $\Def_X$ and  $\Def'_X$ denote the functor of deformations of $X$ and locally trivial deformations of $X$, respectively. When $X$ is smooth these functors coincide. The locally trivial deformations of $X$ are governed by the tangent sheaf $\T_{X}$ and 
\[\rF_{\T_{X}} \xrightarrow{\exp} \Def'_X \]
is an isomorphism of functors, see \cite[Proposition 2.5]{BGL22}. Here $\exp$ denotes the exponential map defined in \S\ref{sec:bch}, applied to sections of $\T_X$. For each affine open $U$, the map $\exp$ yields automorphisms of $U \times \spec A$ over $\spec A$ reducing to the identity modulo $\mfm_A$. A cochain $x \in \check{C}^1(\sU,\T_X)\otimes \mfm_A$ satisfies $\mfo(x)=0$ if and only if the automorphisms $\exp(x_{ij})$ glue on overlaps to define a locally trivial deformation. The equivalence relation induced by the action of $\check{C}^0(\sU,\T_X)\otimes \mfm_A$ identifies isomorphic deformations.  We will study this situation in detail in \S\ref{sec:deftoric} when $X$ is a $\QQ$-factorial toric variety.

\subsection[A homotopy fiber quotient] 
{A homotopy fiber quotient}\label{sec:functor2}
Let $X$ be a separated scheme and $\mcL$ be a quasi-coherent sheaf on $X$ which is also a sheaf of Lie algebras over $\KK$ such that all restriction maps to non-empty open sets are injective. Fix a finite affine open cover $\sU$ of $X$ and choose a non-empty open set $V\subseteq \bigcap U_i$.
Let $\mcK$ be the pushforward of $\mcL_{|V}$ along the inclusion $V\subseteq X$; the Lie bracket on $\mcL$ induces a Lie bracket on $\mcK$.

We thus have an exact sequence of sheaves on $X$ 
\begin{equation}\label{eq:exactLie}
	\begin{tikzcd}
		0 \ar[r] & \mcL \ar[r,"\iota"] & \mcK \ar[r,"\lambda"] & \mcK/ \mcL \ar[r] &0.
	\end{tikzcd}
\end{equation}
It is worth noting that $\mcK/\mcL$ is not a sheaf of Lie algebras in general.
The exact sequence \eqref{eq:exactLie} induces a map of \v{C}ech complexes 
\begin{equation*}\label{eq:Cechcomplex}
	\begin{tikzcd}
		0 \ar[r] & \check{C}^{\bullet}(\sU,\mcL) \ar[r,"\iota"] & \check{C}^{\bullet}(\sU,\mcK) \ar[r,"\lambda"] & \check{C}^{\bullet}(\sU,\mcK/\mcL) \ar[r]   &0.
	\end{tikzcd}
\end{equation*}
We will use $d$ to refer to the differential of any one of these complexes. Note that  $\iota$ and $\lambda$ are compatible with $d$. 
Since $\iota$ is injective, we will use
the notation $\iota^{-1}$ to denote the map
\[\iota(\check{C}^{\bullet}(\sU,\mcL)) \hookrightarrow \check{C}^{\bullet}(\sU,\mcL) \]
inverse to $\iota$. 

For each affine open set $U\subseteq X$ obtained by intersecting elements of $\sU$, we fix a $\KK$-linear section 
\begin{equation}\label{eqn:s}
s: (\mcK/\mcL)(U) \to \mcK(U)
\end{equation}
to $\lambda$, that is, $\lambda \circ s: (\mcK/\mcL)(U)\to (\mcK/\mcL)(U)$ is the identity map. 
For every $A\in \Art$, we thus have the following commutative diagram:
\begin{mdiagram}[H]
	\[
	\begin{tikzcd}
		\check{C}^{0}(\sU,\mcL)\otimes \mfm_A \ar[r,hook,"\iota"] \ar[d,"\mfp" left] &   \check{C}^{0}(\sU,\mcK)\otimes \mfm_A \ar[r,twoheadrightarrow,"\lambda"{below}] \ar[d,"\mfp" left] & \check{C}^{0}(\sU,\mcK/\mcL)\otimes \mfm_A  \ar[l, bend right=20,"s" above]   \\
		\check{C}^{1}(\sU,\mcL)\otimes \mfm_A \ar[r,hook,"\iota"] \ar[d,"\mfo" left] &  \check{C}^{1}(\sU,\mcK)\otimes \mfm_A\ar[r,twoheadrightarrow,"\lambda"] \ar[d,"\mfo" left] & \check{C}^{1}(\sU,\mcK/\mcL)\otimes \mfm_A \ar[l, bend right=20,"s" above] 
		\\
		\check{C}^{2}(\sU,\mcL)\otimes \mfm_A\ar[r,hook,"\iota"] &   
		\check{C}^{2}(\sU,\mcK)\otimes \mfm_A \ar[r,twoheadrightarrow,"\lambda"]  & \check{C}^{2}(\sU,\mcK/\mcL)\otimes \mfm_A .
	\end{tikzcd}
	\]
	\addtocounter{equation}{1}
	\caption{Map between \v{C}ech complexes involving the maps $\mfp$ and $\mfo$
	}
	\label{diagram1}
\end{mdiagram}

\begin{defn}\thlabel{functornew}
We define 
the functor ${\rG}_{\mcL}:\Art \to \Set$ on objects as follows:
	\begin{align*}
		\rG_{\mcL}(A)&= \{\alpha \in \check{C}^0(\sU, \mcK/\mcL)\otimes \mfm_{A} : \lambda(\mfp(s(\alpha)))=0 \}.
	\end{align*}
Likewise, we define the functor ${\widehat\rF}_{\mcL}:\Art \to \Set$ via
	\begin{align*}
		\widehat\rF_{\mcL}(A)&= \rG_{\mcL}(A)/\sim
	\end{align*}
	where $\alpha= \beta$ in $	\widehat\rF_{\mcL}(A)$  if and only if there exists $a\in \check{C}^0(\sU,\mcL)\otimes \mfm_{A}$ such that 
	\[\iota(a)\odot \mfp(s(\alpha)) = \mfp(s(\beta)). \]
	Both functors act on morphisms in the obvious way.
\end{defn}
\noindent{Although we have suppressed it from the notation, both functors depend on the cover $\sU$, the open set $V\subseteq X$, and the collection of sections $s$.

	\begin{rem}\label{rem:homotopy}
		The functor $\widehat\rF_{\mcL}$ may be viewed as a quotient of the deformation functor $\Def_{\iota}$ controlled by the Thom-Whitney homotopy fiber of the inclusion of differential graded Lie algebras $\iota:\Tot(\check{C}^\bullet(\sU,\mcL))\hookrightarrow \Tot(\check{C}^\bullet(\sU,\mcK))$. See \cite[\S2]{liedes}, \cite[Theorem 2]{dgla2}, and \cite[cf. Example 3.9]{Iacono0}. A concrete description of the deformation functor $\Def_\iota$, which makes evident that $\widehat\rF_{\mcL}$ is a quotient, may be obtained by (twice!) applying Hinich descent as explained in \cite{Iacono3}. See also the ideas in the proofs of \cite[Theorem 4.2]{Iacono0} and \cite[Theorem 8.1.2]{Lie}.
	\end{rem}

Now we will examine the relationship between the functors $\rF_{\mcL}$ and $\widehat\rF_{\mcL}$.
Given a representative $\alpha \in \check{C}^0(\sU,\mcK/\mcL)\otimes \mfm_{A}$ of an element in $\widehat\rF_{\mcL }(A)$, there exists a unique $x\in  \check{C}^1(\sU, \mcL)\otimes \mfm_{A}$ such that $\iota(x)=\mfp(s(\alpha))$. Since, $\mfo(\mfp(s(\alpha)))=0$, by considering the commutativity of Diagram \ref{diagram1}, we can deduce that $\mfo(x)=0$. This mapping $\alpha \mapsto x=\iota^{-1}(\mfp(s(\alpha)))$ induces a map
	\[\iota^{-1}\circ \mfp\circ s: \widehat\rF_{\mcL} \to \rF_{\mcL},\]
	 since it is well-behaved under the equivalence relations used to define $\rF_{\mcL}$ and $\widehat\rF_{\mcL}$.

	\begin{thm}\thlabel{compa2}
The morphism of functors
\[\iota^{-1}\circ\mfp\circ s :\widehat\rF_{\mcL}\to \rF_{\mcL}\] 
		is an isomorphism.
	\end{thm}
	\begin{proof}
	First we will show the injectivity of $\iota^{-1}\circ\mfp\circ s:\widehat\rF_{\mcL}\to \rF_{\mcL}$. Let $A\in \Art$, and let $\alpha,\beta \in \check{C}^0(\sU,\mcK/\mcL)\otimes \mfm_{A}$ be representatives of two elements in $\widehat\rF_{\mcL}(A)$ and
		\[\iota^{-1}\circ \mfp \circ s(\alpha)= x; \quad \iota^{-1}\circ \mfp\circ s(\beta)=y.\]
	From \thref{liefun1} and \thref{functornew}, it is immediately seen that if $x\sim y$, then $\alpha\sim \beta$.

	For the surjectivity of $\iota^{-1}\circ\mfp\circ s$ we will show that the map $\rG_{\mcL}\to \rF_{\mcL}$ is smooth, hence surjective. The surjectivity of $\iota^{-1}\circ\mfp\circ s$ will follow. Since $\bch$ commutes with fiber products in $\Art$ and $s$ and $\lambda$ are $\mfm_A$-linear for $A\in\Art$, the functor $\rG_{\mcL}$ is a deformation functor (cf. \cite[Definition 3.2.5]{Lie}). By Lemma \ref{lemma:tangentspace}, its tangent space may be identified with $\check{H}^0(\sU,\mcK/\mcL)$. By Lemma \ref{lemma:obstruction}, a complete obstruction theory is given by the vector space $\check{H}^1(\sU,\mcK/\mcL)$ with the map induced by $\lambda\circ \mfp\circ s$.

	We remark that $\check{H}^1(\sU,\mcK)=0$. Indeed, as $\mcK$ is constant on the finite cover $\sU$, the claim  follows from the contractibility of a simplex. We thus have a surjection $\check{H}^0(\sU,\mcK/\mcL)\to \check{H}^1(\sU,\mcL)$ and an injection $\check{H}^1(\sU,\mcK/\mcL) \to \check{H}^2(\sU,\mcL)$. The smoothness of the map of functors now follows from \thref{standardsmooth} and Remark \ref{rem:semicosimplicial}.
	\end{proof}

In the remainder of this section, we will establish two lemmas used in the proof of \thref{compa2}.

	\begin{lemma}\label{lemma:tangentspace}
The tangent space of $\rG_\mcL$ is 
\[
	\rT^1 \rG_\mcL=\check{H}^0(\sU,\mcK/\mcL).
\]
The natural map $\check{H}^0(\sU,\mcK/\mcL)\to \rT^1 \widehat\rF_\mcL$ induces an isomorphism
\[
	\check{H}^0(\sU,\mcK/\mcL)/\lambda(\check{H}^0(\sU,\mcK))\to \rT^1 \widehat\rF_\mcL.
\]
	\end{lemma}
	\begin{proof}
Let $t$ be the coordinate on the ring of dual numbers $\KK[t]/t^2$. The condition that $\alpha \in \check{C}^0(\sU, \mcK/\mcL) \otimes t$ satisfies  $\lambda(\mfp(s(\alpha))) = 0 \mod t^2$ is equivalent to the condition that $d(\alpha)=0$, showing the first claim. 
For the second claim, $\alpha \in \check{H}^0(\sU,\mcK/\mcL)\otimes t$ is trivial in $\widehat\rF_\mcL$ if and only if there exists $a\in \check{C}^0(\sU,\mcL)\otimes t$ such that $\iota(a)\odot \mfp(s(\alpha))=0$, or equivalently, $\mfp(s(\alpha))=\mfp(\iota(a))$. Since $t^2=0$, this is equivalent to $s(\alpha)-\iota(a)$ being a cocycle. Since $\lambda(s(\alpha)-\iota(a))=\alpha$, the kernel of the map $\check{H}^0(\sU,\mcK/\mcL)\otimes t\to \rT^1 \widehat\rF_\mcL$ is exactly the image of $\check{H}^0(\sU,\mcK))\otimes t$ under $\lambda$. 
	\end{proof}

	\begin{lemma}\label{lemma:obstruction}
		Consider a small extension as in \eqref{eqn:smallextension} and any $\alpha\in \rG_\mcL(A)$. Let $\alpha'\in \check{C}^0(\sU,\mcK/\mcL)\otimes \mfm_{A'}$ be any lift of $\alpha$. Then:
	\begin{enumerate}[label={(\roman*)}]
		\item \label{prop:cocycle:fun2.1}  $\lambda(\mfp(s(\alpha')))$ is an alternating 1-cocycle in $\check{C}^1(\sU,\mcK/\mcL)\otimes I$;
		\item \label{prop:cocycle:fun2.2}  for any other lift $\xi'\in \check{C}^0(\sU,\mcK/\mcL)\otimes \mfm_{A'}$ of $\alpha$, we have
		\[\lambda(\mfp(s(\xi')))=\lambda(\mfp(s(\alpha')))+ d(\xi'-\alpha');\]
		\item \label{prop:cocycle:fun2.3}  for any $a'\in \check{C}^0(\sU,\mcL)\otimes \mfm_{A'}$, we have \[\lambda(\iota(a')\odot \mfp(s(\alpha')))= \lambda(\mfp(s(\alpha'))).\]
	\end{enumerate}
	\end{lemma}
	\begin{proof}
It is straightforward to check that $\lambda(\mfp(s(\alpha')))$ is alternating.
	 We will show that it is a cocycle as well.
	Set $\eta'=s(\lambda(\mfp(s(\alpha'))))$; this belongs to  $\check{C}^1(\sU,\mcK/\mcL)\otimes I$. Since  \[\lambda\big(\eta'-\mfp(s(\alpha'))\big)=0,\] there exists $x'\in \check{C}^1(\sU,\mcL)\otimes \mfm_{A'}$ such that $\iota(x')= \eta'-\mfp(s(\alpha'))$. 
Then
\begin{align*}
d(\eta')&=d(\eta')-\mfo(\mfp(s(\alpha')))\\&=\mfo(\eta'-\mfp(s(\alpha')))\\
&=\mfo(\iota(x'))\\&=\iota(\mfo(x'))
\end{align*}
with the second equality following from \eqref{eqn:odiff}.
Then $d(\lambda(\mfp(s(\alpha'))))=\lambda(d(\eta'))=\lambda(\iota(\mfo(x')))=0$ and claim \ref{prop:cocycle:fun2.1} follows.

Using that any two liftings of $\alpha$ differ by a cochain in $\check{C}^0(\sU,\mcK/\mcL)\otimes I$, claim \ref{prop:cocycle:fun2.2} follows from \eqref{eqn:odiff} and the fact that $d$ and $\lambda$ commute.
	
For	claim \ref{prop:cocycle:fun2.3}, let $x'$ and $\eta'$ be as above.
Then	\begin{align*}
		\iota(a'\odot x')&= 	\iota(a')\odot \iota(x')\\
		&= \iota(a')\odot (\eta'-\mfp(s(\alpha')))\\
		&= \eta'-(\iota(a')\odot \mfp(s(\alpha'))).
	\end{align*}
Applying $\lambda$, we obtain
		$\lambda(\eta')=\lambda(\iota(a')\odot \mfp(s(\alpha')))$ and 
the claim follows.
\end{proof}

\section{The deformation equation}\label{sec:defeq}
\subsection{Setup}\label{sec:defeqsetup}
Let $X$ be a separated scheme and $\mcL$ be a quasi-coherent sheaf on $X$ which is also a sheaf of Lie algebras over $\KK$ such that all restriction maps are injective. Fix a finite affine open cover $\sU$ of $X$ and choose a non-empty open set $V\subseteq \bigcap U_i$. The goal of this section is to explicitly construct the hull of the deformation functors $\rF_{\mcL}$ and its isomorphic avatar $\widehat\rF_{\mcL}$ (see Definitions \ref{liefun1} and \thref{functornew}) under suitable hypotheses.

 According to Schlessinger's theorem (see, for example, \cite[Theorem 3.5.10]{Lie}),   $\widehat\rF_{\mcL}$ has a hull $R\in \Comp$ if 
 \[\rT^1 \widehat\rF_{\mcL}\cong \check{H}^0(\sU,\mcK/\mcL)/\lambda(\check{H}^0(\sU,\mcK))\] 
is finite-dimensional. In other words, there exists a smooth morphism of functors $f:\Hom(R,-)\to \widehat\rF_{\mcL}$ such that the induced map on tangent spaces is bijective. By adapting the ideas from \cite{stevens}, we define the deformation equation and use it to explicitly construct the hull of $\widehat\rF_{\mcL}$, see \thref{hull}.

To construct the deformation equation for $\widehat\rF_{\mcL }$, we need a finite-dimensional obstruction space. Hence, we assume that $\check{H}^1(\sU,\mcK/\mcL)$ is finite-dimensional, but not necessarily that $\rT^1 \widehat\rF_{\mcL }$ is finite-dimensional. We first fix the following data:
\begin{enumerate}
	\item Elements $\theta_{\ell}\in \check{Z}^0(\sU,\mcK/\mcL)$, $\ell=1,\ldots,p$;
	\item Elements $\omega_\ell\in \check{Z}^1(\sU,\mcK/\mcL)$, $\ell=1,\ldots,q$ whose images in $\check{H}^1(\sU,\mcK/\mcL)$ form a basis.
\end{enumerate}
We are working here with the \v{C}ech complex of alternating cochains $\check{C}^k(\sU,\mcK/\mcL)$ with respect to the open cover $\sU$.

Let $S=\KK[[t_1,\ldots,t_p]]$ with maximal ideal $\mfm=\langle t_1,\ldots,t_p\rangle$. We let $\mfm_k$ denote the $k$th graded piece of $\mfm$, which is a $\KK$-vector space with basis the monomials in $t_1,\ldots,t_p$ of degree $k$.  We let $\mfm_{\leq k}$ (respectively $(\mfm^2)_{\leq k}$) denote the direct sum of the graded pieces of degree at most $k$ of $\mfm$ (respectively $\mfm^2$).
It will be useful to fix a \textit{graded local monomial order} on $S$ (see  e.g.~\cite[Definition 1.2.4]{GP}). 
For any ideal $I\subseteq S$, the \emph{standard monomials} of $I$ are those the monomials of $S$ that are not leading monomials for any element of $I$. Assuming that $I$ contains a power of $\mfm$, the \emph{normal form} with respect to $I$ of any $f\in S$ is the unique element $\overline{f}$ such that $f\equiv \overline{f}\mod I$ and $\overline{f}$ only contains standard monomials.

\subsection{The deformation equation}\label{sec:deq}
For each $r\geq 1$, we inductively construct $\alpha^{(r)}\in \check{C}^0(\sU,\mcK/\mcL)\otimes \mfm_{\leq r}$ and \textit{obstruction polynomials} $g_1^{(r)},\ldots, g_q^{(r)} \in (\mfm^2)_{\leq r}$  such that   
\begin{equation} \label{eq:defeq1}
	\lambda(\mfp(s(\alpha^{(r)})))\equiv 0\quad \mod J_{r}, 
\end{equation}
where $J_r=\langle g_1^{(r)},\ldots,g_q^{(r)}\rangle+\mfm^{r+1}$.
We begin by setting
\[
\alpha^{(1)}= \sum_{\ell=1}^{p}t_{\ell}\cdot \theta_{\ell} \quad \mathrm{and} \quad g_1^{(1)}=\ldots= g_q^{(1)}=0. 
\]	
Since $\alpha^{(1)}\in \check{Z}^0(\sU,\mcK/\mcL)\otimes \mfm_{\leq 1}$, it is immediate to see that
\[
0=d(\alpha^{(1)}) \equiv \lambda(\mfp(s(\alpha^{(1)}))) \qquad \mod J_{1}.
\]
In practice, it is enough to solve the \textit{deformation equation} 
\begin{equation}\label{eq:defeq2}
	\lambda(\mfp(s(\alpha^{(r)})))-\sum_{\ell=1}^q g_{\ell}^{(r)}\cdot\omega_\ell
	\equiv d(\beta^{(r+1)})+\sum_{\ell=1}^q \gamma_\ell^{(r+1)} \cdot \omega_{\ell}\qquad \mod  \mfm\cdot  J_{r}
\end{equation}
for
	\[\beta^{(r+1)} \in \check{C}^0(\sU,\mcK/\mcL)\otimes \mfm_{r+1};\qquad
	\gamma_\ell^{(r+1)}\in \mfm_{r+1}.
\]
	We then set 
\[\alpha^{(r+1)}=\alpha^{(r)}-\beta^{(r+1)};\qquad g_\ell^{(r+1)}=g_\ell^{(r)}+\gamma_\ell^{(r+1)}.\]

In Proposition \ref{prop:defeqsolving} we show that
\begin{equation}\label{eq:defeq}
	\lambda(\mfp(s(\alpha^{(r+1)})))\equiv \sum_{\ell=1}^q g_\ell^{(r+1)}\cdot \omega_{\ell}\qquad \mod \mfm\cdot J_{r}.
\end{equation}
In particular, the desired equation  \eqref{eq:defeq1} modulo $J_{r+1}$ follows from \eqref{eq:defeq} together with the observation that by construction, $\mfm\cdot J_r\subseteq J_{r+1}$.

As a convention, we set $J_0=\mfm$.
\begin{prop}\thlabel{prop:defeqsolving}
	Suppose that $\check{H}^1(\sU,\mcK/\mcL)$ is finite dimensional. Then:
	\begin{enumerate}[label={(\roman*)}]
		\item Given a solution $\alpha^{(r)},g_\ell^{(r)}$ of \eqref{eq:defeq}  modulo $\mfm\cdot J_{r-1}$ with $J_r+\mfm^r = J_{r-1}+\mfm^r$, there is a solution of \eqref{eq:defeq2} modulo $\mfm\cdot J_r$. 
			\label{prop:defeqsolve1}
		\item Given any solution to \eqref{eq:defeq2} modulo $\mfm\cdot J_r$ the resulting $\alpha^{(r+1)}$ and $g_\ell^{(r+1)}$ satisfy \eqref{eq:defeq} modulo $\mfm\cdot J_{r}$ and $J_{r+1} + \mfm^{r+1} = J_{r}+\mfm^{r+1}$. \label{prop:defeqsolve2}
	\end{enumerate}
\end{prop}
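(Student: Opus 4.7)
The plan is to interpret $\alpha^{(r)}$ as representing an element $[\alpha^{(r)}]\in\widehat{\rF}_{\mcL\hookrightarrow\mcK}(R_r)$ with $R_r := S/J_r$, and to deduce both claims from the obstruction theory established in \thref{showdeffun2}. Both $R_r$ and $R'_r := S/(\mfm\cdot J_r)$ lie in $\Art$ (from $\mfm^{r+1}\subseteq J_r$ and $\mfm^{r+2}\subseteq\mfm\cdot J_r$) and fit into a small extension
\[
0\to J_r/(\mfm\cdot J_r)\to R'_r\to R_r\to 0.
\]
Part (i) will then amount to decomposing the resulting obstruction class along the basis $\{[\omega_\ell]\}$ of $H^1(X,\mcK/\mcL)$, while Part (ii) is a direct calculation with the BCH formula.

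For Part (i), I would first verify $\mfm\cdot J_{r-1}\subseteq J_r$: since $g_\ell^{(r)}-g_\ell^{(r-1)}\in\mfm_r$, one has $\mfm\cdot(g_\ell^{(r)}-g_\ell^{(r-1)})\subseteq\mfm^{r+1}\subseteq J_r$, and $\mfm\cdot\mfm^r\subseteq J_r$ as well. The induction hypothesis then gives $\lambda(\mfp(s(\alpha^{(r)})))\equiv 0\mod J_r$, so $[\alpha^{(r)}]\in\widehat{\rF}_{\mcL\hookrightarrow\mcK}(R_r)$. By \thref{prop:cocycle:fun2}\ref{prop:cocycle:fun2.1}, $\lambda(\mfp(s(\alpha^{(r)})))$ is an alternating $1$-cocycle valued in $J_r/(\mfm\cdot J_r)$; subtracting the cocycle $\sum_\ell g_\ell^{(r)}\omega_\ell$ exhibits
\[
E\ :=\ \lambda(\mfp(s(\alpha^{(r)})))\ -\ \sum_\ell g_\ell^{(r)}\omega_\ell
\]
as a $1$-cocycle modulo $\mfm\cdot J_r$. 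A degree count shows $\mfm\cdot J_{r-1}$ and $\mfm\cdot J_r$ agree in degrees $\leq r$ (their generators differ by elements of $\mfm\cdot\mfm_r\subseteq\mfm^{r+1}$), and $\mfm^{r+2}\subseteq\mfm\cdot J_r$ kills degrees $\geq r+2$; hence $E$ is concentrated in degree $r+1$ modulo $\mfm\cdot J_r$. Since $\sU$ is affine, \v{C}ech cohomology coincides with sheaf cohomology and the $[\omega_\ell]$ form a basis of $H^1(X,\mcK/\mcL)$. A finite-dimensional linear-algebra argument then produces $\beta^{(r+1)}\in\check{C}^0(\sU,\mcK/\mcL)\otimes\mfm_{r+1}$ and $\gamma_\ell^{(r+1)}\in\mfm_{r+1}$ with $E\equiv d(\beta^{(r+1)})+\sum_\ell\gamma_\ell^{(r+1)}\omega_\ell\mod\mfm\cdot J_r$.

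For Part (ii), I substitute $\alpha^{(r+1)}=\alpha^{(r)}-\beta^{(r+1)}$ and expand $\mfp(s(\alpha^{(r+1)}))$ via BCH. The contribution of $s(\beta^{(r+1)})$ to the linear part of each $-s(\alpha^{(r+1)})_i\bch s(\alpha^{(r+1)})_j$ is $-d(s(\beta^{(r+1)}))_{ij}$, while every new nonlinear BCH term involves a bracket containing at least one entry of $s(\beta^{(r+1)})\in\mfm_{r+1}$ together with another element of $\mfm$, hence lies in $\mfm^{r+2}\subseteq\mfm\cdot J_r$. Applying $\lambda$ and using $\lambda\circ s=\id$ yields $\lambda(\mfp(s(\alpha^{(r+1)})))\equiv\lambda(\mfp(s(\alpha^{(r)})))-d(\beta^{(r+1)})\mod\mfm\cdot J_r$; combining with \eqref{eq:defeq2} gives $\lambda(\mfp(s(\alpha^{(r+1)})))\equiv\sum_\ell g_\ell^{(r+1)}\omega_\ell\mod\mfm\cdot J_r$, which is \eqref{eq:defeq}. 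Finally, $J_{r+1}\equiv J_r\mod\mfm^{r+1}$ follows immediately from $g_\ell^{(r+1)}-g_\ell^{(r)}=\gamma_\ell^{(r+1)}\in\mfm_{r+1}\subseteq\mfm^{r+1}$.

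The main obstacle is the graded bookkeeping required to conclude that $E$ is concentrated in degree $r+1$ modulo $\mfm\cdot J_r$: this is what allows the decomposition via the basis $\{[\omega_\ell]\}$ to produce $\beta^{(r+1)}$ and $\gamma_\ell^{(r+1)}$ of the prescribed degree. Once that is in hand, the cocycle statement is supplied by \thref{prop:cocycle:fun2}, and the truncation of the BCH expansion in Part (ii) reduces to the same degree considerations.
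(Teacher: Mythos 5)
Your proposal is correct and takes essentially the same approach as the paper's proof (Appendix \ref{ap:solve}): both work with the small extension $0\to J_r/(\mfm\cdot J_r)\to S/(\mfm\cdot J_r)\to S/J_r\to 0$, invoke \thref{prop:cocycle:fun2} to obtain the cocycle property, use the identity $\mfm\cdot J_{r-1}=\mfm\cdot J_r+\mfm^{r+1}$ to locate the obstruction in degree $r+1$, and then decompose along the basis $\{[\omega_\ell]\}$. Your explicit BCH truncation argument in part (ii) unpacks what the paper covers by citing \thref{rem:obs}\ref{rem:obs:3}.
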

\noindent We defer the proof of \thref{prop:defeqsolving} to Appendix \ref{ap:solve}.

\begin{rem}\thlabel{rem:sm}
	By passing to normal forms with respect to $\mfm\cdot J_r$ in \eqref{eq:defeq2}, we may assume that $\beta^{(r+1)}$ and $\gamma_\ell^{(r+1)}$ only involve standard monomials of $\mfm\cdot J_r$. By the inductive construction of $J_r$ and the fact that our monomial order is graded, this means that we may assume that the $g_\ell^{(r+1)}$ only involve standard monomials of $\mfm\cdot J_r$. 
\end{rem}
\begin{rem}\thlabel{rem:relevant}
	In situations where the cohomology groups $\check{H}^k(\sU,\mcK/\mcL)$ are multigraded, we may often obtain greater control over which monomials can appear in the obstruction polynomials $g_\ell^{(r)}$. This allows us to simplify computations. Given standard monomials $t^w$ for $\mfm\cdot J_{r-1}$ and $t^{w'}$ for $\mfm\cdot J_r$, we say that $t^w$ is \emph{relevant} for $t^{w'}$ if there exists a monomial $t^{w''}$ with $t^w$ as a factor such that the monomial $t^{w'}$ has non-zero coefficient in the normal form of $t^{w''}$ with respect to $\mfm\cdot J_r$.

	This condition may be used as follows, assuming we are only using standard monomials as in \thref{rem:sm}: if $t^w$ is \emph{not} relevant for $t^{w'}$, then the coefficient of $t^w$ in $\alpha^{(r)}$ has no effect on the coefficients of $t^{w'}$ in the possible solutions $\beta^{(r+1)}$ and $\gamma_\ell^{(r+1)}$ of \eqref{eq:defeq2}.

	Indeed, to solve \eqref{eq:defeq2}, we must consider the normal form of 
	\[\lambda(\mfp(s(\alpha^{(r)})))-\sum_{\ell=1}^q g_{\ell}^{(r)}\omega_\ell\] with respect to $\mfm\cdot J_r$. By \thref{rem:sm} the term
$\sum_{\ell=1}^q g_{\ell}^{(r)}\omega_\ell$ is already in normal form. Furthermore, by the BCH formula (see \S\ref{sec:bch}) it follows that the coefficient of $t^w$ in $\alpha^{(r)}$ will only affect the coefficients in $\lambda(\mfp(s(\alpha^{(r)})))$ of those monomials $t^{w''}$ with $t^w$ as a factor; passing to the normal form only affects coefficients of monomials for which $t^{w}$ is relevant.
\end{rem}

\subsection{Versality}\label{sec:defeqversal}
Using our solutions to the deformation equation \eqref{eq:defeq}, we will construct the hull of $\widehat\rF_{\mcL }$. Let $g_{\ell}$
be the projective limit of $g_{\ell}^{(r)}$ in $S$, and let $\alpha$ be the projective limit of $\alpha^{(r)}$ in $\check{C}^0(\sU,\mcK/\mcL)\otimes S$. We define
\[J= \langle g_{1},\ldots,g_{q} \rangle, \quad R:=S/J \quad \mathrm{and} \quad R_r:=S/J_r.\]
Moreover, we have
\[
J + \mfm^{r+1} = J_r + \mfm^{r+1}.
\] 

The pair $(\alpha, R)$
defines a map of functors of Artinian rings \[f: \Hom(R,-)\to \widehat\rF_{\mcL }.\] Indeed, every $\zeta \in \Hom(R,A)$ factors through a morphism $\zeta_r : R_r \to A$ for $r\gg 0$ and then $f(\zeta_r)= \zeta_r(\alpha^{(r)})$ 
defines $f$.

\begin{thm}\thlabel{hull}
	Assume that the images of $\theta_1,\ldots,\theta_p$ in $$\check{H}^0(\sU,\mcK/\mcL)/\lambda(\check{H}^0(\sU,\mcK))$$ are a basis. Then, $f:\Hom(R,-)\to \widehat\rF_{\mcL }$ is a hull, that is, $f$ is smooth and induces an isomorphism on tangent spaces.
\end{thm}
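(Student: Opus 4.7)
The plan is to verify the hypotheses of the standard smoothness criterion (\thref{standardsmooth}) for $f$: (i) bijectivity (in particular surjectivity) on tangent spaces, and (ii) the existence of an injective obstruction map. Smoothness of $f$ then follows, and together with (i) this gives that $R$ is a hull.

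For (i), since every $g_\ell\in\mfm^2$ we have $J\subseteq\mfm^2$, hence $\mfm_R/\mfm_R^2=\mfm/\mfm^2$ and $\rT^1\Hom(R,-)$ has basis dual to $t_1,\ldots,t_p$. For $\zeta:R\to\KK[t]/t^2$ with $\zeta(t_i)=c_i t$, the value $f(\zeta)=\zeta(\alpha)$ is represented by $\sum_i c_i\theta_i\cdot t$, because $\alpha\equiv\alpha^{(1)}=\sum_i t_i\theta_i\pmod{\mfm^2}$. Under the isomorphism $\lambda:\rT^1\widehat\rF_{\mcL\hookrightarrow\mcK}\xrightarrow{\sim}\check{H}^0(\sU,\mcK/\mcL)/\lambda(\check{H}^0(\sU,\mcK))\otimes t$ of \thref{showdeffun2}(ii), this maps to $\sum c_i[\theta_i]\otimes t$, which is an isomorphism by the basis hypothesis on the $[\theta_\ell]$.

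For (ii), I take $\KK^q$ with basis $e_1,\ldots,e_q$ as obstruction space of $\Hom(R,-)$. Given a small extension $0\to I\to A'\to A\to 0$ and $\psi:R\to A$, choose any lift $\tilde\psi_0:S\to A'$ (possible as $S$ is a power series ring and $A'$ is Artinian), and set the obstruction to be $\sum_\ell e_\ell\otimes\tilde\psi_0(g_\ell)\in\KK^q\otimes I$. Two lifts differ by a $\KK$-derivation $\delta:S\to I$ relative to $\tilde\psi_0$; since $g_\ell\in\mfm^2$, the Leibniz rule forces $\delta(g_\ell)\in\mfm_{A'}\cdot I=0$, so the definition is independent of the lift, and completeness is immediate since the obstruction vanishes iff $\tilde\psi_0(J)=0$ iff $\tilde\psi_0$ factors through $R$. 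The map $\kappa:\KK^q\to\check{H}^1(\sU,\mcK/\mcL)$ sending $e_\ell\mapsto[\omega_\ell]$ is an isomorphism (hence injective) by the choice of the $\omega_\ell$.

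It remains to verify that $\kappa$ is an obstruction map for $f$. Choose $r$ so large that $\tilde\psi_0$ kills $\mfm^{r+1}$; then $\tilde\psi_0(g_\ell^{(r)})=\tilde\psi_0(g_\ell)$ in $I$ and $\tilde\psi_0(\mfm\cdot J_{r-1})\subseteq\mfm_{A'}\cdot I=0$. The cochain $\tilde\psi_0(\alpha^{(r)})\in\check{C}^0(\sU,\mcK/\mcL)\otimes\mfm_{A'}$ lifts $\psi(\alpha)$, and applying $\tilde\psi_0$ to the defining congruence \eqref{eq:defeq} yields $\lambda(\mfp(s(\tilde\psi_0(\alpha^{(r)}))))=\sum_\ell\tilde\psi_0(g_\ell)\,\omega_\ell$ in $\check{Z}^1(\sU,\mcK/\mcL)\otimes I$. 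Its class in $\check{H}^1\otimes I$, which by \thref{showdeffun2}(iii) is the obstruction of $f(\psi)$, equals $\kappa$ applied to the obstruction of $\psi$. The main point I expect to require care is the bookkeeping relating the truncations $\alpha^{(r)},g_\ell^{(r)}$ to their limits $\alpha,g_\ell$; the essential ingredients are $g_\ell\equiv g_\ell^{(r)}\pmod{\mfm^{r+1}}$ and the collapse of higher-order errors via $\mfm_{A'}\cdot I=0$.
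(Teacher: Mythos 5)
Your argument follows the same route as the paper's: reduce to the standard smoothness criterion by exhibiting an isomorphism on tangent spaces and an injective obstruction map, the latter being verified by evaluating a lift $\tilde\psi_0:S\to A'$ on the defining congruence \eqref{eq:defeq}. The paper defers exactly this verification to Appendix~C (the proof of \thref{lemma:obfinj}). Two small differences. First, you take $\KK^q$ as the obstruction space for $\Hom(R,-)$ with obstruction element $\sum_\ell e_\ell\otimes\tilde\psi_0(g_\ell)$, whereas the paper uses the standard space $(J/\mfm J)^*$ with $\varphi\mapsto\sum_\ell\varphi(\overline g_\ell)\omega_\ell$; these are compatible under the injection $(J/\mfm J)^*\hookrightarrow\KK^q$, $\varphi\mapsto(\varphi(\overline g_\ell))_\ell$, and both give injective obstruction maps since the $[\omega_\ell]$ are a basis. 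Second, your truncation is slightly loose: killing $\mfm^{r+1}$ alone does not ensure $\tilde\psi_0(\mfm\cdot J_{r-1})\subseteq\mfm_{A'}\cdot I$, because $J_{r-1}$ contains $\mfm^r$ and you need $\tilde\psi_0(\mfm^r)\subseteq I$, not just $\tilde\psi_0(\mfm^{r+1})=0$. The fix is to pick $r$ with $\mfm_A^r=0$ (so that $\tilde\psi_0(\mfm^r)\subseteq\mfm_{A'}^r\subseteq I$ and hence $\tilde\psi_0(\mfm^{r+1})=0$ follows); the paper handles this point more carefully via the Artin--Rees lemma, which also makes the identification $J/\mfm J\cong J_r/(\mfm J_r+\mfm^{r+1})$ explicit. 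With that adjustment your proof is complete and amounts to the paper's argument.
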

\begin{proof}
Since $J\subseteq \mfm^2$, we have $\rT^1\Hom(R,-)\cong (\mfm_R/\mfm_R^2)^*\cong (\mfm/\mfm^2)^{*}$ (see, for example, \cite[Example 3.5.3]{Lie}). The induced map on tangent spaces is given by
	\begin{align*}
		f:(\mfm/\mfm^2)^{*} &\to \rT^1 \widehat\rF_{\mcL }=\widehat\rF_{\mcL }(\KK[t]/t^2)\\
		t_{\ell}^{*}&\mapsto  \theta_{\ell}\otimes t.
	\end{align*}
	This map is an isomorphism, since the ${\theta_{\ell}\otimes t}$ form a basis for $\rT^1 \widehat\rF_{\mcL }$ by Lemma \ref{lemma:tangentspace}. 
	
	By \thref{lemma:obfinj} (see Appendix \ref{ap:solve}), we have  that $ob_f$ is an injective obstruction map for $f$. Thus by \thref{standardsmooth} it follows that $f$ is smooth.
\end{proof}

\section{Toric varieties}\label{sec:toric}
\subsection{Preliminaries}\label{sec:toricprelim}
In this section, we will fix certain notation and recall relevant facts about toric varieties. We refer the reader to \cite{cls} for more details on toric geometry.

We will consider a lattice $M$ with the dual lattice $N=\text{Hom}(M,\mathbb{Z})$ and associated vector space $N_{\mathbb{R}}=N\otimes_{\mathbb{Z}}\mathbb{R}$.
Given a fan $\Sigma$ in $N_{\RR}$, there is an associated toric variety $X_{\Sigma}$ with an action by the torus $T_{N}=\spec\KK[M]$ (see \cite[\S 3.1]{cls}). The variety $X_{\Sigma}$ has a $T_{N}$-invariant open affine cover $\sU=\{U_{\sigma}\}_{\sigma\in \Sigma_{\max}}$, where $\Sigma_{\text{max}}$ is the set of maximal cones in $\Sigma$. Here, each   $U_{\sigma}$ is defined as \[U_{\sigma}= \text{Spec~} \KK[\sigma^{\vee}\cap M]; \quad \sigma^{\vee}=\{\bu\in M\otimes\RR: v(\bu)\geq0~~ \text{for~all~} v\in \sigma \}.\]

We will denote the regular function on $T_{N}$ associated with $\bu\in M$ by $\chi^\bu$.
We denote the set rays of $\Sigma$ by $\Sigma(1)$. Given a ray $\rho \in \Sigma(1)$, the  primitive lattice generator of $\rho$ is denoted by $n_{\rho}$ and the evaluation of $n_{\rho}$ at $\bu\in M$ is denoted by $\rho(\bu)$. 
Recall that the \emph{support} of the fan $\Sigma$ is $|\Sigma|=\displaystyle\bigcup_{\sigma\in \Sigma} \sigma$. 

Important aspects of the geometry of $X_\Sigma$ can be seen from the combinatorics of the fan $\Sigma$. In particular:
\begin{enumerate}
	\item The variety $X_{\Sigma}$ is smooth if and only if $\Sigma$ is smooth, that is every cone $\sigma \in \Sigma$ is generated by part of a basis of $N$ \cite[Proposition 4.2.7]{cls};
	\item $X_\Sigma$ is $\QQ$-factorial if and only if $\Sigma$ is simplicial \cite[Proposition 4.2.7]{cls};
	\item $X_\Sigma$ is complete if and only if $|\Sigma|=N_\RR$ \cite[Theorem 3.19]{cls}.
\end{enumerate}

The variety $X_{\Sigma}$ has a \emph{torus factor} if it is equivariantly isomorphic to the product of a nontrivial torus and a toric variety of smaller dimension. We will frequently assume that the toric variety does not have a torus factor; this is equivalent to the condition  that $N_{\RR}$ is spanned by the primitive ray generators $n_\rho$ for  $\rho\in \Sigma(1)$ \cite[Corollary 3.3.10]{cls}.

\subsection{Cohomology of the structure sheaf}\label{sec:structuresheaf}
Let $\Sigma$ be a fan in $N_{\RR}$ and $X_{\Sigma}$ be the associated toric variety. The cohomology groups of the structure sheaf of $X_{\Sigma}$ naturally have an $M$-grading, and each graded piece can be understood combinatorially using certain subsets of $N_{\RR}$. For every $\bu \in M$, we associate the subset of $N_{\RR}$ given by
\begin{equation*}
		V_{\bu}:=\bigcup_{\sigma\in \Sigma} \conv\{n_{\rho}: \rho(\bu)<0\}_{\rho \in  \Sigma(1)\cap \sigma}\subseteq N_{\RR}.
\end{equation*}

The following result establishes a relation between the sheaf cohomology of the structure sheaf $\CO_{X_{\Sigma}}$ and the reduced singular cohomology of $V_{\bu}$.
\begin{prop}[cf. {\cite[Theorem 9.1.3]{cls}}] \thlabel{van1}
	For $\bu\in M$ and $k\geq 0$, we have
	\[
	H^k(X,\CO_{X_\Sigma})_{\bu}\cong \widetilde{H}^{k-1}(V_{\bu},\KK).
	\]
\end{prop}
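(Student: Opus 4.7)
The plan is to use Čech cohomology for the $T_N$-invariant affine cover $\sU=\{U_\sigma\}_{\sigma\in\Sigma_{\max}}$ and identify the $\bu$-graded piece with the reduced cohomology of $V_\bu$ via a relative simplicial cochain complex plus the nerve theorem. Because $\CO_{X_\Sigma}$ is quasi-coherent and each $U_\sigma$ and their pairwise intersections $U_\sigma\cap U_{\sigma'}=U_{\sigma\cap\sigma'}$ are affine, $\check{C}^\bullet(\sU,\CO_{X_\Sigma})$ computes $H^\bullet(X_\Sigma,\CO_{X_\Sigma})$. The torus action gives an $M$-grading, and for $\bu\in M$ the $\bu$-graded piece of $\CO_{X_\Sigma}(U_{\sigma_0\cap\cdots\cap\sigma_k})$ is $\KK\cdot\chi^\bu$ when $\bu\in(\sigma_0\cap\cdots\cap\sigma_k)^\vee$ and zero otherwise. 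Equivalently, the tuple contributes iff no ray $\rho$ common to all $\sigma_i$ satisfies $\rho(\bu)<0$.

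Next I would encode this combinatorially. Let $\Delta$ be the abstract full simplex with vertex set $\Sigma_{\max}$, and let $L_\bu\subseteq\Delta$ consist of those simplices $\{\sigma_0,\ldots,\sigma_k\}$ admitting some common ray $\rho\in\bigcap_i\sigma_i(1)$ with $\rho(\bu)<0$. This is genuinely a subcomplex, since deleting a cone from the tuple only enlarges the set of common rays, preserving the existence of such a $\rho$. Under this identification, the $\bu$-graded Čech complex coincides, with its standard differential, with the relative simplicial cochain complex $C^\bullet(\Delta,L_\bu;\KK)$.

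Then I would invoke the long exact sequence of the pair $(\Delta,L_\bu)$. Since $\Delta$ is contractible, $H^\bullet(\Delta;\KK)$ is concentrated in degree $0$, and the LES collapses to isomorphisms $H^k(\Delta,L_\bu;\KK)\cong\widetilde{H}^{k-1}(L_\bu;\KK)$ for every $k\geq 0$, where the convention $\widetilde{H}^{-1}(\emptyset)=\KK$ takes care of the case $L_\bu=\emptyset$. Combined with the identification above, this yields $H^k(X_\Sigma,\CO_{X_\Sigma})_\bu\cong\widetilde{H}^{k-1}(L_\bu;\KK)$.

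The final step is to match $L_\bu$ with $V_\bu$ via a nerve argument. The closed sets $V_{\bu,\sigma}:=\conv\{n_\rho:\rho\in\sigma(1),\,\rho(\bu)<0\}$ for $\sigma\in\Sigma_{\max}$ form a cover of $V_\bu$ by convex sets whose arbitrary intersections are again convex, hence contractible or empty; the nerve theorem then shows the nerve of this cover is homotopy equivalent to $V_\bu$. To conclude, I need the nerve to coincide with $L_\bu$, i.e., $\bigcap_i V_{\bu,\sigma_i}\neq\emptyset$ iff $\bigcap_i\sigma_i(1)$ contains a ray $\rho$ with $\rho(\bu)<0$. The backward direction is immediate, since any such $n_\rho$ lies in every $V_{\bu,\sigma_i}$. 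The forward direction is the main obstacle: any $p\in\bigcap_i V_{\bu,\sigma_i}$ lies in the common face $\bigcap_i\sigma_i$, whose rays are exactly $\bigcap_i\sigma_i(1)$. In the simplicial case, uniqueness of cone decompositions forces the support of $p$ inside $\bigcap_i\sigma_i(1)\cap\{\rho:\rho(\bu)<0\}$, producing the desired ray; in the non-simplicial setting decompositions are non-unique, and a more delicate convex-geometric argument is needed to extract a common ray from the negativity locus.
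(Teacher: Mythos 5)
The paper does not prove this proposition; it is cited directly from \cite[Theorem~9.1.3]{cls}, so there is no ``paper's own proof'' to compare against. Your route --- \v{C}ech on the torus-invariant affine cover, identification of the degree-$\bu$ piece with the relative cochain complex $C^\bullet(\Delta,L_\bu;\KK)$, the long exact sequence of the pair, and a nerve argument relating $L_\bu$ to $V_\bu$ --- is a standard and essentially correct path to this result and is closely related to the argument in \cite{cls}.

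The gap you flag at the end is not actually a gap, and your worry about non-uniqueness of cone decompositions in the non-simplicial case is a red herring: you do not need uniqueness of any representation, only existence together with one linear observation you have not yet exploited. Let $p\in\bigcap_i V_{\bu,\sigma_i}$. Each $V_{\bu,\sigma}$ is the convex hull of points $n_\rho$ with $\langle\bu,n_\rho\rangle<0$, so the linear functional $\langle\bu,\cdot\rangle$ is strictly negative on $V_{\bu,\sigma}$; in particular $\langle\bu,p\rangle<0$ and $p\neq 0$. As you note, $V_{\bu,\sigma_i}\subseteq\sigma_i$, so $p$ lies in $\tau:=\bigcap_i\sigma_i$, a cone of $\Sigma$ whose rays are $\tau(1)=\bigcap_i\sigma_i(1)$. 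Any strongly convex rational polyhedral cone is generated by its rays, so there exist $\lambda_\rho\geq 0$ (not necessarily unique, which is irrelevant) with $p=\sum_{\rho\in\tau(1)}\lambda_\rho n_\rho$. Then
\[
0>\langle\bu,p\rangle=\sum_{\rho\in\tau(1)}\lambda_\rho\,\rho(\bu),
\]
so at least one $\rho\in\tau(1)$ satisfies $\rho(\bu)<0$, which is exactly the forward implication you needed. With this observation in place, the nerve identification holds for arbitrary (not necessarily simplicial) fans and your proof goes through.
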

When the set $V_{\bu}$ is contractible to a point, its reduced singular cohomology vanishes. In fact, we will make use of the following vanishing result:
\begin{prop}[cf. {\cite[Theorem 9.2.3]{cls}}]\thlabel{prop:O}
	Suppose that $|\Sigma|$ is convex. Then for all $k>0$,
	\[
	H^k(X_\Sigma,\CO_{X_\Sigma})=0.
	\]
\end{prop}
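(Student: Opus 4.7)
By Proposition \ref{van1}, it suffices to show that $\widetilde{H}^{k-1}(V_\bu, \KK) = 0$ for every $\bu \in M$ and every $k \geq 1$. I would prove this by establishing that each $V_\bu$ is either empty or contractible, so that its reduced cohomology vanishes in all nonnegative degrees. If every ray $\rho \in \Sigma(1)$ satisfies $\rho(\bu) \geq 0$ then $V_\bu = \emptyset$ and the claim is immediate.

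Otherwise at least one ray has $\rho(\bu) < 0$, and I claim $V_\bu$ is contractible. The crucial input is the auxiliary set
\[\widetilde{V}_\bu := \{v \in |\Sigma| : v(\bu) < 0\}.\]
Since $|\Sigma|$ is convex by hypothesis and the half-space $\{v(\bu) < 0\}$ is convex, $\widetilde{V}_\bu$ is convex, and it is nonempty because it contains $n_\rho$ for any ray with $\rho(\bu) < 0$; hence $\widetilde{V}_\bu$ is contractible. The plan is then to exhibit $V_\bu$ as a deformation retract of $\widetilde{V}_\bu$, which immediately gives the contractibility of $V_\bu$.

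To build the retraction I would work cone by cone. For $\sigma \in \Sigma$ and $v \in \sigma \cap \widetilde{V}_\bu$, write $v = \sum_{\rho \in \sigma(1)} \lambda_\rho n_\rho$ with $\lambda_\rho \geq 0$. Setting $A_\sigma := \{\rho \in \sigma(1) : \rho(\bu) < 0\}$, the inequality $v(\bu) < 0$ together with $\sum_{\rho \notin A_\sigma} \lambda_\rho \rho(\bu) \geq 0$ forces $S := \sum_{\rho \in A_\sigma} \lambda_\rho > 0$, so one may set $r_\sigma(v) := S^{-1} \sum_{\rho \in A_\sigma} \lambda_\rho n_\rho$, which lies in $\conv\{n_\rho\}_{\rho \in A_\sigma} \subseteq V_\bu$. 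The straight-line homotopy from $v$ to $r_\sigma(v)$ stays inside the convex region $\sigma \cap \widetilde{V}_\bu$, since both endpoints satisfy $(\cdot)(\bu) < 0$ and $\sigma$ is convex.

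The main obstacle will be verifying that these local retractions glue into a continuous global deformation retract of $\widetilde{V}_\bu$ onto $V_\bu$. When $\Sigma$ is simplicial, the decomposition $v = \sum \lambda_\rho n_\rho$ is unique and the maps $r_\sigma$ automatically agree on common faces, so the construction globalizes with no further work. For a general fan the coefficients are not unique, and I would circumvent this either by first passing to a simplicial refinement $\Sigma'$ with $|\Sigma'| = |\Sigma|$ and comparing the complexes $V_\bu$ for $\Sigma$ and $\Sigma'$, or by invoking a nerve-theorem style argument on a cover of $\widetilde{V}_\bu$ whose combinatorial nerve recovers $V_\bu$.
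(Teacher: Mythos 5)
The paper does not supply its own argument for this proposition; it simply cites \cite[Theorem 9.2.3]{cls}, so the comparison is really with the argument in CLS. Your reduction via \thref{van1} and the idea of exhibiting $V_\bu$ as a deformation retract of the convex set $\widetilde V_\bu = \{v\in|\Sigma| : v(\bu)<0\}$ is the right strategy and is in the spirit of the CLS proof. Your verifications that $\widetilde V_\bu$ is convex (hence contractible when nonempty), that $V_\bu\subseteq\widetilde V_\bu$, that $S>0$, that $r_\sigma(v)\in V_\bu$, and that the straight-line homotopy stays inside $\sigma\cap\widetilde V_\bu$ are all correct, and in the simplicial case the local retractions do glue because barycentric coordinates are unique and restrict correctly to faces.

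The genuine gap is the non-simplicial case, which you flag but do not resolve, and which the paper does need: in \thref{mildsing} the proposition is applied to a complete fan $\Sigma$ that is only assumed $\QQ$-factorial in codimension three, so $\Sigma$ may well have non-simplicial cones. Neither of your two proposed workarounds is complete as stated. The simplicial-refinement route has a concrete problem: if $\Sigma'$ is a triangulation of $\Sigma$ using only the existing rays, the set $V_\bu^{\Sigma'}$ is in general a proper subset of $V_\bu^{\Sigma}$ (for instance, triangulating a non-simplicial cone $\sigma$ from a vertex outside $A_\sigma$ can replace the solid polytope $\conv\{n_\rho:\rho\in A_\sigma\}$ by a lower-dimensional union of its faces). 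Your retraction then proves contractibility of $V_\bu^{\Sigma'}$, i.e. the vanishing for $X_{\Sigma'}$, but $X_{\Sigma'}\neq X_\Sigma$; to transfer the conclusion you would need $R^q\pi_*\CO_{X_{\Sigma'}}=0$ for the resolution $\pi:X_{\Sigma'}\to X_\Sigma$, which is a non-combinatorial input (rationality of toric singularities) and changes the character of the proof. The nerve-theorem route \emph{does} work and is arguably cleaner: cover $\widetilde V_\bu$ by the convex sets $\{\sigma\cap\widetilde V_\bu\}_{\sigma\in\Sigma_{\max}}$ and cover $V_\bu$ by the convex sets $\{V_\bu\cap\sigma\}_{\sigma\in\Sigma_{\max}}$; all nonempty finite intersections are convex (hence contractible), and one checks that for any $S\subseteq\Sigma_{\max}$ the intersection $\bigcap_{\sigma\in S}(\sigma\cap\widetilde V_\bu)=\tau\cap\widetilde V_\bu$ (with $\tau=\bigcap_{\sigma\in S}\sigma$) is nonempty if and only if $A_\tau\neq\emptyset$, which is equivalent to $\bigcap_{\sigma\in S}(V_\bu\cap\sigma)\neq\emptyset$. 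Hence the two covers have the same nerve, and the nerve lemma gives $V_\bu\simeq\widetilde V_\bu$ without any simplicial hypothesis. You should spell out this equality of nerves (and be careful that the nerve lemma you invoke applies to a finite closed cover by convex sets) to close the gap.
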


\subsection{Cohomology of torus invariant divisors}\label{sec:divisor}
Torus invariant divisors on $X_\Sigma$ and the cohomology groups of the associated reflexive sheaves can be understood combinatorially as well; we focus here on the prime torus invariant divisors which are in bijection with the rays of $\Sigma$ \cite[\S 4.1]{cls}. We denote the divisor corresponding to $\rho\in\Sigma(1)$ by $D_{\rho}$.

Similar to above, the torus action on $X_{\Sigma}$ induces a natural $M$-grading on the sections of the sheaf $\CO(D_{\rho})$. The homogeneous part of $H^0(U_{\sigma},\CO(D_{\rho}))$ of degree $\bu$ is denoted by $H^0(U_{\sigma},\CO(D_{\rho}))_{\bu}$. By \cite[Equation 9.1.2]{cls}, we have
$H^0(U_{\sigma},\CO(D_{\rho}))_{\bu}$ is nonzero if and only if for all $\rho'\in \Sigma(1)\cap \sigma$
\begin{align}\label{eq:torussection}
	\rho'(\bu)\geq 
	\begin{cases}
	0\quad &\rho'\not=\rho,\\
	-1 \quad &\rho'=\rho.
	\end{cases}
\end{align}
In this case, $H^0(U_{\sigma},\CO(D_{\rho}))_{\bu}$ is a one-dimensional $\KK$-vector space with basis $\chi^{\bu}$.

For $\rho\in \Sigma(1)$, and $\bu\in M$,  we define the subset
\[V_{\rho,\bu}:= \bigcup _{\sigma \in \Sigma}
\conv\left\{ n_{\rho'}\ \Big|\begin{array}{c@{\ \textrm{if}\ }c}
	\rho'(\bu)<0 & \rho'\neq \rho\\
	\rho'(\bu)<-1 & \rho'=\rho
\end{array}
\right\}_{\rho'\in\Sigma(1)\cap\sigma}\subseteq N_{\RR}.\]
When $\Sigma$ is simplicial, this is a (topological realization of a) simplicial complex.
If $\rho(\bu)\not=-1$, it is immediate to see that $V_{\rho,\bu}=V_{\bu}$. For every $\sigma\in \Sigma$, there is a canonical exact sequence
\begin{equation}\label{canonicalseq}
	0 \rightarrow H^0(U_{\sigma},\CO(D_{\rho}))_\bu \xrightarrow{\iota} \KK  \xrightarrow{\lambda} H^0(V_{\rho,\bu}\cap\sigma,\KK) \rightarrow 0,
\end{equation}
see \cite[Equation 9.1.10]{cls}.
Since $\lambda$ is either an isomorphism or the zero map, there is a unique $\KK$-linear section $s: H^0(V_{\rho,\bu}\cap\sigma,\KK)\to \KK$ of $\lambda$.

There is a natural closed cover 
\[
\sV_{\rho,\bu}=\{V_{\rho,\bu}\cap\sigma\}_{\sigma\in\Sigma_{\max}}
\]
of the set $V_{\rho,\bu}$ indexed by elements of $\Sigma_{\max}$, with all of its intersections being contractible. 
The above exact sequence of vector spaces leads to a short exact
sequence of alternating \v{C}ech complexes with respect to the covers $\sU,\Sigma_{\max},\V_{\rho,\bu}$ (cf. \cite[p. 403]{cls},\cite[p. 8]{ilten3}):
\begin{equation}\label{exact}
	\begin{tikzcd}
		\displaystyle\bigoplus_{\substack{\rho\in\Sigma(1)\\\bu\in M}}\check{C}^{\bullet}(\sU,\CO(D_\rho))_\bu  \ar[r,hook,"\iota"]  & 
		\displaystyle\bigoplus_{\substack{\rho\in\Sigma(1)\\\bu\in M}}\check{C}^{\bullet}(\Sigma_{\max},\KK) \ar[r,twoheadrightarrow,"\lambda"{below}] & 
		\displaystyle\bigoplus_{\substack{\rho\in\Sigma(1)\\\bu\in M}}\check{C}^{\bullet}(\sV_{\rho,\bu},\KK)  \ar[l, bend right=20,"s"]. 
	\end{tikzcd}
\end{equation}
We will use $d$ to refer to the differential of any one of these complexes. Note that although $\iota$ and $\lambda$ are compatible with $d$, the section $s$ is not.
\begin{convention}\label{convention}
	We will frequently use the notation $\chi^\bu$ and $f_\rho$ to distinguish between the individual direct summands appearing in the terms of \eqref{exact}, for example, a basis of $\bigoplus_{\substack{\rho\in\Sigma(1)\\\bu\in M}}\check{C}^k(\Sigma_{\max},\KK)$ is given by $\{\chi^\bu\cdot f_\rho\}_{\rho,\bu}$.
\end{convention}

\begin{rem}\label{rem:nerve}
	Recall that the \emph{nerve} of a cover $\sV=\{V_i\}_{i\in I}$ is the abstract simplicial complex whose faces (or simplices) are those subsets $J\subseteq I$ such that $\bigcap_{j\in J} V_j\neq \emptyset$. We will denote the nerve of a cover $\sV$ by $\nerve(\sV)$, and its topological realization by $|\nerve(\sV)|$.

	Suppose that the cover $\sV$ has the property that the intersection of any of its sets is either empty or is connected; this is true for the closed covers $\Sigma_{\max}$ of $|\Sigma|$ and $\sV_{\rho,\bu}$ of $V_{\rho,\bu}$. Then for a constant sheaf $\F$ with values in an abelian group $B$, the alternating \v{C}ech complex	$\check{C}^\bullet(\sV,\F)$ may be identified with the complex $C^\bullet(\nerve(\sV),B)$ of simplicial cochains on $\nerve(\sV)$ with coefficients in $B$.

	In our setting, $B$ will either be $\KK$, or the maximal ideal $\mfm_A$ of an Artinian ring $A$.
In particular, the reader may interpret the complexes
\[
	\check{C}^\bullet(\sV_{\rho,\bu},\KK)
\]
appearing in \eqref{exact} either as \v{C}ech complexes for the cover $\sV_{\rho,\bu}$, or as simplicial cochain complexes on $\nerve(\sV_{\rho,\bu})$.
\end{rem}

For the middle and right \v{C}ech complexes in \eqref{exact}, there exists a canonical isomorphism between \v{C}ech cohomology and singular cohomology.\footnote{Indeed, in this setting there is an isomorphism between \v{C}ech cohomology and sheaf cohomology \cite[II.5.2]{godement}, and then an isomorphism between sheaf cohomology and singular cohomology \cite[III.1]{bredon}.
Alternatively, in light of Remark \ref{rem:nerve}, there is a homotopy equivalence between $|\nerve(\sV_{\rho,\bu})|$ and $V_{\rho,\bu}$ \cite[Theorem 3.9]{nerve}, and singular and simplicial cohomology agree for triangulable spaces.} In other words, we have 
\[H^k(\check{C}^{\bullet}(\Sigma_{\max},\KK))\cong H^k(|\Sigma|,\KK); \quad H^k(\check{C}^{\bullet}(\sV_{\rho,\bu},\KK))\cong H^k(V_{\rho,\bu},\KK) .\]
Since $H^0(|\Sigma|,\KK)=\KK$ and $H^k(|\Sigma|,\KK)=0$ for $k\geq1$, the long exact sequence of cohomology implies that the boundary map induces an isomorphism
\begin{equation}\label{iso1}
	\wt{H}^{k-1}(V_{\rho,\bu},\KK)\cong H^{k}(X_{\Sigma},\CO(D_{\rho}))_{\bu}
\end{equation}
for $k\geq1$,
see \cite[Theorem 9.1.3]{cls}. 

\begin{cor}\thlabel{cor:vanishdegree}
	Suppose that $H^k(X_{\Sigma},\CO_{X_{\Sigma}})_{\bu}=0$. If $\rho(\bu)\not=-1$, then
	\[\wt{H}^{k-1}(V_{\rho,\bu},\KK)=0.\]
\end{cor}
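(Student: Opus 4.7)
The plan is to reduce the statement to a direct application of \thref{van1} by observing that the hypothesis $\rho(\bu) \neq -1$ forces the simplicial complex $V_{\rho,\bu}$ to coincide with $V_\bu$.

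First, I would recall the definition of $V_{\rho,\bu}$ given in \S\ref{sec:divisor}: it differs from $V_\bu$ only in the condition placed on $n_\rho$ itself. Specifically, the vertex $n_\rho$ is included in the simplicial complex $V_\bu$ precisely when $\rho(\bu) < 0$, while it is included in $V_{\rho,\bu}$ precisely when $\rho(\bu) < -1$. When $\rho(\bu) \neq -1$, these two conditions are equivalent: either $\rho(\bu) \geq 0$, in which case $n_\rho$ appears in neither complex, or $\rho(\bu) \leq -2$, in which case $n_\rho$ appears in both. Thus $V_{\rho,\bu} = V_\bu$ under the hypothesis, as was already noted in the paragraph following the definition.

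Given this identification, the result is immediate from \thref{van1}, which provides the isomorphism
\[
H^k(X_\Sigma, \CO_{X_\Sigma})_\bu \cong \widetilde{H}^{k-1}(V_\bu, \KK).
\]
Combining with $V_{\rho,\bu} = V_\bu$ yields
\[
\widetilde{H}^{k-1}(V_{\rho,\bu}, \KK) = \widetilde{H}^{k-1}(V_\bu, \KK) \cong H^k(X_\Sigma, \CO_{X_\Sigma})_\bu = 0,
\]
and the corollary follows.

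There is no real obstacle here; the entire content is packaged in the definitions and in \thref{van1}. The only subtlety worth flagging explicitly is the case analysis on the sign of $\rho(\bu)$ to confirm the equality $V_{\rho,\bu} = V_\bu$, which is why the hypothesis $\rho(\bu) \neq -1$ is precisely what is needed.
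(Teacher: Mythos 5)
Your proof is correct and follows exactly the same route as the paper's: observe that $\rho(\bu)\neq -1$ forces $V_{\rho,\bu}=V_{\bu}$, then invoke \thref{van1}. You simply spell out the case analysis that the paper leaves implicit.
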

\begin{proof}
	If $\rho(\bu)\not=-1$, then $V_{\rho,\bu}=V_{\bu}$. Then the claim follows from \thref{van1}
\end{proof}

\begin{rem}\thlabel{exactseqlocal}
	Since $T_{N}\subseteq U_{\sigma}$ for every $\sigma$, we have the injection 
	\[\iota: T_{N} \hookrightarrow \bigcap U_{\sigma}.\]
	At this stage, we have not yet endowed $\CO(D_{\rho})$ with the structure of a sheaf of Lie algebras. Nonetheless, as in \eqref{eq:exactLie} at the start of \S\ref{sec:functor2}
	we obtain an exact sequence of quasi-coherent sheaves
	\begin{equation*}
		\begin{tikzcd}
			0 \ar[r] & \CO(D_{\rho})  \ar[r,"\iota"] & \iota_{*}(\CO(D_{\rho})_{|T_{N}}) \ar[r,"\lambda"] &  \iota_{*}(\CO(D_{\rho})_{|T_{N}})\big/ \CO(D_{\rho}) \ar[r] &0.
		\end{tikzcd}
	\end{equation*}
	Additionally, we observe that
	\begin{align*}
		\CO(D_{\rho})(U_{\sigma}) &= \bigoplus_{\bu \in M} H^0(U_{\sigma},\CO(D_{\rho}))_{\bu},\\
		\iota_{*}(\CO(D_{\rho})_{|T_{N}})(U_{\sigma})& = \CO(D_{\rho})(T_{N})
		= \bigoplus_{\bu\in M} \KK.
	\end{align*}
	Comparing with \eqref{canonicalseq},
	we can view the quotient sheaf as follows:
	\[ \iota_{*}(\CO(D_{\rho})_{|T_{N}})/ 	\CO(D_{\rho})(U_{\sigma}) = \bigoplus_{\bu \in M} H^0(V_{\rho,\bu},\KK).\]
	Subsequently, the middle and right \v{C}ech complexes in \eqref{exact} can also be expressed as 
	\begin{align*}
		\displaystyle\bigoplus_{\substack{\rho\in\Sigma(1)\\\bu\in M}}\check{C}^k(\Sigma_{\max},\KK)&=
		\bigoplus_{\rho\in \Sigma(1)} \check{C}^k(\sU,\iota_{*}(\CO(D_{\rho})_{|T_{N}})), \\
		\displaystyle\bigoplus_{\substack{\rho\in\Sigma(1)\\\bu\in M}}\check{C}^k(\sV_{\rho,\bu},\KK) &= \bigoplus_{\rho\in \Sigma(1)} \check{C}^k(\sU,\iota_{*}(\CO(D_{\rho})_{|T_{N}})\big/ 	\CO(D_{\rho}) ).
		\end{align*}
	
\end{rem}

\begin{figure}[htbp]
	\begin{tikzpicture}[scale=0.9]
		\coordinate (1) at (0,1);
		\coordinate (2) at (-1,1);
		\coordinate (3) at (-1,0);
		\coordinate (4) at (-1,-1);
		\coordinate (5) at (0,-1);
		\coordinate (6) at (1,0);
		\coordinate (0) at (0,0);
		\coordinate (8) at (0,2);
		\coordinate (9) at (0,-2);
		\coordinate (10) at (2,0);
		\coordinate (11) at (4,0);
		\coordinate (12) at (3,0);
		
		\coordinate (A) at (5,0);
		\coordinate (B) at (0,3);
		\coordinate (C) at (-3,3);
		\coordinate (D) at (-3,0);
		\coordinate (E) at (-3,-3);
		\coordinate (F) at (0,-3);

		\draw (1)--(2)--(3)--(4)--(5)--(6)--cycle;
		
		\draw (0)--(A);
		\draw (0)--(B);
		\draw (0)--(C);
		\draw (0)--(D);
		\draw (0)--(E);
		\draw (0)--(F);
		
		\draw (8)--(2);
		\draw (8)--(6);
		\draw (8)--(10);
		\draw (8)--(11);
		\draw (8)--(12);

		\draw (9)--(4);
		\draw (9)--(6);
		\draw (9)--(10);
		\draw (9)--(11);
		\draw (9)--(12);

		\draw[fill=black] (0) circle (1pt);
		\node[anchor=south west,font=\tiny] at (0) {0};
		\draw[fill=black] (1) circle (1pt);
		\node[anchor=south west,font=\tiny] at (1) {1};
		\draw[fill=black] (2) circle (1pt);
		\node[anchor=north east,font=\tiny] at (2) {2};
		\draw[fill=black] (3) circle (1pt);
		\node[anchor=south east,font=\tiny] at (3) {3};
		\draw[fill=black] (4) circle (1pt);
		\node[anchor=south east,font=\tiny] at (4) {4};
		\draw[fill=black] (5) circle (1pt);
		\node[anchor=north west,font=\tiny] at (5) {5};
		\draw[fill=black] (6) circle (1pt);
		\node[anchor=south west,font=\tiny] at (6) {6};
		
		\draw[fill=black] (8) circle (1pt);
		\node[anchor=south west,font=\tiny] at (8) {8};
		\draw[fill=black] (9) circle (1pt);
		\node[anchor=north west,font=\tiny] at (9) {9};

		\draw[fill=black] (10) circle (1pt);
		\node[anchor=south west,font=\tiny] at (10) {10};
		\draw[fill=black] (11) circle (1pt);
		\node[anchor=south west,font=\tiny] at (11) {11};
		\draw[fill=black] (12) circle (1pt);
		\node[anchor=south west,font=\tiny] at (12) {12};

	\end{tikzpicture}
	\caption{Representation of fan in Example \ref{ex:unobstructed1} as an abstract simplicial complex with $\rho_7$ as a vertex at $\infty$ (not to scale).}
	\label{fig:fan2}
	
\end{figure}

\begin{figure}[htbp]
	\centering
	
	\begin{subfigure}{0.45\textwidth}
		\centering
		\begin{tikzpicture}[scale=0.9]
			\coordinate (3) at (0,0);
			\coordinate (2) at (1,0);
			\coordinate (4) at (-1,0);

			\draw[dashed] (2)--($(2)+(1,0)$);
			\draw[dashed] (2)--($(2)+(1,1)$);
			\draw[dashed] (2)--($(2)+(0,1)$);
			\draw[dashed] (2)--($(2)+(0,-1)$);
			\draw[dashed] (2)--(3);
			
			\draw[dashed] (3)--($(3)+(0,1)$);
			\draw[dashed] (3)--($(3)+(0,-1)$);
			
			\draw[dashed] (4)--($(4)+(-1,0)$);
			\draw[dashed] (4)--($(4)+(-1,1)$);
			\draw[dashed] (4)--($(4)+(0,1)$);
			\draw[dashed] (4)--($(4)+(0,-1)$);
			\draw[dashed] (4)--(3);

			\draw[fill=red] (2) circle (2pt);
			\node[anchor=south east,font=\tiny] at (2) {2};
			\draw[fill=black] (3) circle (1pt);
			\node[anchor=south west,font=\tiny] at (3) {3};
			\draw[fill=red] (4) circle (2pt);
			\node[anchor=south west,font=\tiny] at (4) {4};
		\end{tikzpicture}
		\caption{$\rho=\rho_3,\bu=(1,0,0)$}
	\end{subfigure}\hfill
	\begin{subfigure}{0.45\textwidth}
		\centering
		\begin{tikzpicture}[scale=0.9]
			\coordinate (7) at (0,0);
			\coordinate (11) at (1,0);
			\coordinate (12) at (2,0);

			\draw[dashed] (7)--(11);
			\draw[dashed] (7)--($(7)+(0,1)$);
			\draw[dashed] (7)--($(7)+(0,-1)$);
			\draw[dashed] (7)--($(7)+(-1,1)$);
			\draw[dashed] (7)--($(7)+(-1,0)$);
			\draw[dashed] (7)--($(7)+(-1,-1)$);

			\draw[dashed] (11)--(12);
			\draw[dashed] (11)--($(11)+(0,1)$);
			\draw[dashed] (11)--($(11)+(0,-1)$);		
			
			\draw[dashed] (12)--($(12)+(0,1)$);
			\draw[dashed] (12)--($(12)+(0,-1)$);
			\draw[dashed] (12)--($(12)+(1,0)$);

			\draw[fill=red] (7) circle (2pt);
			\node[anchor=south west,font=\tiny] at (7) {7};
			\draw[fill=black] (11) circle (1pt);
			\node[anchor=south west,font=\tiny] at (11) {11};
			\draw[fill=red] (12) circle (2pt);
			\node[anchor=south west,font=\tiny] at (12) {12};
		\end{tikzpicture}
		\caption{$\rho=\rho_{11}, \bu=(0,0,1)$}
	\end{subfigure}
	
	\par\medskip
	
	\begin{subfigure}{0.45\textwidth}
		\centering
		\begin{tikzpicture}[scale=0.9]
			\coordinate (6) at (-1/2,0);
			\coordinate (10) at (0,0);
			\coordinate (12) at (1,0);
			\coordinate (0) at (-1,0);
			\coordinate (1) at (-1,1);
			\coordinate (5) at (-1,-1);

			\draw[dashed] (12)--($(12)+(1,0)$);
			\draw[dashed] (12)--($(12)+(0,2)$);
			\draw[dashed] (12)--($(12)+(0,-2)$);
			
			\draw[dashed] (10)--($(10)+(0,2)$);
			\draw[dashed] (10)--($(10)+(0,-2)$);
			\draw[dashed] (10)--(12);
			\draw[dashed] (10)--(6);
			
			\draw[dashed] (6)--($(6)+(0,2)$);
			\draw[dashed] (6)--($(6)+(0,-2)$);
			
			\filldraw[fill=red!20, draw=red] (1)--(6)--(5)--cycle;
			\draw[red] (6)--(1);
			\draw[red] (6)--(0);
			\draw[red] (6)--(5);
			
			\draw[red] (0)--(1);
			\draw[red] (0)--(5);
			\draw[dashed] (0)--($(0)+(-1,1)$);
			\draw[dashed] (0)--($(0)+(-1,-1)$);
			\draw[dashed] (0)--($(0)+(-1,0)$);

			\draw[dashed] (1)--($(1)+(0,1)$);
			\draw[dashed] (1)--($(1)+(-1,1)$);
			\draw[dashed] (5)--($(5)+(-1,-1)$);
			\draw[dashed] (5)--($(5)+(0,-1)$);

			\draw[fill=red] (6) circle (2pt);
			\node[anchor=south west,font=\tiny] at (6) {6};
			\draw[fill=black] (10) circle (1pt);
			\node[anchor=south west,font=\tiny] at (10) {10};
			\draw[fill=red] (12) circle (2pt);
			\node[anchor=south west,font=\tiny] at (12) {12};
			\draw[fill=red] (0) circle (2pt);
			\node[anchor=south west,font=\tiny] at (0) {0};
			\draw[fill=red] (1) circle (2pt);
			\node[anchor=south west,font=\tiny] at (1) {1};
			\draw[fill=red] (5) circle (2pt);
			\node[anchor=north west,font=\tiny] at (5) {5};

		\end{tikzpicture}
		\caption{$\rho=\rho_{10},\bu=(-1,0,-1)$}
	\end{subfigure}\hfill
	\begin{subfigure}{0.45\textwidth}
		\centering
		\begin{tikzpicture}[scale=0.9]
			\coordinate (10) at (0,0);
			\coordinate (12) at (0,-1/2);
			\coordinate (11) at (0,-1);
			\coordinate (6) at (0,1);

			\draw[dashed] (11)--($(11)+(1,0)$);
			\draw[dashed] (11)--($(11)+(-1,0)$);
			\draw[dashed] (11)--($(11)+(0,-1)$);
			\draw[red] (11)--(12);
			\draw[dashed] (12)--($(12)+(1,0)$);
			\draw[dashed] (12)--($(12)+(-1,0)$);
			\draw[dashed] (12)--(10);
			\draw[dashed] (10)--($(10)+(1,0)$);
			\draw[dashed] (10)--($(10)+(-1,0)$);
			\draw[dashed] (10)--(6);
			
			\draw[dashed] (6)--($(6)+(1,0)$);
			\draw[dashed] (6)--($(6)+(-1,0)$);
			
			\draw[dashed] (6)--($(6)+(1,1)$);
			\draw[dashed] (6)--($(6)+(0,1)$);
			\draw[dashed] (6)--($(6)+(-1,1)$);
			
			\draw[fill=red] (6) circle (2pt);
			\node[anchor=north west,font=\tiny] at (6) {6};
			\draw[fill=black] (10) circle (1pt);
			\node[anchor=south west,font=\tiny] at (10) {10};
			\draw[fill=red] (12) circle (2pt);
			\node[anchor=south west,font=\tiny] at (12) {12};
			\draw[fill=red] (11) circle (2pt);
			\node[anchor=south west,font=\tiny] at (11) {11};

		\end{tikzpicture}
		\caption{$\rho=\rho_{10},\bu=(-1,0,0)$}
	\end{subfigure}
	
	\par\medskip
	
	\begin{subfigure}{0.45\textwidth}
		\centering
		\begin{tikzpicture}[scale=0.9]
			
			\coordinate (0) at (0,0);
			\coordinate (1) at (0,1);
			\coordinate (2) at (-1,1);
			\coordinate (3) at (-1,0);
			\coordinate (4) at (-1,-1);
			\coordinate (5) at (0,-1);
			\coordinate (6) at (1,0);
			
			\draw[red] (1)--(2)--(3)--(4)--(5)--(6)--cycle;
			
			\draw[dashed] (0)--(6);
			\draw[dashed] (0)--(1);
			\draw[dashed] (0)--(2);
			\draw[dashed] (0)--(3);
			\draw[dashed] (0)--(4);
			\draw[dashed] (0)--(5);
			\draw[dashed] (6)--($(6)+(0,1)$);
			\draw[dashed] (6)--($(6)+(0,-1)$);
			\draw[dashed] (6)--($(6)+(1,0)$);
			\draw[dashed] (1)--($(1)+(0,1)$);
			\draw[dashed] (2)--($(2)+(-1,1)$);
			\draw[dashed] (2)--($(2)+(0,1)$);

			\draw[dashed] (3)--($(3)+(-1,0)$);
			\draw[dashed] (4)--($(4)+(-1,-1)$);
			\draw[dashed] (4)--($(4)+(0,-1)$);
			\draw[dashed] (5)--($(5)+(0,-1)$);

			\draw[fill=black] (0) circle (1pt);
			\node[anchor=south west,font=\tiny] at (0) {$0$};
			\draw[fill=red] (1) circle (2pt);
			\node[anchor=south west,font=\tiny] at (1) {$1$};
			\draw[fill=red] (2) circle (2pt);
			\node[anchor=north east,font=\tiny] at (2) {$2$};
			\draw[fill=red] (3) circle (2pt);
			\node[anchor=south east,font=\tiny] at (3) {$3$};
			\draw[fill=red] (4) circle (2pt);
			\node[anchor=south east,font=\tiny] at (4) {$4$};
			\draw[fill=red] (5) circle (2pt);
			\node[anchor=north west,font=\tiny] at (5) {$5$};
			\draw[fill=red] (6) circle (2pt);
			\node[anchor=south west,font=\tiny] at (6) {$6$};
			
		\end{tikzpicture}
		\caption{$\rho=\rho_0, \bu=(0,0,-1)$}

	\end{subfigure}
	
	\caption{Intersections of $\Sigma$ with, and projections of $V_{\rho,\bu}$ onto, $\langle -,\bu  \rangle=-1$ in Example \ref{ex:unobstructed1}}
	\label{fig:hyp}

\end{figure}

\begin{example}\label{ex:unobstructed1}
	We consider the toric threefold $X_\Sigma$ whose fan $\Sigma$ in $\RR^3$ may be described as follows.
The generators of its rays are given by the columns of the following matrix:
	\NiceMatrixOptions{code-for-first-row = \color{darkgray}}
	\setcounter{MaxMatrixCols}{20}
	\begin{equation*}
		\begin{pNiceMatrix}[first-row,first-col,margin]
			&\rho_0&\rho_1&\rho_{2}&\rho_3&\rho_4&\rho_{5}&\rho_{6}&\rho_7& \rho_8&\rho_9&\rho_{10}&\rho_{11}&\rho_{12}\\	
			&0&0&-1&-1&-1& 0&1& 0&0& 0&1& 1& 2 \\
			&0&1& 1& 0&-1&-1&0& 0&1&-1&0& 0& 0 \\
			&1&1& 1& 1& 1& 1&1&-1&0& 0&0&-1&-1 
		\end{pNiceMatrix}.
	\end{equation*}
A set of rays forms a cone in $\Sigma$ if the corresponding set of vertices belong to a common simplex in Figure \vref{fig:fan2}, where the ray $\rho_7$ corresponds to the point at infinity. Taking the lattice $N=\ZZ^3$, it is straightforward to verify that $\Sigma$ is smooth and complete.
	
It can be shown that $H^1(X_{\Sigma},\CO(D_\rho))_\bu\cong \widetilde H^0(V_{\rho,\bu},\KK)$ is non-zero only for the $(\rho,\bu)$ pairs
\[(\rho_3,(1,0,0)),(\rho_{11},(0,0,1)),(\rho_{10},(-1,0,-1)),(\rho_{10},(-1,0,0)).\]
Similarly, $H^2(X_{\Sigma},\CO(D_\rho))_\bu$ is zero except for the $(\rho,\bu)$ pair $(\rho_{0},(0,0,-1))$. Although in principle there are infinitely many ray-degree pairs that needs to be checked, there are actually only finitely many different simplicial complexes that can occur, and each case can be verified individually. 

For the cases with non-vanishing cohomology, Figure \vref{fig:hyp} shows the intersections of $\Sigma$ with, and projections of the simplicial complexes $V_{\rho,\bu}$ onto, the hyperplane $\langle -,\bu  \rangle=-1$.
The first four simplicial complexes have two connected components, hence have $\dim \widetilde H^0(V_{\rho,\bu},\KK)=1$, while the final simplical complex has a single cycle, hence $\dim \widetilde H^1(V_{\rho,\bu},\KK)=1$. We will see in Example \ref{ex:unobstructed2} that the toric threefold $X_\Sigma$ is unobstructed. 
\end{example}

\subsection{Euler sequence}\label{seq:euler}
Let $X_{\Sigma}$ be a $\QQ$-factorial toric variety with no torus factors. To understand locally trivial deformations of $X_\Sigma$ we will need control of its tangent sheaf. There is an exact sequence of sheaves 
\begin{equation*}
	\begin{tikzcd}
		0 \ar[r]&\Hom_{\ZZ}(\Cl(X_{\Sigma}),\ZZ)\otimes_{\ZZ} \CO_X \ar[r]&	\bigoplus \limits_{\rho \in \Sigma(1)} \CO(D_{\rho})  \ar[r,"\eta"] & \T_{X_{\Sigma}} \ar[r] &0
	\end{tikzcd}
\end{equation*}
called the Euler sequence, see  \cite[Theorem 8.1.6]{cls} (and dualize). By \cite[Equation (5)]{ilten3}, the image of the local section $\chi^\bu$ of $\CO(D_{\rho})$ is given by the derivation $\p(\rho,\bu)$ defined by \[\p(\rho,\bu)(\chi^\bv)=\rho(\bv)\chi^{\bv+\bu}.\]
When $\Cl(X_{\Sigma})$ is trivial, the map $\eta$  is an isomorphism.

\begin{prop}[cf. {\cite[Corollary 3.9]{jac}}]\thlabel{prop:isofromEulereq}
Let $X_\Sigma$ be a $\QQ$-factorial toric variety with no torus factors. Then for $k\geq0$ \[\eta: \bigoplus_{\rho \in \Sigma(1)}H^k(X_{\Sigma},\CO(D_{\rho}))\rightarrow H^k(X_{\Sigma},\T_{X_{\Sigma}})\]
is:
	\begin{enumerate}[label={(\roman*)}]
		\item  injective if $H^{k}(X_{\Sigma},\CO_{X_{\Sigma}})=0$ and $k\geq1$;
		\item  surjective if $H^{k+1}(X_{\Sigma},\CO_{X_{\Sigma}})=0$ and $k\geq0$.
		
	\end{enumerate}
\end{prop}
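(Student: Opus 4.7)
The plan is to apply the long exact sequence of cohomology to the Euler sequence
\[
0\to K \to \bigoplus_{\rho\in\Sigma(1)} \CO(D_\rho) \xrightarrow{\eta} \T_{X_\Sigma}\to 0,
\]
where $K:=\Hom_\ZZ(\Cl(X_\Sigma),\ZZ)\otimes_\ZZ \CO_{X_\Sigma}$, and to read off both claims directly once we know the cohomology of $K$. The no-torus-factor hypothesis is what allows us to cite \cite[Theorem 8.1.6]{cls} for the sequence itself.

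The crucial observation is that $K$ is a free $\CO_{X_\Sigma}$-module of finite rank. Indeed, on any toric variety every Weil divisor class is represented by a $T_N$-invariant divisor, giving a surjection $\ZZ^{\Sigma(1)}\twoheadrightarrow \Cl(X_\Sigma)$; hence $\Cl(X_\Sigma)$ is a finitely generated abelian group, and its $\ZZ$-dual $\Hom_\ZZ(\Cl(X_\Sigma),\ZZ)$ is a free $\ZZ$-module of some finite rank $r$. Therefore $K\cong \CO_{X_\Sigma}^{\oplus r}$, and for all $k\geq 0$ we obtain
\[
H^k(X_\Sigma,K)\cong H^k(X_\Sigma,\CO_{X_\Sigma})^{\oplus r}.
\]

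With this in hand, the long exact sequence reads
\[
\cdots \to H^k(X_\Sigma,K) \to \bigoplus_{\rho}H^k(X_\Sigma,\CO(D_\rho)) \xrightarrow{\eta} H^k(X_\Sigma,\T_{X_\Sigma}) \to H^{k+1}(X_\Sigma,K)\to \cdots.
\]
If $H^k(X_\Sigma,\CO_{X_\Sigma})=0$ and $k\geq 1$, then $H^k(X_\Sigma,K)=0$ by the isomorphism above, forcing $\eta$ to be injective on $H^k$. Similarly, if $H^{k+1}(X_\Sigma,\CO_{X_\Sigma})=0$, then $H^{k+1}(X_\Sigma,K)=0$, so $\eta$ is surjective on $H^k$.

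There is really no obstacle beyond the finite-generation statement for $\Cl(X_\Sigma)$; the only thing to double-check is that we genuinely have a free $\CO_{X_\Sigma}$-module (as opposed to a sum involving torsion), which is clear since we dualize against $\ZZ$ before tensoring with $\CO_{X_\Sigma}$. Everything else is a mechanical consequence of the long exact sequence and the additivity of cohomology over finite direct sums.
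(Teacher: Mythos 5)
Your argument is correct and is essentially the same as the paper's: apply cohomology vanishing to the long exact sequence coming from the Euler sequence, using that $\Hom_\ZZ(\Cl(X_\Sigma),\ZZ)\otimes_\ZZ\CO_{X_\Sigma}$ is a free $\CO_{X_\Sigma}$-module of finite rank so its cohomology is a finite direct sum of copies of $H^k(X_\Sigma,\CO_{X_\Sigma})$. You have simply spelled out the finite-generation/freeness observation that the paper leaves implicit.
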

\begin{proof}
	The claims follow by applying the assumptions of cohomology vanishing to the long exact sequence induced by the Euler sequence, see also \cite{jac},\cite[Lemma 2.5]{ilten3}.
\end{proof}
Combining $\eta$ with \eqref{iso1} we obtain the following:
\begin{prop}[cf. {\cite[Proposition 1.4]{ilten1}, \cite[Proposition 3.1]{ilten3}}]\thlabel{prop:cohom2}
	Let $X_\Sigma$ be a $\QQ$-factorial toric variety with no torus factors. Then for $k\geq1$ \[ \bigoplus_{\substack{\rho\in\Sigma(1), \bu\in M \\ \rho(\bu)=-1}} \widetilde{H}^{k-1}(V_{\rho,\bu},\KK) \rightarrow H^k(X_{\Sigma},\T_{X_{\Sigma}})\]
	is:
	\begin{enumerate}[label={(\roman*)}]
		\item  injective if $H^{k}(X_{\Sigma},\CO_{X_{\Sigma}})=0$;
		\item surjective if $H^{k+1}(X_{\Sigma},\CO_{X_{\Sigma}})=0$;
		
	\end{enumerate}
\end{prop}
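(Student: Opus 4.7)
The plan is to derive the statement as a direct combination of three ingredients already established in this section: the isomorphism \eqref{iso1} identifying each graded piece $H^k(X_\Sigma, \CO(D_\rho))_\bu$ with $\widetilde H^{k-1}(V_{\rho,\bu},\KK)$; the vanishing \thref{cor:vanishdegree}; and the Euler-sequence consequence \thref{prop:isofromEulereq} supplying the map
\[\eta\colon \bigoplus_{\rho\in\Sigma(1)} H^k(X_\Sigma, \CO(D_\rho))\longrightarrow H^k(X_\Sigma,\T_{X_\Sigma}).\]

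First, I would use \eqref{iso1} together with the $M$-grading to identify the source of $\eta$ as $\bigoplus_{\rho\in\Sigma(1),\,\bu\in M} \widetilde H^{k-1}(V_{\rho,\bu},\KK)$. Under this identification, the map in the statement is the restriction of $\eta$ along the inclusion of the subsum indexed by the pairs $(\rho,\bu)$ with $\rho(\bu)=-1$. The central observation is that, since $V_{\rho,\bu}=V_\bu$ whenever $\rho(\bu)\neq -1$, \thref{cor:vanishdegree} shows that if the corresponding graded piece of $H^k(X_\Sigma,\CO_{X_\Sigma})$ vanishes, then $\widetilde H^{k-1}(V_{\rho,\bu},\KK)=0$. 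Applied in each degree under a global vanishing hypothesis on $H^k(X_\Sigma,\CO_{X_\Sigma})$, this forces all summands with $\rho(\bu)\neq -1$ to vanish, so the restricted subsum equals the full direct sum.

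Under the hypothesis of part (i), this observation shows that the map in the proposition coincides with $\eta$ itself, and the injectivity follows immediately from \thref{prop:isofromEulereq}(i). For part (ii), the hypothesis $H^{k+1}(X_\Sigma,\CO_{X_\Sigma})=0$ makes $\eta$ surjective by \thref{prop:isofromEulereq}(ii); the surjectivity of the map in the proposition then follows by combining this with the same degree-wise application of \thref{cor:vanishdegree} used in (i).

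The main technical point, and the only place where the argument is not purely formal, is the degree-by-degree compatibility in (ii): although the cohomology vanishing hypothesis is for $H^{k+1}$ rather than $H^k$ of the structure sheaf, one needs to confirm that the contributions to $H^k(X_\Sigma,\T_{X_\Sigma})$ arising from summands with $\rho(\bu)\neq -1$ are already absorbed by the restricted subsum. This should drop out of the $M$-gradedness of the Euler sequence together with \thref{cor:vanishdegree}, but it is the step where one must be most careful about bookkeeping rather than simply invoking the two preceding results.
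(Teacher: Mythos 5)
Your treatment of part (i) is correct and is exactly the paper's intended argument: $H^k(X_\Sigma,\CO_{X_\Sigma})=0$ together with \thref{cor:vanishdegree} kills every summand indexed by a pair with $\rho(\bu)\neq -1$, so after applying \eqref{iso1} the restricted direct sum coincides with $\bigoplus_\rho H^k(X_\Sigma,\CO(D_\rho))$, and \thref{prop:isofromEulereq}(i) gives injectivity. The concern you raise about part (ii), however, is not a bookkeeping subtlety that dissolves on closer inspection: the combination of \thref{prop:isofromEulereq}(ii), \eqref{iso1}, and \thref{cor:vanishdegree} genuinely does not close. When $\rho(\bu)\neq -1$ one has $\widetilde H^{k-1}(V_{\rho,\bu},\KK)\cong H^k(X_\Sigma,\CO_{X_\Sigma})_\bu$, and nothing forces this to vanish under $H^{k+1}(X_\Sigma,\CO_{X_\Sigma})=0$ alone. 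Tracing the degree-$\bu$ piece of the Euler long exact sequence, the images under $\eta$ of these extra summands are absorbed by the restricted subsum precisely when $\{n_\rho : \rho(\bu)=-1\}$ spans $N_\RR$ (or when $H^k(X_\Sigma,\CO_{X_\Sigma})_\bu=0$); replacing $\bu$ by $p\bu$ for a prime $p>1$ leaves $V_\bu$ unchanged but forces $\rho(p\bu)\neq -1$ for every $\rho$, so the spanning condition can always be destroyed in degrees where $H^k(X_\Sigma,\CO_{X_\Sigma})$ does not vanish.

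For a concrete failure, let $X$ be $\PP^2$ with one torus fixed point removed (rays $(1,0),(0,1),(-1,-1)$ and two of the three maximal cones), a smooth toric surface without torus factors with $H^2(X,\CO_X)=0$ but $H^1(X,\CO_X)\neq 0$. In degree $\bu=(-2,4)$ all three summands $\widetilde H^0(V_{\rho,\bu},\KK)$ are $\KK$ and one computes $H^1(X,\T_X)_\bu\cong\KK^2$ from the Euler sequence, yet no ray satisfies $\rho(\bu)=-1$, so the restricted map is zero in that degree. Thus part (ii) as stated also requires $H^k(X_\Sigma,\CO_{X_\Sigma})=0$, under which the argument collapses to the one you gave for (i). This extra hypothesis is harmless for the rest of the paper, since every invocation of this proposition is for a complete $X_\Sigma$, where \thref{prop:O} supplies both vanishings; but it is not a formal consequence of the three cited ingredients under the stated hypothesis alone.
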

\begin{proof}
	The proof follows from combining the results of \thref{prop:isofromEulereq}, Equation \eqref{iso1}, and \thref{cor:vanishdegree}.
\end{proof}

\section{Deformations of toric varieties}
\label{sec:deftoric}

\subsection{The combinatorial deformation functor}\label{sec:cdf}
Let $X_{\Sigma}$ be a $\QQ$-factorial toric variety with no torus factors. In this section, we define the combinatorial deformation functor and show that under appropriate hypotheses it is isomorphic to $\Def'_{X_{\Sigma}}$, the functor of locally trivial deformations of $X_{\Sigma}$. 

\begin{defn}\label{bilinear}
	We define a bilinear map 
	\[[-,-]:\bigoplus_{\rho\in \Sigma(1)}\CO(D_{\rho})|_{T_N}\times \bigoplus_{\rho\in \Sigma(1)}\CO(D_{\rho})|_{T_N} \to \bigoplus_{\rho\in \Sigma(1)}\CO(D_{\rho})|_{T_N} \]
	by setting
	\[[\chi^\bu\cdot f_\rho,\chi^{\bu'}\cdot f_{\rho'}]:=\rho(\bu')\chi^{\bu+\bu'}\cdot f_{\rho'}-\rho'(\bu)\chi^{\bu+\bu'}\cdot f_\rho\]
	and extending linearly.
	Here $f_\rho$ and $f_{\rho'}$ denote the canonical sections of $\CO(D_\rho)|_{T_N}$ and $\CO(D_{\rho'})|_{T_N}$ respectively, corresponding to the rational function $1\in \KK(X_{\Sigma})$. 
\end{defn}
\noindent	It is straightforward to check that $[-,-]$ is alternating and satisfies the Jacobi identity. Hence, it is a Lie bracket.

\begin{thm}\thlabel{thm:liebracket}
	Let $X_{\Sigma}$ be a $\QQ$-factorial toric variety without any torus factors. Then the bracket of Definition \ref{bilinear} extends to a Lie bracket on $\bigoplus_{\rho\in \Sigma(1)} \CO(D_\rho)$ such that the map
	\[\eta: \bigoplus_{\rho\in \Sigma(1)} \CO(D_\rho) \to \T_{X_{\Sigma}} \] of the Euler sequence (see \S \ref{seq:euler}) is a map of sheaves of $M$-graded Lie algebras.
\end{thm}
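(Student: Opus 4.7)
The plan is to extend the bracket to $\bigoplus_{\rho} \CO(D_\rho)$ by exactly the same formula from Definition~\ref{bilinear} on $M$-homogeneous generators $\chi^\bu f_\rho$, viewing sections over each invariant affine open $U_\sigma$ through their $M$-grading. Since the inclusion $\bigoplus_\rho \CO(D_\rho) \hookrightarrow \bigoplus_\rho \CO(D_\rho)|_{T_N}$ is injective on sections over every $U_\sigma$, the alternating property and the Jacobi identity for the extension are inherited automatically from the already-known bracket on $T_N$, as is compatibility with sheaf restriction (the formula is $\KK$-linear and $M$-graded). Hence only two substantive things need to be verified: (i) the prescribed formula, applied to local sections of $\bigoplus_\tau \CO(D_\tau)$, actually lands in $\bigoplus_\tau \CO(D_\tau)$ rather than merely in $\bigoplus_\tau \CO(D_\tau)|_{T_N}$, and (ii) the map $\eta$ intertwines this bracket with the commutator of vector fields.

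The main obstacle is (i), which is a case analysis using the monomial criterion \eqref{eq:torussection}. Fix a maximal cone $\sigma$ and generators $\chi^\bu f_\rho$, $\chi^{\bu'} f_{\rho'} \in \Gamma(U_\sigma, \bigoplus_\tau \CO(D_\tau))$. I need each nonzero term $\rho(\bu')\chi^{\bu+\bu'} f_{\rho'}$ (and, symmetrically, $\rho'(\bu)\chi^{\bu+\bu'} f_\rho$) of the bracket to satisfy the section criterion on $U_\sigma$. For every $\rho'' \in \Sigma(1)\cap\sigma$ with $\rho''\notin\{\rho,\rho'\}$ the summands $\rho''(\bu),\rho''(\bu')$ are both nonnegative, so $\rho''(\bu+\bu')\ge 0$; for $\rho''=\rho'$ (with $\rho'\ne\rho$) one has $\rho'(\bu)\ge 0$ and $\rho'(\bu')\ge -1$, giving $\rho'(\bu+\bu')\ge -1$. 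The only subtle case is $\rho''=\rho\ne\rho'$ with $\rho\in\Sigma(1)\cap\sigma$ and $\rho(\bu)=-1$, where I need $\rho(\bu+\bu')\ge 0$. This is exactly where the scalar coefficient $\rho(\bu')$ rescues the argument: either $\rho(\bu')=0$, in which case the whole term vanishes, or $\rho(\bu')\ge 1$, because $\rho(\bu')$ is a nonnegative integer by the section condition on $\chi^{\bu'}$ (since $\rho\ne\rho'$). A symmetric argument treats the other summand, and an analogous small observation covers the diagonal case $\rho=\rho'$, where the defining formula collapses to $\bigl(\rho(\bu')-\rho(\bu)\bigr)\chi^{\bu+\bu'} f_\rho$ and the coefficient again vanishes precisely when positivity would fail.

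For (ii), I carry out a direct computation on generators. Using $\partial(\rho,\bu)(\chi^\bv)=\rho(\bv)\chi^{\bv+\bu}$ together with the Leibniz rule,
\begin{align*}
[\partial(\rho,\bu),\partial(\rho',\bu')](\chi^\bv)
&=\rho'(\bv)\rho(\bv+\bu')\chi^{\bv+\bu+\bu'}-\rho(\bv)\rho'(\bv+\bu)\chi^{\bv+\bu+\bu'}\\
&=\bigl(\rho(\bu')\rho'(\bv)-\rho'(\bu)\rho(\bv)\bigr)\chi^{\bv+\bu+\bu'},
\end{align*}
which agrees with $\eta\bigl(\rho(\bu')\chi^{\bu+\bu'} f_{\rho'}-\rho'(\bu)\chi^{\bu+\bu'} f_\rho\bigr)$ evaluated on $\chi^\bv$. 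Thus $\eta$ preserves brackets on the generating set of homogeneous sections and, by bilinearity, on all of $\bigoplus_\rho \CO(D_\rho)$, completing the proof.
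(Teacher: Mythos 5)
Your proof is correct and follows essentially the same approach as the paper's: both verify, via the section criterion \eqref{eq:torussection}, that the bracket formula lands in $\bigoplus_\rho\CO(D_\rho)$ rather than only in $\bigoplus_\rho\CO(D_\rho)|_{T_N}$, with the key observation that the scalar coefficient vanishes exactly in the cases where the positivity constraint might fail; and both finish by a direct computation showing $\eta$ intertwines the brackets. The only difference is cosmetic: the paper phrases the section check for $\chi^\bu\in H^0(U_\sigma,\CO(D_\rho))$, $\chi^{\bu'}\in H^0(U_\tau,\CO(D_{\rho'}))$ with the target $H^0(U_\sigma\cap U_\tau,-)$, whereas you work over a single $U_\sigma$ (and you actually spell out the diagonal case $\rho=\rho'$, which the paper leaves implicit). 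One small nit: in that diagonal case the phrase ``the coefficient vanishes precisely when positivity would fail'' should read ``whenever positivity would fail'' — the coefficient $\rho(\bu')-\rho(\bu)$ can also vanish when positivity holds fine.
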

\begin{proof}
	Suppose that $\chi^{\bu} \in H^0(U_{\sigma},\CO(D_{\rho}))$ and $\chi^{\bu'}\in H^0(U_{\tau},\CO(D_{\rho'}))$. By \eqref{eq:torussection}, for all $\rho''\in \Sigma(1) \cap \sigma \cap \tau$ we obtain the following system of inequalities:
	\begin{equation*}
		\begin{aligned}
			&\rho'(\bu)\geq0, \quad \rho'(\bu')\geq-1\\
			&\rho''(\bu)\geq0, \quad \rho''(\bu')
			\geq0 \quad \text{if $\rho''\not=\rho,\rho'$} \\
			& \rho(\bu)\geq -1, \quad \rho(\bu')\geq0.
		\end{aligned}
	\end{equation*}
	This means that for all $\rho''\in \Sigma(1) \cap \sigma \cap \tau$,
	\[
	\rho''(\bu+\bu')\geq 0
	\begin{cases}
		\text{if $\rho''\not=\rho, \rho'$,}\\
		\text{if $\rho''=\rho$ and $\rho(\bu')\not=0$ }\\
		\text{if $\rho''=\rho'$ and $\rho'(\bu)\not=0$ }.
	\end{cases}
	\]
	Thus, using \eqref{eq:torussection} again we observe that
	\[\rho'(\bu)\chi^{\bu+\bu'}\in H^0(U_{\sigma}\cap U_{\tau}, \CO(D_{\rho})), \quad \rho(\bu')\chi^{\bu+\bu'}\in H^0(U_{\sigma}\cap U_{\tau}, \CO(D_{\rho'})). \]
	Therefore, the bracket of Definition \ref{bilinear} extends to a Lie bracket on $\bigoplus_{\rho} \CO(D_\rho)$. 
	
	It is straightforward to verify that
	\[[\p(\rho,\bu), \p(\rho',\bu')]=\rho(\bu')\p(\rho',\bu+\bu')-\rho'(\bu)\p(\rho,\bu+\bu').\]
	It follows that the Lie bracket on $\bigoplus_{\rho}\CO(D_{\rho})$ is compatible with $\eta$ and the Lie bracket on $\T_{X_\Sigma}$, hence $\eta$ is a map of sheaves of Lie algebras.
\end{proof}

Combining with \thref{exactseqlocal} and setting $\mcL=\bigoplus_\rho \CO(D_\rho)$, the above result allows us to consider the functor $\widehat\rF_\mcL$ of Definition \ref{functornew} in the setting of toric varieties. We make this explicit here.

\begin{defn}\label{defn:cdef}
	 Let $X_{\Sigma}$ be a $\QQ$-factorial toric variety.
	 We define $\mfoc$ to be the composition $\lambda \circ \mfp \circ s$ with $\lambda,s$ as in \S\ref{sec:divisor} and $\mfp$ defined using Definition \ref{bilinear}.
	 The \emph{combinatorial deformation functor} $\cDef_{\Sigma}:\Art \to \Set$ is defined on objects as follows:
	\begin{align*}
		\cDef_{\Sigma}(A)&= \{\alpha \in \bigoplus_{\rho, \bu}\check{C}^0(\sV_{\rho,\bu},\mfm_A) : \mfoc(\alpha)=0 \} /\sim 
	\end{align*}
	where $\alpha=\beta$ in $\cDef_{\Sigma}(A)$  if and only if there exists $\gamma\in \bigoplus_{\rho,\bu}\check{C}^0(\sU,\CO(D_{\rho}))\otimes \mfm_{A}$ such that 
	\[\iota(\gamma)\odot \mfp(s(\alpha)) = \mfp(s(\beta)). \]
	It is defined on morphisms in the obvious way.
\end{defn}

\noindent Recall from Remark \ref{rem:nerve} that 
we may interpret $\check{C}^i(\sV_{\rho,\bu},\mfm_A)$ for $i=0,1$ as simplicial $0$- or $1$-cochains on the nerve $\nerve(\sV_{\rho,\bu})$ with coefficients in $\mfm_A$. 

The following is our primary result:

\begin{thm}\thlabel{thm:combiso}
	Let $X_{\Sigma}$ be a $\QQ$-factorial toric variety without any torus factors.
	Suppose that $H^{1}(X_{\Sigma},\CO_{X_{\Sigma}})=
H^{2}(X_{\Sigma},\CO_{X_{\Sigma}})
=0$. Then, $\cDef_{\Sigma}$ is isomorphic to $\Def'_{X_{\Sigma}}$. In fact, we have the following isomorphisms of deformation functors:
\[ \cDef_{\Sigma} \overset{\iota^{-1}\circ\mfp \circ s}{\cong} \rF_{\bigoplus_{\rho}\CO(D_{\rho})} \overset{\eta}{\cong} \rF_{\T_{X_{\Sigma}}} \overset{\exp}{\cong} \Def'_{X_{\Sigma}}.\]  
\end{thm}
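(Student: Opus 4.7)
The plan is to establish the three isomorphisms in the stated composition in turn, with each one reduced to a specialization of a result already proved earlier in the paper. Since \thref{thm:liebracket} endows $\mcL := \bigoplus_{\rho \in \Sigma(1)} \CO(D_\rho)$ with a Lie bracket compatible with $\eta$, and \thref{exactseqlocal} realizes the right-hand complex in \eqref{exact} as the \v{C}ech complex of the pushforward sheaf $\mcK := \bigoplus_{\rho} \iota_*(\CO(D_\rho)|_{T_N})$, the entire proof becomes a chase through the machinery of \S\ref{sec:functor1}--\S\ref{sec:functor2} and \S\ref{seq:euler}.

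For the first isomorphism $\cDef_\Sigma \cong \rF_{\mcL}$, observe that $\mcL \hookrightarrow \mcK$ fits into an exact sequence of the form \eqref{eq:exactLie}, mirroring \thref{construction}. Comparing \thref{defn:cdef} with \thref{functornew2}, we see that $\cDef_\Sigma$ is literally the functor $\widehat{\rF}_{\mcL \hookrightarrow \mcK}$, with the section $s$ coming from \eqref{canonicalseq}. Applying \thref{prop:isosecfunctor} gives $\widehat{\rF}_{\mcL \hookrightarrow \mcK} \cong \rF_{\mcL \hookrightarrow \mcK}$, and then \thref{compa2} produces the isomorphism $\rF_{\mcL \hookrightarrow \mcK} \xrightarrow{\iota^{-1}\circ \mfp} \rF_{\mcL}$ once we verify the single hypothesis $\check{H}^1(\sU,\mcK)=0$. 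This vanishing is exactly the content of \thref{rem:H1vanishing} applied to our $\mcK$, since $\mcK$ is constant on any intersection of elements of the affine open cover $\sU = \{U_\sigma\}$ (each contains the torus $T_N$).

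For the second isomorphism $\rF_{\mcL} \xrightarrow{\eta} \rF_{\T_{X_\Sigma}}$, we invoke \thref{compa} applied to the Lie algebra morphism $\eta$ of the Euler sequence. The cohomological hypotheses of \thref{compa}---surjectivity on $\check{H}^0$, bijectivity on $\check{H}^1$, and injectivity on $\check{H}^2$---all follow from a single appeal to \thref{prop:isofromEulereq}, since our two vanishing assumptions $H^1(X_\Sigma,\CO_{X_\Sigma}) = H^2(X_\Sigma,\CO_{X_\Sigma}) = 0$ supply precisely the right inputs at degrees $k=0,1,2$. The third isomorphism $\rF_{\T_{X_\Sigma}} \xrightarrow{\exp} \Def'_{X_\Sigma}$ is the general comparison between locally trivial deformations and the tangent sheaf formalism, recorded in \thref{rem:applications}\ref{rem:applications1} and proved in \cite[Proposition 2.5]{BGL22}; the no-torus-factor hypothesis is needed not here but back in \thref{prop:isofromEulereq}.

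I do not expect a true obstacle: the content lies entirely in the earlier sections, and what remains is bookkeeping. The subtlest point to flag is simply to confirm that the canonical section $s$ of \eqref{canonicalseq} used to define $\cDef_\Sigma$ agrees with the section $s$ from \eqref{eqn:s} required by \thref{functornew2}---a matter of unwinding definitions---and that the exact sequence \eqref{exact} is genuinely the \v{C}ech complex of $\mcL \hookrightarrow \mcK \twoheadrightarrow \mcK/\mcL$ rather than just a formal analogue, which is exactly the content of the identifications made in \thref{exactseqlocal}.
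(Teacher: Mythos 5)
Your proposal is correct and follows essentially the same approach as the paper: identify $\cDef_\Sigma$ with $\widehat\rF_{\mcL\hookrightarrow\mcK}$ via \thref{exactseqlocal} and \thref{thm:liebracket}, then chain \thref{prop:isosecfunctor}, \thref{compa2} (with $\check H^1(\sU,\mcK)=0$ from \thref{rem:H1vanishing}), \thref{compa} (with the cohomological inputs supplied by \thref{prop:isofromEulereq} and the vanishing hypotheses), and \thref{rem:applications}\ref{rem:applications1}. The only cosmetic difference is that the paper works through the chain right-to-left while you go left-to-right.
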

\begin{proof}
	According to \cite[Proposition 2.5]{BGL22}, we have  $\rF_{\T_{X_{\Sigma}}}\overset{\exp}{\cong} \Def'_{X_{\Sigma}}$. By Theorem \ref{thm:liebracket}, $\eta$ induces  a morphism of \v{C}ech complexes of sheaves of Lie algebras:
	\[\eta:\bigoplus_{\rho\in \Sigma(1)} \check{C}^{\bullet}(\sU,\CO(D_{\rho})) \to \check{C}^{\bullet}(\sU,\T_{X_{\Sigma}}).\]
	Using the vanishing of $H^{k}(X_{\Sigma},\CO_{X_{\Sigma}})$ for $k=1$ and $2$ and \thref{prop:isofromEulereq}, we have that $\bigoplus H^k(X_{\Sigma}, \CO(D_{\rho})) \to H^k(X_{\Sigma}, \T_{X_{\Sigma}})$ is surjective for $k=0$, bijective for $k=1$ and injective for $k=2$.
	Hence, Theorem \ref{compa} implies that  
	\[\rF_{\bigoplus_{\rho}\CO(D_{\rho})}\overset{\eta}{\cong} \rF_{\T_{X_{\Sigma}}}.\]
	
	Combining \thref{thm:liebracket} and \thref{exactseqlocal}, $\cDef_{\Sigma}$ may be identified with the functor 
	$\widehat\rF_{\mcL}$ 
	(Definition \ref{functornew}) where $\mcL=\bigoplus_{\rho} \CO(D_\rho)$, $\sU=\{U_\sigma\}_{\sigma\in \Sigma_{\max}}$, $V=T_N$, and $s$ is as in \S\ref{sec:divisor}. By \thref{compa2}  we obtain 
	\[\cDef_{\Sigma} \overset{\iota^{-1}\circ \mfp \circ s}{\cong}\rF_{\bigoplus_{\rho}\CO(D_{\rho})} ,\]
	completing the proof.
\end{proof}

We are especially interested in understanding all deformations, not just locally trivial ones. The following corollary allows us to do this when the singularities of $X_\Sigma$ are mild enough.

\begin{cor}\thlabel{mildsing}
	Let $X_\Sigma$ be a complete toric variety of dimension at least $3$. Assume that $X_\Sigma$ is smooth in codimension $2$, and $\QQ$-factorial in codimension $3$. Let $\widehat \Sigma$ be any simplicial subfan of $\Sigma$ containing all three-dimensional cones of $\Sigma$. Then $\Def_{X_\Sigma}$ is isomorphic to $\cDef_{\widehat\Sigma}$.
\end{cor}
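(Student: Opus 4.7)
The plan is to prove the corollary by combining \thref{thm:combiso} (applied to $X_{\widehat\Sigma}$ rather than to $X_\Sigma$ itself) with the comparison theorem \thref{CMiso} from Appendix \ref{sec:comp}. We may assume $n := \dim X_\Sigma \geq 3$, since otherwise completeness together with the hypotheses forces $X_\Sigma$ to be smooth and $\widehat\Sigma = \Sigma$, in which case the statement is immediate from \thref{thm:combiso}.

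First I would verify that the hypotheses of \thref{thm:combiso} are met for $X_{\widehat\Sigma}$. $\QQ$-factoriality is immediate since $\widehat\Sigma$ is simplicial. Next, in a complete fan every cone is a face of some top-dimensional cone, and (using $n \geq 3$) every cone of dimension at most $3$ is in turn a face of a $3$-dimensional face of such a top cone. Combined with the assumption that $\widehat\Sigma$ contains all $3$-dimensional cones of $\Sigma$, this shows that $\widehat\Sigma$ contains every cone of $\Sigma$ of dimension at most $3$. Two consequences follow: the rays of $\widehat\Sigma$ coincide with those of $\Sigma$ (which span $N_\RR$ by completeness), so $X_{\widehat\Sigma}$ has no torus factors; and the complement $X_\Sigma \setminus X_{\widehat\Sigma}$ is a union of torus orbits of codimension at least $4$. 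For the cohomological vanishing $H^i(X_{\widehat\Sigma}, \CO) = 0$ for $i = 1, 2$, I would exploit that every toric variety is Cohen-Macaulay (hence $\CO_{X_\Sigma}$ has sufficiently large depth along the codim-$\geq 4$ closed subscheme $X_\Sigma \setminus X_{\widehat\Sigma}$): a standard local cohomology argument then gives $H^i(X_\Sigma, \CO) \cong H^i(X_{\widehat\Sigma}, \CO)$ for $i \leq 2$, and the left-hand side vanishes by \thref{prop:O} since $|\Sigma| = N_\RR$ is convex. \thref{thm:combiso} then yields $\cDef_{\widehat\Sigma} \cong \Def'_{X_{\widehat\Sigma}}$.

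Second, I would invoke \thref{CMiso} to identify $\Def_{X_\Sigma}$ with $\Def'_{X_{\widehat\Sigma}}$ via a common smooth open. Let $U$ be the smooth locus of $X_{\widehat\Sigma}$; by the smooth-in-codimension-two hypothesis on $X_\Sigma$ (inherited by the open $X_{\widehat\Sigma}$), $X_{\widehat\Sigma} \setminus U$ has codimension at least $3$, and since $X_\Sigma \setminus X_{\widehat\Sigma}$ has codimension at least $4$, also $X_\Sigma \setminus U$ has codimension at least $3$ in $X_\Sigma$. Both $X_\Sigma$ and $X_{\widehat\Sigma}$ are Cohen-Macaulay, so applying \thref{CMiso} twice identifies $\Def_{X_\Sigma}$ and $\Def_{X_{\widehat\Sigma}}$ with $\Def_U$. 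Since $U$ is smooth, $\Def_U = \Def'_U$, and a final application of \thref{CMiso} (to the tangent-sheaf/locally trivial side, which is the setting actually controlled by the theorem) gives $\Def'_U \cong \Def'_{X_{\widehat\Sigma}}$. Chaining the isomorphisms produces $\Def_{X_\Sigma} \cong \Def'_{X_{\widehat\Sigma}} \cong \cDef_{\widehat\Sigma}$.

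The main obstacle is technical bookkeeping: aligning the precise formulation of \thref{CMiso} with the several uses above, in particular keeping track of when we are comparing all deformations versus locally trivial ones. Depending on how \thref{CMiso} is stated in the appendix, it may be cleanest to extract a single comparison of the form $\Def_X \cong \Def'_U$ valid for a Cohen-Macaulay $X$ that is smooth in codimension two and a sufficiently large smooth open $U \subseteq X$, which would collapse the second step above into one application of the theorem. A secondary (but routine) point is the cohomology vanishing for $X_{\widehat\Sigma}$; the depth/local-cohomology route above is robust but one could alternatively refine $\widehat\Sigma$ to arrange $|\widehat\Sigma|$ convex and invoke \thref{prop:O} directly.
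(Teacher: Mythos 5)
Your overall strategy (apply \thref{thm:combiso} to $X_{\widehat\Sigma}$, then compare $\Def_{X_\Sigma}$ with $\Def_{X_{\widehat\Sigma}}$ via \thref{CMiso}) matches the paper, and your route to the vanishing $H^1(X_{\widehat\Sigma},\CO)=H^2(X_{\widehat\Sigma},\CO)=0$ via local cohomology and Lemma~\ref{lemma:lc} is a clean alternative to the paper's simplicial-complex argument with the $V_\bu$'s. However, there is a genuine gap at the last step. You need to pass from $\Def_{X_{\widehat\Sigma}}$ (all deformations) to $\Def'_{X_{\widehat\Sigma}}$ (locally trivial ones), and you try to do this by ``a final application of \thref{CMiso} to the tangent-sheaf/locally trivial side.'' But \thref{CMiso} as stated (and as proved in Appendix~\ref{sec:comp}) concerns only the functor $\Def$ of \emph{all} deformations, not $\Def'$; the depth hypotheses there are on $\CO_X$, not $\T_X$, and $\T_X$ need not be Cohen--Macaulay. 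The chain $\Def'_{X_{\widehat\Sigma}}\hookrightarrow\Def_{X_{\widehat\Sigma}}\cong\Def_U=\Def'_U$ gives you injectivity of the restriction $\Def'_{X_{\widehat\Sigma}}\to\Def'_U$, but surjectivity is exactly the assertion that every deformation of $X_{\widehat\Sigma}$ is locally trivial --- which you have not established and which your argument circularly presupposes.

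The missing ingredient is the fact (used in the paper's proof and cited from \cite[\S3b]{rigid}, cf.~also \cite[\S5.1]{altmann1}) that a $\QQ$-factorial toric variety that is smooth in codimension two has only locally trivial infinitesimal deformations, so $\Def_{X_{\widehat\Sigma}}\cong\Def'_{X_{\widehat\Sigma}}$. With that in hand the detour through $U$ is unnecessary: one application of \thref{CMiso} (the complement $X_\Sigma\setminus X_{\widehat\Sigma}$ already has codimension $\geq 4$) gives $\Def_{X_\Sigma}\cong\Def_{X_{\widehat\Sigma}}$, then the locally-trivial fact gives $\Def_{X_{\widehat\Sigma}}\cong\Def'_{X_{\widehat\Sigma}}$, and \thref{thm:combiso} finishes.
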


\begin{proof}
	First, we observe that any toric variety associated with a fan is Cohen-Macaulay, see \cite[Theorem 9.2.9]{cls}. Since $\widehat \Sigma$ is a subfan of $\Sigma$, the resulting toric variety $X_{\widehat \Sigma}$ is an open subset of $X_{\Sigma}$. By the construction of $\widehat \Sigma$, the inequality $\codim(X_{\Sigma}\setminus X_{\widehat \Sigma})\geq 4$ follows from the orbit-cone correspondence (\cite[Theorem 3.2.6]{cls}. This allow us to utilize  \thref{CMiso} (see Appendix \ref{sec:comp}), thereby obtaining the isomorphism
	  \[\Def_{X_\Sigma} \cong \Def_{X_{\widehat\Sigma}}.\]
	  
	  From the preceding discussion, our focus now shifts to $\Def_{X_{\widehat\Sigma}}$. Observe that $X_{\widehat\Sigma}$ is smooth in codimension 2 and $\QQ$-factorial. By \cite[Theorem 11.4.8]{cls}, $X_{\hat \Sigma}$ has abelian finite quotient singularities. Since $X_{\Sigma}$ is smooth in codimension 2, these singularities are isolated. By \cite[\S3b]{rigid} (cf.~also \cite[\S 5.1]{altmann1}), the deformations of $X_{\widehat\Sigma}$ are locally trivial, leading us to conclude that 
	\[
	\Def_{X_{\widehat\Sigma}} \cong \Def'_{X_{\widehat\Sigma}}.
	\]

	We will now prove that \[\cDef_{\widehat\Sigma} \cong \Def'_{X_{\widehat\Sigma}}\]
	by showing that $X_{\widehat\Sigma}$ satisfies all the assumptions in \thref{thm:combiso}; this will complete the proof. For $\bu \in M$, recall the sets $V_{\bu}$ defined in \S\ref{sec:structuresheaf}, and denote the corresponding set for $\widehat \Sigma$ by $\widehat V_{\bu}$.  Let $V_{\bu}^{(2)}$ denote the 2-skeleton of $V_{\bu}$. Concretely,
	\begin{align*}
			\widehat V_{\bu}:&=\bigcup_{\sigma\in \widehat \Sigma} \conv\{n_{\rho}: \rho(\bu)<0\}_{\rho \in  \widehat\Sigma(1)\cap \sigma}\subseteq N_{\RR},\\
			V_{\bu}^{(2)}:&=\bigcup_{\sigma\in \Sigma(3)} \conv\{n_{\rho}: \rho(\bu)<0\}_{\rho \in  \Sigma(1)\cap \sigma}\subseteq N_{\RR},
	\end{align*}
	where $\Sigma(3)$ denotes the $3$-dimensional cones of $\Sigma$.
	It is straightforward to see that the singular cohomology groups $\wt H^{k-1}(V_{\bu},\KK)$ for $k=1,2$ depend solely on $V_{\bu}^{(2)}$. Moreover, since $\widehat \Sigma$ contains all three-dimensional cones of $\Sigma$, we have
	\[V_{\bu}^{(2)}= \widehat V_{\bu}^{(2)}.\]

	 It follows from the preceding discussion combined with \thref{van1} that for $k=1,2$ and every $\bu \in M$, 
	 \begin{equation}\label{eq:iso}
	 	H^{k}(X_{ \Sigma},\CO_{X_{\Sigma}})_{\bu}\cong \wt H^{k-1}(V_{\bu},\KK)= H^{k-1}(V^{(2)}_{\bu},\KK)\cong H^{k}(X_{\widehat \Sigma},\CO_{X_{\widehat\Sigma}})_{\bu}. 
 \end{equation}	
	Moreover, since $\Sigma$ is a complete fan and hence $|\Sigma|$ is convex, by \thref{prop:O}, we have $H^{k}(X_{ \Sigma},\CO_{X_{\Sigma}})=0$ for $k\geq 1$. Therefore, by \eqref{eq:iso} we obtain that
	\footnote{Alternatively, one can obtain this cohomological vanishing via the long exact sequence for cohomology with support in $Z = X_{\Sigma}\setminus X_{\widehat\Sigma}$, since $\codim Z \ge 4$.}
	\[ H^{1}(X_{\widehat \Sigma},\CO_{X_{\widehat\Sigma}})=  H^{2}(X_{\widehat \Sigma},\CO_{X_{\widehat\Sigma}})=0.\]
	Additionally, as $\Sigma$ is complete,  $X_{\Sigma}$ does not have any torus factors. Since $\Sigma(1)=\widehat\Sigma(1)$, the variety $X_{\widehat\Sigma}$ also does not have any torus factors.
	
	By the above discussion, $X_{\widehat\Sigma}$ satisfies all the assumptions in \thref{thm:combiso}. Therefore, we conclude that
	$\cDef_{\widehat\Sigma}$ and  $\Def'_{\widehat X_{\Sigma}}$ are isomorphic.\end{proof}

\begin{rem}[Comparison with the Cox torsor I]\label{rem:cox1}
Let $X=X_\Sigma$ be a $\QQ$-factorial toric variety with no torus factors.
	The Lie bracket on $\bigoplus_{\rho\in \Sigma(1)} \CO(D_\rho)$ may be interpreted as coming from the Lie bracket on the tangent sheaf of the affine space $\Aff^{\#\Sigma(1)}$ associated to the Cox ring of $X_\Sigma$, as we now briefly explain. 
	The variety $X=X_\Sigma$ arises as a geometric quotient $\pi:\widetilde X\to X$ of an open subset $\widetilde X$ of $\Aff^{\#\Sigma(1)}$ under the action of the quasitorus $G=\Hom(\Cl(X), \KK^{*})$, see \cite[Theorem 5.1.11]{cls}. We will call $\widetilde X$ the \emph{Cox torsor} of $X$ (although it is actually only a torsor when $X$ is smooth).\footnote{In e.g. \cite{cox}, $\widetilde X$ is called the \emph{characteristic space} of $X$.}

	The variety $\widetilde X$ is itself toric, given by a fan $\widetilde \Sigma$ whose cones are in dimension-preserving bijection with the cones of $\Sigma$ (cf. \cite[Proposition 5.1.9]{cls}). In particular, for a cone $\sigma\in\Sigma$, denote by $\widetilde \sigma\in \widetilde\Sigma$ the corresponding cone. Since the affine space has trivial class group, the generalized Euler exact sequence for the Cox torsor gives a torus-equivariant isomorphism
	\[\widetilde\eta: \bigoplus \limits_{\widetilde{\rho} \in \widetilde{\Sigma}(1)} \CO(D_{\widetilde{\rho}})  \rightarrow \T_{X_{\widetilde{\Sigma}}}\]
	inducing a $G$-equivariant Lie bracket on $\bigoplus \limits_{\widetilde{\rho} \in \widetilde{\Sigma}(1)} \CO(D_{\widetilde\rho})$.  

Moreover, for any torus-invariant open subset $U_\sigma\subseteq X$, $\pi$ induces an isomorphism
	\[ \bigoplus \limits_{\rho \in \Sigma(1)} \CO(D_{\rho})(U_{\sigma}) \overset{\pi^{*}}{\cong} \Big(\bigoplus \limits_{\widetilde{\rho} \in \widetilde{\Sigma}(1)}\CO(D_{\widetilde{\rho}})^G(U_{\widetilde \sigma})\Big). \]
The Lie bracket on  
$\bigoplus_{\rho\in \Sigma(1)} \CO(D_\rho)$
from Definition \ref{bilinear} is exactly obtained by applying this isomorphism to the above bracket on $\bigoplus \limits_{\widetilde{\rho} \in \widetilde{\Sigma}(1)} 
\CO(D_{\widetilde\rho})$. 
\end{rem}

\begin{rem}[Comparison with the Cox torsor II]\thlabel{rem:cox2}
	The discussion of Remark \ref{rem:cox1} can be used to show that there is an isomorphism between $\rF_{\bigoplus \CO(D_{\rho})}$  and the functor $\Def_{\widetilde X}^G$ of $G$-invariant deformations of the Cox torsor $\widetilde X$.
	 Indeed, the sheaf $\T_{\widetilde X}^G\cong \bigoplus_{\widetilde \rho\in\widetilde\Sigma(1)} \CO(D_{\widetilde \rho})^G$ controls $G$-invariant deformations of $\widetilde X$ (since $\widetilde X$ is smooth). By the above we have an isomorphism of \v{C}ech complexes
	 \[ \bigoplus \limits_{\rho \in \Sigma(1)}\check{C}^{\bullet}(\{U_\sigma\},\CO(D_{\rho})) \overset{\pi^{*}}{\cong} \bigoplus \limits_{\widetilde\rho \in \widetilde\Sigma(1)}\check{C}^{\bullet}(\{U_{\widetilde \sigma}\},\CO(D_{\widetilde\rho})^G)\]
	 and the claim follows.
	 Note that if $H^1(X,\CO_X)= H^2(X,\CO_X)=0$, \thref{thm:combiso} implies that we actually have an isomorphism
	\[\Def_{\widetilde X}^G \to \Def'_{X_{\Sigma}},\]
that is, $G$-invariant deformations of the Cox torsor are equivalent to locally trivial deformations of $X_\Sigma$.

This is very much related to the work of \cite{comp}, in which $G$-invariant deformations of some affine scheme $Y$ are compared with deformations of the quotient $X$ under the action of $G$ of an invariant open subscheme $U\subseteq Y$; here $G$ is a linearly reductive group. In the toric setting, the natural affine scheme $Y$ to consider is the spectrum of the Cox ring $\Aff^{\#\Sigma(1)}$. However, as affine space is rigid, one will only obtain trivial deformations in this manner.

The above discussion can be generalized far beyond toric varieties. Let $X$ be any normal $\QQ$-factorial variety with finitely generated class group and no non-trivial global invertible functions, and let $\widetilde X$ be the relative spectrum of its Cox sheaf \cite[Construction 1.4.2.1]{cox}. We again call $\widetilde X$ the Cox torsor of $X$ (although it is only a torsor if $X$ is factorial). As before, $\pi:\widetilde X\to X$ is a geometric quotient by the group $G=\Hom(\Cl(X),\KK^*)$.
If $X$ is factorial, there is a generalized Euler sequence 
\[
	0\to \Hom_\ZZ (\Cl(X),\ZZ)\otimes \CO_X\to \pi_*(\T_{\widetilde X}^G) \to \T_X\to 0,
\]
see e.g.~\cite[Theorem 5.12]{comp} for even more general conditions guaranteeing such a sequence.
In any case, given such a generalized Euler sequence, if $H^1(X,\CO_X)=H^2(X,\CO_X)=0$, we may apply Theorem \ref{compa} to conclude that $\Def_X'$ is isomorphic to the functor 
$(\Def_{\widetilde X}')^G$
of locally trivial $G$-invariant deformations of the Cox torsor $\widetilde X$.
\end{rem}

\subsection{The combinatorial deformation equation}\label{sec:cde}
In this section, we specialize the setup discussed in \S\ref{sec:defeq} to the combinatorial deformation functor $\cDef_{\Sigma}$.
As discussed in the proof of \thref{thm:combiso}, this functor may be identified with a functor of the form $\widehat\rF_{\mcL}$ with respect to the open cover $\sU=\{U_\sigma\}_{\sigma\in\Sigma_{\max}}$ of $X_\Sigma$, allowing us to apply the setup of \S\ref{sec:defeq}. However, we will think about $\cDef_\Sigma$ instead as in Definition \ref{defn:cdef} with respect to the various closed covers $\sV_{\rho,\bu}$, which are also indexed by elements of $\Sigma_{\max}$.

To start solving the deformation equation $\eqref{eq:defeq2}$, we need to fix bases for the tangent and obstruction spaces of $\cDef_\Sigma$.
We make this explicit here.
By Lemma \ref{lemma:tangentspace}, \thref{exactseqlocal}, and Equation \eqref{iso1} we obtain the tangent space
\[\rT^1 \cDef_{\Sigma}\cong  \bigoplus_{\rho\in\Sigma(1), \bu\in M } \widetilde{H}^{0}(V_{\rho,\bu},\KK). \]
To any connected component $C$ of the simplicial complex $V_{\rho,\bu}$ we associate a zero cocycle  $\theta_C \in \check{Z}^0(\sV_{\rho,\bu},\KK)$ as follows:
\[
\{\theta_C\}_{\sigma}=  
\begin{cases}
	1 & \textrm{if} \; \sigma\in \Sigma_{\max} \; \textrm{and} \; C\cap \sigma\not= \emptyset,\\
	0 & \textrm{otherwise.}
\end{cases}
\]
The images of $\theta_{C}$ span $\widetilde{H}^0(V_{\rho,\bu},\KK)$ and removing any one of these provides a basis.
Doing this for all $\rho,\bu$ with $\widetilde{H}^{0}(V_{\rho,\bu},\KK)\neq 0$, we obtain a basis \[\theta_1 ,\ldots,\theta_p \] of $\rT^1 \cDef_{\Sigma}$
where each $\theta_i$ is of the form $\theta_{C}\cdot \chi^\bu\cdot f_\rho$ for some $\rho\in\Sigma(1)$, $\bu\in M$, and connected component $C$ of $V_{\rho,\bu}$ (see Convention \ref{convention}).
This will also determine
\[\alpha^{(1)}= \sum_{\ell=1}^{p} t_{\ell}\cdot \theta_{\ell}.\]

An obstruction space for $\cDef_{\Sigma}$ is given by
\[ \bigoplus_{\rho\in\Sigma(1), \bu\in M } \widetilde{H}^{1}(V_{\rho,\bu},\KK),\]
by Lemma \ref{lemma:obstruction}, \thref{exactseqlocal}, and Equation \eqref{iso1}. Before choosing cocycles $\omega_1,\ldots,\omega_q$ whose images give a basis of the obstruction space, 
we fix any $\KK$-linear map 
\[\psi: \bigoplus_{\rho,\bu}\check{C}^1(\sV_{\rho,\bu},\KK) \to \bigoplus_{\rho,\bu} \check{C}^0(\sV_{\rho,\bu},\KK)\]
that is compatible with the direct sum decomposition and such that $d\circ \psi(\omega)=\omega$ if $\omega\in \check{C}^1(\sV_{\rho,\bu},\KK)$ is a coboundary.

Here is one explicit way to do this: fix an ordering of the elements of $\Sigma_{\max}$. For each connected component $C$ of $V_{\rho,\bu}$, this determines a unique cone $\sigma_C$ that is minimal among all cones $\sigma\in\Sigma_{\max}$ intersecting $C$ non-trivially.
For any cone $\sigma$ intersecting $C$, there is a unique sequence $\tau_1=\sigma,\tau_2,\ldots,\tau_k=\sigma_C$ such that for each $i\geq 1$, $\tau_i\cap \tau_{i+1}\cap C\neq \emptyset$, $k$ is minimal, and the sequence is minimal in the lexicographic order with respect to the previous properties. 
Given $\omega\in \check{C}^1(\sV_{\rho,\bu},\KK)$
we then define
\[\psi(\omega)_{\sigma}=\sum_{i=1}^{k-1} \omega_{\tau_{i+1},\tau_{i}}
\]
if $\sigma\cap C\neq \emptyset$ for some connected component $C$, and set $\psi(\omega)_{\sigma}=  0$ otherwise.
Note that in particular, $\psi(\omega)_{\sigma_C}=0$. It is straightforward to verify that $\psi$ has the desired property.

We now choose one-cocycles \[\omega_1,\ldots,\omega_q\in\bigoplus_{\rho\in\Sigma(1), \bu\in M } \check{Z}^1(\sV_{\rho,\bu},\KK)\] whose images in 
\[
 \bigoplus_{\rho\in\Sigma(1), \bu\in M } \widetilde{H}^{1}(V_{\rho,\bu},\KK)
\]
form a basis, such that each $\omega_i$ lies in a single direct summand, and such that $d(\psi(\omega))=0$. From an arbitrary set of cocycles $\omega_1',\ldots,\omega_q'$ whose images form a basis, we may set
\[
\omega_i=\omega_i'-d(\psi(\omega_i'))
\]
to obtain that $d(\psi(\omega_i))=0$.

In this situation, we refer to the corresponding deformation equation \eqref{eq:defeq2} for $\cDef_{\Sigma}$ as the \emph{combinatorial deformation equation}. By \thref{prop:defeqsolving}, we know that for each order $r$ it has a solution 
\begin{align*}
	&\beta^{(r+1)} \in \bigoplus_{\rho,\bu}\check{C}^0(\sV_{\rho,\bu},\KK)\otimes \mfm_{r+1};\qquad
	\gamma_\ell^{(r+1)}\in \mfm_{r+1}.
\end{align*}
In fact, we may obtain a solution using the map $\psi$:
\begin{prop}\label{prop:cdefeqsolving}
	Let $\eta\in \bigoplus_{\rho,\bu}\check{C}^1(\sV_{\rho,\bu},\KK)\otimes \mfm_{r+1}$ be the normal form of $\mfoc(\alpha^{(r)})-\sum_{\ell=1}^qg_{\ell}^{(r)}\omega_\ell$ with respect to $\mfm\cdot J_r$. Then $\beta^{(r+1)}=\psi(\eta)$ and $\gamma_\ell^{(r+1)}$ 
give a solution to the combinatorial deformation equation, where $\gamma_\ell^{(r+1)}$ is determined by
\[
\sum_{\ell=1}^q \gamma_\ell^{(r+1)}\cdot  \omega_\ell=\eta-d(\beta^{(r+1)}).
\]
\end{prop}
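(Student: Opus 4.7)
The strategy is to verify that the prescribed $\beta^{(r+1)}=\psi(\eta)$ together with the $\gamma_\ell^{(r+1)}$ satisfies the combinatorial deformation equation \eqref{eq:defeq2}. By the very definition of the normal form,
\[
\mfoc(\alpha^{(r)})-\sum_{\ell=1}^q g_\ell^{(r)}\omega_\ell\equiv \eta\pmod{\mfm\cdot J_r},
\]
so it suffices to establish the \emph{exact} identity $\eta=d(\psi(\eta))+\sum_\ell \gamma_\ell^{(r+1)}\omega_\ell$ in $\bigoplus_{\rho,\bu}\check{C}^1(\sV_{\rho,\bu},\KK)\otimes\mfm_{r+1}$, together with the containment $\beta^{(r+1)},\gamma_\ell^{(r+1)}\in\mfm_{r+1}$.

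The key step, which I expect to be the main obstacle, is showing that $\eta$ is an actual cocycle, not merely a cocycle modulo $\mfm\cdot J_r$. The inductive hypothesis \eqref{eq:defeq1} gives $\mfoc(\alpha^{(r)})\equiv 0\pmod{J_r}$, so applying \thref{prop:cocycle:fun2}\ref{prop:cocycle:fun2.1} to the small extension $0\to J_r/\mfm J_r\to S/\mfm J_r\to S/J_r\to 0$ with the natural lift $\alpha^{(r)}\bmod \mfm J_r$ shows that $\mfoc(\alpha^{(r)})\bmod \mfm J_r$ is a $1$-cocycle. Since each $\omega_\ell$ is a cocycle, this forces $d\eta\in\bigoplus_{\rho,\bu}\check{C}^2(\sV_{\rho,\bu},\KK)\otimes(\mfm\cdot J_r)$. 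On the other hand, by the very definition of the normal form, $\eta$ is supported on standard monomials of $\mfm\cdot J_r$, and since the \v{C}ech differential $d$ acts only on the cochain index, $d\eta$ is supported on the same standard monomials. Because any element of $\mfm\cdot J_r$ supported entirely on standard monomials must vanish, we conclude $d\eta=0$.

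With $\eta$ a genuine cocycle, the remainder is a clean algebraic splitting. Since $[\omega_1],\ldots,[\omega_q]$ form a basis of $\bigoplus_{\rho,\bu}\widetilde H^1(V_{\rho,\bu},\KK)$ compatibly with the direct sum decomposition and $\eta$ has coefficients in $\mfm_{r+1}$, there exist unique $c_\ell\in\mfm_{r+1}$ and some $\mu\in\bigoplus_{\rho,\bu}\check{C}^0(\sV_{\rho,\bu},\KK)\otimes\mfm_{r+1}$ with $\eta-\sum_\ell c_\ell\omega_\ell=d\mu$. Applying $\psi$ and invoking both of its defining properties --- that $d\psi$ is the identity on coboundaries and that $d\psi(\omega_\ell)=0$ by the choice of the $\omega_\ell$ --- yields
\[
d\psi(\eta)=\sum_\ell c_\ell\, d\psi(\omega_\ell)+d\psi(d\mu)=d\mu,
\]
so $\eta-d\psi(\eta)=\sum_\ell c_\ell\omega_\ell$. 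This matches precisely the equation defining the $\gamma_\ell^{(r+1)}$, forcing $\gamma_\ell^{(r+1)}=c_\ell\in\mfm_{r+1}$; meanwhile $\beta^{(r+1)}=\psi(\eta)\in\bigoplus\check{C}^0\otimes\mfm_{r+1}$ by $\KK$-linearity of $\psi$. Combined with the initial congruence, this yields \eqref{eq:defeq2} and completes the verification.
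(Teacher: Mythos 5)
Your proof is correct, and it follows essentially the same route as the paper's: you establish that $\eta$ is a genuine cocycle and then exploit the two defining properties of $\psi$ (that $d\circ\psi$ is the identity on coboundaries and that $d(\psi(\omega_\ell))=0$) to conclude $\eta-d(\psi(\eta))=\sum_\ell c_\ell\omega_\ell$ and match this with the definition of $\gamma_\ell^{(r+1)}$. The only presentational difference is that the paper's short proof invokes Proposition \ref{prop:defeqsolving}\ref{prop:defeqsolve1} as a black box to obtain some $\beta',\gamma'_\ell$ with $\eta = d(\beta')+\sum_\ell\gamma'_\ell\omega_\ell$ and applies $d\circ\psi$ to both sides, whereas you inline the content of the Appendix \ref{ap:solve} argument — the small extension $0\to J_r/\mfm J_r\to S/\mfm J_r\to S/J_r\to 0$, \thref{prop:cocycle:fun2}\ref{prop:cocycle:fun2.1}, and the standard-monomial observation — before decomposing $\eta$ directly via the basis $\{[\omega_\ell]\}$.
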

\begin{proof}
	By \thref{prop:defeqsolving}\ref{prop:defeqsolve1}, there exists
	\begin{align*}
		\beta' \in \bigoplus_{\rho,\bu}\check{C}^0(\sV_{\rho,\bu},\KK)\otimes \mfm_{r+1}; \quad	\gamma_\ell'\in \mfm_{r+1}
	\end{align*}
	such that	
	\begin{equation*}
		\eta= d(\beta')+\sum_{\ell=1}^q \gamma_\ell' \cdot \omega_{\ell}. 
	\end{equation*}
	Applying $d\circ \psi$ to both sides we obtain
	\begin{equation*}
		d(\beta^{(r+1)})= d(\beta'). 
	\end{equation*}
	Since the $\omega_\ell$ are linearly independent, this implies $\gamma_\ell^{(r+1)}=\gamma_\ell'$.
\end{proof}
To summarize the contents of Proposition \ref{prop:cdefeqsolving}, in order to solve the combinatorial deformation equation, we only need to reduce $\mfoc(\alpha^{(r)})-\sum_{\ell=1}^qg_{\ell}^{(r)}\cdot\omega_\ell$ to its normal form with respect to $\mfm\cdot J_r$ (an unavoidable algebraic step), and then apply the map $\psi$ (a purely combinatorial step). 
By \thref{hull}, iteratively solving the combinatorial deformation equation gives us a procedure for computing the hull of $\cDef_\Sigma$.
We will do this explicitly in several examples in \S\ref{sec:rank3ctd}.

\subsection{Higher order obstructions}\label{sec:obs}
To solve the combinatorial deformation equation discussed in \S \ref{sec:cde} we have to compute $\mfp(s(\alpha))$. In this section, we will derive a general formula for $\mfp(s(\alpha))$ which is of theoretical interest and present explicit formulas for lower order terms.

We first establish some notation. For $w=(w_1,\ldots,w_p)\in \ZZ^p_{\geq0}$, we denote $t_1^{w_1}\ldots t_p^{w_p}$ by $t^{w}$. We choose $\theta_1,\ldots,\theta_p$ using the construction in \S\ref{sec:cde}.  Let $\varphi \in \Hom(\ZZ^p,M)$ be the map sending the $\ell$-th basis vector of $\ZZ^p$ to the degree in $M$ of $\theta_\ell$. Consider 
\[
\alpha=\sum_{\rho\in\Sigma(1)} \sum_{w\in\ZZ_{\geq 0}^p \setminus \{0\}} c^w_{\rho}\cdot t^{w}\cdot  \chi^{\varphi(w)}\cdot f_\rho \in \bigoplus_{\substack{\rho\in\Sigma(1) \\ \bu\in M}}\check{C}^0(\V_{\rho,\bu},\KK)\otimes \mfm,
\]
where $c^w_{\rho}$ is a cochain in $\check{C}^0(\V_{\rho,\varphi(w)},\KK)$ and $\chi^{\varphi(w)}\cdot f_{\rho}$ specifies the summand in which $c_\rho^w$ lies (see Convention \ref{convention}). 

\begin{defn}
	For integers $d\geq 1$ and $1\leq k \leq d$ we define the set $\Delta_{d,k}$ as follows:
	\[
	\Delta_{d,k}=\left\{(a_1,\ldots,a_{d})\in \ZZ_{\geq 1}^{d}\ :\ \begin{array}{l l}
		a_i <i\ \textrm{or}\ a_i=k-\displaystyle\sum_{\substack {j>i\\ j<  a_j}} a_j-j&\textrm{if}\ i<k\\
		a_i=k & \textrm{if}\ i=k\\
		a_i< i &\textrm{if}\ i> k 
	\end{array} \right\}.
	\]
	Likewise, for $a\in \Delta_{d,k}$, we set
	\[
	\sgn(a)=(-1)^{\#\{i\ |\ a_i>i\}}.
	\]
\end{defn}

\begin{defn}
	For an integer $d\geq 1$ and $w \in\ZZ_{\geq 0}^p \setminus \{0\}$ we define the set $\nabla_{w,d}$ as follows: 
	\[
	\nabla_{w,d}=\left\{\vec \bw=(\bw_1,\ldots,\bw_d)\in (\ZZ_{\geq 0}^p\setminus\{0\})^d\ \bigg|\ \sum_{i=1}^d \bw_i=w\right\}.
	\]
\end{defn}

\begin{example}
	Here we give some examples of $\Delta_{d,k}$ and $\nabla_{w,d}$. For $d=3$ and $k=1,2,3$ we have the following sets:
	\begin{align*}
		&\Delta_{3,1}=\{(1,1,1),(1,1,2)\}\\
		&\Delta_{3,2}=\{(2,2,1),(2,2,2)\} \\
		&\Delta_{3,3}=\{(3,1,3),(2,3,3)\}.
	\end{align*}
	For $w=(1,1,1)$ and $d=2,3$, we have the following sets:
	\begin{align*}
		\nabla_{(1,1,1),3}&=\big\{ \big(\pi(e_1),\pi(e_2),\pi(e_3)\big) :  \pi\in S_3\}\\
		\nabla_{(1,1,1),2}&=\big\{ \big(\pi(e_1),\pi(e_2+e_3)\big) :  \pi\in S_3\}.
	\end{align*}
	Here, $\pi(e_i)$ denotes the image of the standard basis vector $e_i$ of $\RR^3$ under the action of $\pi\in S_3$, where $S_3$ is the symmetric group on three elements.
	These sets appear in the formula for the coefficient of 
	$t_1t_2t_3$ in Table \vref{Table:obspoly}.
\end{example}

Consider any BCH formula
\[
x\bch y=\sum_{d \geq 1} \sum_{\sigma(x,y)\in\mfS_d} b_{\sigma} [\sigma(x,y)]
\]
where $b_\sigma\in \QQ$, $\mfS_d$ is some set of words $\sigma(x,y)$ of length $d$  in $x$ and $y$, and 
$[\sigma(x,y)]$ denotes the iterated Lie bracket (see notation following \eqref{dynkin}); we may for example take Dynkin's formula \eqref{dynkin}. Set  
\[\sgn(\sigma)=(-1)^{\#\{i\ | \sigma_i=x\}}.\]
We will use the notation $\vec\rho=(\rho_1,\ldots,\rho_d)$ for a $d$-tuple of rays of $\Sigma$. In this section, for compactness of notation and to avoid confusion with elements of $\mfS_d$, we will denote elements of $\Sigma_{\max}$ by $i$ and $j$.

\begin{thm}\thlabel{thm:obsformula}
	 The coefficient of $t^w$ in $\mfp(s(\alpha))_{ij}$ is the product of $\chi^{\varphi(w)}$
	with
	\begin{align*}
		\sum_{\substack{d\geq 1\\ \vec \bw \in \nabla_{w,d}\\
				\vec\rho\in\Sigma(1)^d}}
		\left(\sum_{\sigma\in \mfS_d} \sgn(\sigma) b_\sigma \prod_{k=1}^d \big(c^{\bw_k}_{\rho_k}\big)_{\sigma(i,j)_{d-k+1}}\right)
		 \Biggl(
		\sum_{\substack{k=1\ldots d\\a\in\Delta_{d,k}}}\sgn(a)\prod_{\ell\neq k}\rho_{\ell}(\varphi(\bw_{a_\ell})) \cdot f_{\rho_k}
		\Biggr).
	\end{align*}
\end{thm}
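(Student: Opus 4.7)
The plan is to expand $\mfp(s(\alpha))_{ij} = -s(\alpha)_i \bch s(\alpha)_j$ term-by-term using a fixed BCH formula and match each resulting monomial contribution with the claimed expression. Write $x = -s(\alpha)_i$ and $y = s(\alpha)_j$; since $s$ is $\KK$-linear, both are explicit sums over $(\rho,\bw)$ of terms $\pm(c_\rho^\bw)_{?}\cdot t^\bw\cdot\chi^{\varphi(\bw)}f_\rho$, with a sign only on $x$ and subscript $?$ equal to $i$ or $j$ respectively. Substituting into the BCH expansion $x\bch y = \sum_d\sum_{\sigma\in\mfS_d}b_\sigma[\sigma(x,y)]$ and using multilinearity of the iterated brackets, every contribution to the coefficient of $t^w$ is indexed by a choice of $d\geq1$, $\vec\bw\in\nabla_{w,d}$, $\vec\rho\in\Sigma(1)^d$, and $\sigma\in\mfS_d$. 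The $d$ minus signs attached to occurrences of $x$ collapse into the single scalar $\sgn(\sigma)$, and the $d$ cochain evaluations assemble into $\prod_k(c_{\rho_k}^{\bw_k})_{\sigma(i,j)_{d-k+1}}$, where the reversal $d-k+1$ reflects the convention (matching, for instance, Dynkin's formula) that $\sigma$ is written outermost-to-innermost while the theorem labels the $(\rho_k,\bw_k)$ innermost-to-outermost.

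After this bookkeeping, the theorem reduces to a purely Lie-algebraic identity: for each $\vec\rho$ and $\vec\bw$, the right-nested iterated bracket
\[
[\chi^{\varphi(\bw_d)}f_{\rho_d},[\chi^{\varphi(\bw_{d-1})}f_{\rho_{d-1}},[\cdots,[\chi^{\varphi(\bw_2)}f_{\rho_2},\chi^{\varphi(\bw_1)}f_{\rho_1}]]]]
\]
equals $\chi^{\varphi(w)}\sum_{k=1}^d C_k^{(d)}f_{\rho_k}$ with $C_k^{(d)} = \sum_{a\in\Delta_{d,k}}\sgn(a)\prod_{\ell\neq k}\rho_\ell(\varphi(\bw_{a_\ell}))$. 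I would prove this identity by induction on $d$. The base case $d=1$ is immediate from $\Delta_{1,1}=\{(1)\}$. For the inductive step, apply $[\chi^{\varphi(\bw_{d+1})}f_{\rho_{d+1}},-]$ to the length-$d$ bracket using the rule $[\chi^\bu f_\rho,\chi^{\bu'}f_{\rho'}] = \rho(\bu')\chi^{\bu+\bu'}f_{\rho'} - \rho'(\bu)\chi^{\bu+\bu'}f_\rho$; this splits each summand into a contribution $+C_k^{(d)}\rho_{d+1}(\sum_{l\leq d}\varphi(\bw_l))$ on $f_{\rho_k}$ for $k\leq d$ and a contribution $-C_k^{(d)}\rho_k(\varphi(\bw_{d+1}))$ on $f_{\rho_{d+1}}$.

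The main obstacle is identifying each of these two contributions with the correct $C_k^{(d+1)}$, which demands unraveling the intricate definition of $\Delta_{d+1,k}$ and tracking signs. For $k\leq d$, the expansion $\rho_{d+1}(\sum_l\varphi(\bw_l)) = \sum_l\rho_{d+1}(\varphi(\bw_l))$ yields a bijection $a\leftrightarrow a'$ extending $a\in\Delta_{d,k}$ to $a'\in\Delta_{d+1,k}$ by appending $a'_{d+1}=l\in\{1,\ldots,d\}$; the new entry is not descending, the old conditions at positions $i\leq k$ are unchanged because the sum $\sum_{d+1\geq j>i,\,j<a'_j}(a'_j-j)$ picks up no new term at $j=d+1$, and $\sgn$ is preserved. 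For $k=d+1$, the bijection sends each pair $(k',a)$ with $a\in\Delta_{d,k'}$ to $a'\in\Delta_{d+1,d+1}$ by setting $a'_{k'}=d+1$ and $a'_{d+1}=d+1$; the key verification is that the arithmetic identity defining $\Delta_{d+1,d+1}$ at positions $i\leq k'$ reduces to the old identity defining $\Delta_{d,k'}$, precisely because the single new contribution $(d+1)-k'$ coming from $j=k'$ cancels the shift from base $k'$ to base $d+1$. Surjectivity follows from the observation that every $a'\in\Delta_{d+1,d+1}$ has a unique position $k'\leq d$ with $a'_{k'}=d+1$, namely the maximal position $\leq d$ at which $a'$ is descending; the added descending entry at $k'$ flips $\sgn(a)$ into $\sgn(a')$, supplying the minus sign needed to match the $-\rho_k(\varphi(\bw_{d+1}))$ term in the direct computation.
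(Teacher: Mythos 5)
Your proof is correct and follows the same two-step strategy as the paper: expand $\mfp(s(\alpha))_{ij}$ via a fixed BCH formula and multilinearity (collecting the signs from $-s(\alpha)_i$ into $\sgn(\sigma)$ and the cochain factors with the index reversal $d-k+1$), then reduce to the Lie-algebraic identity that is exactly the paper's Proposition~\ref{prop:iteratedlie}, which you prove by the same induction on word length using the maps $\pi_k$ (your ``append $a'_{d+1}=l$'' extension) and $\hat\pi_k$ (your ``set $a'_{k'}=a'_{d+1}=d+1$'' map) on the sets $\Delta_{d,k}$. The only difference is that you spell out the verification that these maps land in $\Delta_{d+1,k}$ and $\Delta_{d+1,d+1}$ in a little more detail than the paper, which simply calls these relations ``straightforward''; your terminology ``descending'' for positions with $a'_i>i$ is nonstandard but the intent is clear and the underlying claim (uniqueness of $k'\le d$ with $a'_{k'}=d+1$, identified as the maximal such position) is correct.
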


We postpone the proof of \thref{thm:obsformula} until the end of this section. We first proceed to describe the explicit formulas for lower order terms. Using the notation

{\scriptsize
\begin{align*}
	\alpha^{(1)}_i &= t_1\cdot\chi^{\bu_1}\cdot c^{(1,0)}_{1,i}\cdot  f_1 + t_2\cdot\chi^{\bu_2}\cdot c^{(0,1)}_{2,i}\cdot  f_2\\
	\alpha^{(2)}_i&= \alpha^{(1)}_i+
	t_1t_2\cdot\chi^{\bu_1+\bu_2}\cdot (c^{(1,1)}_{1,i}\cdot f_1+ c^{(1,1)}_{2,i}\cdot f_2)\\
	\alpha^{(3)}_i&= \alpha^{(2)}_i+ t_1^2t_2\cdot\chi^{2\bu_1+\bu_2}\cdot (c^{(2,1)}_{1,i}\cdot f_1+ c^{(2,1)}_{2,i}\cdot f_2)+ t_1t_2^2\cdot\chi^{\bu_1+2\bu_2}\cdot (c^{(1,2)}_{1,i}\cdot f_1+ c^{(1,2)}_{2,i}\cdot f_2)
\end{align*}
}
\noindent
we list the formulas for the coefficients of  $t_1t_2,t_1t_2^2,t_1t_2^3$ and $t_1^2t_2^2$ in Table~\ref{Table:obspoly}. By applying $\lambda$ to the coefficient of $t_1t_2$ in $\mfp(s(\alpha^{(1)}))_{ij}$, we recover the combinatorial cup-product from \cite[Theorem 4.3]{ilten3}. Similarly, using the notation
{\scriptsize
\begin{align*}
	\alpha^{(2)}_i &= t_1\cdot\chi^{\bu_1}\cdot c^{e_1}_{1,i}\cdot  f_1 + t_2\cdot\chi^{\bu_2}\cdot c^{e_2}_{2,i}\cdot  f_2+ t_3\cdot\chi^{\bu_3}\cdot c^{e_3}_{3,i}\cdot  f_3+ \\
	&t_1t_2\cdot\chi^{\bu_1+\bu_2}\cdot (c^{e_1+e_2}_{1,i}\cdot f_1+ c^{e_1+e_2}_{2,i}\cdot f_2) + t_1t_3\cdot\chi^{\bu_1+\bu_3}\cdot (c^{e_1+e_3}_{1,i}\cdot f_1+ c^{e_1+e_3}_{3,i}\cdot f_3)\\
	&+ t_2t_3\cdot\chi^{\bu_2+\bu_3}\cdot (c^{e_2+e_3}_{2,i}\cdot f_2+ c^{e_2+e_3}_{3,i}\cdot f_3)
\end{align*}
}
we also list a formula for the coefficient of $t_1t_2t_3$ of $\mfp(\alpha^{(2)})_{ij}$ in Table \ref{Table:obspoly}.
We thus have explicit formulas for all obstructions of third order, and fourth order obstructions involving only two deformation directions. In theory, we could write down similar formulas for higher order obstructions using \thref{thm:obsformula}, but they become increasingly large. We note that unlike for the cup product case, the formulas for obstructions of degree larger than two involve not only first order deformations but also higher order perturbation data.

\begin{table}[htbp]
	\centering
	\caption{List of lower-order obstruction polynomials, with contributions solely from first order terms on first line}
	\scriptsize
	\label{Table:obspoly}
	\resizebox{\textwidth}{!}{%
		\begin{tabular}{@{}p{\textwidth}@{}}
			\toprule	
			coefficient of 
			$t_1t_2\cdot \chi^{\bu_1+\bu_2}$ in 
			$\mfp(s(\alpha^{(1)}))_{ij}$ is
			\vspace{0.5ex}
			\begin{align*}
				\scriptstyle\frac{1}{2}\Big(c^{(1,0)}_{1,i}\cdot c^{(0,1)}_{2,j}- c^{(1,0)}_{1,j}\cdot c^{(0,1)}_{2,i}    \Big) \cdot (\rho_2(\bu_1)\cdot f_1- \rho_1(\bu_2)\cdot f_2)
			\end{align*}\\
			\midrule
			
			coefficient of 
			$t_1t_2^2\cdot \chi^{\bu_1+2\bu_2}$ in 
			$\mfp(s(\alpha^{(2)}))_{ij}$ is
			\vspace{0.5ex}
			\begin{align*}
				\scriptstyle \frac{-1}{12} \big\{c^{(1,0)}_{1,i}\cdot c^{(0,1)}_{2,j}- c^{(1,0)}_{1,j}\cdot c^{(0,1)}_{2,i} \big \}\cdot \big \{c^{(0,1)}_{2,i}+ c^{(0,1)}_{2,j} \big\}\cdot  \rho_2(\bu_1)\cdot \big\{\rho_2(\bu_1+\bu_2)\cdot f_1 - 2 \rho_1(\bu_2)\cdot f_2 \big\} \\\\
				{\scriptstyle + \frac{1}{2} \big\{c^{(1,1)}_{1,i}\cdot c^{(0,1)}_{2,j}- c^{(1,1)}_{1,j}\cdot c^{(0,1)}_{2,i} \big \}\cdot  \big\{\rho_2(\bu_1+\bu_2)\cdot f_1  -\rho_1(\bu_2) \cdot f_2\big\} }
				{\scriptstyle + \frac{1}{2} \big\{c^{(1,1)}_{2,i}\cdot c^{(0,1)}_{2,j}- c^{(1,1)}_{2,j}\cdot c^{(0,1)}_{2,i} \big \}  \cdot\rho_2(\bu_1)\cdot  f_2 }
			\end{align*} \\
			\midrule
			
			coefficient of 
			$t_1t_2^3\cdot \chi^{\bu_1+3\bu_2}$ in 
			$\mfp(s(\alpha^{(3)}))_{ij}$ is
			\vspace{0.5ex}
			\begin{align*}
				\scriptstyle \frac{1}{24} \big\{c^{(1,0)}_{1,i}\cdot c^{(0,1)}_{2,j}- c^{(1,0)}_{1,j}\cdot c^{(0,1)}_{2,i} \big\}\cdot c^{(0,1)}_{2,i}\cdot c^{(0,1)}_{2,j}\cdot 
				 \rho_2(\bu_1)\cdot \rho_2(\bu_1+\bu_2)\cdot  \big\{\rho_2(\bu_1+2\bu_2) \cdot f_1 - 3 \rho_1(\bu_2)\cdot f_2 \big\} 
				\\\\
				{\scriptstyle - \frac{1}{12} \big\{c^{(1,1)}_{1,i}\cdot c^{(0,1)}_{2,j}- c^{(1,1)}_{1,j}\cdot c^{(0,1)}_{2,i} \big\} \cdot \big\{c^{(0,1)}_{2,i}+ c^{(0,1)}_{2,j}\big\} 
				 \cdot \rho_2(\bu_1+\bu_2)\cdot \big \{\rho_2(\bu_1+2\bu_2)\cdot f_1 -2 \rho_1(\bu_2) \cdot f_2\big \}} \\
				{\scriptstyle - \frac{1}{12}\cdot \big\{c^{(1,1)}_{2,i}\cdot c^{(0,1)}_{2,j}- c^{(1,1)}_{2,j}\cdot c^{(0,1)}_{2,i} \big\} \cdot \big\{c^{(0,1)}_{2,i}+ c^{(0,1)}_{2,j}\big\} 
					\scriptstyle \cdot \rho_2(\bu_1+\bu_2) \cdot  \rho_2(\bu_1)\cdot f_2 } \\
				{\scriptstyle + \frac{1}{2} \big\{c^{(1,2)}_{1,i}\cdot c^{(0,1)}_{2,j} - c^{(1,2)}_{1,j}\cdot c^{(0,1)}_{2,i} \big\}  \scriptstyle  \big \{\rho_2(\bu_1+2\bu_2)\cdot f_1 - \rho_1(\bu_2) \cdot f_2\big \}} 	
				{\scriptstyle + \frac{1}{2} \big\{c^{(1,2)}_{2,i}\cdot c^{(0,1)}_{2,j} - c^{(1,2)}_{2,j}\cdot c^{(0,1)}_{2,i} \big\}  \rho_2(\bu_1+\bu_2) \cdot f_2}
			\end{align*} \\
			\midrule
			
			coefficient of 
			$t_1^2t_2^2\cdot \chi^{2\bu_1+2\bu_2}$ in 
			$\mfp(s(\alpha^{(3)}))_{ij}$ is
			\vspace{0.5ex}
			\begin{align*}
				\scriptstyle \frac{1}{12} \big\{c^{(1,0)}_{1,i}\cdot c^{(0,1)}_{2,j}- c^{(1,0)}_{1,j}\cdot c^{(0,1)}_{2,i} \big\}\cdot  
				\big\{c^{(1,0)}_{1,i}\cdot c^{(0,1)}_{2,j}+ c^{(1,0)}_{1,j}\cdot c^{(0,1)}_{2,i} \big\}\cdot 
				\rho_2(\bu_1)\cdot \rho_1(\bu_2)\cdot  \big\{ \rho_2(2\bu_1+\bu_2)\cdot f_1 - \rho_1(\bu_1+2\bu_2) \cdot f_2 \big\} 
				\\\\
				{\scriptstyle- \frac{1}{12} \big\{c^{(1,1)}_{1,i}+c^{(1,1)}_{1,j} \big\} \cdot \big\{c^{(1,0)}_{1,i}\cdot c^{(0,1)}_{2,j}- c^{(1,0)}_{1,j}\cdot c^{(0,1)}_{2,i} \big\}\cdot 
					\rho_1(\bu_2)\cdot  \big\{\rho_2(\bu_1+\bu_2)  \cdot f_1-  \rho_1(\bu_1+\bu_2)\cdot f_2 \big\} }
				\\
				{\scriptstyle- \frac{1}{12} \big\{c^{(1,1)}_{2,i}+c^{(1,1)}_{2,j} \big\} \cdot \big\{c^{(1,0)}_{1,i}\cdot c^{(0,1)}_{2,j}- c^{(1,0)}_{1,j}\cdot c^{(0,1)}_{2,i} \big\}\cdot 
					\rho_2(\bu_1)\cdot  \big\{\rho_2(\bu_1+\bu_2)  \cdot f_1-  \rho_1(\bu_1+\bu_2)\cdot f_2 \big\} }
				\\   
				{\scriptstyle- \frac{1}{12} \big\{c^{(1,0)}_{1,i}+c^{(1,0)}_{1,j} \big\} \cdot \big\{c^{(1,1)}_{1,i}\cdot c^{(0,1)}_{2,j}- c^{(1,1)}_{1,j}\cdot c^{(0,1)}_{2,i} \big\}\cdot 
					\rho_1(\bu_2)\cdot  \big\{\rho_2(3\bu_1+2\bu_2)  \cdot f_1-  \rho_1(\bu_1+2\bu_2)\cdot f_2 \big\} }
				\\ 
				{\scriptstyle- \frac{1}{12} \big\{c^{(1,0)}_{1,i}+c^{(1,0)}_{1,j} \big\} \cdot \big\{ c^{(0,1)}_{2,i} \cdot c^{(1,1)}_{2,j} - c^{(0,1)}_{2,j} \cdot c^{(1,1)}_{2,i} \big\}\cdot 
					\rho_2(\bu_1)\cdot  \big\{\rho_2(\bu_1)  \cdot f_1-  \rho_1(\bu_1+2\bu_2)\cdot f_2 \big\} }
				\\ 
				{\scriptstyle- \frac{1}{12} \big\{c^{(0,1)}_{2,i}+c^{(0,1)}_{2,j} \big\} \cdot \big\{c^{(1,1)}_{1,i}\cdot c^{(1,0)}_{1,j}- c^{(1,1)}_{1,j}\cdot c^{(1,0)}_{1,i} \big\}\cdot 
					\rho_1(\bu_2)\cdot  \big\{\rho_2(2\bu_1+\bu_2)  \cdot f_1-  \rho_1(\bu_2)\cdot f_2 \big\}} 
				\\ 
				{\scriptstyle - \frac{1}{12} \big\{c^{(0,1)}_{2,i}+c^{(0,1)}_{2,j} \big\} \cdot \big\{c^{(1,1)}_{2,i}\cdot c^{(1,0)}_{1,j}- c^{(1,1)}_{2,j}\cdot c^{(1,0)}_{1,i} \big\}\cdot 
					\rho_2(\bu_1)\cdot  \big\{-\rho_2(2\bu_1+\bu_2)  \cdot f_1+ \rho_1(2\bu_1+3\bu_2)\cdot f_2 \big\} }
				\\
				{\scriptstyle+ \frac{1}{2}\cdot
					\big\{ c^{(1,1)}_{1,i}\cdot c^{(1,1)}_{2,j}- c^{(1,1)}_{1,j}\cdot c^{(1,1)}_{2,i} \big\} \cdot
					\big\{  \rho_2(\bu_1+\bu_2) \cdot f_1 -\rho_1(\bu_1+\bu_2) \cdot f_2 \big\}} 
				\\
				{\scriptstyle+\frac{1}{2}\cdot
					\big\{ c^{(1,2)}_{2,i} \cdot  c^{(1,0)}_{1,j}- c^{(1,2)}_{2,j} \cdot c^{(1,0)}_{1,i} \big \} \cdot
					\big\{-\rho_2(\bu_1) \cdot f_1+ \rho_1(\bu_1+2\bu_2)\cdot f_2 \big\} 
					+
					\big\{c^{(1,2)}_{1,i}\cdot c^{(1,0)}_{1,j}- c^{(1,2)}_{1,j}\cdot c^{(1,0)}_{1,i} \big\}
					\cdot  \rho_1(\bu_2)\cdot f_1}
				\\
				{	\scriptstyle+\frac{1}{2}\cdot
					\big\{c^{(2,1)}_{1,i}\cdot c^{(0,1)}_{2,j}- c^{(2,1)}_{1,j}\cdot c^{(0,1)}_{2,i} \big\}
					\cdot
					\big\{\rho_2(2\bu_1+\bu_2)  \cdot f_1-  \rho_1(\bu_2)\cdot f_2 \big\} 
					\scriptstyle+
					\big\{c^{(2,1)}_{2,i}\cdot c^{(0,1)}_{2,j}- c^{(2,1)}_{2,j}\cdot c^{(0,1)}_{2,i} \big\}
					\cdot  \rho_2(\bu_1)\cdot f_2}
				\\
			\end{align*} \\
			\midrule
			
			coefficient of 
			$t_1t_2t_3\cdot \chi^{\bu_1+\bu_2+\bu_3}$ in 
			$\mfp(s(\alpha^{(2)}))_{ij}$ is
			\vspace{0.5ex}
			\begin{align*}
				\scriptstyle \frac{1}{12}\cdot \sum_{\pi \in S_3} c^{\pi(e_1)}_{\pi(1),j}\cdot c^{\pi(e_2)}_{\pi(2),i}\cdot \big \{c^{\pi(e_3)}_{\pi(3),i}+ c^{\pi(e_3)}_{\pi(3),j} \big \}  \cdot
				\big\{ \big(\rho_{\pi(2)}(\bu_{\pi(1)})\cdot \rho_{\pi(3)}(\bu_{\pi(1)})+ \rho_{\pi(2)}(\bu_{\pi(1)})\cdot \rho_{\pi(3)}(\bu_{\pi(2)})\big)\cdot f_{\pi(1)}
				\\ 
				\scriptstyle+ \big(-\rho_{\pi(1)}(\bu_{\pi(2)})\cdot \rho_{\pi(3)}(\bu_{\pi(1)})- \rho_{\pi(1)}(\bu_{\pi(2)})\cdot \rho_{\pi(3)}(\bu_{\pi(2)})\big)\cdot f_{\pi(2)}
				\\
				\scriptstyle +\big(-\rho_{\pi(1)}(\bu_{\pi(3)})\cdot \rho_{\pi(2)}(\bu_{\pi(1)})+ \rho_{\pi(1)}(\bu_{\pi(2)})\cdot \rho_{\pi(2)}(\bu_{\pi(3)})\big)\cdot f_{\pi(3)} \big \}
				\\\\
				{\scriptstyle
					+\frac{1}{2}\cdot \sum_{\pi \in S_3} \big\{c^{\pi(e_1)}_{\pi(1),i}\cdot c^{\pi(e_2)+\pi(e_3)}_{\pi(2),j}- c^{\pi(e_1)}_{\pi(1),j}\cdot c^{\pi(e_2)+\pi(e_3)}_{\pi(2),i}\big\} \cdot \big\{\rho_{\pi(2)}(\bu_{\pi(1)})\cdot f_{\pi(1)}- \rho_{\pi(1)}(\bu_{\pi(2)}+ \bu_{\pi(3)})\cdot f_{\pi(2)} \big\} 
				}   
			\end{align*}
			\\
			\bottomrule
		\end{tabular}
	}
\end{table}

We now start on proving \thref{thm:obsformula}.
Let $\rho_1,\ldots,\rho_{m}$ be not-necessarily distinct rays of $\Sigma$, and $\bv_1,\ldots,\bv_{m}\in M$. We will need an explicit formula for the iterated Lie bracket
\[
[\chi^{\bv_{m}}\cdot f_{\rho_{m}}\ \chi^{\bv_{m-1}}\cdot f_{\rho_{m-1}}\ \cdots \ \chi^{\bv_1}\cdot f_{\rho_1}],
\]
see discussion following \eqref{dynkin} for notation.

\begin{prop}\thlabel{prop:iteratedlie}
	The iterated Lie bracket
	\[
	[\chi^{\bv_m}\cdot f_{\rho_m}\ \chi^{\bv_{m-1}}\cdot f_{\rho_{m-1}}\ \cdots \ \chi^{\bv_1}\cdot f_{\rho_1}]
	\]
	is equal to 
	\[
	\chi^{\bv_1+\cdots+\bv_{m}}\cdot \sum_{k=1}^{m}\left(\sum_{a\in\Delta_{m,k}}\sgn(a)\prod_{i\neq k}\rho_{i}(\bv_{a_i}) \right)\cdot f_{\rho_k}.\]
\end{prop}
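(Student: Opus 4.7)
The plan is to proceed by induction on $m$. The base case $m=1$ is immediate: $\Delta_{1,1} = \{(1)\}$ has sign $+1$ and empty product $1$, so the formula reduces to $\chi^{\bv_1}\cdot f_{\rho_1}$. For the inductive step, I would set
\[
Y := [\chi^{\bv_{m-1}}\cdot f_{\rho_{m-1}}\ \cdots\ \chi^{\bv_1}\cdot f_{\rho_1}],
\]
apply the inductive hypothesis to write $Y = \chi^{\bv_1+\cdots+\bv_{m-1}}\sum_{k=1}^{m-1}C_k^{(m-1)}\cdot f_{\rho_k}$ with $C_k^{(m-1)} = \sum_{a' \in \Delta_{m-1,k}}\sgn(a')\prod_{i\neq k}\rho_i(\bv_{a'_i})$, and then expand $[\chi^{\bv_m}\cdot f_{\rho_m}, Y]$ using bilinearity and the bracket formula of Definition \ref{bilinear}. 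The result splits into two pieces: for each $k<m$, the summand $C_k^{(m-1)}\cdot f_{\rho_k}$ picks up a factor $\rho_m(\bv_1+\cdots+\bv_{m-1})$, and a new $f_{\rho_m}$-summand of the form $-\sum_{k<m}\rho_k(\bv_m)\cdot C_k^{(m-1)}$ appears.

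Matching this against the claimed formula reduces to two combinatorial identities. For $k<m$, the identity
\[
\sum_{a\in\Delta_{m,k}}\sgn(a)\prod_{i\neq k}\rho_i(\bv_{a_i}) = \rho_m(\bv_1+\cdots+\bv_{m-1})\cdot C_k^{(m-1)}
\]
follows from a sign- and product-preserving bijection $\Delta_{m,k}\leftrightarrow \Delta_{m-1,k}\times\{1,\ldots,m-1\}$ sending $a$ to $((a_1,\ldots,a_{m-1}),a_m)$. Here the key point is that since $a_m<m$, the index $j=m$ never satisfies $j<a_j$, so the sums $\sum_{j>i,j<a_j}(a_j-j)$ in the definition of $\Delta_{m,k}$ are unaffected by deleting the last coordinate, and $\sgn$ is preserved because $a_m<m$ contributes no new inversion.

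The main obstacle is the case $k=m$, where one must establish
\[
\sum_{a\in\Delta_{m,m}}\sgn(a)\prod_{i<m}\rho_i(\bv_{a_i}) = -\sum_{k'=1}^{m-1}\rho_{k'}(\bv_m)\cdot C_{k'}^{(m-1)}.
\]
The key combinatorial fact is that for any $a\in\Delta_{m,m}$ there is exactly one index $k'<m$ with $a_{k'}=m$. To prove this, I would let $T = \{i<m : a_i \geq i\}$, enumerate $T = \{i_1<\cdots<i_s\}$, and unfold the defining formula $a_i = m - \sum_{j>i,\,j<a_j}(a_j-j)$ from $i_s$ downward: this forces $a_{i_s}=m$ and $a_{i_j}=i_{j+1}$ for $j<s$, so $k'=i_s=\max T$ is the unique such index. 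This yields a bijection $\Delta_{m,m}\leftrightarrow \bigsqcup_{k'<m}\Delta_{m-1,k'}$ via $a\mapsto(k',a')$ where $a'_i=a_i$ for $i\neq k'$ and $a'_{k'}=k'$. Verifying $a'\in\Delta_{m-1,k'}$ is a direct check of the defining conditions, using that the shift of outer constant from $m$ to $k'$ is exactly compensated by removing the contribution $a_{k'}-k' = m-k'$ from the sums for indices $i<k'$. Finally, the sign flips because the single index $k'$ with $a_{k'}>k'$ becomes $a'_{k'}=k'$, producing the required minus sign.
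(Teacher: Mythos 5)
Your proof is correct and follows essentially the same inductive strategy as the paper: the two combinatorial identities you establish are exactly equations (\ref{eq:Delta1}) and (\ref{eq:Delta2}) from the paper's proof, and your bijection $\Delta_{m,m}\leftrightarrow\bigsqcup_{k'<m}\Delta_{m-1,k'}$ is the inverse of the paper's maps $\hat\pi_k$ (while your $\Delta_{m,k}\leftrightarrow\Delta_{m-1,k}\times\{1,\ldots,m-1\}$ repackages the $(m-1)$-to-$1$ map $\pi_k$). The paper asserts these bijection properties as ``straightforward inductive relations'' without justification, whereas you supply the key omitted detail --- the unfolding argument via $T=\{i<m:a_i\geq i\}$ showing there is a unique $k'<m$ with $a_{k'}=m$ --- which is a genuine and worthwhile addition of rigor.
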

\begin{proof}
	The proof is by induction. The base case $m=1$ is immediate. For proving the induction step, it is enough to show that
	\begin{equation}\label{eq:Delta1}
		\sum_{a\in\Delta_{m-1,k}} \sum_{j=1}^{m-1}\sgn(a) \prod_{i\neq k}\rho_{i}(\bv_{a_i})\rho_m(\bv_{j})= \sum_{a\in\Delta_{m,k}} \sgn(a) \prod_{i\neq k}\rho_{i}(\bv_{a_i})
	\end{equation}
	for $k=1,\ldots,m-1$ and
	\begin{equation}\label{eq:Delta2}
		\sum_{k=1}^{m-1}-\rho_k(\bv_m)\cdot \sum_{a\in\Delta_{m-1,k}}\sgn(a)\prod_{i\neq k}\rho_{i}(\bv_{a_i})\cdot = \sum_{a\in\Delta_{m,m}} \sgn(a) \prod_{i\neq m}\rho_{i}(\bv_{a_i}).
	\end{equation}

	To establish this, we use the following straightforward inductive relations on the sets $\Delta_{m,k}$. For $k\leq m-1$ the map
	$\pi_k: \Delta_{m,k} \to \Delta_{m-1,k}$
	defined by \[\pi_k(a) =(a_1,\ldots,a_{m-1})\]
	is $(m-1)$-to-$1$ and satisfies $\sgn(\pi_k(a))=\sgn(a)$. Additionally, for $k\leq m-1$ the map $\hat\pi_k: \Delta_{m-1,k} \to \Delta_{m,m}$ defined by 	\[\hat\pi_k(a)=(a_1,\ldots,a_{k-1},m,a_{k+1},\ldots,a_{m-1},m)\]
is injective, $\sgn(\hat\pi_k(a))=-\sgn(a)$, and we have
	\[\Delta_{m,m}= \bigsqcup_{k=1}^{m-1} \hat \pi_k ( \Delta_{m-1,k}),\]
	where $\bigsqcup$ denotes disjoint union.
The proofs of \eqref{eq:Delta1} and \eqref{eq:Delta2} follow directly from these observations.
\end{proof}

\begin{proof}[Proof of \thref{thm:obsformula}]
We have
	\begin{align*}
		\mfp(\alpha)_{ij}&= \sum_{d \geq 1} \sum_{\sigma(x,y)\in\mfS_d} b_{\sigma} [\sigma(-\alpha_i,\alpha_j)].
	\end{align*}
	Expanding the right-hand side, we obtain that the coefficient of  $t^w$ is \begin{align*}
	\sum_{\substack{d\geq 1\\\vec \bv \in \nabla_{w,d}\\
			\vec\rho\in\Sigma(1)^d}}
	\sum_{\sigma\in \mfS_d} 
	\sgn(\sigma)\cdot b_{\sigma}\cdot \prod_{k=1}^d \Big(c^{\bw_k}_{\rho_k}\Big)_{\sigma(i,j)_{d-k+1}} \cdot \Big
		[\chi^{\varphi(\bw_d)}\cdot f_{\rho_d}\  \cdots \ \chi^{\varphi(\bw_1)}\cdot f_{\rho_1} \Big].
	\end{align*}
	Applying \thref{prop:iteratedlie}, we then obtain that this is equal to $\chi^{\varphi(w)}$
	multiplied with the quantity in the statement of the theorem.
\end{proof}

\subsection{Removing cones}\label{sec:rc}
In \S \ref{sec:cde}, we discussed how to compute the hull of $\cDef_{\Sigma}$ using the combinatorial deformation equation. When $X_{\Sigma}$ has mild singularities, this approach indeed yields the hull of $\Def_{X_{\Sigma}}$, see \thref{mildsing}. Although working with $\cDef_{\Sigma}$ is less complex than $\Def_{X_\Sigma}$, computations still involve dealing with every maximal cone of $\Sigma$ and their pairwise intersections. In this section, we will explore methods to streamline the process of determining the hull of $\cDef_{\Sigma}$ by reducing the number of maximal cones and intersections which we must consider. We will apply these techniques when we compute hull for examples in \S \ref{sec:rank3ctd}.

Throughout this section, $\Sigma$ is any simplicial fan.
\begin{defn}\label{defn:A}
	Let 
	\begin{equation*}
	\A=\{(\rho,\bu)\in\Sigma(1)\times M\ |\  \widetilde{H}^0(V_{\rho,\bu},\KK)\not=0\}.
\end{equation*}
	We let $\Gamma \subseteq \Sigma(1)\times M$ be the smallest set containing $\A$ that satisfies the following:
	\begin{enumerate}
		\item	If $(\rho,\bu),(\rho',\bu')\in \Gamma$, $\rho\neq \rho'$, and $\rho'(\bu)\neq 0$ then $(\rho,\bu+\bu')\in\Gamma$;
		\item If $(\rho,\bu),(\rho,\bu')\in \Gamma$ and $\rho(\bu')\neq \rho(\bu)$ then $(\rho,\bu+\bu')\in\Gamma$.
	\end{enumerate}
	We then define $\mcL_\Gamma= \bigoplus_{(\rho, \bu)\in \Gamma} \CO(D_{\rho})_{\bu}$, where $\CO(D_{\rho})_{\bu}$ is the subsheaf of $\CO(D_\rho)$ defined by
	\[
		\CO(D_\rho)_\bu(U)=\CO(D_\rho)(U)\cap \KK\cdot \chi^\bu.
	\]
\end{defn}
\noindent
It is straightforward to verify that the subsheaf $\mcL_\Gamma$ is stable under the Lie bracket (Definition \ref{bilinear})  on 
$\bigoplus_{\rho \in \Sigma(1)}\CO(D_{\rho}).$

Let $\Sigma'$ be a subfan of $\Sigma$ with $\Sigma_{\max}'\subseteq \Sigma_{\max}$. We say that $\Sigma'$ \emph{covers $\Gamma$} if for every $(\rho,\bu)\in \Gamma$, $V_{\rho,\bu}\subseteq |\Sigma'|$. In this case, we obtain a cover $\sV'_{\rho,\bu}$ of $V_{\rho,\bu}$ by intersecting $V_{\rho,\bu}$ with the cones in $\Sigma_{\max}'$.

\begin{figure}[htbp]
	
	\begin{tikzpicture}[scale=0.6]
		\coordinate (1) at (3,0);
		\coordinate (2) at (0,3);
		\coordinate (3) at (-3,0);
		\coordinate (4) at (0,-3);
		\coordinate (5) at (0,0);

		\draw (1)-- (2)--(3)--(4)--cycle;
		\draw (5)--($(5)+(4,0)$);
		\draw (5)--($(5)+(0,4)$);
		\draw (5)--($(5)+(-4,0)$);
		\draw (5)--($(5)+(0,-4)$);
		
		\draw[fill=black] (1) circle (2pt);
		\node[anchor=north west,font=\tiny] at (1) {1};
		\draw[fill=black] (2) circle (2pt);
		\node[anchor=south west,font=\tiny] at (2) {2};
		\draw[fill=black] (3) circle (2pt);
		\node[anchor=north east,font=\tiny] at (3) {3};
		\draw[fill=black] (4) circle (2pt);
		\node[anchor=north west,font=\tiny] at (4) {4};
		\draw[fill=black] (5) circle (2pt);
		\node[anchor=north west,font=\tiny] at (5) {5};

		\node[anchor=north west,font=\small] at (1,1) {$\sigma_2$};
		\node[anchor=north east,font=\small] at (-1,1) {$\sigma_3$};
		\node[anchor=south east,font=\small] at (-1,-1) {$\sigma_4$};
		\node[anchor=south west,font=\small] at (1,-1) {$\sigma_1$};

	\end{tikzpicture}
	\caption{A representation of the fan in Example \ref{example:Gamma} as an abstract simplicial complex with the ray $\rho_6$ as a vertex at $\infty$ (not to scale).}
	
	\label{fig:cones}
\end{figure}

\begin{example}\label{example:Gamma}
	Fix the lattice $N=\ZZ^3$. We consider a fan $\Sigma$ with six rays, where the generator of the  $i$th ray $\rho_i$ is given by the $i$th column of the following matrix:
	\[
	\begin{pmatrix}
		1 &	0& -1&	0& 0 &  0 \\
		0 &	1&	e& -1& 0 &  0 \\
		0 &	0&	a&	b& 1 & -1
	\end{pmatrix}.
	\]
	We assume that $e,b \geq0$ (the reason for this assumption will be explained  in \S \ref{sec:rank3}).
Rays belong to a common cone of $\Sigma$ if the corresponding set of vertices in Figure \vref{fig:cones} belong to the same simplex, with the ray $\rho_6$ as a vertex at infinity.

	Suppose that we know that the ray-degree pairs $(\rho,\bu)\in \A$  are of the form 
	\[ \Big( \rho_2, (*,*,0) \Big) \quad  \textrm{or} \quad \Big(\rho_5, (*,*,-1) \Big) \quad  \text{or} \quad \Big(\rho_6, (*,*,1) \Big).\] 
	We will establish this in \thref{lemma:SigmaD}.
	It is then straightforward to verify that any element of $\Gamma$ not in $\A$ must be of the form
		\[ \Big( \rho_5, (*,*,-1) \Big) \quad  \textrm{or} \quad \Big(\rho_6, (*,*,1) \Big) \quad  \text{or} \quad \Big(\rho_5, (*,*,0) \Big) \quad  \text{or} \quad \Big(\rho_6, (*,*,0) \Big).\]
	
	Given the above assumption on $\A$, we claim that the fan $\Sigma'$ with maximal cones
	\begin{align*}
		\sigma_1&= \cone(\rho_1,\rho_4,\rho_5), \quad \sigma_2= \cone(\rho_1,\rho_2,\rho_5), \\
		\sigma_3&= \cone(\rho_2,\rho_3,\rho_5), \quad  \sigma_4= \cone(\rho_3,\rho_4,\rho_5).
	\end{align*}
	covers $\Gamma$. Indeed, it is straightforward to see that for $(\rho,\bu)\in\Gamma$, $n_{\rho_6}\notin V_{\rho,\bu}$ and the claim follows.
	\end{example}

\begin{defn}
Suppose that $\Sigma'$ covers $\Gamma$.
	We define the functor 
	\begin{align*}
		\cDef_{\Sigma',\Gamma}(A)&= \{\alpha \in \bigoplus_{(\rho, \bu)\in \Gamma}\check{C}^0(\sV'_{\rho,\bu},\KK)\otimes \mfm_{A} : \mfocp(\alpha)=0 \} /\sim
	\end{align*}
	where $\alpha=\beta$ in $\cDef_{\Sigma',\Gamma}(A)$  if and only if there exists \[\gamma\in \bigoplus_{(\rho,\bu)\in \Gamma}\check{C}^0(\sU',\CO(D_{\rho}))_{\bu}\otimes \mfm_{A}\] such that 
	\[\iota(\gamma)\odot \mfp(s(\alpha)) = \mfp(s(\beta)). \]
	Here $\sU'=\{U_\sigma\}_{\sigma\in\Sigma'_{\max}}$.
	This functor is defined on morphisms in the obvious way.
\end{defn}

\begin{thm}\thlabel{thm:samehull}
	Let $\Sigma$ be a simplicial fan and suppose that $\Sigma'$ covers $\Gamma$.	
	Then there are smooth maps $\rF_{\mcL_\Gamma}\to \cDef_\Sigma$ and $\rF_{\mcL_\Gamma}\to \cDef_{\Sigma', \Gamma}$ that induce isomorphisms on tangent spaces. In particular, if $\rT^1 \cDef_\Sigma$ is finite dimensional, then $\cDef_{\Sigma}$ and $\cDef_{\Sigma',\Gamma}$ have the same hull.
\end{thm}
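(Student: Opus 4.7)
Both smooth maps will be produced from the inclusion of sheaves of Lie algebras $\mcL_\Gamma\hookrightarrow \bigoplus_\rho \CO(D_\rho)$, which is closed under the bracket by the defining closure conditions on $\Gamma$, via the framework of \S\ref{sec:functor1}--\S\ref{sec:functor2}. I interpret ``$\cDef_{\Sigma'}$'' in the statement as $\cDef_{\Sigma',\Gamma}$, since this is what the in-particular conclusion requires. For each map, the strategy is to identify the target with an $\rF_\mcL$-style functor and then verify the hypotheses of the standard smoothness criterion \thref{standardsmooth}.

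\textbf{Map to $\cDef_\Sigma$.} Combining \thref{compa2}, \thref{prop:isosecfunctor}, and \thref{rem:H1vanishing} applied to $\mcK=\bigoplus_\rho \CO(D_\rho)|_{T_N}$ on $X_\Sigma$ yields natural isomorphisms $\cDef_\Sigma \cong \rF_{\bigoplus_\rho \CO(D_\rho)\hookrightarrow\mcK}\cong \rF_{\bigoplus_\rho \CO(D_\rho)}$. Crucially, this chain of identifications (the same one used inside the proof of \thref{thm:combiso}, minus the step involving $\T_X$) requires no cohomological hypotheses on $X_\Sigma$. Composing with the morphism induced by the inclusion of sheaves of Lie algebras produces $\rF_{\mcL_\Gamma}\to\cDef_\Sigma$. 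Since $|\Sigma|$ is starlike with respect to the origin and hence contractible, the isomorphism \eqref{iso1} identifies $\rT^1\rF_{\mcL_\Gamma}$ with $\bigoplus_{(\rho,\bu)\in\Gamma}\widetilde H^0(V_{\rho,\bu},\KK)$ and $\rT^1\cDef_\Sigma$ with $\bigoplus_{(\rho,\bu)}\widetilde H^0(V_{\rho,\bu},\KK)$; using $\A\subseteq \Gamma$ together with the vanishing of $\widetilde H^0(V_{\rho,\bu},\KK)$ outside $\A$, the induced map on tangent spaces is an isomorphism. The corresponding obstruction map $\bigoplus_{(\rho,\bu)\in\Gamma}\widetilde H^1(V_{\rho,\bu},\KK)\hookrightarrow \bigoplus_{(\rho,\bu)}\widetilde H^1(V_{\rho,\bu},\KK)$ is a summand inclusion, hence injective, and \thref{standardsmooth} yields smoothness.

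\textbf{Map to $\cDef_{\Sigma',\Gamma}$.} The same chain of identifications on $X_{\Sigma'}$ with cover $\sU'=\{U_\sigma\}_{\sigma\in\Sigma'_{\max}}$ gives $\cDef_{\Sigma',\Gamma}\cong \rF_{\mcL_\Gamma|_{X_{\Sigma'}},\sU'}$, and the restriction of alternating \v{C}ech cochains $\check{C}^k(\sU,\mcL_\Gamma)\to\check{C}^k(\sU',\mcL_\Gamma|_{X_{\Sigma'}})$ is bracket-compatible and induces the desired morphism $\rF_{\mcL_\Gamma}\to \cDef_{\Sigma',\Gamma}$. By the $M$-grading, checking smoothness and iso on tangents reduces to showing that for every $(\rho,\bu)\in\Gamma$ and $i=1,2$ the degree-$\bu$ restriction
\[ H^i(X_\Sigma,\CO(D_\rho))_\bu \longrightarrow H^i\big(X_{\Sigma'},\CO(D_\rho)_\bu|_{X_{\Sigma'}}\big) \]
is an isomorphism. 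If $\rho\in \Sigma'(1)$, then $\CO(D_\rho)|_{X_{\Sigma'}}$ is the usual toric divisor sheaf on $X_{\Sigma'}$, and the covering hypothesis $V_{\rho,\bu}\subseteq |\Sigma'|$ forces the two simplicial complexes $V_{\rho,\bu}^{(\Sigma)}$ and $V_{\rho,\bu}^{(\Sigma')}$ to coincide as subsets of $N_\RR$, whence both sides identify with $\widetilde H^{i-1}(V_{\rho,\bu},\KK)$ via \eqref{iso1}. If $\rho\notin\Sigma'(1)$, then $n_\rho\notin |\Sigma'|$ (any containment $n_\rho\in\sigma'\in\Sigma'$ would force $\rho$ to be a face of $\sigma'$); combined with $V_{\rho,\bu}\subseteq |\Sigma'|$ this forces $\rho(\bu)\geq -1$, so the condition \eqref{eq:torussection} at $\rho$ becomes vacuous on $U_\sigma$ for $\sigma\in\Sigma'_{\max}$ giving $\CO(D_\rho)_\bu|_{X_{\Sigma'}}\cong (\CO_{X_{\Sigma'}})_\bu$, and a direct inspection shows $V_{\rho,\bu}^{(\Sigma)}=V_\bu^{(\Sigma')}$ as subsets of $N_\RR$. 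Both sides are again identified with $\widetilde H^{i-1}$ of the same subset via \eqref{iso1}.

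\textbf{Same hull.} Finite-dimensionality of $\rT^1\cDef_\Sigma$ passes through the two tangent-space isomorphisms to $\rF_{\mcL_\Gamma}$ and $\cDef_{\Sigma',\Gamma}$, so all three functors admit hulls by Schlessinger's theorem. Since a smooth morphism inducing an isomorphism on tangent spaces transports hulls, any hull of $\rF_{\mcL_\Gamma}$ is simultaneously a hull of $\cDef_\Sigma$ and of $\cDef_{\Sigma',\Gamma}$, and the two therefore share a hull. The principal technical point, and the only place where the covering hypothesis does real combinatorial work, is the case $\rho\notin\Sigma'(1)$ of Step 2, where one must carefully recognise the restriction of $\CO(D_\rho)_\bu$ to $X_{\Sigma'}$ as a graded piece of the structure sheaf and then match the simplicial complexes $V_{\rho,\bu}^{(\Sigma)}$ and $V_\bu^{(\Sigma')}$.
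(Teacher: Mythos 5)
Your proof is correct and follows essentially the same strategy as the paper's: produce a common source $\rF_{\mcL_\Gamma,\sU}$, map it into each of $\cDef_\Sigma$ and $\cDef_{\Sigma',\Gamma}$ (via the inclusion $\mcL_\Gamma\hookrightarrow\bigoplus\CO(D_\rho)$ on one side and restriction of covers on the other, followed in each case by the homotopy-fiber-analogue identifications from \thref{compa2}, \thref{prop:isosecfunctor}, \thref{rem:H1vanishing}), and then apply the standard smoothness criterion with the summand inclusion on obstructions. Your reading of ``$\cDef_{\Sigma'}$'' as ``$\cDef_{\Sigma',\Gamma}$'' is also the correct one.

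The one place where you do more work than necessary is the dichotomy on whether $\rho\in\Sigma'(1)$. The covering hypothesis already forces the subcomplex $V_{\rho,\bu}$ (built from $\Sigma$) to carry the \emph{same} simplicial-complex structure when cut out by $\Sigma'$: if a face $\tau'$ of a simplicial cone $\tau\in\Sigma$ lies in $|\Sigma'|$, take a relative-interior point $x\in\tau'$, note $x\in\sigma'$ for some $\sigma'\in\Sigma'$, and conclude $\sigma'\cap\tau'$ is a face of $\tau'$ containing a relative-interior point, hence equals $\tau'$, so $\tau'\in\Sigma'$. This shows $V^{(\Sigma)}_{\rho,\bu}=V^{(\Sigma')}_{\rho,\bu}$ uniformly, without splitting on whether $n_\rho\in|\Sigma'|$, and the identifications via \eqref{iso1} follow directly from the two short exact sequences of type \eqref{exact} on $\Sigma$ and on $\Sigma'$. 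Your observation that $\CO(D_\rho)|_{X_{\Sigma'}}\cong\CO_{X_{\Sigma'}}$ when $\rho\notin\Sigma'(1)$ is true and interesting, but it is a consequence of the uniform argument rather than a separate case that needs handling. This is the only real divergence from the paper, which simply asserts ``the maps between the tangent and obstruction spaces are isomorphisms since $\Sigma'$ covers $\Gamma$''; your proposal makes explicit what the covering hypothesis buys, which is a useful addition even if slightly longer than optimal.
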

\begin{proof}
	We have the natural map of functors $f_1: \rF_{\mcL_\Gamma,\sU}\to \rF_{\bigoplus \CO(D_{\rho})}$ induced from the injection $\mcL_\Gamma \to \bigoplus \CO(D_{\rho})$ and the cover $\sU$. Consider the open cover $\sU'=\{U_\sigma\}_{\sigma\in\Sigma_{\max}'}$ of $X_{\Sigma'}\subseteq X_\Sigma$. Similar to the construction of the deformation functor $\rF_{\mcL_\Gamma}=\rF_{\mcL_\Gamma,\sU}$  using the \v{C}ech complex $\check{C}^{\bullet}(\sU,\mcL_\Gamma)$, we can define the deformation functor $\rF_{\mcL_\Gamma,\sU'}$  using the \v{C}ech complex $\check{C}^{\bullet}(\sU',\mcL_\Gamma)$.  The natural map on \v{C}ech complexes induces a morphism of functors $f_2:\rF_{\mcL_\Gamma,\sU}\to \rF_{\mcL_\Gamma,\sU'}$.

	To prove the theorem, we will show that $f_1$ and $f_2$ are smooth maps with isomorphisms on the tangent spaces, and there are isomorphisms $g_1$ and $g_2$ as in the following diagram:
	
	\[
	\begin{tikzcd}
		& \rF_{\bigoplus \CO(D_{\rho})} \arrow[r,"g_1","\cong"swap ]  & \cDef_{\Sigma} \\
		\rF_{\mcL_\Gamma,\sU} \arrow[ru,"f_1"] \arrow[rd,"f_2"] & &\\
		& \rF_{\mcL_\Gamma,\sU'} \arrow[r,"g_2","\cong"swap ] & \cDef_{\Sigma',\Gamma}.
	\end{tikzcd}
	\]

	First, consider the map $f_1$. By construction
	\[ \rT^1 \rF_{\mcL_\Gamma}= \bigoplus_{(\rho,\bu)\in \Gamma} \check{H}^1(\sU,\CO(D_{\rho})_\bu)=\bigoplus_{(\rho,\bu) \in\Sigma(1)\times M} \check{H}^1(\sU,\CO(D_{\rho}))_{\bu}= \rT^1 \rF_{\bigoplus \CO(D_{\rho})} \]
	and 
	\[\bigoplus_{(\rho,\bu)\in \Gamma} \check{H}^2(\sU,\CO(D_{\rho})_\bu) \to \bigoplus_{(\rho,\bu) \in\Sigma(1)\times M} \check{H}^2(\sU,\CO(D_{\rho}))_{\bu} \]
	is an inclusion. Thus, by \thref{standardsmooth} the map $f_1$ is smooth. 

	Now consider $f_2$.  The maps between the tangent and obstruction spaces are isomorphisms since $\Sigma'$ covers $\Gamma$, so $f_2$ is also smooth by \thref{standardsmooth}.

	We know that $\cDef_\Sigma$ is the functor $\widehat{\rF}_{\bigoplus \CO(D_{\rho})}$.	Likewise, a straightforward adaptation of  \thref{exactseqlocal} implies that $\cDef_{\Sigma',\Gamma}$ is the functor $\widehat\rF_{\mcL_\Gamma}$ with respect to the open cover $\sU'$. The isomorphisms $g_1$ and $g_2$ thus follows from \thref{compa2}.
\end{proof}

From the above theorem, we can reduce the number of maximal cones that need to be considered. In the combinatorial deformation equation, we also need to compute $\mfoc(\alpha)_{\sigma\tau}$ for every pair $\sigma,\tau$ of maximal cones. The following cocycle property and \thref{prop:closure} reduce the number of cases that need to be calculated. Let $\bigwedge^2 \Sigma_{\max}$ be the set consisting of size two subsets of  $\Sigma_{\max}$.

\begin{defn}
	We say a set $\D \subseteq \bigwedge^2 \Sigma_{\max}$ has the \emph{cocycle property}  if for $\{\sigma,\kappa\}, \{\tau,\kappa\}\in \D$ with $\sigma\cap \tau \subseteq \kappa$, it follows that $\{\sigma,\tau\}\in \D$. For $\D\subseteq \bigwedge^2 \Sigma_{\max}$, we define $\overline{\D}$ to be the smallest set containing $\D$ that has the cocycle property.
\end{defn}

\begin{prop}\thlabel{prop:closure}
	Let $\Sigma$ be a simplicial fan, and let $\D\subseteq \bigwedge^2 \Sigma_{\max}$ be such that $\overline{\D}= \bigwedge^2 \Sigma_{\max}$. Then a cocycle $\omega \in \check{Z}^1(\sV_{\rho,\bu},\KK)$ is determined by $\{\omega_{\sigma\tau}\ |\ \{\sigma,\tau\} \in \D\}$. 
\end{prop}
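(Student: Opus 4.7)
The plan is to prove the statement by induction on a natural filtration of $\overline{\D}$ that realizes it as the iterated closure of $\D$ under the cocycle-property operation. Define
\[
\D_0 = \D, \qquad \D_{n+1} = \D_n \cup \bigl\{\{\sigma,\tau\} \ \big|\ \exists\, \kappa \in \Sigma_{\max}\ \text{with}\ \{\sigma,\kappa\},\{\tau,\kappa\}\in \D_n\ \text{and}\ \sigma\cap\tau\subseteq \kappa\bigr\}.
\]
By minimality of $\overline{\D}$, we have $\bigcup_{n\geq 0} \D_n = \overline{\D} = \bigwedge^2 \Sigma_{\max}$, so it suffices to show by induction on $n$ that $\omega_{\sigma\tau}$ is determined by the given data $\{\omega_{\sigma'\tau'}\}_{\{\sigma',\tau'\}\in\D}$ for every $\{\sigma,\tau\}\in\D_n$. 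The base case $n=0$ is immediate.

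For the inductive step, fix $\{\sigma,\tau\}\in \D_{n+1}\setminus\D_n$ and pick $\kappa$ as in the definition of $\D_{n+1}$. Recall that since all nonempty intersections in the cover $\sV_{\rho,\bu}$ are contractible, each component $\omega_{\sigma\tau}$ is simply a scalar in $\KK$ when $V_{\rho,\bu}\cap \sigma\cap \tau$ is nonempty (and is zero otherwise). The cocycle condition on the triple intersection reads
\[
\omega_{\tau\kappa} - \omega_{\sigma\kappa} + \omega_{\sigma\tau} = 0 \qquad \text{on}\ V_{\rho,\bu}\cap\sigma\cap\tau\cap\kappa.
\]
The key observation is that the hypothesis $\sigma\cap\tau\subseteq \kappa$ gives $\sigma\cap\tau\cap\kappa = \sigma\cap\tau$ and hence $V_{\rho,\bu}\cap\sigma\cap\tau\cap\kappa = V_{\rho,\bu}\cap\sigma\cap\tau$. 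Consequently the identity $\omega_{\sigma\tau} = \omega_{\sigma\kappa} - \omega_{\tau\kappa}$ holds on the full domain of $\omega_{\sigma\tau}$, and by the induction hypothesis the right-hand side is already determined by the given data. This completes the induction and the proof.

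The main conceptual step is the passage from the triple intersection to the double intersection via $\sigma\cap\tau\subseteq \kappa$; this is precisely the point of the cocycle property in the definition of $\overline{\D}$, and it is what allows a genuinely local relation (a priori valid only on $V_{\rho,\bu}\cap\sigma\cap\tau\cap\kappa$) to pin down $\omega_{\sigma\tau}$ over all of $V_{\rho,\bu}\cap\sigma\cap\tau$. No genuine obstacle is expected: the reduction is immediate once the contractibility of intersections is invoked, and consistency of the reconstruction under different choices of $\kappa$ is automatic because $\omega$ is assumed to be a cocycle.
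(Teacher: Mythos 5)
Your proof is correct and follows essentially the same route as the paper's: the same iterated filtration $\D_0\subseteq\D_1\subseteq\cdots$ of $\overline\D$, the same induction, and the same use of the Čech cocycle identity on triples. If anything, your argument is slightly more explicit than the paper's at the key geometric step, where you point out that $\sigma\cap\tau\subseteq\kappa$ forces $V_{\rho,\bu}\cap\sigma\cap\tau\cap\kappa = V_{\rho,\bu}\cap\sigma\cap\tau$, so the cocycle relation on the triple intersection determines $\omega_{\sigma\tau}$ on its entire domain rather than merely on a sub-locus; the paper only notes non-emptiness of the triple intersection and leaves this equality implicit.
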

\begin{proof}
	Let $\D_0=\D$ and define $\D_i$ recursively by
	\[\D_{i+1}=\D_{i}\cup \left\{ \{\sigma,\tau\}\ |\ \{\tau,\kappa\},\{\sigma,\kappa\} \in \D_{i} \; \textrm{with} \; \sigma\cap \tau \subseteq \kappa \right \}.\]
	Since $\bigwedge^2 \Sigma_{\max}$ is finite, after finitely many steps we obtain $\D_{m}=\overline{\D}$. We will prove by induction that for every $\{\sigma,\tau\}\in \D_i$, $\omega_{\sigma\tau}$ is determined by \[\{\omega_{\sigma'\tau'}\ | \{\sigma',\tau'\} \in \D\}.\]
	Clearly, for $i=0$, this is true. Suppose that  $\{\sigma,\tau\}\in \D_i$ for some $i\geq 1$. If $\{\sigma,\tau\}\in \D_{i-1}$ the statement is true by induction. If $V_{\rho,\bu}\cap \sigma \cap  \tau=\emptyset$, then $\omega_{\sigma\tau}=0$. Thus, we may assume that $V_{\rho,\bu}\cap \sigma \cap  \tau\not=\emptyset$. Since $\omega \in \check{Z}^1(\sV_{\rho,\bu},\KK)$ and 
	$V_{\rho,\bu}\cap \sigma \cap\tau\cap  \kappa\not=\emptyset$, we have $\omega_{\sigma\tau}=-\omega_{\tau\kappa}+\omega_{\sigma\kappa}$. By the induction hypothesis, $\omega_{\tau\kappa}$ and $\omega_{\sigma\kappa}$ are already determined. Thus, the statement is true by induction.
\end{proof}

In situations where $\Sigma$ has reasonable geometry, there is a canonical choice of $\D$. For a cone $\tau\in \Sigma$, define
\[
	\Star(\tau,\Sigma)=\{\sigma\in\Sigma\ |\ \tau\subseteq \sigma\}.
\]
We say that $\Star(\tau,\Sigma)$ is \emph{connected in codimension one} if for any two maximal cones $\sigma,\sigma'\in \Star(\tau,\Sigma)$, there is a sequence of maximal cones $\tau_1=\sigma,\tau_2,\ldots,\tau_k=\sigma'$ such that $\tau_i\in \Star(\tau,\Sigma)$ and $\tau_i,\tau_{i+1}$ intersect in a common facet.

\begin{prop}\thlabel{prop:codimone}
	Let $\Sigma$ be a simplicial fan and let $\D\subseteq \bigwedge^2\Sigma_{\max}$ consist of those pairs of cones that intersect in a common facet. Suppose that for every $\tau\in\Sigma$, $\Star(\tau,\Sigma)$ is connected in codimension one. Then $\overline{\D}= \bigwedge^2 \Sigma_{\max}$. In particular, a cocycle $\omega \in \check{Z}^1(\sV_{\rho,\bu},\KK)$ is determined by $\{\omega_{\sigma\tau}\ |\ \{\sigma,\tau\} \in \D\}$. 
\end{prop}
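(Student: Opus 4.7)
The plan is to prove $\overline{\D} = \bigwedge^2 \Sigma_{\max}$; the ``in particular'' assertion is then immediate from Proposition \ref{prop:closure}. The starting point is: given any pair $\{\sigma, \sigma'\}$ of maximal cones, set $\tau = \sigma \cap \sigma'$. By hypothesis $\Star(\tau, \Sigma)$ is connected in codimension one, so there exists a chain $\sigma = \sigma_0, \sigma_1, \ldots, \sigma_k = \sigma'$ of maximal cones in $\Star(\tau, \Sigma)$ with $\{\sigma_{i-1}, \sigma_i\} \in \D$ for each $i$. The crucial property is that every $\sigma_i$ contains $\tau = \sigma \cap \sigma'$.

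The goal is to peel off pairs from the end of this chain using the cocycle property: once we know $\{\sigma_0, \sigma_{k-1}\} \in \overline{\D}$, then combining this with $\{\sigma_{k-1}, \sigma_k\} \in \D$ and the inclusion $\sigma_0 \cap \sigma_k = \tau \subseteq \sigma_{k-1}$ (which holds because $\sigma_{k-1} \in \Star(\tau,\Sigma)$) yields $\{\sigma_0, \sigma_k\} \in \overline{\D}$. The subtlety is that a naive induction on chain length fails: setting $\tau' = \sigma_0 \cap \sigma_{k-1}$, the sub-chain $\sigma_0, \ldots, \sigma_{k-1}$ need not lie in $\Star(\tau', \Sigma)$, since the intermediate cones are only known to contain $\tau$ and not the potentially larger cone $\tau'$.

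To resolve this, I would use a nested induction on the lexicographically ordered pair $(c, k)$, where $c = \codim(\sigma \cap \sigma')$ and $k$ is the length of the shortest chain in $\Star(\sigma \cap \sigma', \Sigma)$ between $\sigma$ and $\sigma'$ (both are bounded positive integers, so the order is well-founded). The base case is $c = 1$, where $\sigma, \sigma'$ share a facet and $\{\sigma, \sigma'\} \in \D$. For the inductive step, $c \geq 2$ forces $k \geq 2$ (adjacent cones in the chain meet in a codim-one facet), and one splits into two cases according to $\tau' := \sigma_0 \cap \sigma_{k-1}$. If $\tau' \supsetneq \tau$, then $\codim \tau' < c$ and the primary induction hypothesis gives $\{\sigma_0, \sigma_{k-1}\} \in \overline{\D}$ regardless of the chain length involved. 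If $\tau' = \tau$, then the truncated chain $\sigma_0, \ldots, \sigma_{k-1}$ lies entirely in $\Star(\tau', \Sigma) = \Star(\tau, \Sigma)$, so the shortest such chain for the pair $(\sigma_0, \sigma_{k-1})$ has length at most $k-1$, and the secondary induction hypothesis applies. In either case, $\{\sigma_0, \sigma_{k-1}\} \in \overline{\D}$, and one final application of the cocycle property completes the proof.

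The main obstacle is identifying that single induction on chain length is inadequate, because passing to a sub-chain may strictly enlarge the relevant intersection and thereby fail to place the sub-chain in the corresponding star. Pairing an outer induction on $\codim(\sigma \cap \sigma')$ with an inner induction on chain length is precisely what is needed: the two cases above correspond exactly to whether the codimension drops or the chain length does, and in both situations we can legitimately invoke the induction hypothesis.
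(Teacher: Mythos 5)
Your proof is correct and uses the same nested induction as the paper: outer induction on $\codim(\sigma\cap\sigma')$ (the paper uses $\dim$, same thing), inner induction on the length of a connecting chain inside $\Star(\sigma\cap\sigma',\Sigma)$. The paper peels the chain from the front rather than the back and is somewhat terse about the truncated pair (it simply says ``by induction'' for both $\{\sigma,\tau_2\}$ and $\{\tau_2,\sigma'\}$), whereas your explicit case split on whether $\sigma_0\cap\sigma_{k-1}$ equals $\tau$ or strictly contains it spells out exactly which of the two induction hypotheses is being invoked, which is the subtlety you correctly flagged as the main obstacle.
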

\begin{proof}
	We will show that $\overline{\D}= \bigwedge^2 \Sigma_{\max}$; the second claim then follows from \thref{prop:closure}. More specifically, we will show that for $\{\sigma,\sigma'\}\in \bigwedge^2 \Sigma_{\max}$, $\{\sigma,\sigma'\}\in \overline{\D}$. We will induct on the dimension of $\tau=\sigma\cap\sigma'$. If $\sigma,\sigma'$ intersect in a common facet, then we are done by definition of $\D$. 
Otherwise, suppose that we have shown all pairs intersecting in a face of dimension larger than $\dim \tau$ belong to $\overline \D$.

Fixing $\tau$, we now show that any pair of maximal cones $\sigma,\sigma'$ from $\Star(\tau,\Sigma)$ belongs to $\overline \D$. For this, we induct on the length of a sequence $\tau_1=\sigma,\tau_2,\ldots,\tau_k=\sigma'$ in $\Star(\tau,\Sigma)$ connecting $\sigma,\sigma'$ in codimension one. If $k=2$, then again we are done by the definition of $\D$. For $k>2$, by induction we have that $\{\sigma,\tau_2\}$ and $\{\tau_2,\sigma'\}$ belong to $\overline{\D}$. Since $\sigma\cap\sigma'=\tau\subseteq \tau_2$, it follows that $\{\sigma,\sigma'\}\in\overline{\D}$. The claim now follows by induction.
\end{proof}
\begin{rem}
	It is straightforward to verify that if a simplicial fan $\Sigma$ is combinatorially equivalent to a fan with convex support, it satisfies the hypotheses of \thref{prop:codimone}.
\end{rem}

It follows from  \thref{thm:samehull} and \thref{prop:closure} that in order to compute the hull of $\Def_{X_{\Sigma}}$, we can use the functor $\cDef_{\Sigma',\Gamma}$  for some $\Sigma'$ covering $\Gamma$, and, when computing obstructions, choose $\D \subseteq \bigwedge^2 \Sigma'_{\max}$ such that $\overline \D=\bigwedge^2 \Sigma'_{\max}$. We will apply these constructions in several examples in \S\ref{sec:rank3ctd}.

\begin{example}\label{example:D}
	Let $\Sigma'$ be the fan from Example \ref{example:Gamma}.
	Define the set \[\D=\Big \{ \{\sigma_1,\sigma_2\}, \{\sigma_2,\sigma_3\}, \{\sigma_3,\sigma_4\}, \{\sigma_4,\sigma_1\} \Big\}.  \]
	We have $\{\sigma_i,\sigma_{i+1}\}, \{\sigma_{i+1},\sigma_{i+2}\} \in \D $ with 
	$\sigma_i\cap \sigma_{i+2}\subseteq \sigma_{i+1}$ for $i=1,2$. Thus, $\{\sigma_1,\sigma_{3}\}$ and $\{\sigma_2,\sigma_{4}\}$ are in $\overline{\D}$.
	It follows that $\overline{\D}= \bigwedge^2 \Sigma'_{\max}$.                                       

In fact, $\D$ is the set of all pairs of maximal cones intersecting in a common facet. Since
$\Sigma'$ has the property that $\Star(\sigma,\Sigma')$ is connected in codimension one for all $\sigma$, \thref{prop:codimone} guarantees that 
$\overline{\D}= \bigwedge^2 \Sigma'_{\max}$. 
\end{example}
\begin{rem}
When $\Sigma$ is the inner normal fan of some polytope $P$ (i.e.~$X_\Sigma$ is a projective toric variety), the interpretation of 
$\check{C}^i(\sV_{\rho,\bu},\mfm_A)$ for $i=0,1$ as simplicial cochains on the nerve $\nerve(\sV_{\rho,\bu})$ becomes especially attractive in light of Propositions \ref{prop:closure} and \ref{prop:codimone}. Indeed, for a ray $\rho\in\Sigma(1)$ and a degree $\bu\in M$, let $P_{\rho,\bu}$ consist of the union of those faces of $P$ on which the vertices of $V_{\rho,\bu}$ obtain their minimum. We may view the vertices of $\nerve(\sV_{\rho,\bu})$ as the vertices of $P_{\rho,\bu}$. Moreover, pairs of maximal cones of $\Sigma$ intersecting in a common facet that meets $V_{\rho,\bu}$ correspond to edges of $P_{\rho,\bu}$.
Thus, by Proposition \ref{prop:closure} any $1$-cocycle on $\nerve(\sV_{\rho,\bu})$ is determined by its values on edges of $P_{\rho,\bu}$ and we may view the computations involved in solving the combinatorial deformation equation as taking place in simplicial cochain complexes for the one-skeletons of the $P_{\rho,\bu}$.
\end{rem}

\subsection{Unobstructedness}\label{sec:unobstructed}
In this section, we will
provide a sufficient criterion for a toric variety to have unobstructed deformations.

\begin{thm}\thlabel{thm:unobstructed}
	Let $X=X_{\Sigma}$ be a complete toric variety that is smooth in codimension 2 and $\QQ$-factorial in codimension 3. Let
	\begin{equation*}
	\A=\{(\rho,\bu)\in\Sigma(1)\times M\ |\  \widetilde{H}^0(V_{\rho,\bu},\KK)\not=0\}
\end{equation*}
and let $\Gamma$ be as in Definition \ref{defn:A}.
	If ${H}^{1}(V_{\rho,\bu},\KK)=0$ for all pairs $(\rho,\bu)\in \Gamma$ satisfying $\rho(\bu)=-1$, then $X_{\Sigma}$ is unobstructed.
In particular, setting 
	\begin{align*}
		\B:= \left\{ (\rho,\bu+\bv)\in \Sigma(1)\times M\ \Big|
			(\rho,\bu)\in \mathcal{A};\  
			\bv\in \displaystyle\sum_{(\rho',\bu')\in \mathcal{A}} \ZZ_{\geq0}\cdot \bu' 
		\right\},
	\end{align*}
if ${H}^{1}(V_{\rho,\bu},\KK)=0$ for all pairs $(\rho,\bu)\in \B$ satisfying $\rho(\bu)=-1$, then $X_{\Sigma}$ is unobstructed.
\end{thm}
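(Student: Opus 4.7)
The plan is to show that the combinatorial deformation equation of \S\ref{sec:cde} for $\cDef_{\widehat\Sigma}$, where $\widehat\Sigma$ is a simplicial subfan of $\Sigma$ containing every three-dimensional cone as in \thref{mildsing}, can be solved inductively with every obstruction polynomial $g_\ell^{(r)}$ equal to zero. By \thref{hull} this forces the hull ring $R=S$ to be smooth, and then \thref{mildsing} gives unobstructedness of $\Def_{X_\Sigma}$. Since $\widehat\Sigma(1)=\Sigma(1)$ and since both $\widetilde H^0(V_{\rho,\bu},\KK)$ and $H^1(V_{\rho,\bu},\KK)$ depend only on the 2-skeleton of $V_{\rho,\bu}$, which is the same for $\Sigma$ and $\widehat\Sigma$, the set $\A$ and the vanishing hypothesis on $\Gamma$ transfer verbatim. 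Moreover, completeness of $\Sigma$ implies via \thref{prop:O} and \thref{van1} that $\widetilde H^1(V_\bu,\KK)=0$ for every $\bu\in M$; since $V_{\rho,\bu}=V_\bu$ whenever $\rho(\bu)\neq -1$, this forces $H^1(V_{\rho,\bu},\KK)=0$ for every such pair in $\Gamma$. Combined with the hypothesis of the theorem for $\rho(\bu)=-1$, we conclude that $H^1(V_{\rho,\bu},\KK)=0$ for all $(\rho,\bu)\in\Gamma$.

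I would then induct on $r$ with the hypothesis that $\alpha^{(r)}$ can be chosen supported in degrees $(\rho,\bu)\in\Gamma$ and that $g_\ell^{(r)}=0$. The base case $r=1$ is immediate since $\alpha^{(1)}=\sum t_\ell\theta_\ell$ is supported in $\A\subseteq\Gamma$. For the inductive step, with $J_r=\mfm^{r+1}$, the combinatorial deformation equation \eqref{eq:defeq2} reduces modulo $\mfm^{r+2}$ to
\[
\mfoc(\alpha^{(r)}) \equiv d(\beta^{(r+1)}) + \sum_\ell \gamma_\ell^{(r+1)}\cdot\omega_\ell \pmod{\mfm^{r+2}}.
\]
The essential combinatorial input is that $\Gamma$ is closed under the Lie bracket of Definition \ref{bilinear}: the identity $[\chi^\bu f_\rho,\chi^{\bu'}f_{\rho'}] = \rho(\bu')\chi^{\bu+\bu'}f_{\rho'} - \rho'(\bu)\chi^{\bu+\bu'}f_\rho$ for $\rho\neq\rho'$, together with the analogous formula $(\rho(\bu')-\rho(\bu))\chi^{\bu+\bu'}f_\rho$ when $\rho=\rho'$, matches exactly the closure rules (1) and (2) of Definition \ref{defn:A}. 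An induction on nesting depth then shows that every iterated Lie bracket of elements supported in $\Gamma$ has support in $\Gamma$, hence so do $\mfp(s(\alpha^{(r)}))$ and its image $\mfoc(\alpha^{(r)})=\lambda(\mfp(s(\alpha^{(r)})))$; the explicit formula in \thref{thm:obsformula} provides a direct verification via the factors $\prod_{i\neq k}\rho_i(\varphi(\bw_{a_i}))$.

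For each $(\rho,\bu)\in\Gamma$, the $(\rho,\bu)$-component of $\mfoc(\alpha^{(r)})$ is a 1-cocycle in $\check Z^1(\sV_{\rho,\bu},\KK)\otimes\mfm_{r+1}$ whose cohomology class lies in $H^1(V_{\rho,\bu},\KK)\otimes\mfm_{r+1}=0$, so it is a coboundary; for $(\rho,\bu)\notin\Gamma$ the component is zero and the obstruction contribution is trivially a coboundary. Applying the degree-preserving map $\psi$ of \S\ref{sec:cde} yields $\beta^{(r+1)}=\psi(\mfoc(\alpha^{(r)})\bmod\mfm^{r+2})$ supported in $\Gamma$, and by \thref{prop:cdefeqsolving} we may take $\gamma_\ell^{(r+1)}=0$. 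Setting $\alpha^{(r+1)}=\alpha^{(r)}-\beta^{(r+1)}$ and $g_\ell^{(r+1)}=0$ propagates the inductive hypothesis. The main technical point is verifying the closure of $\Gamma$ under iterated brackets, which reduces by induction on depth to the pairwise case above. Finally, the statement involving $\B$ follows because $\Gamma\subseteq\B$: every element of $\Gamma$ arises by starting from an element of $\A$ and successively adding values $\bu'$ with $(\rho',\bu')\in\A$, so the assumed vanishing of $H^1(V_{\rho,\bu},\KK)$ on the appropriate subset of $\B$ implies the same on $\Gamma$.
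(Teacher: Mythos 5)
Your proof is correct, and it arrives at the same conclusion by a somewhat different route. The paper proceeds more abstractly: after reducing to $\cDef_{\widehat\Sigma}$ via \thref{mildsing}, it invokes \thref{thm:samehull} to replace $\cDef_{\widehat\Sigma}$ by $\cDef_{\Gamma,\widehat\Sigma}$ (same hull), notes that the latter has obstruction space $\bigoplus_{(\rho,\bu)\in\Gamma}H^1(V_{\rho,\bu},\KK)$, and observes this space vanishes by the hypothesis combined with \thref{cor:vanishdegree}. A vanishing obstruction space immediately yields unobstructedness. Your proposal essentially unfolds \thref{thm:samehull} and the standard smoothness criterion into an explicit induction on the deformation equation, tracking that $\alpha^{(r)}$ remains supported in $\Gamma$ and that every obstruction polynomial can be kept zero. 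The decisive combinatorial point in both approaches is the same — $\Gamma$ is closed under the Lie bracket of Definition \ref{bilinear} (you verify this directly; the paper states it en route to \thref{thm:samehull}) — but your argument is more computational and less modular, while the paper's reuses machinery already established. One small imprecision in your final paragraph: the closure rules for $\Gamma$ allow adding $\bu'$ with $(\rho',\bu')\in\Gamma$, not merely in $\A$, so the inclusion $\Gamma\subseteq\B$ really requires the observation that $\B$ itself is closed under rules (1) and (2) of Definition \ref{defn:A}; this is true and can be checked by a short unrolling argument, but as written your justification ("successively adding values $\bu'$ with $(\rho',\bu')\in\A$") understates what needs to be verified.
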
	
\begin{proof}
By \thref{mildsing}, after possibly replacing $\Sigma$ be any simplicial subfan of $\Sigma$ containing all three-dimensional cones, we may assume that 
$\Def_X\cong \cDef_\Sigma$.
By \thref{thm:samehull}, the functors $\cDef_{\Sigma}$ and $\cDef_{\Gamma,\Sigma}$ have the same hulls. But an obstruction space for $\cDef_{\Gamma,\Sigma}$ is given by 
\[
	\bigoplus_{(\rho,\bu)\in\Gamma} H^1(V_{\rho,\bu},\KK),
\]
see Lemma \ref{lemma:obstruction}. Moreover, by \thref{cor:vanishdegree}, we know that for any $(\rho,\bu)\in \Gamma$,  $\rho(\bu)\not=-1$ implies ${H}^{1}(V_{\rho,u},\KK)=0$ and the first claim of the theorem follows.
To show the second claim, observe that $\B$ contains $\Gamma$.
\end{proof}

To illustrate the significance of the above theorem, we provide an example of an unobstructed toric threefold whose unobstructedness does not follow by degree reasons alone.

\begin{example}\label{ex:unobstructed2}
	We consider the smooth toric threefold from Example \ref{ex:unobstructed1}. 
	The set $\A$ from \thref{thm:unobstructed} consists of exactly
\[(\rho_3,(1,0,0)),(\rho_{11},(0,0,1)),(\rho_{10},(-1,0,-1)),(\rho_{10},(-1,0,0)).\]
As noted earlier, we also have $H^1(V_{\rho,\bu},\KK)=0$ except for $(\rho,\bu)=(\rho_{0},(0,0,-1))$.

There are infinitely many positive integer combinations of degrees \[(1,0,0),(0,0,1),(-1,0,-1),(-1,0,0)\] that sum to $(0,0,-1)$. For example,
\[(0,0,-1)=(-1,0,-1)+(1,0,0).\]
Hence, we cannot conclude by degree reasons alone that $X_\Sigma$ is unobstructed. However, since $\rho_0$ does not appear in any element of $\A$, $(\rho_0,(0,0,-1)$ is not in $\B$ and we conclude that $X_\Sigma$ is in fact unobstructed.
\end{example}

\section{Examples}\label{sec:Examples}
Throughout this section, we will assume that $\Sigma$ is a smooth complete fan.
\subsection{Primitive collections and rigidity}\label{sec:prim}

The data of a fan can be provided by specifying the ray generators and listing the maximal cones. Instead of specifying the maximal cones we can describe the fan using the notion of \emph{primitive collections}:

\begin{defn}[{\cite[Definition 2.6]{bat}}]
	A subset $\mcP\subseteq \Sigma(1)$ is a \emph{primitive collection} if the elements of $\mcP$ do not belong to a common cone in $\Sigma$, but the elements of every proper subset of $\mcP$ do.
\end{defn}
	Let $\mcP=\{\rho_1,\ldots,\rho_k\}$ be a primitive collection and let $\sigma \in \Sigma$ be the unique cone such that $n_{\rho_1}+\cdots+n_{\rho_k}$ lies in the relative interior of $\sigma$. Then there is a relation
	\[n_{\rho_1}+\cdots+n_{\rho_k}= \sum_{\rho\in \sigma\cap \Sigma(1)} c_{\rho}n_{\rho} \]
	where $c_{\rho}>0$ for all $\rho \in \sigma\cap \Sigma(1)$. This is the \textit{primitive relation} associated to $\mcP$, see {\cite[Definition 2.1.4]{bat1}}. The \emph{degree} of $\mcP$ is the integer \[\deg(\mcP)= k-\sum c_{\rho}.\]

\begin{thm}\thlabel{lemma:rigid}
	Let $\Sigma$ be a smooth complete fan. If $\deg(\mcP)>0$ for every primitive collection of cardinality 2, then $X_{\Sigma}$ is rigid.
\end{thm}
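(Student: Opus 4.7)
The plan is to reduce the rigidity of $X_\Sigma$ to a vanishing statement about the simplicial complexes $V_{\rho,\bu}$ and then, assuming a disconnection, to extract a primitive collection of cardinality two of non-positive degree. Since $X_\Sigma$ is smooth and complete, $\Def_{X_\Sigma} = \Def'_{X_\Sigma}$, and by \thref{thm:combiso} this is identified with $\cDef_\Sigma$. Its tangent space is
\[\rT^1 \cDef_\Sigma \;\cong\; \bigoplus_{(\rho,\bu)\,:\,\rho(\bu)=-1} \widetilde{H}^{0}(V_{\rho,\bu},\KK),\]
so it will suffice to show that $V_{\rho,\bu}$ is connected whenever $\rho(\bu) = -1$.

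Suppose for contradiction that $V_{\rho,\bu}$ is disconnected for some such pair $(\rho,\bu)$. I would pick $\rho_1,\rho_2 \in \Sigma(1)$ lying in distinct connected components of $V_{\rho,\bu}$; in particular $\rho_i \neq \rho$ and $\rho_i(\bu) \leq -1$. The rays $\rho_1,\rho_2$ cannot lie in a common cone of $\Sigma$ (otherwise $\conv\{n_{\rho_1},n_{\rho_2}\}$ would be contained in a single simplex of $V_{\rho,\bu}$), so $\{\rho_1,\rho_2\}$ is a primitive collection. Letting $\tau\in\Sigma$ be the cone containing $n_{\rho_1} + n_{\rho_2}$ in its relative interior, write the primitive relation
\[n_{\rho_1} + n_{\rho_2} \;=\; \sum_{\rho' \in \tau(1)} c_{\rho'}\, n_{\rho'}, \qquad c_{\rho'} > 0,\]
and pair it with $\bu$ to obtain $\sum_{\rho' \in \tau(1)} c_{\rho'}\, \rho'(\bu) = \rho_1(\bu) + \rho_2(\bu) \leq -2$.

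The remaining task is to force $\sum_{\rho'} c_{\rho'} \geq 2$, since then $\deg\{\rho_1,\rho_2\} = 2 - \sum c_{\rho'} \leq 0$, contradicting the hypothesis. The strategy is to choose the pair $(\rho_1,\rho_2)$ so that $\tau(1) \setminus \{\rho\}$ contains no ray $\rho^*$ with $\rho^*(\bu) < 0$: if such a $\rho^*$ existed it would itself lie in $V_{\rho,\bu}$, hence in some connected component, and replacing whichever of $\rho_1, \rho_2$ lies in that component by $\rho^*$ would yield another primitive collection of the same type that is strictly smaller with respect to a suitable invariant. Once such a $\rho^*$ is ruled out, every $\rho' \in \tau(1)\setminus\{\rho\}$ satisfies $\rho'(\bu) \geq 0$, so the only possible negative term in $\sum c_{\rho'}\,\rho'(\bu)$ is $-c_\rho$, contributed exactly when $\rho \in \tau(1)$. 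Combined with $\sum c_{\rho'} \rho'(\bu) \leq -2$ and $\rho(\bu)=-1$, this forces $\rho \in \tau(1)$ and $c_\rho \geq 2$, whence $\sum c_{\rho'} \geq c_\rho \geq 2$, as required.

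The hard part will be making the minimization precise, by exhibiting an invariant of the pair $(\rho_1,\rho_2)$ that is bounded below and strictly improves under the swap $\rho_j \mapsto \rho^*$. Natural candidates are $\dim\tau$ (equivalently the number of rays appearing in the primitive relation) or $|\tau(1)\cap V_{\rho,\bu}|$, possibly ordered lexicographically with a secondary quantity such as $\rho_1(\bu)+\rho_2(\bu)$. Verifying that one of these actually decreases under the swap requires a short convex-geometric analysis inside $\tau$: expressing $n_{\rho^*}+n_{\rho_j}$ in terms of the primitive data of $\tau$ and locating its new minimal cone as a proper face of $\tau$ (or smaller). This is the only delicate step in the argument; everything else follows directly from the hypothesis on primitive collections of cardinality two and the combinatorial machinery developed in \S\ref{sec:cdf}.
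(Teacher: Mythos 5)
Your overall strategy matches the paper's: reduce rigidity to the connectedness of the complexes $V_{\rho,\bu}$ with $\rho(\bu)=-1$ via Proposition~\ref{prop:cohom2}, and observe that two rays in distinct connected components of $V_{\rho,\bu}$ necessarily form a cardinality-two primitive collection. The gap is exactly where you flag it: you never establish that the proposed replacement terminates, and the missing idea is one that the paper uses from the start. Since $\Sigma$ is smooth, the coefficients $c_{\rho'}$ in any primitive relation are positive integers, so $\deg\{\rho_1,\rho_2\}>0$ forces $\sum c_{\rho'}\le 1$: the cone $\tau$ is a single ray or the zero cone. In the nonzero case $n_{\rho_1}+n_{\rho_2}=n_{\rho^*}$ with $\rho^*(\bu)=\rho_1(\bu)+\rho_2(\bu)$, strictly smaller than each $\rho_i(\bu)$, so among your candidate invariants only $\rho_1(\bu)+\rho_2(\bu)$ actually decreases (since $\dim\tau\le1$ already, your other two candidates cannot do any work). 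One also needs $\rho^*\neq\rho$, which follows from $\rho^*(\bu)\le-2<-1=\rho(\bu)$. With this the descent terminates and your argument closes. Note also that your route of forcing $c_\rho\ge 2$ is more roundabout than needed: once $\sum c_{\rho'}\le1$ is known, any $\rho^*$ arising from the relation immediately does the job.

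The paper dispenses with the iteration. It fixes once and for all a ray $\rho_{\min}\neq\rho$ with $\rho_{\min}(\bu)$ minimal and shows directly that every vertex $\rho'$ of $V_{\rho,\bu}$ shares a cone with $\rho_{\min}$: otherwise $\{\rho_{\min},\rho'\}$ is a primitive collection, and the constraint $\sum c\le 1$ gives either $n_{\rho_{\min}}+n_{\rho'}=0$ (impossible, both pairings negative) or $n_{\rho_{\min}}+n_{\rho'}=n_{\rho''}$ with $\rho''(\bu)<\rho_{\min}(\bu)$ and $\rho''\neq\rho$, contradicting minimality. Thus every vertex is joined to $n_{\rho_{\min}}$ by an edge in $V_{\rho,\bu}$, which is therefore connected. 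This is the same contradiction you are aiming for, obtained in a single step; the ``delicate step'' you flag dissolves the moment you notice the primitive relation has at most one ray on its right-hand side.
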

\begin{proof}
	We will show that $H^1(X_{\Sigma},\T_{X_{\Sigma}})=0$ in this case. According to Proposition \ref{prop:cohom2}, we have
	\begin{equation*}
		H^1(X_{\Sigma},\T_{X_{\Sigma}})\cong\bigoplus_{\substack{\rho\in\Sigma(1), \bu\in M \\ \rho(\bu)=-1}} \widetilde{H}^{0}(V_{\rho,\bu},\KK).
	\end{equation*}
	For any $\rho\in \Sigma(1)$ and $\bu\in M$, $\widetilde{H}^{0}(V_{\rho,\bu},\KK)\neq0$ only when the simplicial complex $V_{\rho,\bu}$ is disconnected. Hence, it is enough to show that $V_{\rho,\bu}$ is connected for all $\rho\in \Sigma(1)$ and $\bu\in M$.

	Assuming that  $V_{\rho,\bu}\neq \emptyset$, let $\rho_{\min}\in \Sigma(1)$ be any ray not equal to $\rho$ such that $\rho_{\min}(\bu)$ is minimal. Then for any $\rho'\neq \rho$ with $\rho'(\bu)<0$, we claim that   $\rho'$ shares a cone of $\Sigma$ with $\rho_{\min}$. Indeed, if not, $\mcP=\{\rho_{\min},\rho'\}$ forms a primitive collection, and since 
$\deg(\mcP)>0$ we have only two possibilities:
\[n_{\rho_{\min}}+n_{\rho'}=0; \quad n_{\rho_{\min}}+n_{\rho'}= n_{\rho''}.\]
The first case is impossible since both $\rho_{\min}(\bu)$ and $\rho'(\bu)$ are less than zero. The second case is impossible since it would follow that $\rho''(\bu)<\rho_{\min}(\bu)$, contradicting the choice of $\rho_{\min}$. This implies the claim, and the connectedness of $V_{\rho,\bu}$ follows.	
\end{proof}

\begin{rem}
	A smooth toric variety $X_{\Sigma}$ is Fano (respectively weak Fano), if and only if $\deg(\mcP)>0$ (respectively $\deg(\mcP)\geq0$) for every primitive collection (see \cite[Proposition 2.3.6]{bat1}). As a corollary, we obtain the well-known result that every smooth toric Fano variety is rigid (see \cite[Proposition 4.2]{BB}).
	We also obtain the rigidity for smooth toric weak Fano varieties with no degree zero primitive collections of cardinality 2 (cf. \cite[Corollary 1.7]{ilten1} for a similar result).
\end{rem}

It is well-known that $\PP^n$ is the only smooth complete toric variety with Picard rank 1. Since it is Fano, it is rigid by the previous remark. Thus we may focus on smooth complete toric varieties with higher Picard rank.

\subsection{Picard rank two}\label{sec:ranktwo}
In this section, we will prove that every smooth complete toric variety with Picard rank 2 is unobstructed. Any $n$-dimensional toric variety $X$ of this type can be expressed as 
\[X \cong \PP(\CO_{\PP^{s}}\oplus \CO_{\PP^{s}}(a_1)\oplus \cdots \oplus \CO_{\PP^{s}}(a_r)),\]
for $r,s\geq 1, r+s=n$ and $0\leq a_1\leq \cdots \leq a_r$, see \cite{kleinschmidt}, \cite[Theorem 7.3.7]{cls}.

A fan $\Sigma$ with $X=X_\Sigma$ may be described as follows, see also \cite[Example 7.3.5]{cls}. Fix the lattice $N=\ZZ^n$, and consider the ray generators given by the columns of the following matrix:
\NiceMatrixOptions{code-for-first-col = \color{darkgray},code-for-first-row = \color{darkgray}}
\setcounter{MaxMatrixCols}{20}
	\begin{equation*}
		A=
			\begin{pNiceMatrix}[small,first-row,first-col,margin]
				& \rho_1&\cdots&\rho_s&\rho_{s+1}&\cdots&\rho_n&\rho_{n+1}&\rho_{n+2}\\
				&&&&&&&&&\\
				&\Block{3-3}<\LARGE>{I_s} & & &\Block{3-3}<\LARGE>{  0}&&&0&-1\\
				&&&&&&&\vdots&\vdots\\
				&&&&&&&0&-1\\\\
				&\Block{3-3}<\LARGE>{ 0} & & &\Block{3-3}<\LARGE>{I_{r}}&&&-1&a_1\\
				&&&&&&&\vdots&\vdots\\
				&&&&&&&-1&a_r\\
			\end{pNiceMatrix}
			\label{matrix}
		\end{equation*}
		The primitive collections for $\Sigma$ are $\{\rho_1,\ldots,\rho_{s},\rho_{n+2}\}$ and $\{\rho_{s+1},\ldots,\rho_{n},\rho_{n+1}\}$.

\begin{thm}\thlabel{thm:ranktwo}
	Let 
	\[X= \PP(\CO_{\PP^{s}}\oplus \CO_{\PP^{s}}(a_1)\oplus \cdots \oplus \CO_{\PP^{s}}(a_r)).\]
	Then:
	\begin{enumerate}[label={(\roman*)}]
		\item $X$ is rigid if and only if $s>1$, or $s=1$ and $a_r\leq1$. If $s=1$, then 
		\[\dim_{\KK} H^1(X,\T_{X})= \sum_{j=1}^{r} \max \{a_{j}-1,0\}+ \sum_{k<j} \max \{a_j-a_k-1,0\};\] \label{thm:ranktwo:claim1}
		\item $X$ is unobstructed.
		\label{thm:ranktwo:claim2}
	\end{enumerate}
\end{thm}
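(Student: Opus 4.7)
The plan is to combine the cohomology description from \thref{prop:cohom2} with the structural form of the Kleinschmidt fan $\Sigma$. The primitive collections of $\Sigma$ are $\mcP_1 = \{\rho_1, \ldots, \rho_s, \rho_{n+2}\}$ of cardinality $s+1$, with primitive relation $\sum_{i \leq s}\rho_i + \rho_{n+2} = \sum_j a_j \rho_{s+j}$, and $\mcP_2 = \{\rho_{s+1}, \ldots, \rho_n, \rho_{n+1}\}$ of cardinality $r+1$, with trivial primitive relation $\sum_{j \geq s+1}\rho_j + \rho_{n+1} = 0$. The underlying simplicial complex of $\Sigma$ is therefore the topological join $\partial\Delta^s \ast \partial\Delta^r$ on vertex sets $\mcP_1$ and $\mcP_2$, and any induced subcomplex $V_{\rho, \bu}$ decomposes as $V_{\rho,\bu} = W_1 \ast W_2$, where $W_i$ is the induced subcomplex on $V_{\rho, \bu} \cap \mcP_i$. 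Each $W_i$ is either empty, a full simplex on its vertices (hence contractible), or all of $\partial \Delta^{|\mcP_i|-1} \simeq S^{|\mcP_i|-2}$.

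For claim (i), every pair of rays not forming a primitive collection spans a cone of $\Sigma$, so $V_{\rho,\bu}$ is disconnected only if its vertex set contains a cardinality $2$ primitive collection with no bridging ray. When $s \geq 2$ and $r \geq 2$ no primitive collection has cardinality $2$; when $s \geq 2$ and $r = 1$, having both rays of $\mcP_2 = \{\rho_n, \rho_{n+1}\}$ in $V_{\rho,\bu}$ requires the incompatible conditions $u_n < 0$ and $-u_n < 0$. Hence $X$ is rigid for $s \geq 2$. When $s = 1$, the cardinality $2$ primitive collection is $\mcP_1 = \{\rho_1, \rho_{n+2}\}$, and $V_{\rho,\bu}$ is disconnected precisely when $V_{\rho,\bu} = \{\rho_1, \rho_{n+2}\}$, since any other ray $\rho_k \in \{\rho_2,\ldots,\rho_{n+1}\}$ spans a cone with each of $\rho_1, \rho_{n+2}$ and thereby bridges them. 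Enumerating the pairs $(\rho, \bu)$ with $\rho(\bu) = -1$ realizing this configuration yields the stated dimension of $H^1(X, \T_X)$, which vanishes exactly when $a_r \leq 1$.

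For claim (ii), $X$ is unobstructed in all cases. If $s \geq 2$ this is immediate from rigidity. If $s = 1$, I show $H^2(X, \T_X) = 0$ by proving $\widetilde H^1(V_{\rho,\bu}, \KK) = 0$ for every $(\rho, \bu)$. Since $\rho \notin V_{\rho,\bu}$, the factor $W_i$ corresponding to the primitive collection containing $\rho$ is a proper subcomplex of $\partial \Delta^{|\mcP_i|-1}$, hence either a (contractible) simplex or empty. The join cohomology formula then forces $\widetilde H^1(V_{\rho,\bu}) \neq 0$ only if $V_{\rho,\bu} \simeq S^1$, which requires $W_i = \emptyset$ and $W_{3-i} = \partial\Delta^2$; in the $s = 1$ case this means $r = 2$ with all three rays of $\mcP_2$ in $V_{\rho,\bu}$. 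But $\rho_{s+1}, \rho_{s+2} \in V_{\rho,\bu}$ forces $u_{s+1}, u_{s+2} < 0$, while $\rho_{n+1} \in V_{\rho,\bu}$ forces $u_{s+1} + u_{s+2} > 0$ via the primitive relation, a contradiction.

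The principal obstacle is the detailed enumeration in claim (i) for $s = 1$: one must handle each ray $\rho$ separately and carefully parse the coupled inequalities defining $V_{\rho,\bu} = \{\rho_1, \rho_{n+2}\}$ together with $\rho(\bu) = -1$ in order to extract the precise dimension. The remainder of the argument is structural and follows routinely from the join decomposition of the underlying simplicial complex of $\Sigma$.
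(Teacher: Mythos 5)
The overall strategy is sound and tracks the paper's: use Proposition~4.8 plus the join structure of the Kleinschmidt fan. Your treatment of claim~(ii) is even a bit cleaner than the paper's (you work with the full factor on $\mcP_2$ and invoke the join cohomology formula, whereas the paper drops a single ray from $\mcP_2$ to produce an explicit simplex factor); both are correct. For claim~(i), the rigidity criterion ($X$ rigid $\iff$ $s>1$, or $s=1$ and $a_r\leq 1$) also goes through.

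The gap is the sentence ``Enumerating the pairs~$(\rho,\bu)$\ldots yields the stated dimension.'' You do not carry out the enumeration, and if you do carry it out you will find it does \emph{not} give $\sum_j\max\{a_j-1,0\}$. For $\rho=\rho_{s+j}$ the conditions $\rho_{s+j}(\bu)=-1$, $\rho_1(\bu)<0$, $\rho_{n+2}(\bu)<0$, and $\rho_{s+i}(\bu)\geq 0$, $\rho_{n+1}(\bu)\geq 0$ for $i\neq j$ force $\sum_{i\neq j,\,i\leq r}u_{s+i}\in\{0,1\}$, and the second branch, $\bu = d\,e_1 + e_{s+i_0} - e_{s+j}$ with $i_0\neq j$ and $a_{i_0}-a_j+1\leq d\leq -1$, produces genuine extra contributions whenever some $a_j-a_{i_0}\geq 2$ with $i_0\geq 1$. (The paper's own proof also omits this branch.) The correct count is $\sum_{0\leq i,j\leq r}\max\{a_j-a_i-1,0\}$ with $a_0=0$, matching $\dim_\KK H^1\bigl(\PP^1,\mathcal{E}nd(\bigoplus_j\CO_{\PP^1}(a_j))\bigr)$. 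As a concrete check, take $X=\PP(\CO_{\PP^1}^2\oplus\CO_{\PP^1}(2))$: for $\rho=\rho_3$ both $\bu=(-1,0,-1)$ and $\bu=(-1,1,-1)$ give $V_{\rho,\bu}=\{n_{\rho_1},n_{\rho_5}\}$, so $\dim H^1(X,\T_X)=2$, not $1$ as the stated formula predicts. The unobstructedness and the rigidity dichotomy are unaffected, but the exact dimension formula in claim~(i), as stated, is incorrect, so the enumeration step cannot be waved through.
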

\begin{proof}

	 We will make use of the isomorphism
	 \begin{equation*}
	 	H^k(X,\T_{X})\cong\bigoplus_{\substack{\rho\in\Sigma(1), \bu\in M \\ \rho(\bu)=-1}} \widetilde{H}^{k-1}(V_{\rho,\bu},\KK),
	 \end{equation*}
	 for $k=1,2$ (see Proposition \ref{prop:cohom2}).

	 We will first show claim \ref{thm:ranktwo:claim1}.  
	 Suppose that $H^1(X,\T_{X})\neq0$. In this case, there must exist a ray $\rho\in \Sigma(1)$ with $\rho(\bu)=-1$ and a primitive collection $\{\rho',\rho''\}$ with exactly two rays which are distinct from $\rho$ and satisfy $\rho'(\bu)<0, \rho''(\bu)<0$. Consequently, either $r$ or $s$ must equal one. However, if $r=1$, then $\rho'(\bu)=-\rho''(\bu)$. Hence, it must be the case that $s=1$ and the rigidity for $s>1$ follows.
	 
	 Assume that $s=1$.  From the primitive collections, it follows that the simplicial complex $V_{\rho,\bu}$ is disconnected only when it consists of the two vertices $n_{\rho_{1}}$ and $n_{\rho_{n+2}}$. This occurs if and only if the following conditions on the ray-degree pairings are satisfied:  there exists $j \in \{1,\ldots,r+1\}$ such that $\rho_{s+j}(\bu)=-1$ and 
	 \begin{align*}
		\rho_1(\bu)<0, \quad \rho_{n+2}(\bu)<0, \quad \rho_{s+i}(\bu)\geq 0 \quad \textrm{ for all} \; i \in \{1,\ldots,r+1\}\setminus \{\ell\}. 
	\end{align*}
	If $a_{r}\geq 2$, then for $\rho=\rho_{s+r}=\rho_{n}$ and $\bu=(-1,0,\ldots,0,-1)$, the above set of conditions is satisfied, and thus we obtain  $V_{\rho,\bu}$ is disconnected. In fact, all degree-ray pairs $(\rho,\bu)$ satisfying the above conditions are given by choosing  $\rho=\rho_{s+j}$ for $ j\in \{1,\ldots,r\}$ and taking $\bu$ of one of the following two forms:
	\begin{align*}
		\bu &= d\cdot e_1-e_{s+j}, 
		&& \mathrm{where}\ 1-a_j\le d\le -1,\\
		\bu &= d\cdot e_1+e_{s+k}-e_{s+j}, 
		&& \mathrm{where}\ 1-a_j+a_k\le d\le -1,\ \ 1\leq k<j.
	\end{align*}
	
	For each $j=1,\ldots,r$, taking the direct sum over all $\bu$ of the first form, we obtain
	\[
	\dim_{\KK}(\bigoplus
	\widetilde{H}^{0}(V_{\rho_{s+j},\bu},\KK))
	=
	\max\{a_j-1,0\},
	\]
	and for each pair $1\le k<j\le r$, taking the direct sum over all $\bu$ of the second form, we obtain
	\[
	\dim_{\KK}(\bigoplus
	\widetilde{H}^{0}(V_{\rho_{s+j},\bu},\KK))
	=
	\max\{a_j-a_k-1,0\}.
	\]
	Summing over all $j$ and all pairs $(k,j)$ with $1\le k<j\le r$, we obtain
	\[
	\dim_{\KK} H^1(X,\T_X)
	=
	\sum_{j=1}^{r}\max\{a_j-1,0\}
	+
	\sum_{k<j}\max\{a_j-a_k-1,0\},
	\]
	and claim~\ref{thm:ranktwo:claim1} follows.

	To prove claim \ref{thm:ranktwo:claim2}, we show that if $X$ is not rigid, then $H^2(X,\T_X)=0$. Hence, we may again assume that $s=1$. It is suffices to show that for all ray-degree pairs $(\rho,\bu)$, every connected component of $V_{\rho,\bu}$ is contractible to a point.
	Given that
	\[n_{\rho_{s+1}}+\ldots+n_{\rho_{n+1}}=0,\]
	for every $\bu \in M$ there exists at least one $j\in \{1,\ldots,r+1\}$ such that $\rho_{s+j}(\bu)\geq 0$. Let $\sigma$ be the cone in $\Sigma$ generated by $\{\rho_{s+1}, \ldots, \rho_{n+1}\} \setminus \{\rho_{s+j}\}$. Then $V_{\rho,\bu}$ is the join of the simplicial complexes
	 $\left( V_{\rho,\bu}\cap \{n_{\rho_1},n_{\rho_{n+2}}\} \right )$ and $\left (V_{\rho,\bu}\cap \sigma \right)$.
	Since $V_{\rho,\bu}\cap \sigma$ is either empty or a simplex, it follows that every connected component of $V_{\rho,\bu}$ is contractible to a point. This completes the proof of claim \ref{thm:ranktwo:claim2}.
\end{proof}

\subsection{Split \texorpdfstring{$\mathbb{P}^1$}{P1}-bundles over Hirzebruch surfaces}\label{sec:rank3}
As shown in \cite{FPR}, there exists a  $\PP^1$-bundle over the second Hirzebruch surface $\FF_2$ that exhibits quadratic obstructions. In fact, Picard rank three toric threefolds represent the minimal cases in terms of both dimension and Picard rank where obstructions can occur. This is because smooth complete toric varieties of dimension at most 2 (\cite[Corollary 1.5]{ilten1}) and those with Picard rank at most 2 (\thref{thm:ranktwo}) are unobstructed. 

In this section and the next, we examine toric threefolds that are $\PP^1$-bundles over the $e$th Hirzebruch surface \[\FF_e=\PP(\CO_{\PP^1}\oplus \CO_{\PP^1}(e)),\]
where $e\geq 0$. Every toric threefold of Picard rank 3 is either of this form, or the blowup of a toric threefold of Picard rank 2 in a point or a $\PP^1$ (see \cite[Chapter \Romannum{7}, Theorem 8.2]{Ewald}, \cite[Theorem 0.1]{RL}). 

 Any toric $\PP^1$-bundle over $\FF_e$ can be expressed as
\[ X\cong \PP(\mathcal{O}_{\mathbb{F}_e}\oplus \mathcal{O}_{\mathbb{F}_e}(aF+bH)),\]
where $a,b\in \ZZ$, and $F$ and $H$ respectively represent the classes in $\Pic(\FF_e)$ of the fiber and $\CO_{\mathbb{F}_e}(1)$ in the $\PP^1$-bundle fibration of $\FF_e$ over $\PP^1$. 
In other words, 
\[\Pic(\mathbb{F}_e)= \mathbb{Z}F\oplus \mathbb{Z}H \quad  \mathrm{with}\quad  F^2=0,\; F\cdot H=1,\;  H^2=e.\]
Since we are considering $X$ up to isomorphism, we can take $e,b\geq0$, see  e.g.~\cite[Theorem 3.6]{robins}. Under this assumption, the fan $\Sigma$ from Example \ref{example:Gamma} describes $X$, that is, $X=X_\Sigma$. The primitive collections are given by $\{\rho_1,\rho_3\}$, $\{\rho_2,\rho_4\}$, and $\{\rho_5,\rho_6\}$. From this, we obtain the primitive relations:

\begin{equation}\label{eq:primrel}
	\begin{aligned}
	n_{\rho_5}+ n_{\rho_6}&=0;\\
	n_{\rho_2}+ n_{\rho_4}&= b\cdot n_{\rho_5};\\
	n_{\rho_1}+n_{\rho_3}&= 
	\begin{cases}
		e \cdot n_{\rho_2} + a\cdot n_{\rho_5} \; \textrm{if} \; a\geq 0 \\
		e \cdot n_{\rho_2} + (-a)\cdot  n_{\rho_6} \; \textrm{if} \; a< 0. \\
	\end{cases} 
\end{aligned}
\end{equation}

As a first step for computing the hull for several examples of $\PP^1$-bundles over $\FF_e$, we will describe $H^1(X,\T_{X})$ and $H^2(X,\T_{X})$ in the following two lemmas.

\begin{lemma}\thlabel{lemma:H2deg}
	Let $X= \PP(\mathcal{O}_{\mathbb{F}_e}\oplus \mathcal{O}_{\mathbb{F}_e}(aF+bH))$ with $e,b\geq0$. Then
	\[H^2(X,\T_{X})\cong \bigoplus_{\substack{\rho\in\Sigma(1), \bu\in M \\ \rho(\bu)=-1}} H^1(V_{\rho,\bu},\KK)\ \textrm{is non-zero}\]
	if and only if  $b\geq2$ and $(b-1)e+a\geq 2$. Moreover, $H^1(V_{\rho,\bu},\KK)\not=0$ if and only if $\rho=\rho_5$ and $\bu=(x,y,-1)$ satisfies
	\begin{align*}
		-b+1\leq y \leq -1, \quad	ey-a+1\leq x \leq -1.
	\end{align*}
\end{lemma}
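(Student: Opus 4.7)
The plan is to combine Proposition~5.4.7 with a combinatorial analysis of the simplicial complexes $V_{\rho,\bu}$ using the primitive-collection structure of $\Sigma$. Proposition~5.4.7 gives the isomorphism stated in the lemma (applicable since $X$ is smooth complete, so $H^k(X,\CO_X)=0$ for $k\geq 1$), so it remains to determine for which pairs $(\rho,\bu)$ with $\rho(\bu)=-1$ one has $\widetilde H^1(V_{\rho,\bu},\KK)\neq 0$.

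The key structural observation is that $|\Sigma|$ is a $2$-sphere triangulated with $6$ vertices and $8$ triangles, and the primitive collections $\{\rho_1,\rho_3\}$, $\{\rho_2,\rho_4\}$, $\{\rho_5,\rho_6\}$ identify pairs of opposite rays. For each $\rho_i$, removing the open star of $n_{\rho_i}$ from this sphere yields exactly the closed star of its opposite ray $\rho_i^{\mathrm{op}}$, a topological disk whose boundary is the $4$-cycle on the four neighbors of $\rho_i^{\mathrm{op}}$. Since $V_{\rho_i,\bu}$ (with $\rho_i(\bu)=-1$) is the full subcomplex of this disk induced by the vertices $\rho'$ with $\rho'(\bu)<0$, I split into two cases: either $n_{\rho_i^{\mathrm{op}}}\in V_{\rho_i,\bu}$, so that $V_{\rho_i,\bu}$ is the cone with apex $n_{\rho_i^{\mathrm{op}}}$ over the induced subcomplex of the boundary $4$-cycle and hence contractible; or $n_{\rho_i^{\mathrm{op}}}\notin V_{\rho_i,\bu}$, in which case $V_{\rho_i,\bu}$ is a subcomplex of the $4$-cycle itself, so $\widetilde H^1\neq 0$ precisely when $V_{\rho_i,\bu}$ realizes the entire $4$-cycle.

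I then check each ray in the second case. For $\rho_i\in\{\rho_1,\rho_2,\rho_3,\rho_4\}$, the $4$-cycle at $\rho_i^{\mathrm{op}}$ contains both $n_{\rho_5}$ and $n_{\rho_6}$; but the relation $\rho_5(\bu)=-\rho_6(\bu)$ prevents both from lying in $V_{\rho_i,\bu}$. For $\rho_i=\rho_6$, writing $\bu=(x,y,1)$, the $4$-cycle on $\{\rho_1,\rho_2,\rho_3,\rho_4\}$ would require $\rho_2(\bu)=y<0$ and $\rho_4(\bu)=-y+b<0$ simultaneously, contradicting $b\geq 0$. Only $\rho_i=\rho_5$ contributes: writing $\bu=(x,y,-1)$, the condition $\rho_6(\bu)=1>0$ forces the non-apex case, and requiring $\rho_1(\bu),\rho_2(\bu),\rho_3(\bu),\rho_4(\bu)$ all negative translates precisely to the inequalities $-b+1\leq y\leq -1$ and $ey-a+1\leq x\leq -1$.

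Finally, non-emptiness of this lattice region yields the numerical criterion. The $y$-range is non-empty iff $b\geq 2$; given $b\geq 2$ and $e\geq 0$, the $x$-range is non-empty for some admissible $y$ iff it is non-empty at the endpoint $y=-b+1$ (which minimizes $ey-a$), giving $(b-1)e+a\geq 2$. The main subtlety lies in verifying contractibility in the apex case: one must check that every face of $V_{\rho_i,\bu}$, including edges and triangles not containing $n_{\rho_i^{\mathrm{op}}}$, lies in the cone structure over the induced subcomplex of the boundary $4$-cycle—a short verification that reduces to the fact that every $2$-cell in the disk contains the apex while every $1$-cell in the boundary link is part of such a $2$-cell whenever its endpoints lie in $V_{\rho_i,\bu}$.
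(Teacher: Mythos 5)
Your proof is correct and reaches the same inequalities as the paper, but it organizes the topological argument a bit differently. The paper writes $V_{\rho,\bu}$ directly as the join $V_1 * V_2 * V_3$ of its intersections with the three opposite pairs $\{\rho_1,\rho_3\}$, $\{\rho_2,\rho_4\}$, $\{\rho_5,\rho_6\}$; the primitive relation $n_{\rho_5}+n_{\rho_6}=0$ forces $V_3$ to be empty or a single point, a join with a point is a cone, and when $V_3=\emptyset$ the join $V_1*V_2$ has $\widetilde H^1\neq 0$ exactly when both $V_1$ and $V_2$ are discrete two-point sets. You instead view $V_{\rho,\bu}$ (with $\rho(\bu)=-1$) as a full subcomplex of the closed star of $\rho^{\mathrm{op}}$ in the octahedral sphere and split on whether the apex $n_{\rho^{\mathrm{op}}}$ lies in $V_{\rho,\bu}$, which exploits the same $S^0*S^0*S^0$ structure from the opposite side. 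The two routes produce identical case analyses and the same numerical criterion $b\geq 2$, $(b-1)e+a\geq 2$. The paper's join presentation is slightly more economical because contractibility in the apex case comes for free (any join with a vertex is a cone), whereas your disk picture requires the final cone-verification, which, as you correctly indicate, reduces to the octahedral fact that any simplex avoiding a vertex extends to a simplex through the opposite vertex; both are sound, but the paper dispenses with this check implicitly.
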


\begin{proof}
	Clearly, when $V_{\rho,\bu}$ is the simple cycle formed by the vertices $\rho_1,\rho_2,\rho_3,\rho_4$, we have $H^1(V_{\rho,\bu},\KK)\not=0$. 
	We claim that for any other simplicial complex of the form $V_{\rho,\bu}$, its connected components are contractible to a point. Indeed,  any such simplicial complex $V_{\rho,\bu}$ can be expressed as the join of the simplicial complexes:
	\[V_1= V_{\rho,\bu} \cap \{n_{\rho_1},n_{\rho_3}\}, \quad V_2=V_{\rho,\bu} \cap \{n_{\rho_2},n_{\rho_4}\}, \quad V_3=V_{\rho,\bu} \cap \{n_{\rho_5},n_{\rho_6}\}.\]
	From \eqref{eq:primrel}, $V_3$ is either empty or one of the vertices. In the latter case, $V_{\rho,\bu}$ is contractible to that vertex. Therefore, we may restrict to the case $V_{\rho,\bu}$ is the join of the simplicial complexes $V_1$ and $V_2$. In that case, the connected components of $V_{\rho,\bu}$ are contractible unless $V_1$ and $V_2$ each consist of two vertices.

	This scenario occurs if and only if the following conditions on ray-degree pairing are satisfied: either $\rho_5(\bu)=-1$ or $\rho_6(\bu)=-1$, and 
	\[\rho_i(\bu)<0 \quad \textrm{for all} \quad i=1,2,3,4.\] 
	There is no $\bu$ satisfying the second case (this is immediate from primitive relations \eqref{eq:primrel}). Since $\rho_5(\bu)=-1$, we can represent $\bu=(x,y,-1)$. Under this representation, the inequalities can be expressed as:
	\begin{align*}
		ey-a+1&\leq x \leq -1\\
		-b+1&\leq y \leq -1.
	\end{align*}
This system of inequalities has an integral solution (namely $(-1,-b+1,-1)$) if and only if $b\geq 2$ and $(b-1)e+a\geq 2$. 
\end{proof}

\begin{lemma}\thlabel{lemma:H1deg}
	Let $X\cong \PP(\mathcal{O}_{\mathbb{F}_e}\oplus \mathcal{O}_{\mathbb{F}_e}(aF+bH))$ with $e,b\geq0$. Then
	\[\widetilde H^0(V_{\rho,\bu},\KK) \not=0\]
	in exactly the following mutually disjoint cases:
	\begin{enumerate}[label=Type \Roman*:,  left=0pt, align=left]
		\item The pair $(\rho,\bu)$ takes the form $(\rho_2,(x,-1,0))$, where $x$ satisfies
		\[-e+1\leq x \leq -1.\]
		Type \Romannum{1} occurs if and only if $e\geq 2$;
		
		\item The pair $(\rho,\bu)$ takes the form $(\rho_6,(x,y,1))$, where $x,y$ satisfies
		 \begin{align*}
		 	0\leq y\leq b, \quad 	ey+a+1 \leq x \leq -1.
		 \end{align*}
		 Type \Romannum{2} occurs if and only if $a\leq -2$;
		
		\item The pair $(\rho,\bu)$ takes the form $ (\rho_5,(x,y,-1))$, where $x,y$ satisfies
		\begin{align*}
			-b+1\leq y\leq -1, \quad 0 \leq x \leq ey-a.
		\end{align*}
		Type \Romannum{3} occurs if and only if $e+a\leq0$ and $b\geq 2$;
		
		\item The pair $(\rho,\bu)$ takes the form $(\rho_5,(x,0,-1))$, where $x$ satisfies
		\begin{align*}
			b=0, \quad -a+1 &\leq x \leq -1.
		\end{align*}
		Type \Romannum{4} occurs if and only if
		$a\geq 2$ and $b=0$.
	\end{enumerate}
\end{lemma}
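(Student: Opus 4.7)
The plan is to reduce the question to a purely combinatorial enumeration by exploiting the simple structure of $\Sigma$: it has exactly three primitive collections, each of size two, and these three collections are pairwise disjoint. First I would note that $\widetilde{H}^0(V_{\rho,\bu},\KK)\neq 0$ precisely when $V_{\rho,\bu}$ has more than one connected component, and that by \thref{cor:vanishdegree} together with $H^k(X,\CO_X)=0$ for $k\geq 1$ (which holds since $X$ is complete, by \thref{prop:O}) we may restrict to pairs with $\rho(\bu)=-1$.

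Next, imitating the decomposition used in the proof of \thref{lemma:H2deg}, I would write
\[
  B_1=V_{\rho,\bu}\cap\{n_{\rho_1},n_{\rho_3}\},\quad B_2=V_{\rho,\bu}\cap\{n_{\rho_2},n_{\rho_4}\},\quad B_3=V_{\rho,\bu}\cap\{n_{\rho_5},n_{\rho_6}\}.
\]
Because a subset of $\Sigma(1)$ spans a cone of $\Sigma$ iff it contains no full primitive collection, and the three primitive collections are exactly the pairs above, the simplicial complex $V_{\rho,\bu}$ is the join $B_1*B_2*B_3$. A join of nonempty complexes is connected, so $V_{\rho,\bu}$ is disconnected iff exactly one $B_i$ consists of both of its two vertices (hence a pair of isolated points) while the other two $B_j$ are empty. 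The relation $n_{\rho_5}+n_{\rho_6}=0$ forces $\rho_5(\bu)$ and $\rho_6(\bu)$ to have opposite signs, immediately ruling out the possibility that $B_3$ contains both vertices. Similarly, if $B_i$ contains both of its vertices then $\rho$ cannot belong to that primitive pair, since $\rho(\bu)=-1$ is incompatible with $n_{\rho}\in V_{\rho,\bu}$ (which requires $\rho(\bu)<-1$).

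Using the explicit ray coordinates
\[
  n_{\rho_1}=e_1,\ n_{\rho_2}=e_2,\ n_{\rho_3}=-e_1+ee_2+ae_3,\ n_{\rho_4}=-e_2+be_3,\ n_{\rho_5}=e_3,\ n_{\rho_6}=-e_3,
\]
write $\bu=(x,y,z)$ and translate the conditions ``$n_{\rho'}\in V_{\rho,\bu}$'' or ``$n_{\rho'}\notin V_{\rho,\bu}$'' into linear inequalities on $(x,y,z)$; the values of $z$ and (when relevant) $y$ are pinned down from the $\geq 0$ constraints coming from the excluded pair(s), and the remaining variable ranges over an interval. For instance, in the case $B_1=\{n_{\rho_1},n_{\rho_3}\}$ with $\rho=\rho_2$, the conditions $\rho_5(\bu),\rho_6(\bu)\geq 0$ force $z=0$, then $\rho_2(\bu)=-1$ gives $y=-1$, and the remaining inequalities $x<0$ and $-x+ey<0$ yield exactly $-e+1\leq x\leq -1$, which is Type I. One runs through the analogous analysis for $\rho\in\{\rho_4,\rho_5,\rho_6\}$ in Case A and $\rho\in\{\rho_1,\rho_3,\rho_5,\rho_6\}$ in Case B; the sub-cases $\rho\in\{\rho_1,\rho_3,\rho_4,\rho_6\}$ in Case B and $\rho=\rho_4$ in Case A are seen to have empty solution sets (typically because $y$ is simultaneously forced to be positive and negative), leaving precisely the four families in the statement. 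The ``occurs if and only if'' clauses then follow by checking when each resulting inequality system admits an integral solution.

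The heart of the argument is conceptual (the join decomposition and the observation about $B_3$), and the only delicate point is the bookkeeping in the final step: one must consistently track whether a ray coincides with $\rho$ (so that the strict inequality $\rho'(\bu)<-1$ or $\rho'(\bu)\geq -1$ applies in place of the $<0$ or $\geq 0$ condition), and one must verify that the four surviving families give pairwise disjoint sets of pairs $(\rho,\bu)$. I would expect this enumeration to be the main (but entirely routine) obstacle.
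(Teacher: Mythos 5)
Your proposal is correct and follows essentially the same route as the paper's proof: decompose $V_{\rho,\bu}$ as the join $B_1*B_2*B_3$ coming from the three disjoint primitive collections, observe that $B_3$ can never consist of both vertices because $n_{\rho_5}+n_{\rho_6}=0$, conclude that disconnectedness forces exactly one of $B_1,B_2$ to equal its full antipodal pair with the others empty, and then enumerate the resulting inequality systems in $(x,y,z)$. Your extra remarks — the preliminary reduction to $\rho(\bu)=-1$ via \thref{cor:vanishdegree} and the observation that $\rho$ cannot lie in the disconnecting pair — are correct and merely make explicit what the paper leaves implicit when it asserts there are ``four distinct scenarios.''
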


\begin{proof}
	Any simplicial complex $V_{\rho,\bu}$ can be expressed as the join of the simplicial complexes:
	\[V_1= V_{\rho,\bu} \cap \{n_{\rho_1},n_{\rho_3}\}, \quad V_2=V_{\rho,\bu} \cap \{n_{\rho_2},n_{\rho_4}\}, \quad V_3=V_{\rho,\bu} \cap \{n_{\rho_5},n_{\rho_6}\}.\]
	If at least two of $V_1,V_2$ and $V_3$ are non-empty, then $V_{\rho,\bu}$ is connected. According to \eqref{eq:primrel}, $V_3$ is either empty or consists of single vertex. Hence, $V_{\rho,\bu}$ can only have more than one connected component when it consists of exactly two vertices, either $\{n_{\rho_1}, n_{\rho_3}\}$ or $\{n_{\rho_2}, n_{\rho_4}\}$. There are four distinct scenarios in which this occurs:
	
	\noindent
	\textbf{Type \Romannum{1}:}
	\[\rho_2(\bu)=-1, \quad \rho_1(\bu)<0, \quad \rho_3(\bu)<0, \quad \rho_4(\bu)\geq 0, \quad \rho_5(\bu)\geq0 ,\quad \rho_6(\bu)\geq 0. \]
	Thus, we can represent $\bu=(x,-1,0)$, and the above 
	inequalities lead to
	\[-e+1\leq x \leq -1.\]
	It is immediate to see that Type \Romannum{1} occurs if and only if $e\geq 2$.
		
	\noindent
	\textbf{Type \Romannum{2}:}
	\[\rho_6(\bu)=-1, \quad \rho_1(\bu)<0, \quad \rho_3(\bu)<0, \quad \rho_2(\bu)\geq 0, \quad \rho_4(\bu)\geq0 . \]
	Thus, we can represent $\bu=(x,y,1)$, and the above inequalities lead to
	\begin{align*}
		0\leq y\leq b, \quad 	ey+a+1 \leq x \leq -1.
	\end{align*}
	If Type \Romannum{2} occurs, then $\bu=(-1,0,1)$ is always included, which requires $a\leq -2$.

	\noindent
	\textbf{Type \Romannum{3}:}
	\[\rho_5(\bu)=-1, \quad \rho_2(\bu)<0, \quad \rho_4(\bu)<0, \quad \rho_1(\bu)\geq 0, \quad \rho_3(\bu)\geq0 . \]
	Thus, we can represent $\bu=(x,y,-1)$, and the above inequalities lead to
	\begin{align*}
		-b+1\leq y\leq -1, \quad 0 \leq x \leq ey-a.
	\end{align*}
	If Type \Romannum{3} occurs, then $\bu=(0,-1,-1)$ is always included, which requires $e+a\leq0$.
	
	\noindent
	\textbf{Type \Romannum{4}:}
	\[\rho_5(\bu)=-1, \quad \rho_1(\bu)<0, \quad \rho_3(\bu)<0, \quad \rho_2(\bu)\geq 0, \quad \rho_4(\bu)\geq0 . \]
	Thus, we can represent $\bu=(x,0,-1)$, and the above inequalities lead to
	\begin{align*}
		b=0, \quad -a+1 &\leq x \leq -1.
	\end{align*}
	It is immediate to see that Type \Romannum{4} occurs if and only if $a\geq 2$ and $b=0$.
	
\end{proof}

Using \thref{lemma:H2deg} and \thref{lemma:H1deg}, we will apply the unobstructedness result (\thref{thm:unobstructed}) based on the ray-degree pairs.

\begin{lemma}\thlabel{lemma:possibleobs}
	Let \(X\cong \PP(\mathcal{O}_{\mathbb{F}_e}\oplus \mathcal{O}_{\mathbb{F}_e}(aF+bH))\) with \(e, b \geq 0\). Then $X$ is unobstructed, except possibly in the following cases:
	\begin{enumerate}[label={(\roman*)}]
		\item $e=1$, $a\leq -2$ and $b\geq 3-a$; \label{case:possible1}
		\item $e\geq 2$, $a\leq -e$ and $b \geq  1+\dfrac{2-a}{e}$. \label{case:possible2} 
	\end{enumerate}

\end{lemma}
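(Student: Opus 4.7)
The plan is to apply the unobstructedness criterion \thref{thm:unobstructed}: it suffices to show that for every pair $(\rho,\bu)\in\B$ with $\rho(\bu)=-1$, one has $H^1(V_{\rho,\bu},\KK)=0$. By \thref{lemma:H2deg}, $H^1(V_{\rho,\bu},\KK)\neq 0$ forces $\rho=\rho_5$, and also requires $b\geq 2$ and $(b-1)e+a\geq 2$ (the non-vanishing condition for $H^2(X,\T_X)$). So outside these $H^2$-non-vanishing conditions there is nothing to prove, and it suffices to rule out pairs $(\rho_5,\bu)\in\B$ lying in the support range from \thref{lemma:H2deg}.

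First I would observe that a pair $(\rho_5,\bu)\in\B$ forces the existence of some $(\rho_5,\bu_0)\in\A$, i.e.\ a Type III or Type IV pair. Since Type IV requires $b=0$ and we are assuming $b\geq 2$, only Type III is possible; by \thref{lemma:H1deg} this requires $a\leq -e$. Then I would split into cases according to $e$, assuming we are not in cases (i) or (ii):
\begin{itemize}
\item $e=0$: Type III needs $a\leq 0$, but $(b-1)e+a=a\geq 2$ gives a contradiction, so $H^2=0$.
\item $e=1$ and not in case (i): either $a\geq 0$, in which case $a>-e$ and Type III does not exist, so no pair of the form $(\rho_5,\cdot)$ lies in $\B$; or $a\leq -2$ and $b\leq 2-a$, in which case $(b-1)e+a\leq 1<2$ forces $H^2=0$; or $a=-1$, which is the exceptional boundary case treated below.
\item $e\geq 2$ and not in case (ii): either $a\geq -e+1>-e$, so Type III does not exist; or $a\leq -e$ and $b<1+(2-a)/e$, in which case $(b-1)e+a<2$ forces $H^2=0$.
\end{itemize}

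The one remaining case is $e=1$, $a=-1$, $b\geq 4$ (the last inequality coming from $(b-1)+(-1)\geq 2$). Here Type III requires $-b+1\leq y\leq -1$ and $0\leq x\leq y+1$; the latter forces $y=-1$ and $x=0$, so $\A$ contains the unique pair $(\rho_5,(0,-1,-1))$ among Type III/IV contributions. Consequently any $\bv$-perturbation of $(0,-1,-1)$ by non-negative integer combinations of elements of $\A$ that keeps the ray equal to $\rho_5$ produces a degree of the form $(0,-1-k,-1-k)$ for $k\geq 0$ (only Type III can be added to preserve the ray, and Type III here has the form above). For $k\geq 1$ we have $\rho_5(\bu)=-1-k\neq -1$, so by \thref{cor:vanishdegree} combined with the vanishing $H^2(X,\CO_X)=0$ from \thref{prop:O}, we get $H^1(V_{\rho_5,\bu},\KK)=0$. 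For $k=0$, $\bu=(0,-1,-1)$, I would compute $\rho(\bu)$ for each ray: one finds that $n_{\rho_2}$ and $n_{\rho_4}$ are the only vertices of $V_{\rho_5,\bu}$; since $\{\rho_2,\rho_4\}$ is a primitive collection (see \eqref{eq:primrel}), these vertices are not joined by an edge, so $V_{\rho_5,\bu}$ is two isolated points and $H^1(V_{\rho_5,\bu},\KK)=0$.

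The main obstacle is the exceptional case $e=1$, $a=-1$, which does not succumb to a pure degree-counting argument (the set $\B$ really does include pairs in the non-vanishing support range of $H^2$), and must instead be handled by the explicit topological computation of the single relevant simplicial complex $V_{\rho_5,(0,-1,-1)}$ via the primitive collections. The remaining subcases are routine case analysis showing that either $\A$ contains no Type III element, or $H^2$ already vanishes; packaging them cleanly will account for most of the writeup.
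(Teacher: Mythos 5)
Your proof is correct and follows the same overall strategy as the paper: reduce via \thref{thm:unobstructed} and the classifications in \thref{lemma:H1deg} and \thref{lemma:H2deg} to the question of whether $\B$ (or $\Gamma$) contains a pair $(\rho_5,\bu)$ with $\rho_5(\bu)=-1$ lying in the $H^1$-nonvanishing range, then argue by cases on $e$. The one genuine divergence is the case $e=1$, $a=-1$. The paper avoids treating it separately by observing that producing such a problematic pair requires a perturbation $\bv$ with $z$-coordinate $0$ and negative $x$-coordinate; since for $e=1$ Types I and IV are absent, Type II degrees (with $z=1$, $x\leq -1$) are the only source of both, so Type~II must occur, forcing $a\leq -2$ directly. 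Your alternative is to note Type III gives only $a\leq -1$, then for $a=-1$ to compute $\A=\{(\rho_5,(0,-1,-1))\}$ explicitly and show $\B$ restricted to $\rho_5(\bu)=-1$ is just the Type~III pair itself, whose $V_{\rho,\bu}$ is two isolated points. This is more work but also correct; you could shorten it by noting the Type~III pair has $x=0\geq 0$, which lies outside the $H^1\neq 0$ range $x\leq -1$ from \thref{lemma:H2deg}, so the explicit simplicial-complex computation is unnecessary.

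One small phrasing issue: you write ``only Type III can be added to preserve the ray,'' but the ray of a pair $(\rho,\bu+\bv)\in\B$ is fixed by the initial pair $(\rho,\bu)\in\A$ and is unaffected by which $\A$-degrees compose the perturbation $\bv$. Your computation is still correct because $\A$ happens to consist of a single pair in the $a=-1$ case, but the stated reason is off.
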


\begin{proof}
	Suppose first that $e=0$. By \thref{lemma:H2deg} $H^2(X,\T_X)\not=0$ if and only if $a\geq 2$ and $b\geq 2$. In this case, \thref{lemma:H1deg} implies that $H^1(X,\T_X)=0$. Therefore, when $e=0$,  $X$ is always unobstructed.
	
	Now assume that $e=1$. Then by \thref{lemma:H2deg} $H^2(X,\T_X)\not=0$ if and only if 
	\begin{align}\label{eq:ineq1}
	 b \geq \max \left\{2, 3-a \right\}.
	\end{align}
	 In this case, Type \Romannum{1} and Type \Romannum{4} of \thref{lemma:H1deg} do not occur.
	By \thref{thm:unobstructed}, to have obstructions, we must have Type \Romannum{2} and Type \Romannum{3} elements of \thref{lemma:H1deg}. Hence, we have $a\leq-2$ and combining this with \eqref{eq:ineq1} yields \ref{case:possible1}.

	Now assume that $e\geq 2$. By \thref{lemma:H2deg} $H^2(X,\T_X)\not=0$ if and only if  \begin{align}\label{eq:ineq2}
		 b \geq \max \left\{2, 1+\dfrac{2-a}{e}\right\}.
	\end{align}
	In this case, Type \Romannum{4} of \thref{lemma:H1deg} does not occur and Type \Romannum{1} does occur. By \thref{thm:unobstructed}, Type \Romannum{3} must occur. 
	However, the occurrence of both Type \Romannum{1} and Type \Romannum{3} together implies Type \Romannum{2} also occurs. Hence, we have $a\leq-e$; combining this with \eqref{eq:ineq2} yields \ref{case:possible2}.
\end{proof}

\begin{figure}
	\begin{tikzpicture}[scale=0.6]
		\draw[dotted,->] (-8,0) -- (5,0) node[right] {$x$};
		\draw[dotted,->] (0,-9) -- (0,2) node[above] {$y$};
		
	
			
		\node[anchor=south west,font=\tiny] at (-7,-7) {$H^2(X,\T_X)_{\bu}\not=0$};
		
		\filldraw[fill=red!20, draw=red] (-4,-9)--(-1,-9)--(-1,-6)--cycle;
		
		\draw[fill=red] (-1,-6) circle (2pt);
		\node[anchor=south east,font=\tiny] at (-1,-6) {$(-1,a-2)$};

		\draw[fill=red] (-1,-9) circle (2pt);
		\node[anchor=north east,font=\tiny] at (0,-9) {$(-1,-b+1)$};

		\draw[fill=red] (-4,-9) circle (2pt);
		\node[anchor=north east,font=\tiny] at (-4,-9) {$(-a-b+2,-b+1)$};
		
		\node[anchor=south west,font=\tiny] at (-3,-8.5) {$z=-1$};

		
			
		
		\node[anchor=south west,font=\tiny] at (-6,1) {Type \Romannum{2} };

		\filldraw[fill=blue!20, draw=blue] (-3,0)--(-1,0)--(-1,2)--cycle;

		\draw[fill=blue] (-1,0) circle (2pt);
		\node[anchor=north west,font=\tiny] at (-2,0) {$(-1,0)$};
		
		\draw[fill=blue] (-3,0) circle (2pt);
		\node[anchor=north east,font=\tiny] at (-3,0) {$(a+1,0)$};
		
		\draw[fill=blue] (-1,2) circle (2pt);
		\node[anchor=south east,font=\tiny] at (-1,2) {$(-1,-a-2)$};
		
		\node[anchor=north west,font=\tiny] at (-2.3,0.8) {$z=1$};


		\node[anchor=south west,font=\tiny] at (2,-3) {Type \Romannum{3}};

		\filldraw[fill=black!20, draw=black] (0,-1)--(3,-1)--(0,-4)--cycle;
		
		\draw[fill=black] (0,-1) circle (2pt);
		\node[anchor=south west,font=\tiny] at (0,-1) {$(0,-1)$};
		
		\draw[fill=black] (3,-1) circle (2pt);
		\node[anchor=south west,font=\tiny] at (3,-1) {$(-a-1,-1)$};
		
		\draw[fill=black] (0,-4) circle (2pt);
		\node[anchor=north west,font=\tiny] at (0,-4) {$(0,a)$};
		
		\node[anchor=south east,font=\tiny] at (2,-2) {$z=-1$};
	

	\end{tikzpicture}
	\caption{The projections onto the $xy$-coordinates of the degrees $\bu$ where $H^1(X,\T_{X})_{\bu}\not=0$ and $H^2(X,\T_{X})_{\bu}\not=0$ for the case $e=1$ with $a \leq -2$ and $b\geq -a+3$.}
	\label{fig:type1}
\end{figure}

\begin{figure}
	\begin{tikzpicture}[scale=0.67]
		\draw[dotted,->] (-8,0) -- (5,0) node[right] {$x$};
		\draw[dotted,->] (0,-6) -- (0,4) node[above] {$y$};
		
		\def\e{4}
		\def\a{-9}
		\def\b{6}
		
		\def\eeta{-3}	
		\def\xxi{-2}
		\def\mmu{1}

		
		\node[anchor=south west,font=\tiny] at (-10,-4) {$H^2(X,\T_X)_{\bu}\not=0$};
		
		
		\coordinate (A1) at ({(-\b+1)*\e-\a+1}, {-\b+1});
		\coordinate (A2) at(-1,{-\b+1});
		\coordinate (A3) at(-1,\eeta);
		\coordinate (A4) at ({\eeta*\e-\a+1},\eeta);

		\filldraw[fill=red!20, draw=red] (A1)--(A2)--(A3)--(A4)--cycle;
		
		\draw[fill=red] (A3) circle (2pt);
		\node[anchor=north west,font=\tiny] at (A3) {$(-1,\eta)$};

		\draw[fill=red] (A4) circle (2pt);
		\node[anchor=south east,font=\tiny] at (A4) {$(\eta \cdot e-a+1,\eta)$};
		
		\draw[fill=red] (A2) circle (2pt);
		\node[anchor=north east,font=\tiny] at (A2) {$(-1,-b+1)$};

		\draw[fill=red] (A1) circle (2pt);
		\node[anchor=north east,font=\tiny] at ($(A1)+(5,0)$) {$((-b+1)\cdot e-a+1,-b+1)$};
		
		\draw[dash pattern=on 1pt off 1pt,red] (A3)--(-1,{(\a-2)/\e})--(A4);
		
		\node[anchor=north east,font=\tiny] at ($(A2)+(-1,1)$) {$z=-1$};

		

		\node[anchor=south west,font=\tiny] at (-8,-1.5) {Type \Romannum{1}};
		
		
		\coordinate (B1) at (-1, -1);
		\coordinate (B2) at({-\e+1},-1);
		
		\draw[thick,green] (B2) -- (B1); 
		
		\draw[fill=green] (B1) circle (2pt);
		\node[anchor=south east,font=\tiny] at (0,-1) {$(-1,-1)$};
		
		\draw[fill=green] (B2) circle (2pt);
		\node[anchor=south east,font=\tiny] at (B2) {$(-e+1,-1)$};
		
		\node[anchor=north east,font=\tiny] at (-1.5,-1) {$z=0$};

		
		\node[anchor=south west,font=\tiny] at (-8,2) {Type \Romannum{2} };

		
		\coordinate (C1) at ({\a+1}, 0);
		\coordinate (C2) at(-1,0);
		\coordinate (C3) at(-1,\mmu);
		\coordinate (C4) at ({\mmu*\e+\a+1},\mmu);

		\filldraw[fill=blue!20, draw=blue] (C1)--(C2)--(C3)--(C4)--cycle;

		\draw[fill=blue] (C2) circle (2pt);
		\node[anchor=south west,font=\tiny] at (-1,0) {$(-1,0)$};
		
		\draw[fill=blue] (C1) circle (2pt);
		\node[anchor=south east,font=\tiny] at (C1) {$(a+1,0)$};
		
		\draw[fill=blue] (C3) circle (2pt);
		\node[anchor=south west,font=\tiny] at (C3) {$(-1,\mu)$};

		\draw[fill=blue] (C4) circle (2pt);
		\node[anchor=south east,font=\tiny] at (C4) {$(\mu\cdot e+a+1,\mu)$};

		\draw[dash pattern=on 1pt off 1pt,blue] (C3)--(-1,{(-\a-2)/\e})--(C4);

		\node[anchor=north east,font=\tiny] at ($(C2)+(-1,1)$) {$z=1$};

		
		\node[anchor=south west,font=\tiny] at (2,-4) {Type  \Romannum{3} };

		
		\coordinate (D1) at (0, -1);
		\coordinate (D2) at ({-\e-\a},-1);
		\coordinate (D3) at({\xxi*\e-\a},\xxi);
		\coordinate (D4) at (0,\xxi);

		\filldraw[fill=black!20, draw=black] (D1)--(D2)--(D3)--(D4)--cycle;
		
		\draw[fill=black] (D1) circle (2pt);
		\node[anchor=south west,font=\tiny] at (D1) {$(0,-1)$};
		
		\draw[fill=black] (D2) circle (2pt);
		\node[anchor=south west,font=\tiny] at ($(D2)+(-1.5,0)$) {$(-e-a,-1)$};
		
		\draw[fill=black] (D4) circle (2pt);
		\node[anchor=north east,font=\tiny] at (D4) {$(0,\xi)$};
		
		\draw[fill=black] (D3) circle (2pt);
		\node[anchor=north west,font=\tiny] at (D3) {$(\xi \cdot e-a,\xi)$};
		
		\draw[dash pattern=on 1pt off 1pt] (D4)--(0,{\a/\e})--(D3)--cycle;
		
		\node[anchor=north east,font=\tiny] at ($(D1)+(2,-0.2)$) {$z=-1$};

	\end{tikzpicture}
	\caption{The projections onto the $xy$-coordinates of the degrees $\bu$ where $H^1(X,\T_{X})_{\bu}\not=0$ and $H^2(X,\T_{X})_{\bu}\not=0$ for the case $e\geq 2$, $a\leq -e$, and $b\geq 1+\dfrac{-a+2}{e}$.}
	\label{fig:type2}
\end{figure}

For the cases outlined in \thref{lemma:possibleobs}, we will review the inequalities that define  the sets of $\bu$ with $H^1(X,\T_{X})_{\bu}\not=0$ and $H^2(X,\T_{X})_{\bu}\not=0$,  as some of the inequalities are redundant.

\vspace{1em}
\noindent
\textbf{Case: $e=1$, $a\leq -2$, and  $b\geq 3-a$. }
In Figure \vref{fig:type1}, we depict the projections onto the $xy$-coordinates of the degrees $\bu$ where $H^1(X,\T_{X})_{\bu}\not=0$ (split into types \Romannum{2} and \Romannum{3}) and $H^2(X,\T_{X})_{\bu}\not=0$. 
In this situation, the degrees with $H^2(X,\T_{X})_{\bu}\not=0$ satisfy the inequalities
\[y\geq -b+1, \quad x\leq -1, \quad x\geq y-a+1.\]
This defines a triangular region, which degenerates to a single point if $b=3-a$. 

Type \Romannum{1} and Type \Romannum{4} of $H^1(X,\T_X)_{\bu}$ do not occur. The region for Type \Romannum{2} is defined by the inequalities
\[y\geq 0, \quad x\leq -1, \quad x\geq y+a+1 \]
resulting in a triangular region, which degenerates to a single point if $a=-2$. The region for Type \Romannum{3} is defined by the inequalities
\[y\leq -1, \quad x\geq 0, \quad x\leq y-a \]
resulting in a triangular region.
We always have the following relation among the degrees:
\begin{equation}\label{eq:relation1}
	(-1,a-2,-1)=(0,a,-1)+(0,-2,-1)+(-1,0,1).
\end{equation}
For the degree on the left hand side, $H^2(X,\T_{X})$ is non-zero, while $H^1(X,\T_X)$ is non-vanishing for each of the degrees on the right hand side.

\vspace{1em}
\noindent
\textbf{Case: $e\geq2$, $a\leq -e$, and  $b\geq 1+\dfrac{2-a}{e}$. }
In Figure \vref{fig:type2}, we depict the projections onto the $xy$-coordinates of the degrees $\bu$ where $H^1(X,\T_{X})_{\bu}\not=0$ (split into types \Romannum{1}, \Romannum{2}, and \Romannum{3}) and $H^2(X,\T_{X})_{\bu}\not=0$. 
In this situation, the degrees with $H^2(X,\T_{X})_{\bu}\not=0$ satisfy the inequalities
\[y\geq -b+1, \quad x\leq -1, \quad x\geq ey-a+1. \]
If $(a-2)/e$ is an integer, the convex hull of these degrees is a triangle, which degenerates to a single point when $b= 1+(2-a)/e$. If $(a-2)/e$ is not an integer, we define \[\eta=\left\lfloor \dfrac{a-2}{e} \right\rfloor,\] and obtain a trapezoidal region (which degenerates to a line when $\eta=-b+1$) by eliminating the top portion of the triangle where there are no lattice points. 

Type \Romannum{4} of $H^1(X,\T_X)_{\bu}$ does not occur.
The region for Type \Romannum{1} is defined by the (in)equalities
\[y=-1, \quad x\leq -1, \quad x\geq -e+1, \]
and forms a line. The region for Type \Romannum{2} is defined by the inequalities
\[y\geq 0, \quad x\leq -1, \quad x\geq ey+a+1. \]
If $(-a-2)/e$ is an integer, we obtain a triangle, which degenerates to a single point when $a=-2$. If $(-a-2)/e$ is not an integer, we define
\[\mu= \left\lfloor \dfrac{-a-2}{e} \right\rfloor\]
and obtain a trapezoidal region (which degenerates to a line  when $\mu=0$) by eliminating the top portion of the triangle where there are no lattice points. 

The region for Type \Romannum{3} is defined by the inequalities
\[y\leq -1, \quad x\geq 0, \quad x\leq ey-a. \]
If $a/e$ is an integer, we obtain a triangle, which degenerates to a single point when $a=-e$. If $a/e$ is not an integer, we define
\[\xi=\left\lceil \dfrac{a}{e} \right\rceil\]
and obtain a trapezoidal region (which degenerates to a line when $\xi=-1$) by eliminating the bottom portion of the triangle where there are no lattice points.

If $a\not\equiv 1 \bmod e$, then $\xi-\eta=1$, and we have the following relation:
\begin{equation}\label{eq:relation2}
	(-1,\eta,-1)=(0,\xi,-1)\;+\;(-1,-1,0).
\end{equation}
If $a\equiv 1 \bmod e$, then $\xi-\eta=2$, and we have the following relation:
\begin{equation}\label{eq:relation3}
	(-1,\eta,-1)= (1,\xi,-1)\;+\;2(-1,-1,0).
\end{equation}
In both cases, for the degree on the left hand side, $H^2(X,\T_{X})$ is non-zero, while $H^1(X,\T_X)$ is non-vanishing for each of the degrees on the right hand side.

\subsection{Split \texorpdfstring{$\mathbb{P}^1$}{P1}-bundles: obstruction computations}\label{sec:rank3ctd}
Our next goal is to compute the hull for several examples. Instead of directly working with $\cDef_{\Sigma}$, we will apply the results from \S \ref{sec:rc}. Specifically, we will identify $\Sigma'$ and $\Gamma$ such that $\cDef_{\Sigma',\Gamma}$ has the same hull as $\cDef_{\Sigma}$. Additionally, we will determine a minimal $\D$ such that $\overline{\D}= \bigwedge^2 \Sigma'$.

\begin{lemma}\thlabel{lemma:SigmaD}
	Let $X_{\Sigma}\cong \PP(\mathcal{O}_{\mathbb{F}_e}\oplus \mathcal{O}_{\mathbb{F}_e}(aF+bH))$ with $e,b\geq0$. Then the fan $\Sigma'$ with the maximal cones
	\begin{align*}
		\sigma_1&= \cone(\rho_1,\rho_4,\rho_5), \quad \sigma_2= \cone(\rho_1,\rho_2,\rho_5), \\
		\sigma_3&= \cone(\rho_2,\rho_3,\rho_5), \quad  \sigma_4= \cone(\rho_3,\rho_4,\rho_5).
	\end{align*}
	 covers $\Gamma$. Moreover, the set
	 
	 \[\D=\Big \{ \{\sigma_1,\sigma_2\}, \{\sigma_2,\sigma_3\}, \{\sigma_3,\sigma_4\}, \{\sigma_4,\sigma_1\} \Big\} \]
		satisfies $\overline{\D}= \bigwedge^2 \Sigma'_{\max}$.                                        
	
\end{lemma}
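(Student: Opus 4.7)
The plan is to address the two claims of the lemma separately. The second is routine, so the bulk of the work is establishing that $\Sigma'$ covers $\Gamma$.

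To show that $\Sigma'$ covers $\Gamma$, I will first describe $\mathcal{A}$ using \thref{lemma:H1deg}. Reading off the four types, every $(\rho,\bu)\in\mathcal{A}$ satisfies $(\rho,\bu_3)\in\{(\rho_2,0),(\rho_5,-1),(\rho_6,1)\}$. I then define the auxiliary set
\[
\Gamma':=\{(\rho_2,\bu):\bu_3=0\}\ \cup\ \{(\rho_5,\bu):\bu_3\in\{-1,0\}\}\ \cup\ \{(\rho_6,\bu):\bu_3\in\{0,1\}\},
\]
and verify $\mathcal{A}\subseteq\Gamma'$. The key step is to show that $\Gamma'$ is closed under the two operations in Definition~\ref{defn:A}, so that $\Gamma\subseteq\Gamma'$ by minimality. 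For operation (1), I enumerate the six possibilities for $(\rho,\rho')$ with $\rho\neq\rho'$ and both rays in $\{\rho_2,\rho_5,\rho_6\}$; using $\rho_5(\bu)=\bu_3$ and $\rho_6(\bu)=-\bu_3$, the hypothesis $\rho'(\bu)\neq 0$ forces $\bu_3$ to be nonzero in the relevant cases, which combined with the allowed values of $\bu'_3$ keeps the sum $\bu_3+\bu'_3$ in the prescribed set for $\rho$. For operation (2), the hypothesis $\rho(\bu)\neq\rho(\bu')$ rules out the problematic sums, since $\{-1,0\}+\{-1,0\}$ with distinct summands yields $-1$, and similarly for $\{0,1\}$. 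Once $\Gamma\subseteq\Gamma'$ is established, the final step is to observe that for every $(\rho,\bu)\in\Gamma'$, the condition that would put $n_{\rho_6}$ in $V_{\rho,\bu}$ fails: if $\rho\neq\rho_6$ we would need $-\bu_3=\rho_6(\bu)<0$, i.e.\ $\bu_3>0$, which is impossible for $\rho\in\{\rho_2,\rho_5\}$; if $\rho=\rho_6$ we would need $-\bu_3<-1$, i.e.\ $\bu_3>1$, which is impossible since $\bu_3\leq 1$. Hence $V_{\rho,\bu}$ avoids $n_{\rho_6}$. Since $\rho_6$ is the unique ray of $\Sigma$ absent from $\Sigma'$, every simplex contributing to $V_{\rho,\bu}$ sits inside a cone of $\Sigma'$ (the face of its containing cone in $\Sigma$ obtained by omitting $\rho_6$), giving $V_{\rho,\bu}\subseteq|\Sigma'|$.

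For the second assertion, I simply unwind the cocycle-property definition on the four-element set $\D$, reproducing the computation already sketched in Example~\ref{example:D}. The pairs $\{\sigma_1,\sigma_2\},\{\sigma_2,\sigma_3\}\in\D$ satisfy $\sigma_1\cap\sigma_3=\cone(\rho_5)\subseteq\sigma_2$, so $\{\sigma_1,\sigma_3\}\in\overline{\D}$; analogously, $\{\sigma_2,\sigma_3\},\{\sigma_3,\sigma_4\}\in\D$ with $\sigma_2\cap\sigma_4=\cone(\rho_5)\subseteq\sigma_3$ give $\{\sigma_2,\sigma_4\}\in\overline{\D}$. These exhaust $\bigwedge^2\Sigma'_{\max}\setminus\D$. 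Alternatively (and more conceptually), since the cones $\sigma_1,\ldots,\sigma_4$ form a fan whose support is combinatorially equivalent to a convex one, $\Star(\tau,\Sigma')$ is connected in codimension one for every $\tau\in\Sigma'$, and \thref{prop:codimone} applies directly.

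The main obstacle is Step~(c)---proving $\Gamma'$ is closed under both operations. Although each case is a short computation, one must be careful to handle operations~(1) and (2) separately and to track precisely when the nonvanishing conditions $\rho'(\bu)\neq 0$ or $\rho(\bu)\neq\rho(\bu')$ constrain the third coordinate. Everything else is a direct application of either \thref{lemma:H1deg} or the definitions in \S\ref{sec:rc}.
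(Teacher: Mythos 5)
Your proof is correct and follows essentially the same route as the paper's, which delegates to Examples~\ref{example:Gamma} and~\ref{example:D}; you have simply filled in the details the paper labels ``straightforward to verify,'' namely the precise bound on the third coordinate of the degrees in $\Gamma$ (via your set $\Gamma'$, which matches the paper's list of allowed forms) and the explicit case analysis for the cocycle property. Since $\rho_5(\bu)=\bu_3$ and $\rho_6(\bu)=-\bu_3$ while $\rho_2(\bu)=\bu_2$ does not constrain $\bu_3$, all six cases of operation~(1) and three cases of operation~(2) check out, and the concluding observation that $n_{\rho_6}\notin V_{\rho,\bu}$ for $(\rho,\bu)\in\Gamma'$ is exactly the paper's key point.
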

\begin{proof}
From \thref{lemma:H1deg}, we observe that $(\rho,\bu)\in \A$ must be of the form 
\[ \Big( \rho_2, (*,*,0) \Big) \quad  \textrm{or} \quad \Big(\rho_5, (*,*,-1) \Big) \quad  \text{or} \quad \Big(\rho_6, (*,*,1) \Big).\] 
The proof then follows from Example \ref{example:Gamma} and Example \ref{example:D}.
\end{proof}
 
For notational simplicity, we will index cochains by the numbers $1,2,3,4$ instead of the cones $\sigma_1,\ldots,\sigma_4$, e.g. for $1\leq i,j\leq 4$ we write $\alpha_i$ instead of $\alpha_{\sigma_i}$ and $\omega_{ij}$ instead of $\omega_{\sigma_i\sigma_j}$.
Recall that in \S\ref{sec:cde}, we needed to make several choices while computing the hull of $\cDef_{\Sigma}$ (or similarly $\cDef_{\Sigma',\Gamma}$). 
In the examples below, we will make these choices as follows:
\begin{enumerate}
	\item We always use the graded lexicographic local monomial order.
	\item For constructing the map $\psi$, we choose the ordering of cones in $\Sigma'$ as $\sigma_1<\sigma_2<\sigma_3<\sigma_4$.
	\item For Type \Romannum{1} and Type \Romannum{2} first order deformations, $V_{\rho,\bu}$ is given by two vertices $n_{\rho_1}$ and $n_{\rho_3}$. When choosing a basis of $\rT^1 \cDef_{\Sigma',\Gamma}$, we will always take the connected component $n_{\rho_3}$.
		\item   For Type \Romannum{3} first order deformations, $V_{\rho,\bu}$ is given by two vertices $n_{\rho_2}$ and $n_{\rho_4}$. 
When choosing a basis of $\rT^1 \cDef_{\Sigma',\Gamma}$, we will always take the connected component $n_{\rho_2}$.
	\item For ray-pairs such that $H^1(V_{\rho,\bu},\KK)\not=0$, we know $V_{\rho,\bu}$ is the simple cycle with vertices $n_{\rho_1},n_{\rho_2}, n_{\rho_3},n_{\rho_4}$ ordered cyclically.
	In these cases, we choose the cocycle
	$ \omega \in \check{Z}^1(\sV'_{\rho,\bu},\KK)$ by setting
	\[\omega_{34}=1, \quad \omega_{12}=\omega_{23}=\omega_{41}=0. \]
	Then $\psi(\omega)_i=0$ for $i=1,\ldots,4$, and thus
	$d(\psi(\omega))=0$ as required.
\end{enumerate}
With the above choices, it follows that $\alpha_{1}^{(r)}$ is always zero.


\begin{figure}[htbp]
	\centering
	
	\begin{subfigure}{0.45\textwidth}
		\centering
		\fbox{
			\begin{tikzpicture}[scale=0.6]
				\foreach \x in {-2,-1,0,1,2} {
					\foreach \y in {-4,-3,-2,-1,0} {
						\draw (\x,\y) circle (2pt);
					}
				}
				
				\draw[dotted,->] (-2.5,0) -- (2.5,0) node[right] {$x$};
				\draw[dotted,->] (0,-4.5) -- (0,1) node[above] {$y$};
				
				\filldraw[blue] (-1,0) circle (2pt); \node[anchor=south east,font=\tiny] at (-1,0) {$\bu_4$};			
				
				\filldraw[fill=black!20, draw=black]	
				(0,-1)-- (1,-1)--(0,-2)--cycle;

				\filldraw[black] (0,-1) circle (2pt); \node[anchor=south west,font=\tiny] at (0,-1) {$\bu_1$};
				
				\filldraw[black] (1,-1) circle (2pt); \node[anchor=south west,font=\tiny] at (1,-1) {$\bu_2$};
				
				\filldraw[black] (0,-2) circle (2pt); \node[anchor=north west,font=\tiny] at (0,-2) {$\bu_3$};

				\filldraw[red] (-1,-4) circle (2pt);
				\node[anchor=north east,font=\tiny] at (-1,-4) {$\bv$};

			\end{tikzpicture}
		}
		\caption{Example \ref{example:1,-2,5}}
		\label{fig:1,-2,5}
	\end{subfigure}\hspace{0.1cm}
	\begin{subfigure}{0.45\textwidth}
		\centering
		\fbox{	
			\begin{tikzpicture}[scale=0.6]
				\foreach \x in {-3,-2,-1,0,1} {
					\foreach \y in {-4,-3,-2,-1,0} {
						\draw (\x,\y) circle (2pt);
					}
				}

				\draw[dotted,->] (-3,0) -- (1.5,0) node[right] {$x$};
				\draw[dotted,->] (0,-4.5) -- (0,1) node[above] {$y$};
				
				\filldraw[blue] (-1,0) circle (2pt); \node[anchor=south east,font=\tiny] at (-1,0) {$\bu_4$};			
				
				\filldraw[blue] (-2,0) circle (2pt); \node[anchor=south east,font=\tiny] at (-2,0) {$\bu_5$};			
				
				\draw[blue]	(-1,0)-- (-2,0);
				
				\filldraw[fill=black!20, draw=black]	
				(0,-1)-- (1,-1);

				\filldraw[black] (0,-1) circle (2pt); \node[anchor=south west,font=\tiny] at (0,-1) {$\bu_1$};
				
				\filldraw[black] (1,-1) circle (2pt); \node[anchor=south west,font=\tiny] at (1,-1) {$\bu_2$};
				
				\filldraw[green] (-1,-1) circle (2pt); \node[anchor=south west,font=\tiny] at (-1,-1) {$\bu_3$};
				
				\draw[red]	(-1,-3)-- (-2,-3);
				
				\filldraw[red] (-1,-3) circle (2pt);
				\node[anchor=north east,font=\tiny] at (-1,-3) {$\bv_1$};
				
				\filldraw[red] (-2,-3) circle (2pt);
				\node[anchor=north east,font=\tiny] at (-2,-3) {$\bv_2$};

			\end{tikzpicture}
		}
		\caption{Example \ref{example:2,-3,4}}
		\label{fig:2,-3,4}

	\end{subfigure}
	\begin{subfigure}{0.45\textwidth}
		\centering
		\fbox{	
			\begin{tikzpicture}[scale=0.6]
				\foreach \x in {-3,-2,-1,0,1} {
					\foreach \y in {-2,-1,0,1} {
						\draw (\x,\y) circle (2pt);
					}
				}

				\draw[dotted,->] (-3,0) -- (1.5,0) node[right] {$x$};
				\draw[dotted,->] (0,-2.5) -- (0,1.5) node[above] {$y$};
				
				\filldraw[blue] (-1,0) circle (2pt); \node[anchor=south west,font=\tiny] at (-1,0) {$\bu_3$};			
				\filldraw[blue] (-2,0) circle (2pt);
				\node[anchor=south west, font=\tiny] at (-2,0) {$\bu_5$};
				\filldraw[blue] (-3,0) circle (2pt); \node[anchor=south east, font=\tiny] at (-2.5,0) {$\bu_{7}$};

				\filldraw[black] (0,-1) circle (2pt); \node[anchor=north west,font=\tiny] at (0,-1) {$\bu_1$};
				
				\filldraw[black] (1,-1) circle (2pt); \node[anchor=north west,font=\tiny] at (1,-1) {$\bu_2$};

				\filldraw[green] (-1,-1) circle (2pt); \node[anchor=north west,font=\tiny] at (-1,-1) {$\bu_4$};
				
				\filldraw[green] (-2,-1) circle (2pt); \node[anchor=north east,,font=\tiny] at (-2,-1) {$\bu_6$};
				
				\filldraw[red] (-1,-2) circle (2pt);
				\node[anchor=north east,font=\tiny] at (-1,-2) {$\bv$};

				\draw[blue] (-3,0)-- (-1,0);
				\draw[green] (-2,-1)-- (-1,-1); 
				\draw[black] (0,-1)-- (1,-1);
			\end{tikzpicture}
		}
		\caption{Example \ref{example:3,-4,3}}
		\label{fig:3,-4,3}
	\end{subfigure}
	\hspace{.1cm}	
	\begin{subfigure}{0.45\textwidth}
		\centering
		\fbox{
			\begin{tikzpicture}[scale=0.6]
				\foreach \x in {-5,-2,-1,0,1,2} {
					\foreach \y in {-2,-1,0,1} {
						\draw (\x,\y) circle (2pt);
					}
				}
				
				\draw[dotted,->] (-5,0) -- (2.5,0) node[right] {$x$};
				\draw[dotted,->] (0,-2) -- (0,1.5) node[above] {$y$};
				
				\filldraw[blue] (-1,0) circle (2pt); \node[anchor=south west,font=\tiny] at (-1,0) {$\bu_3$};			
				\filldraw[blue] (-2,0) circle (2pt);
				\node[anchor=south west, font=\tiny] at (-2,0) {$\bu_5$};
				\filldraw[blue] (-5,0) circle (2pt); \node[anchor=south west, font=\tiny] at (-5,0) {$\bu_{2e-1}$};
				
				\filldraw[black] (0,-1) circle (2pt); \node[anchor=north west,font=\tiny] at (0,-1) {$\bu_1$};

				\filldraw[green] (-1,-1) circle (2pt); \node[anchor=north west,font=\tiny] at (-1,-1) {$\bu_2$};

				\filldraw[green] (-2,-1) circle (2pt); \node[anchor=north west,,font=\tiny] at (-2,-1) {$\bu_4$};
				\filldraw[green] (-5,-1) circle (2pt); \node[anchor=north west,font=\tiny] at(-5,-1) {$\bu_{2e-2}$};

				\filldraw[red] (-1,-2) circle (2pt);
				\node[anchor=north west,font=\tiny] at (-1,-2) {$\bv_1$};

				\filldraw[red] (-2,-2) circle (2pt); \node[anchor=north west,font=\tiny] at (-2,-2) {$\bv_2$};
				
				\filldraw[red] (-5,-2) circle (2pt); \node[anchor=north west,font=\tiny] at (-5,-2) {$\bv_{e-1}$};
				
				\draw[blue] (-2,0)-- (-1,0);
				\draw[blue,dashed] (-5,0)-- (-2,0);
				\draw[green] (-2,-1)-- (-1,-1); 
				\draw[green,dashed] (-5,-1)-- (-2,-1);
				\draw[red] (-2,-2)-- (-1,-2);
				\draw[red,dashed] (-5,-2)-- (-2,-2);
			\end{tikzpicture}
		}
		\caption{Example \ref{example:e,-e,b}}
		\label{fig:e,-e,b}
	\end{subfigure}

	\caption{ The $xy$ coordinates of degrees at which $H^1(X,\T_{X})_{\bu}\not=0$ and $H^2(X,\T_{X})_{\bv}\not=0$ 
	}
	\label{fig:examples}

\end{figure}

\begin{example}[Case $(e,a,b)=(1,-2,5)$]\label{example:1,-2,5}
	Here, we explain how to use the setup in \S\ref{sec:cde} and \S\ref{sec:rc} to compute the hull of $\Def_X$ for 
	\[X=\PP(\mathcal{O}_{\mathbb{F}_1}\oplus \mathcal{O}_{\mathbb{F}_1}(-2F+5H)).\]
	This example possesses the minimal $\rT^1$ dimension among those $\PP^1$-bundles in \thref{lemma:possibleobs} that have no quadratic obstructions, but may have third order obstructions. We will compute its hull, showing that it indeed has a third order obstruction.

	By \thref{lemma:H1deg}, $H^1(X_{\Sigma},T_{X_{\Sigma}})$ is non-zero only in the following degrees:
	\[\bu_{1}=(0,-1,-1), \quad \bu_{2}=(1,-1,-1), \quad \bu_{3}=(0,-2,-1), \quad \bu_{4}=(-1,0,1).\]
	By \thref{lemma:H2deg}, $H^2(X_{\Sigma},T_{X_{\Sigma}})$ is non-zero only in the degree 
	\[\bv=(-1,-4,-1).\] 
	See Figure \vref{fig:1,-2,5} for an illustration.
	The only non-negative integer combinations of the $\bu_i$ giving $\bv$ are of the form
	\[\bv= 2\bu_{3}+\bu_{4}=\bu_{1}+\bu_{2}+\bu_{3}+2\bu_{4}=2\bu_{1}+2\bu_{2}+3\bu_{4}.\]
	Therefore, the hull must have the form
	\[\KK[[t_{1},\ldots,t_{4}]]\Big/J; \quad J=\langle a_1t_{3}^2t_{4} +a_2t_{1}t_{2}t_{3}t_{4}^2+a_3  t_{1}^2t_{2}^2t_{4}^3 \rangle\]
	for some $a_1,a_2,a_3\in \KK$. Here, $t_i$ is the deformation parameter with degree $\bu_i$.
	
	In Table \vref{table:1,-2,5} we summarize the deformation equation computations. See the ancillary files \cite{ancil} for code carrying out these computations in Macaulay2 \cite{M2}. The monomials highlighted in blue are those for which obstructions are possible. 
	The deformation data is interpreted as follows. For each $\alpha_i^{(r)}$, the coefficient of a monomial $t^w$ is obtained by multiplying the entries in the $\sigma_i$ row by the coefficient of $t^w$ in the first column of the table. The expression for $\alpha_i^{(r)}$ is then the sum of these products, considering only monomials of total degree less than or equal to $r$. As explained in Example \ref{example:Gamma}, every element of $\Gamma$ must be of the form 
	\[ \Big( \rho_5, (*,*,-1) \Big) \quad  \textrm{or} \quad \Big(\rho_6, (*,*,1) \Big) \quad  \text{or} \quad \Big(\rho_5, (*,*,0) \Big) \quad  \text{or} \quad \Big(\rho_6, (*,*,0) \Big).\]
	This implies that only monomials of the form $t_1^{w_1}t_2^{w_2}t_3^{w_3}t_4^{w_4}$ with \[|w_1+w_2+w_3-w_4|\leq 1\]
	 will appear. Additionally, we restrict our attention to relevant monomials, see Remark \ref{rem:relevant}:
we only need to consider monomials dividing 	$t_{3}^2t_{4}$, $t_{1}t_{2}t_{3}t_{4}^2$, or $t_{1}^2t_{2}^2t_{4}^3$.
		
The obstruction data should be interpreted as follows. For the normal form of $(\mfoc(\alpha^{(r)})-\sum_{\ell=1}^q g_{\ell}^{(r)}\cdot \omega_\ell)_{ij}$ with respect to $\mfm\cdot J_{r}$,	the coefficient of $t^w\in \mfm_{r+1}$ is the product of the entries in the row labeled $\sigma_{i}\sigma_j$  with the coefficient of $t^w$ found in the first column of the table. We also list the coefficients of the $\gamma_{\ell}^{(r+1)}$ in the rightmost column of the table.

After computing $\alpha^{(7)}$, we obtain the deformation equation
	\begin{align*}
		\mfoc(\alpha^{(7)})&\equiv 0 \mod  \mfm^8+ \langle  t_{3}^2t_{4} -2t_{1}t_{2}t_{3}t_{4}^2+  t_{1}^2t_{2}^2t_{4}^3 \rangle.
	\end{align*}
	Consequently, we conclude that $a_1=a_3=1$ and $a_2=-2$.
	After applying the change of variables  $t_{3}'=t_{3}-t_{1}t_{2}t_{4}$, we observe that
	\[t_{3}'^2t_{4}= t_{3}^2t_{4} -2t_{1}t_{2}t_{3}t_{4}^2+  t_{1}^2t_{2}^2t_{4}^3. \]
	Thus, the hull of $\Def_X$ is given by
	\[
	\KK[[t_1,t_2,t_3',t_4]]/\langle t_3'^2 t_4 \rangle.
	\]
	We obtain that the spectrum of the hull has two irreducible components, both of dimension three. One is smooth and the other generically non-reduced of multiplicity two.
	\end{example}
	
	\begin{rem}
		The entries highlighted in red in Table~\ref{table:1,-2,5} and subsequent Tables \ref{table:3,4,-3} and \ref{table:case: e,-e,3} allow us to read off the nerves $\nerve(\sV(\rho_i,\bu))$ for various relevant choices of $\rho_i$ and $\bu$. Indeed, the red entries
	correspond to those cones $\sigma_j$ or pairs $\sigma_j\sigma_k$ whose intersection with $V_{\rho_i,\bu}$ is empty. Here, $\bu$ is the degree of the monomial in that row, and $i$ is the index with $f_i$ appearing in that row. (For rows with $(f_5-f_6)$, the complexes $V_{\rho_5,\bu}$ and $V_{\rho_6,\bu}$ coincide).
	Note that for the monomials $t_2t_4$ and $t_2^2t_4^2$ in Table~\ref{table:1,-2,5}, $\sigma_1$ also has trivial intersection with $V_{\rho_i,\bu}$.

	The red entries for deformation data and obstruction data are always necessarily zero. On the other hand, the non-red entries mark the vertices and edges of $\nerve(\sV_{\rho_i,\bu})$ (which also include the vertex $\sigma_1$ except for the cases of $t_2t_4$ and $t_2^2t_4^2$ in Table~\ref{table:1,-2,5}). 
	\end{rem}

\begin{table}
	\caption{Deformation data for the case $(e,a,b)=(1,-2,5)$}
	\label{table:1,-2,5}
	\scriptsize
	\begin{tabular}{|c|c|c|c|c|c|c|c|c|c|}
		\hline
		&\multicolumn{3}{c|}{\makecell{Deformation data}}& \multicolumn{4}{c|}{\makecell{Obstruction data}}& \makecell{$\gamma$} \\
		\hline
		\diagbox{$t^{w}$}{$\sigma$} & $\sigma_2$ & $\sigma_3$ & $\sigma_4$ & $\sigma_{1}\sigma_2$ & $\sigma_{2}\sigma_3$ & $\sigma_{3}\sigma_4$ & $\sigma_{4}\sigma_1$ &\\
		\hline
		$t_{1}\cdot f_5$   & 1 & 1 & 0 & \zero & 0 & \zero & 0&0 \\
		\hline
		$t_{2}\cdot f_5$   & 1 & 1 & 0 & \zero & 0 & \zero & 0&0 \\
		\hline
		$t_{3}\cdot f_5$   & 1 & 1 & 0 & \zero & 0 & \zero & 0&0 \\
		\hline
		$t_{4}\cdot f_6$   & 0 & 1 & 1 & 0 & \zero & 0 & \zero&0 \\
		\hline

		\multicolumn{9}{|c|}{\textbf{2nd order}}\\
		\hline
		
		$t_{1}t_{4}\cdot (f_5-f_6)$  & 0 & -1/2 & \zero & 0& 1/2 & \zero & \zero&0 \\
		\hline
		$t_{2}t_{4}\cdot (f_5-f_6)$   & 0 & -1/2 & -1 & \zero & 1/2 & 1/2 & \zero&0 \\
		\hline
		$t_{3}t_{4}\cdot (f_5-f_6)$ & 0 & -1/2 & -1 & 0 & 1/2 & 1/2 & \zero&0 \\
		\hline

		\multicolumn{9}{|c|}{\textbf{3rd order}}\\
		\hline
		$t_{1}^2t_{4}\cdot f_5$     	& 0 & -1/6 & 0  & 0 & 1/6 & \zero  & 0&0 \\
		\hline
		$t_{1}t_{2}t_{4}\cdot f_5$  	& 0 & -1/3 & 0  & \zero & 1/3 & \zero  & 0&0 \\
		\hline
		$t_{1}t_{3}t_{4}\cdot f_5$  	& 0 & -1/3 & 0  & 0 & 1/3 & \zero  & 0&0 \\
		\hline
		$t_{1}t_{4}^2\cdot f_6$     	& 0 & -1/6 & -1 & 0 & 1/6 &5/6& \zero&0 \\
		\hline
		$t_{2}^2t_{4}\cdot f_5$     	& 1 &  5/6 & 0  & \zero & 1/6 &5/6& 0&0 \\
		\hline
		$t_{2}t_{3}t_{4}\cdot f_5$  	& 2 &  5/3 & 0  & \zero & 1/3 &5/3& 0&0 \\
		\hline
		$t_{2}t_{4}^2\cdot f_6$     	& 0 & -1/6 & 0  & 0 & 1/6 & -1/6& \zero&0 \\
		\hline
\color{blue}{$t_{3}^2t_{4}\cdot f_5$}   & 0 & -1/6 & 0  & 0 & 1/6 &5/6& 0&1 \\
		\hline
		$t_{3}t_{4}^2\cdot f_6$  		& 0 & -1/6 & 0  & 0 & 1/6 & -1/6& \zero&0 \\
		\hline

		\multicolumn{9}{|c|}{\textbf{4th order}}\\
		\hline
		$t_{1}^2t_{4}^2\cdot (f_5-f_6)$     & 0 & 1/12& \zero  & 0 &-1/12 & \zero & \zero&0 \\
		\hline
		$t_{1}t_{2}t_{4}^2\cdot (f_5-f_6)$  & 0 & 1/6 &1  & 0 & -1/6 & -5/6 & \zero&0 \\
		\hline
		$t_{1}t_{3}t_{4}^2\cdot (f_5-f_6)$  & 0 & 1/6 &1  & 0 & -1/6 & -5/6 & \zero&0 \\
		\hline
		$t_{2}^2t_{4}^2\cdot (f_5-f_6)$     & 0 &  -5/12&-1/2 & \zero & 5/12 & 1/12 & \zero&0 \\
		\hline
		$t_{2}t_{3}t_{4}^2\cdot (f_5-f_6)$  & 0 &  -5/6 & -1  & 0 & 5/6  & 1/6 & \zero&0 \\
		\hline
		
		\multicolumn{9}{|c|}{\textbf{5th order}}\\
		\hline
		$t_{1}^2t_{2}t_{4}^2\cdot f_5$          & 0 & 1/10   & 0  & 0 & -1/10 & \zero    & 0&0 \\
		\hline
		$t_{1}^2t_{4}^3\cdot f_6$               & 0 & 1/30   & 1  & 0 & -1/30 &-29/30 & \zero&0 \\
		\hline
		$t_{1}t_{2}^2t_{4}^2\cdot f_5$          &-1 & -37/30 & 0  & \zero & 7/30 &-37/30& 0 &0 \\
		\hline
\color{blue}{$t_{1}t_{2}t_{3}t_{4}^2\cdot f_5$} & 0 & -7/15  & 0  & 0 & 7/15 &-37/15& 0&-2 \\
		\hline
		$t_{1}t_{2}t_{4}^3\cdot f_6$            & 0 &  1/15  & 0  & 0 & -1/15 & 1/15& \zero&0 \\
		\hline
		$t_{2}^2t_{4}^3\cdot f_6$               & 0 & -2/15  &-1/6& 0 & 2/15 &1/30& \zero&0 \\
		\hline
		
		\multicolumn{9}{|c|}{\textbf{6th order}}\\
		\hline
		$t_{1}^2t_{2}t_{4}^3\cdot (f_5-f_6)$  & 0 & -1/20 & -1 & 0 & 1/20 & 19/20 & \zero&0 \\
		\hline
		$t_{1}t_{2}^2t_{4}^3\cdot (f_5-f_6)$  & 0 &37/60& 1& 0 &-37/60 &  -23/60 & \zero&0 \\
		\hline
		\multicolumn{9}{|c|}{\textbf{7th order}}\\
		\hline
		\color{blue}{$t_{1}^2t_{2}^2t_{4}^3\cdot f_5$}  & 0 & 41/105 & 0 & 0 & -41/105 & 146/105 & 0&1 \\
		\hline		
	\end{tabular}
\end{table}

\begin{example}[Case $(e,a,b)=(2,-3,4)$]\label{example:2,-3,4}
	Here, we compute the hull of $\Def_X$ for 
	\[X=\PP(\mathcal{O}_{\mathbb{F}_2}\oplus \mathcal{O}_{\mathbb{F}_2}(-3F+4H)).\]
	 We will show that the hull  does not have any quadratic obstructions, but does have third order obstructions. We will utilize this example in the proof of \thref{thm:obstructed}. 
	
	By \thref{lemma:H1deg}, $H^1(X_{\Sigma},T_{X_{\Sigma}})$ is non-zero only in the following degrees:	 
	\begin{align*}
	\bu_1&=(0,-1,-1), & \bu_2&=(1,-1,-1), & \bu_3&=(-1,-1,0),\\
	\bu_4&=(-1,0,1), & \bu_5&=(-2,0,1).	
	\end{align*}
	By \thref{lemma:H2deg}, $H^2(X_{\Sigma},T_{X_{\Sigma}})$ is non-zero only in the degrees 
	\[ \bv_1=(-1,-3,-1),\quad \bv_2=(-2,-3,-1).\] 
	See Figure \vref{fig:2,-3,4} for an illustration.
	The only non-negative integer combinations are of the form
	\begin{align*}
		\bv_1&= \bu_2+2\bu_3=\bu_1+\bu_2+\bu_3+\bu_4=2\bu_2+\bu_3+\bu_5\\
		&= 3\bu_2+ 2\bu_5=   \bu_2+ 2\bu_1+2\bu_4= \bu_1+2\bu_2+\bu_4+\bu_5  \\
		\bv_2&= \bu_1+2\bu_3=\bu_1+\bu_2+\bu_3+\bu_5=2\bu_1+\bu_3+\bu_4\\
		&=3\bu_1+ 2\bu_4= \bu_1+2\bu_2+2\bu_5 =\bu_2+ 2\bu_1+\bu_4+\bu_5 
	\end{align*}
	Therefore the hull must have the form $\KK[[t_1,\ldots,t_5]]\Big/ \langle g_1,g_2 \rangle 
	$, where
	\begin{align*}
		g_1&=a_1t_2t_3^2+a_2t_1t_2t_3t_4+ a_3t_2^2t_3t_5+a_4t_2^3t_5^2+a_5t_1^2t_2t_4^2+a_6t_1t_2^2t_4t_5;\\
		g_2&=b_1t_1t_3^2+b_2t_1t_2t_3t_5+ b_3t_1^2t_3t_4+b_4t_1^3t_4^2+b_5t_2^2t_1t_5^2+b_6t_2t_1^2t_4t_5.
	\end{align*}
	for some $a_1,\ldots,a_6,b_1,\ldots,b_6\in \KK$.
	
	Doing a computation similar to Example \ref{example:1,-2,5}, we obtain
	\begin{align*}
		a_1=-b_1=a_4=-b_4=a_5=-b_5=1;\\
		-a_2=b_2=-a_3=b_3=a_6=-b_6=2,
	\end{align*}
	see \cite{ancil} for details.
	After applying the change of variables $t_3'=-t_3+t_1t_4+t_2t_5$ we observe that
	\begin{align*}
		t_2t_3'^2=g_1\qquad	t_1t_3'^2=-g_2.
	\end{align*}
	Thus, the hull of $\Def_X$ is given by
	\[
	\KK[[t_1,t_2,t_3',t_4,t_5]]/\langle t_2t_3'^2, t_1t_3'^2 \rangle.
	\]
	As in Example \ref{example:1,-2,5}, the spectrum of the hull has two irreducible components. One is smooth with dimension 3, and the other is a generically non-reduced component of multiplicity 2 and dimension 4.
\end{example}

\begin{example}[Case $(e,a,b)=(3,-4,3)$]\label{example:3,-4,3}
	Here we compute the hull of $\Def_X$ for 
	\[X=\PP(\mathcal{O}_{\mathbb{F}_3}\oplus \mathcal{O}_{\mathbb{F}_3}(-4F+3H)).\]
	We will see that the hull is irreducible and singular at the origin, and already determined by the quadratic obstructions. We note that these quadratic obstructions could have been computed using the methods of \cite{ilten3}.  

	By \thref{lemma:H1deg}, $H^1(X_{\Sigma},T_{X_{\Sigma}})$ is non-zero only in the following degrees:
	\begin{align*}
		\bu_1&=(0,-1,-1), &  \bu_2&=(1,-1,-1), & \bu_3&=(-1,0,1), &  \bu_4&=(-1,-1,0),\\
		\bu_5&=(-2,0,1), &  \bu_6&=(-2,-1,0), &  \bu_7&=(-3,0,1).
	\end{align*}
	By \thref{lemma:H2deg}, $H^2(X_{\Sigma},T_{X_{\Sigma}})$ is non-zero only in the degree 
	\[\bv=(-1,-2,-1).\] 
	See Figure \vref{fig:3,-4,3} for an illustration. The only non-negative integer combinations are of the form
	\[\bv= \bu_1+\bu_4=\bu_2+\bu_6=2\bu_1+\bu_3=2\bu_2+\bu_7=\bu_1+\bu_2+\bu_5.\]
	Therefore, the hull must have the form
	\[\KK[[t_1,\ldots,t_7]]\Big/J; \quad J=\langle a_1t_1t_4+ a_2t_2t_6+ a_3t_1^2t_3+a_4t_2^2t_7+a_5t_1t_2t_5 \rangle,\]
	for some $a_1,\ldots,a_5\in \KK$.

	By computing the quadratic obstructions, we obtain that $a_1=a_2=-1$, see Table \vref{table:3,4,-3} or \cite{ancil}. After applying the change of variables $t_4'=-t_4+a_3t_1t_3+a_5t_2t_5$ and $t_6'=-t_6+a_4t_2t_7$, we obtain 
	\[-t_1t_4- t_2t_6+ a_3t_1^2t_3+a_4t_2^2t_7+a_5t_1t_2t_5= t_1t_4'+t_2t_6'.\]
	Thus, the hull of $\Def_X$ is given by
	\[
	\KK[[t_1,t_2,t_3,t_4',t_5,t_6']]/\langle t_1t_4'+t_2t_6' \rangle.
	\]
	The spectrum of the hull is irreducible and has dimension 6; it is the formal completion of the product of $\Aff^3$ with the affine cone over a smooth quadric surface.
\end{example}

\begin{table}[htbp]
	\caption{Deformation data for the case $(e,a,b)=(3,-4,3)$}
	\label{table:3,4,-3}
	\small
	\begin{tabular}{|c|c|c|c|c|c|c|c|c|c|}
		\hline
		&
		\multicolumn{3}{c|}{\makecell{Deformation data}}& \multicolumn{4}{c|}{\makecell{Obstruction data}}& {\makecell{$\gamma$}}\\
		\hline
		\diagbox{$t^{w}$}{$\sigma$} & $\sigma_2$ & $\sigma_3$ & $\sigma_4$ & $\sigma_1\sigma_{2}$& $\sigma_2\sigma_{3}$ & $\sigma_3\sigma_{4}$ & $\sigma_{4}\sigma_1$ &\\
		\hline
		$t_1\cdot f_5$   & 1 & 1 & 0 & \zero & 0&\zero &0&0 \\
		\hline
		$t_2\cdot f_5$   & 1 & 1 & 0 & \zero & 0&\zero&0&0 \\
		\hline
		$t_3\cdot f_6$   & 0 & 1 & 1 & 0 & \zero&0&\zero&0 \\
		\hline
		$t_4\cdot f_2$   & 0 & 1 & 1 & 0 & \zero&0&\zero&0 \\
		\hline
		$t_5\cdot f_6$   & 0 & 1 & 1 & 0 & \zero&0&\zero&0 \\
		\hline
		$t_6\cdot f_2$   & 0 & 1 & 1 & 0 & \zero&0&\zero&0 \\
		\hline
		$t_7\cdot f_6$   & 0 & 1 & 1 & 0 & \zero&0 &\zero&0 \\
		\hline
		
		\multicolumn{9}{|c|}{\textbf{2nd order}}\\
		\hline
		
		$t_1t_3\cdot (f_5-f_6)$  & 0 & -1/2& -1 &  0&1/2&1/2&\zero&0 \\
		\hline
		\color{blue}{$t_1t_4\cdot f_5$}   & 0 & 1/2 & 0 & 0&-1/2&-1/2&0&-1 \\
		\hline
		$t_1t_5\cdot (f_5-f_6)$ & 0 & -1/2& -1 &  0&1/2&1/2&\zero&0 \\
		\hline
		$t_2t_5\cdot (f_5-f_6)$ & 0 & -1/2& -1 &  0&1/2&0&\zero&0 \\
		\hline
		\color{blue}{$t_2t_6\cdot f_5$} & 0 & 1/2& 0 &  0&-1/2&-1/2&0&-1 \\
		\hline
		$t_2t_7\cdot (f_5-f_6)$ & 0 & -1/2& -1 &  0&1/2&1/2&\zero&0 \\
		\hline	
	\end{tabular}
\end{table}

\begin{example}[Case $e\geq2, a=-e, b=3$]\label{example:e,-e,b}
	Here, we compute the hull of $\Def_X$ for 
	\[X=\PP(\mathcal{O}_{\mathbb{F}_e}\oplus \mathcal{O}_{\mathbb{F}_e}(-eF+3H)),\]
	where $e\geq 2$. We will show that
	the spectrum of the hull consists of two irreducible components, with an arbitrarily large difference in their dimensions. The case $e=2$ was first analyzed in \cite[Example 5.2]{FPR} and has minimal $\rT^1$ dimension among obstructed $\PP^1$-bundles.
	Similar to Example \ref{example:3,-4,3}, the hull is already determined by the quadratic obstructions, which could have been computed using the methods of \cite{ilten3}.  

	By \thref{lemma:H1deg}, $H^1(X_{\Sigma},T_{X_{\Sigma}})$ has dimension $2e-1$ and is non-zero in the degrees
	\[\bu_1=(0,-1,-1),\quad \bu_{2k}=(-k,-1,0), \quad \bu_{2k+1}=(-k,0,1)\]
	for $k=1,\ldots,e-1$. By \thref{lemma:H2deg}, $H^2(X_{\Sigma},T_{X_{\Sigma}})$ has dimension $e-1$  and is non-zero in the degrees 
	\[\bv_k=(-k,-2,-1)\] 
	for $k=1,\ldots,e-1$. See Figure \vref{fig:e,-e,b}. The only non-negative integer combinations are of the form
	\[\bv_k= \bu_1+\bu_{2k}=2\bu_1+\bu_{2k+1},\]
	for $k=1,\ldots,e-1$. 
	
	Therefore, the hull must have the form
	\[\KK[[t_{1},\ldots,t_{2k+1}]]\Big/J; \quad J=\langle a_{1k}\cdot t_1t_{2k}+ a_{2k}\cdot t_1^2t_{2k+1}:k=1,\ldots,e-1 \rangle,\]
	for some $a_{1k},a_{2k}\in \KK$ where $k=1,\ldots,e-1$.
	
	The deformation data restricted to the deformation parameters $t_1,t_{2k}$ is shown in Table \vref{table:case: e,-e,3}. Therefore, we obtain that $a_{1k}=-1$ for $k=1,\ldots,e-1$. After applying the change of variables
	\[t_{2k}'= -t_{2k}+a_{2k}\cdot t_1 t_{2k+1},\]
	 we can express the hull as
	\[ \KK[[t_1,t_2',t_3, \ldots, t'_{2e-1} ]]\Big /\langle t_1 \rangle \cdot \langle t_2', t_4',\ldots, t'_{2e-2}\rangle.\]
	The spectrum of the hull has  two irreducible components, both smooth: one with dimension $2e-2$, and the other with dimension $e$.
	
\end{example}

\begin{table}
	\caption{Deformation data for the case $e\geq 2, a=-e$ and $b=3$}
	\label{table:case: e,-e,3}
	\begin{tabular}{|c|c|c|c|c|c|c|c|c|c|}
		\hline
		&
		\multicolumn{3}{c|}{\makecell{Deformation data}}& \multicolumn{4}{c|}{\makecell{Obstruction data}}&{\makecell{$\gamma$}}\\
		\hline
		\diagbox{$t^{w}$}{$\sigma$} & $\sigma_2$ & $\sigma_3$ & $\sigma_4$ & $\sigma_1\sigma_2$& $\sigma_2\sigma_3$ & $\sigma_3\sigma_4$ & $\sigma_4\sigma_1$ & \\
		\hline
		$t_1\cdot f_5$     & 1 & 1& 0  &  \zero&0&\zero &0&0 \\
		\hline
		$t_{2k}\cdot f_2$  & 0 & 1 & 1 & 0&\zero&0&\zero&0 \\
		\hline
		\color{blue}{$t_1t_{2k} \cdot f_5$}  & 0 & 1/2& 0 &  0&-1/2&-1/2&0&-1 \\
		\hline
	\end{tabular}
\end{table}

\begin{rem}
	We have carefully chosen the previous four examples so that degree constraints imply that the obstructions equations $g_{\ell}$ are polynomials instead of power series. Such constraints do not apply in general, such as in the case of $(e,a,b)=(2,-4,4)$.
\end{rem}

Using computations similar to those from the examples above, we now confirm that the cases in \thref{lemma:possibleobs} do indeed yield obstructions, proving \thref{thm:obstructed}.

\begin{proof}[Proof of \thref{thm:obstructed}]
	By \thref{lemma:possibleobs}, the cases not listed in the theorem are unobstructed.
It remains to show that the cases of the theorem are indeed obstructed, with minimal degree of obstruction as claimed.
	Suppose first that $e=1$, $a\leq -2$ and $b\geq 3-a$. By Figure \vref{fig:type1} and the discussion following the proof of \thref{lemma:possibleobs}, there is no degree of Type \Romannum{1}, so it is not possible to have relation among degrees that could provide a quadratic obstruction. However, we have
	\[(-1,a-2,-1)=(0,-a,-1)+(0,-2,-1)+(-1,0,1),\]
	 see \eqref{eq:relation1}. If we restrict the computations to the deformation parameters associated with the ray-degree pairs 
	 \[(\rho_5,(0,-a,-1)), \quad (\rho_5,(0,-2,-1)), \quad (\rho_6,(-1,0,1))\]
	  we obtain results identical to the deformation parameters associated with the
	  ray-degree pairs 
	  \[(\rho_5,(0,-2,-1)), \quad (\rho_5,(0,-2,-1)), \quad (\rho_6,(-1,0,1))\] in Example \ref{example:1,-2,5}. Indeed, the simplicial complexes $V_{\rho,\bu}$ and the rays involved are identical in these two situations.. Thus, we must have a third order obstruction, proving claim \ref{case:obstrcuted1}.
	  
	  Consider instead the case $e\geq 2$, $a\leq -e$ and $b \geq  1+{(2-a)}/{e}$. We now  use Figure \vref{fig:type2} and the discussion following the proof of \thref{lemma:possibleobs}. If  $a\not \equiv 1 \mod e$ we have the relation
	  \[(-1,\eta,-1)=(0,\xi,-1)+(0,-1,-1)\]
	  see \eqref{eq:relation2}. Restricting the computations to the deformation parameters associated with the ray-degree pairs 
	  \[(\rho_5,(0,\xi,-1)), \quad (\rho_2,(-1,-1,0))\]
	  we obtain results identical to the deformation parameters associated with the
	  ray-degree pairs 
	  \[(\rho_5,(0,-1,-1)), \quad (\rho_2,(-k,-1,0))\]
	   in Example \ref{example:e,-e,b}. Thus, we must have a second order obstruction.
	  
	 If instead $a\equiv 1 \mod e$, it is not possible to have relations among degrees that could provide a quadratic obstruction since $\xi-\eta=2$. However, we have
	 \[(-1,\eta,-1)=(1,\xi,-1)\; +\; 2\cdot (-1,-1,0),\]
	 see \eqref{eq:relation3}. Restricting the computations to the deformation parameters associated with the ray-degree pairs 
	 \[(\rho_5,(1,\xi,-1)), \quad (\rho_2,(-1,-1,0))\]
	 we obtain results identical to the deformation parameters associated with the
	 ray-degree pairs 
	 \[(\rho_5,(1,-1,-1)), \quad (\rho_2,(-1,-1,0))\]
	 in Example \ref{example:2,-3,4}. Thus, we must have a third order obstruction. This completes the proof of claim \ref{case:obstructed2}.
\end{proof}

\appendix
\numberwithin{equation}{section}

\section{Comparison theorem for open subschemes}\label{sec:comp}
Let $X$ be a scheme over $\KK$ and $U\subseteq X$ be an open subscheme. 
There is a natural map of functors
$\Def_X \to \Def_{U}$ obtained by restriction.
We make use of the following folklore result that was first brought to our attention by A.~Petracci:
\begin{thm}\thlabel{CMiso}
	Suppose that $X$ is a Noetherian separated scheme over $\KK$ and $U\subset X$ an open subscheme with complement $Z=X\setminus U$.
Then the restriction map
\[\Def_X\to \Def_U\]
is injective if 
the depth of $\CO_X$ at every (not necessarily closed) point of $Z$ is at least two, and an isomorphism if the depth of $\CO_X$ at every (not necessarily closed) point of $Z$ is at least three. In particular, it is an isomorphism if $Z$ has codimension at least three in $X$ and $X$ is Cohen-Macaulay.
\end{thm}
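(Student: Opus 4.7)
\textbf{Proof proposal for \thref{CMiso}.}

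The plan is to use the inclusion $j\colon U \hookrightarrow X$ and compare a deformation $\mathcal{U}$ over $A$ with the sheaf $j_*\CO_{\mathcal{U}}$ on $X$. The key translation of the hypotheses is that $\depth_Z\CO_X\geq 2$ is equivalent to $\CO_X \xrightarrow{\sim} j_*j^*\CO_X$, while $\depth_Z\CO_X\geq 3$ additionally gives $R^1 j_* j^*\CO_X=0$. I would first verify that these depth conditions are preserved under Artinian deformation: if $\mathcal{X}$ is a flat deformation of $X$ over $A\in\Art$, the filtration by powers of $\mfm_A$ has graded pieces of the form $(\mfm_A^i/\mfm_A^{i+1})\otimes_\KK \CO_X$, so depth is preserved by the additivity of depth along short exact sequences, giving $\CO_{\mathcal{X}} \xrightarrow{\sim} j_* j^*\CO_{\mathcal{X}}$ (and the analogous vanishing of $R^1j_*$ under depth $\geq 3$).

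For injectivity (depth $\geq 2$), let $\mathcal{X}_1,\mathcal{X}_2$ be two deformations of $X$ over $A$ together with an isomorphism $\phi\colon \mathcal{X}_1|_U \xrightarrow{\sim}\mathcal{X}_2|_U$ of deformations of $U$. Since $\mathcal{X}_i$ and $X$ share the same underlying topological space, I apply $j_*$ to the induced map of structure sheaves $j^*\CO_{\mathcal{X}_1}\to j^*\CO_{\mathcal{X}_2}$ and use $\CO_{\mathcal{X}_i}=j_*j^*\CO_{\mathcal{X}_i}$ to obtain an isomorphism $\CO_{\mathcal{X}_1}\to \CO_{\mathcal{X}_2}$ of sheaves of flat $A$-algebras extending $\phi$; functoriality of $j_*$ guarantees that this is a map of deformations.

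For bijectivity (depth $\geq 3$), I would proceed by induction on $\dim_\KK A$ using small extensions $0\to I \to A'\to A\to 0$: given $\mathcal{X}$ over $A$ and $\mathcal{U}'$ over $A'$ with $\mathcal{U}'\otimes_{A'} A = \mathcal{X}|_U$, define $\CO_{\mathcal{X}'}:=j_*\CO_{\mathcal{U}'}$. The small extension gives a short exact sequence
\[
0\to I\otimes_\KK \CO_U \to \CO_{\mathcal{U}'}\to \CO_{\mathcal{U}}\to 0,
\]
and applying $j_*$, using $j_*\CO_U=\CO_X$, $j_*\CO_{\mathcal{U}}=\CO_{\mathcal{X}}$, and the vanishing $R^1 j_*(I\otimes\CO_U)=0$ (since $I\otimes\CO_U$ is a sum of copies of $\CO_U$ and depth is at least $3$), yields
\[
0\to I\otimes_\KK \CO_X\to \CO_{\mathcal{X}'}\to \CO_{\mathcal{X}}\to 0.
\]
The local flatness criterion applied to this sequence shows $\CO_{\mathcal{X}'}$ is $A'$-flat with the correct reduction mod $\mfm_{A'}$, and the restriction $j^*\CO_{\mathcal{X}'}=\CO_{\mathcal{U}'}$ follows from the counit of $(j^*,j_*)$ being an isomorphism for open immersions.

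The main obstacle I anticipate is verifying that the ringed space $(X,\CO_{\mathcal{X}'})$ is genuinely a scheme, not merely a sheaf of $A'$-algebras on $X$. The approach is to work affine-locally: for an affine open $V\subseteq X$ with $B=\CO_X(V)$, set $R=\Gamma(V,\CO_{\mathcal{X}'})=\Gamma(V\cap U,\CO_{\mathcal{U}'})$. Taking sections of the displayed sequence above over $V$ (which is exact since $V$ is affine and the first term is quasi-coherent), $R$ is a flat $A'$-algebra with $R\otimes_{A'}\KK=B$. Then I would show that $\CO_{\mathcal{X}'}|_V$ is the structure sheaf of $\Spec R$ by a second induction on the length of $A'$, using the same short exact sequence argument on each standard open $V_f\subseteq V$ to reduce to the known case on the closed fiber $X$, where affineness of $V$ is the hypothesis. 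This completes the induction and establishes that $\mathcal{X}'$ is a deformation of $X$ extending $\mathcal{X}$ and restricting to $\mathcal{U}'$, proving surjectivity; combined with injectivity this yields the isomorphism.
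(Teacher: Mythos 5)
Your proof is correct and follows essentially the same route as the paper's: both arguments translate the depth hypotheses into the statements that $\CO_X\to j_*j^*\CO_X$ is an isomorphism (depth $\geq 2$) and additionally that $R^1j_*j^*\CO_X=0$ (depth $\geq 3$), then obtain injectivity by extending the given isomorphism via $j_*$, and surjectivity by setting $\CO_{\mathcal{X}'}=j_*\CO_{\mathcal{U}'}$ and deducing flatness from the small-extension short exact sequence together with the vanishing of $R^1j_*$. The paper encapsulates the inductive verification of flatness and the depth-preservation-under-deformation step inside two lemmas of Artin which it cites, whereas you re-derive them via the $\mfm_A$-adic filtration whose graded pieces are copies of $\CO_X$; this is the same mechanism, just unpacked. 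One genuine value-add in your version: you explicitly flag and resolve the point that $(X,j_*\CO_{\mathcal{U}'})$ must be checked to be a scheme and not merely a flat sheaf of $A'$-algebras — the paper's ``hence defines a deformation $X'$'' leaves this implicit (it is buried in the affine-local treatment of \cite{artin}), and your localization argument on standard opens $V_f$ is the right way to nail it down.
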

The above theorem is stated and proved in the affine case (with slightly different hypotheses) by M.~Artin in \cite[Proposition 9.2]{artin}. We now show how to globalize the argument of loc.~cit. 
In the following, $X$ will be a Noetherian separated scheme over $\KK$ and $U$ an open subscheme with complement $Z$.
\begin{lemma}\label{lemma:lc}
	Let $k\in \NN$ and assume that the depth of $\CO_X$ at every point of $Z$ is at least $k$. Then
	\[H^i(X,\CO_X)=H^i(U,\CO_X)\]
	for any $i<k-1$.
\end{lemma}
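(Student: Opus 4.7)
The plan is to exploit the long exact sequence of local cohomology with supports in $Z = X \setminus U$:
\[
\cdots \to H^i_Z(X, \CO_X) \to H^i(X, \CO_X) \to H^i(U, \CO_X) \to H^{i+1}_Z(X, \CO_X) \to \cdots.
\]
The isomorphism $H^i(X, \CO_X) \cong H^i(U, \CO_X)$ for $i < k-1$ will follow at once from the vanishing
\[
H^j_Z(X, \CO_X) = 0 \qquad \text{for all } j < k,
\]
since both adjacent local-cohomology terms in the sequence then vanish in the desired range. Establishing this vanishing is therefore the only task.

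To prove the vanishing, I would pass from global local cohomology to the local cohomology sheaves $\mathcal{H}^q_Z(\CO_X)$ via the local-to-global spectral sequence
\[
E_2^{p,q} = H^p\bigl(X,\mathcal{H}^q_Z(\CO_X)\bigr) \Rightarrow H^{p+q}_Z(X, \CO_X),
\]
which reduces matters to showing $\mathcal{H}^q_Z(\CO_X) = 0$ for $q < k$. Since this is a stalkwise statement, I would check it at each $z \in Z$, where the stalk identifies with $H^q_{I_z}(\CO_{X,z})$ for $I_z$ the ideal of $Z$ in the stalk. By Grothendieck's vanishing theorem, this is zero whenever $q$ is strictly below the grade of $I_z$ in $\CO_{X,z}$, so it suffices to show $\mathrm{grade}(I_z, \CO_{X,z}) \geq k$.

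The key commutative-algebra input is the formula
\[
\mathrm{grade}(I_z, \CO_{X,z}) = \inf\bigl\{\depth \CO_{X,z'} : z' \text{ a generization of } z \text{ lying in } Z\bigr\},
\]
which follows from the $\Ext$-characterization of grade together with the fact that $\Ext$ commutes with localization for finitely generated modules over Noetherian rings (see, e.g., Bruns--Herzog, \emph{Cohen-Macaulay Rings}, Prop. 1.2.10, or Hartshorne's \emph{Local Cohomology} notes). Because primes of $\CO_{X,z}$ containing $I_z$ correspond exactly to generizations of $z$ in $Z$, the depth hypothesis immediately yields $\mathrm{grade}(I_z, \CO_{X,z}) \geq k$, closing the argument.

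The hard part will be cleanly justifying the grade-versus-depth identity in this generality; beyond that, the proof is a mechanical assembly of the long exact sequence of local cohomology and the local-to-global spectral sequence.
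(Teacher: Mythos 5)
Your proof is correct and follows essentially the same route as the paper: the paper simply cites \cite[Theorem 3.8]{localcohomology} for the vanishing $\mathcal{H}^i_Z(\CO_X)=0$ for $i<k$ (which encapsulates your stalkwise argument via Grothendieck vanishing and the grade-versus-depth identity) and \cite[Proposition 1.11]{localcohomology} for deducing the isomorphism $H^i(X,\CO_X)\cong H^i(U,\CO_X)$ (which is your combination of the long exact sequence of local cohomology with the local-to-global spectral sequence). You have unpacked the cited results rather than taken a different path.
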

\begin{proof}
	By the depth condition, one obtains that $\mathcal{H}^i_Z(\CO_X)=0$ for $i<k$, see \cite[Theorem 3.8]{localcohomology}. Here, $\mathcal{H}_Z^i$ is the sheaf of local cohomology with support in $Z$. By \cite[Proposition 1.11]{localcohomology}, it follows that $H^i(X,\CO_X)=H^i(U,\CO_U)$ for $i<k-1$.
\end{proof}
\begin{lemma}\label{lemma:extend}
Let $X$ be affine and assume that the depth of $\CO_X$ at every point of $Z$ is at least $2$.
Let $X'$ be any deformation of $X$ over some $A\in\Art$. Then
\[
	H^0(X,\CO_{X'})=H^0(U,\CO_{X'}).
\]
\end{lemma}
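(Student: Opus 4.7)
I would prove this by induction on $\dim_\KK A$. The base case $A=\KK$ gives $\CO_{X'}=\CO_X$, and then the claim $H^0(X,\CO_X)=H^0(U,\CO_X)$ is exactly Lemma \ref{lemma:lc} applied with $k=2$ (which covers the range $i<1$).

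For the inductive step, I would fix a small extension
\[0\to I\to A'\to A\to 0\]
with $I\cong\KK$ one-dimensional, so that $X_A$ denotes the induced deformation over $A$. Flatness of $X'$ over $A'$ gives a short exact sequence of sheaves on the underlying topological space of $X$,
\[
0\to \CO_X\to \CO_{X'}\to \CO_{X_A}\to 0,
\]
where I am using the identification $I\otimes_\KK\CO_X\cong\CO_X$. Taking sections over $X$ and over $U$ and assembling the comparison maps gives the commutative diagram
\[
\begin{tikzcd}[column sep=small]
0\ar[r]&H^0(X,\CO_X)\ar[r]\ar[d,"\cong"]&H^0(X,\CO_{X'})\ar[r]\ar[d]&H^0(X,\CO_{X_A})\ar[r]\ar[d,"\cong"]&H^1(X,\CO_X)\ar[d]\\
0\ar[r]&H^0(U,\CO_X)\ar[r]&H^0(U,\CO_{X'})\ar[r]&H^0(U,\CO_{X_A})\ar[r]&H^1(U,\CO_X)
\end{tikzcd}
\]
The leftmost vertical arrow is an isomorphism by Lemma \ref{lemma:lc}, and the third is an isomorphism by the inductive hypothesis. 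Since $X$ is affine and $\CO_X$ is quasi-coherent, the upper $H^1$ vanishes, so the top row is short exact.

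For injectivity of the middle arrow, if $\tilde g\in H^0(X,\CO_{X'})$ restricts to $0$ on $U$, its image in $H^0(X,\CO_{X_A})$ vanishes by the inductive hypothesis, so $\tilde g$ comes from a section of $\CO_X$ which must itself be zero by Lemma \ref{lemma:lc}. For surjectivity, given $f\in H^0(U,\CO_{X'})$, use induction to lift the image $\bar f\in H^0(U,\CO_{X_A})$ to $g\in H^0(X,\CO_{X_A})$, then use the vanishing of $H^1(X,\CO_X)$ to lift $g$ further to some $\tilde g\in H^0(X,\CO_{X'})$. The restrictions $f$ and $\tilde g|_U$ may differ, but their difference lies in $H^0(U,\CO_X)=H^0(X,\CO_X)$, so we can correct $\tilde g$ by adding a global section of $\CO_X$ to obtain a preimage of $f$.

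The plan has no really hard step — once the setup is in place, it is a diagram chase combined with the vanishing of $H^1(X,\CO_X)$ for affine $X$. The only subtlety is ensuring that the induction is set up for all small extensions (not just those with $I\cong\KK$), which is handled by factoring an arbitrary surjection $A'\twoheadrightarrow A$ in $\Art$ into a sequence of such minimal extensions.
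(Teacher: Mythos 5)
Your proposal is correct and follows essentially the same route the paper (and its source, Artin's proof of claim (*) in Lemma 9.1) intends: induction on $\dim_\KK A$, the exact sequence $0\to\CO_X\to\CO_{X'}\to\CO_{X_A}\to 0$ coming from flatness over a one-dimensional small extension, the identification $H^0(X,\CO_X)=H^0(U,\CO_X)$ from Lemma \ref{lemma:lc}, and $H^1(X,\CO_X)=0$ because $X$ is affine. The paper merely cites Artin and leaves the induction as ``straightforward''; you have filled in exactly that argument, including the correct factoring of an arbitrary surjection in $\Art$ into length-one small extensions.
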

\begin{proof}
	This is claim (*) from the proof of \cite[Lemma 9.1]{artin}, and follows from straightforward induction on the length of $A$. The base case $A=\KK$ follows from Lemma \ref{lemma:lc}.
\end{proof}

\begin{lemma}\label{lemma:flat}
	Let $X$ be affine and assume that the depth of $\CO_X$ at every point of $Z$ is at least $3$. Let $U'$ be any deformation of $U$ over some $A\in \Art$. Then $H^0(U,\CO_{U'})$ is flat over $A$.
\end{lemma}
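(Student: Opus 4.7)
The plan is to prove the stronger statement, by induction on $\dim_\KK A$, that for every deformation $U'$ of $U$ over any $A\in\Art$ the module $M := H^0(U', \CO_{U'})$ is free over $A$ and admits a basis whose image in $H^0(U, \CO_U)$ is a $\KK$-basis. The base case $A = \KK$ is immediate, since there $M = H^0(U, \CO_U)$ is already a $\KK$-vector space.

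For the inductive step, I would choose a small extension $0 \to I \to A \to A_0 \to 0$, set $U_0' := U' \times_{\spec A} \spec A_0$, and write $M_0 := H^0(U_0', \CO_{U_0'})$. Flatness of $\CO_{U'}$ over $A$ combined with $\mfm_A \cdot I = 0$ gives the short exact sequence of sheaves
\[
        0 \to I \otimes_\KK \CO_U \to \CO_{U'} \to \CO_{U_0'} \to 0
\]
on the common underlying space $U$, using the identification $I \otimes_A \CO_{U'} = I \otimes_\KK \CO_U$. Taking global sections and invoking Lemma \ref{lemma:lc} with $k = 3$ to identify $H^1(U, \CO_U)$ with $H^1(X, \CO_X)$, which vanishes because $X$ is affine, produces the short exact sequence
\[
        0 \to I \otimes_\KK H^0(U, \CO_U) \to M \to M_0 \to 0.
\]

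The inductive hypothesis supplies a basis $\{e_\lambda\}_{\lambda \in \Lambda}$ of $M_0$ over $A_0$ whose reductions $\{\bar e_\lambda\}$ form a $\KK$-basis of $H^0(U, \CO_U)$. I would pick arbitrary lifts $\tilde e_\lambda \in M$ of the $e_\lambda$ and define $\phi \colon A^{(\Lambda)} \to M$ by $(a_\lambda) \mapsto \sum a_\lambda \tilde e_\lambda$. In the commutative diagram with exact rows
\[
\begin{tikzcd}[ampersand replacement=\&, column sep=small]
0 \ar[r] \& I \otimes_\KK \KK^{(\Lambda)} \ar[r] \ar[d,"\cong"] \& A^{(\Lambda)} \ar[r] \ar[d,"\phi"] \& A_0^{(\Lambda)} \ar[r] \ar[d,"\cong"] \& 0 \\
0 \ar[r] \& I \otimes_\KK H^0(U, \CO_U) \ar[r] \& M \ar[r] \& M_0 \ar[r] \& 0
\end{tikzcd}
\]
the right vertical is an isomorphism by the choice of $\{e_\lambda\}$, and the left vertical is $\id_I$ tensored with the basis isomorphism $\KK^{(\Lambda)} \xrightarrow{\sim} H^0(U, \CO_U)$. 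The five lemma then forces $\phi$ to be an isomorphism, so $M$ is free over $A$ with basis $\{\tilde e_\lambda\}$ reducing to $\{\bar e_\lambda\}$, closing the induction.

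The only nonroutine ingredient, and the sole place where the depth-three hypothesis is genuinely used, is the surjectivity of $M \to M_0$; once Lemma \ref{lemma:lc} delivers the vanishing $H^1(U, \CO_U) = 0$, the remainder is a standard lift-a-basis-and-apply-the-five-lemma argument.
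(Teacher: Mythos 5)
Your proof is correct and follows the same inductive framework as the paper: induct on the length of $A$, realize $A$ as a small extension, tensor with the $A$-flat sheaf $\CO_{U'}$ to obtain a short exact sequence of sheaves, and then use Lemma \ref{lemma:lc} together with affineness of $X$ to kill $H^1(U,\CO_U)$ and hence obtain a short exact sequence on global sections. Where you diverge is in closing the induction: the paper passes from ``$M_0$ is $A_0$-flat by induction'' to ``$M$ is $A$-flat'' by citing a version of the local criterion of flatness (Artin's Proposition~8.1 or Hartshorne's Proposition~2.2), whereas you carry the slightly stronger inductive hypothesis that $M_0$ is \emph{free} with a basis reducing to a $\KK$-basis of $H^0(U,\CO_U)$, lift that basis, and apply the short five lemma. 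Since flat and free coincide over an Artinian local ring, the two conclusions are equivalent; your route trades the external citation for a little extra bookkeeping and is self-contained, essentially re-proving the local criterion in this special case. It also makes transparent exactly where the depth-three hypothesis is spent, namely only through the vanishing $H^1(U,\CO_U)=0$ that makes the global-sections sequence exact on the right. Either argument is fine.
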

\begin{proof}
	This is the contents of \cite[Lemma 9.2]{artin}, see the final sentence of the proof. For the reader's convenience, we summarize the argument here.
The proof is by induction on the length of $A$; the case $A=\KK$ is trivial.
By Lemma \ref{lemma:lc} we have that $H^i(X,\CO_X)=H^i(U,\CO_U)$ for $i=0,1$, in particular this vanishes for $i=1$ since $X$ is affine. 

	Realizing $A$ as a small extension
\[
0\to \KK \to A \to A_0\to 0
\]
the flatness of $U'$ over $A$ implies the exactness of 
\[
	0\to \CO_U \to \CO_{U'} \to \CO_{U'}\otimes_A A_0\to 0.
\]
By the isomorphisms of the previous paragraph, the exactness of 
\[
	0\to H^0(X,\CO_X) \to H^0(U,\CO_{U'}) \to H^0(U,\CO_{U'}\otimes_A A_0)\to 0
\]
follows.
The ring $H^0(U,\CO_{U'}\otimes_A A_0)$ is flat over $A_0$ by the induction hypothesis and the $A$-flatness of $H^0(U,\CO_{U'})$ follows by a version of the local criterion of flatness, see \cite[Proposition 8.1]{artin} or \cite[Proposition 2.2]{deftheory}.
\end{proof}

\begin{proof}[Proof of \thref{CMiso}]
We first show the injectivity of the map $\Def_X\to \Def_U$ when the depth of $\CO_X$ along $Z$ is at least two. 
	Fix an affine open cover $\mcU=\{U_i\}_{i\in I}$ of $X$.
Consider two deformations $X'$ and $X''$ of $X$ over $A\in\Art$, and denote their restrictions to $U$ by $U'$ and $U''$. Assume that there is an isomorphism of deformations $\phi:U'\to U''$. We thus have isomorphisms 
\[
	\phi_i^\#:H^0(U_i\cap U, \CO_{U''})\to H^0(U_i\cap U, \CO_{U'})
\]
satisfying the obvious cocycle condition.
By Lemma \ref{lemma:extend} 
\[
	H^0(U_i, \CO_{X'})=H^0(U_i\cap U, \CO_{U'}),\qquad
	H^0(U_i, \CO_{X''})=H^0(U_i\cap U, \CO_{U''})
\]
so we obtain isomorphisms $\phi_i:X'_{|U_i}\to X_{|U_i}''$. By the cocycle condition, these glue to give an isomorphism $X'\to X''$. This shows that 
$\Def_X\to \Def_U$ is injective.

For the surjectivity when the depth is at least three, consider any deformation $U'$ of $U$ over $A\in \Art$.
Let $\iota:U\to X$ denote the inclusion of $U$ in $X$, and set $\CO_{X'}:=\iota_*(\CO_{U'})$.
By Lemma \ref{lemma:flat}, $\CO_{X'}$ is flat over $A$, hence defines a deformation $X'$. By construction, this restricts to the deformation $U'$, hence $\Def_X\to \Def_U$ is surjective.\end{proof}

\section{Solving the deformation equation}\label{ap:solve}
In this appendix, we will prove \thref{prop:defeqsolving} and state and prove a lemma we used in proving \thref{hull}.
We use notation as established in \S\ref{sec:defeqsetup} and \S\ref{sec:deq}.
\begin{proof}[Proof of \thref{prop:defeqsolving}]
	To solve \eqref{eq:defeq2} we consider the small extension
	\[
		0\to J_{r}/(\mfm\cdot J_r) \to S/(\mfm\cdot  J_{r}) \to S/J_{r}\to 0.
	\]
	It follows from \eqref{eq:defeq} (modulo $\mfm\cdot J_{r-1}$) that
	\[ \lambda(\mfp(s({\alpha^{(r)}}))) \equiv 
		\lambda(\mfp(s({\alpha^{(r)}})))-\sum_{\ell=1}^q g_\ell^{(r)}\cdot \omega_\ell \equiv 
	0 \quad \mod J_r.\]

	From Lemma \ref{lemma:obstruction} we obtain that the image of \[\xi=\lambda(\mfp(s({\alpha^{(r)}})))-\sum_{\ell=1}^q g_\ell^{(r)}\cdot \omega_\ell\] in $\check{C}^{1}(\sU,\mcK/\mcL)\otimes S/(\mfm\cdot J_{r})$ is a cocycle. 
Because of this, the normal form of $\xi$ with respect to $\mfm\cdot J_r$ is also a cocycle. In fact, the normal form belongs to 
\[
	\check{Z}^{1}(\sU,\mcK/\mcL)\otimes \mfm_{r+1} 
	\]
since 
\[\xi\equiv 0 \mod \mfm \cdot J_{r-1}
\]
and $\mfm \cdot J_{r-1}=\mfm J_r+\mfm^{r+1}$. This latter equality follows from the assumption that $J_{r}+ \mfm^r= J_{r-1}+\mfm^r$. Since the images of the $\omega_\ell$ span $\check{H}^1(\sU,\mcK/\mcL)$, there exists
\[\beta^{(r+1)}\in \check{C}^0(\sU,\mcK/\mcL)\otimes \mfm_{r+1}, \quad \gamma_1^{(r+1)},\ldots,\gamma_q^{(r+1)}\in  \mfm_{r+1} \] satisfying \eqref{eq:defeq2}.
This implies claim \ref{prop:defeqsolve1}.

We now prove claim \ref{prop:defeqsolve2}.
By \eqref{eqn:odiff},
\[
	\lambda(\mfp(s(\alpha^{(r+1)})))\equiv \lambda(\mfp(s(\alpha^{(r)})))-d(\beta^{(r+1)})\qquad\mod \mfm^{r+2}.
\]
Equation \eqref{eq:defeq} then follows directly from \eqref{eq:defeq2}. Likewise, $J_{r+1}+ \mfm^{r+1}= J_{r}+ \mfm^{r+1}$ follows from the fact that $\gamma_\ell^{(r+1)}$ belongs to $\mfm_{r+1}$.
\end{proof}

Finally, with notation as in \S\ref{sec:defeqversal}, the following lemma is used in proving \thref{hull}:

\begin{lemma}\thlabel{lemma:obfinj}
		Consider the map of functors  $f: \Hom(R,-)\to \widehat \rF_{\mcL }$. 
		There exists a natural injective map $ob_f$ satisfying the hypotheses of \thref{standardsmooth} defined by
	 \begin{align*}
		ob_f: (J/\mfm J)^* &\to H^1(\sU,\mcK/\mcL)\\
		\varphi &\mapsto \sum_{\ell=1}^{q} \varphi(\overline g_{\ell})\cdot \omega_{\ell},
	\end{align*}
	where $\overline g_{\ell}$ denotes the image of $g_{\ell}$ in $J/\mfm\cdot J$.
\end{lemma}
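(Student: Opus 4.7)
The plan is three-fold: first, I endow $\Hom(R,-)$ with its standard obstruction theory valued in $(J/\mfm J)^*$; second, I verify that $ob_f$ is an obstruction map compatible with this and the obstruction theory of $\widehat\rF_{\mcL\hookrightarrow\mcK}$ from \thref{showdeffun2}(iii); and third, I check injectivity.

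For the obstruction theory on $\Hom(R,-)$, given a small extension $0\to I\to A'\to A\to 0$ and $\zeta\in\Hom(R,A)$, I choose any ring homomorphism $\wt\zeta:S\to A'$ lifting the composition $S\twoheadrightarrow R\xrightarrow{\zeta} A$; such a lift exists because $S=\KK[[t_1,\ldots,t_p]]$ is a formal power series ring and $A'$ is local Artinian. Since $\wt\zeta(J)\subseteq I$ (its image in $A$ vanishes) and $\wt\zeta(\mfm\cdot J)\subseteq \mfm_{A'}\cdot I=0$ by smallness, the rule $\overline g\mapsto \wt\zeta(g)$ descends to a $\KK$-linear map $\phi_{\Hom}(\zeta,A'):J/\mfm J\to I$, i.e., an element of $(J/\mfm J)^*\otimes I$. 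Independence of the lift follows from $J\subseteq\mfm^2$: two lifts differ on generators $t_i$ by elements of $I$, so their difference on any $g\in\mfm^2$ lies in $\mfm_{A'}\cdot I=0$. Completeness is automatic since $\phi_{\Hom}(\zeta,A')=0$ iff $\wt\zeta$ descends to a ring map $R\to A'$ lifting $\zeta$; functoriality in small extensions is immediate from the explicit formula.

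For the compatibility, I choose $r\gg 0$ large enough that $\zeta$ factors through $\zeta_r:R_r\to A$ and $\wt\zeta(\mfm^{r+1})=0$. Then $f(\zeta)$ is represented by $\zeta_r(\alpha^{(r)})$, lifted by $\wt\zeta(\alpha^{(r)})\in\check C^0(\sU,\mcK/\mcL)\otimes\mfm_{A'}$, so by \thref{showdeffun2}(iii) together with \thref{prop:isosecfunctor}, $\phi_{\widehat\rF}(f(\zeta),A')$ is represented by $\lambda(\mfp(s(\wt\zeta(\alpha^{(r)}))))$. Since $\lambda,s$ are $\KK$-linear and $\mfp$ commutes with $\wt\zeta$ by \eqref{eq:linearity}, this equals $\wt\zeta\bigl(\lambda(\mfp(s(\alpha^{(r)})))\bigr)$. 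Invoking the deformation equation \eqref{eq:defeq},
\[\lambda(\mfp(s(\alpha^{(r)})))\equiv\sum_{\ell=1}^q g_\ell^{(r)}\cdot\omega_\ell\pmod{\mfm\cdot J_{r-1}}.\]
The key technical point is that for $r$ large, $\wt\zeta$ annihilates $\mfm\cdot J_{r-1}=\mfm\cdot\langle g_1^{(r-1)},\ldots,g_q^{(r-1)}\rangle+\mfm^{r+1}$, by combining $\wt\zeta(g_\ell^{(r-1)})=\wt\zeta(g_\ell)\in I$ with $\mfm_{A'}\cdot I=0$ and $\wt\zeta(\mfm^{r+1})=0$; likewise $\wt\zeta(g_\ell^{(r)})=\wt\zeta(g_\ell)$. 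Hence $\phi_{\widehat\rF}(f(\zeta),A')$ is represented by $\sum_\ell\wt\zeta(g_\ell)\cdot\omega_\ell$. Working in any basis of the finite-dimensional space $J/\mfm J$ and using that $\overline g_\ell=\sum_\mu c_{\ell\mu}e_\mu$ implies $g_\ell\equiv\sum_\mu c_{\ell\mu}\widetilde e_\mu\pmod{\mfm J}$ (which is killed by $\wt\zeta$), a direct computation shows that $(ob_f\otimes\mathrm{id}_I)$ applied to $\phi_{\Hom}(\zeta,A')$ produces exactly $\sum_\ell\wt\zeta(g_\ell)\cdot\omega_\ell$, establishing the required compatibility.

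For injectivity, if $\varphi\in(J/\mfm J)^*$ satisfies $ob_f(\varphi)=\sum_\ell\varphi(\overline g_\ell)\omega_\ell=0$ in $H^1(\sU,\mcK/\mcL)$, then the linear independence of the classes of $\omega_1,\ldots,\omega_q$ (built into their choice) forces $\varphi(\overline g_\ell)=0$ for every $\ell$; since $J=\langle g_1,\ldots,g_q\rangle$ by construction, the classes $\overline g_\ell$ span $J/\mfm J$, so $\varphi=0$. The main obstacle is in the compatibility step: carefully verifying that $\wt\zeta$ kills the error term $\mfm\cdot J_{r-1}$ for sufficiently large $r$ so that the congruence from \eqref{eq:defeq} becomes an honest equality in $\check C^1(\sU,\mcK/\mcL)\otimes I$. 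Once this bookkeeping is in place, the remaining assertions reduce to formal manipulations with the construction of the hull via the deformation equation.
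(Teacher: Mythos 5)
Your proof is correct and follows essentially the same route as the paper: lift $\zeta$ to a map $\wt\zeta$ (called $\eta$ in the paper) out of $S$, realize the standard obstruction of $\Hom(R,-)$ as $\overline g\mapsto\wt\zeta(g)$, and then compare with the obstruction of $\widehat\rF_{\mcL\hookrightarrow\mcK}$ using the deformation equation. The main cosmetic difference is that you unfold the standard obstruction theory of $\Hom(R,-)$ from scratch, and you avoid the paper's explicit invocation of the Artin--Rees lemma: the paper uses Artin--Rees to establish the isomorphism $J/\mfm J\cong J_r/(\mfm J_r+\mfm^{r+1})$ so as to identify the obstruction through a commutative diagram, whereas you bypass this by observing directly that for $r\gg 0$ the lift $\wt\zeta$ annihilates $\mfm\cdot J_{r-1}$ (which, since $g_\ell-g_\ell^{(r-1)}\in\mfm^r$, equals $\mfm J+\mfm^{r+1}$, both pieces of which $\wt\zeta$ kills) — a small simplification. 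One minor imprecision worth flagging: you assert $\wt\zeta(g_\ell^{(r-1)})=\wt\zeta(g_\ell)$ from $\wt\zeta(\mfm^{r+1})=0$ alone, but $g_\ell-g_\ell^{(r-1)}\in\mfm^{r}$, not $\mfm^{r+1}$; either take $r$ one step larger so that $\wt\zeta(\mfm^r)=0$, or note that the weaker fact $\wt\zeta(g_\ell^{(r-1)})\in I$ (which follows because $\pi(g_\ell^{(r-1)})\in\mfm_R^r$ maps to $0$ in $A$) already suffices to kill $\mfm_{A'}\cdot\wt\zeta(g_\ell^{(r-1)})$.
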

\begin{proof}
	By construction, $J\subseteq \mfm^2$.
	It is well-known that $(J/\mfm J)^*$ is an obstruction space for $\Hom(R,-)$ (see \cite[Example 3.6.9]{Lie}). An obstruction space for $\widehat{\rF}_{\mcL }$ is given by $\check{H}^1(\sU,\mcK/\mcL)$ (see Lemma \ref{lemma:obstruction}). It is immediate that $ob_f$ is injective since the $\omega_\ell$ form a basis for $H^1(\sU,\mcK/\mcL)$ and the $g_\ell$ generate $J$. We claim that $ob_f$ satisfies the hypothesis of 
\thref{standardsmooth}.
	
	To prove this, we first claim that we can choose $n\gg 0$ such that 
	\begin{equation}\label{Jisomorphism}
		(J+ \mfm^n)/(\mfm\cdot J+ \mfm^n )\cong J/\mfm\cdot J.
	\end{equation}
	We will show this using the ideas from the proof of \cite[Theorem 11.1]{deftheory}.
	According to the Artin-Rees lemma \cite[Corollary 10.10]{commut}, we have $J\cap \mfm^n \subseteq \mfm\cdot J$ for $n\gg0$. 
	This implies that
	\[ (\mfm\cdot J+\mfm^n)\cap J=\mfm\cdot J,\]
	which leads to the isomorphism
	\[(J+ \mfm^n)/(\mfm\cdot J+ \mfm^n ) \cong J/(\mfm \cdot J +\mfm^n)\cap J = J/\mfm\cdot J\]
	as desired.

	Let $\zeta \in \Hom(R,A)$ and consider a small extension as in \eqref{eqn:smallextension}.
	We will show that 
	\[ ob_f\Big(\phi(\zeta, A')\Big)= \phi(f(\zeta),A')\]
	where we use $\phi$ to denote the map taking a small extension to its obstruction class for both functors $\Hom(R,-)$ and $\widehat{\rF}_{\mcL}$.
	
	We define a local morphism of $\KK$-algebras $\eta: S\to A'$ by mapping each variable $t_{\ell}$ to any lifting of its image under the map $S\to R\to A$. Because $A,A'$ are Artinian rings and by the discussion above, there exists $r\gg0$ such that $\eta,\zeta$ factor respectively through $S/\mfm^{r+1}$ and $R_r$, and 
	\[ (J+ \mfm^{r+1})/(\mfm\cdot J+ \mfm^{r+1} ) \cong J/\mfm\cdot J.\]

	The image of  $\mfm\cdot J$ under $\eta$ is zero, because $\mfm_{A'}\cdot I=0$.	Consequently, $\eta$ factors through $S/\mfm\cdot J$.
	Furthermore, since $g_{\ell}-g_{\ell}^{(r)}\in \mfm^{r+1}$, it follows that $J+\mfm^{r+1}= J_r$ and $\mfm\cdot J +\mfm^{r+1}= \mfm\cdot J_r +\mfm^{r+1}$. 
	From the above discussion, we thus have the following commutative diagram:
	\[
	\begin{tikzcd}[column sep=small]
		0\ar[r]& J \ar[r] \ar[d]& S \ar[r] \ar[d] & R \ar[d,"\id"] \ar[r] & 0  \\
		0\ar[r] & J/\mfm J \ar[r] \ar[d, "\cong"]& S/(\mfm \cdot J)\ar[r] \ar[d]& R\ar[r] \ar[d,"\pi_r"]& 0 \\	
		0\ar[r] & J_r/(\mfm\cdot J_r+\mfm^{r+1}) \ar[r] \ar[d, "\overline\eta_r"]& S/(\mfm \cdot J_r+\mfm^{r+1})\ar[r] \ar[d,"\eta_r"]& R_r\ar[r] \ar[d,"\zeta_r"]& 0 \\
		0\ar[r]& I \ar[r] & A' \ar[r] \ & A  \ar[r] & 0  
	\end{tikzcd}.
	\]
	Since $J\subseteq \mfm^2$, it follows that  \[\overline \eta: J/\mfm\cdot J \xrightarrow{\cong} J_r/(\mfm\cdot J_r+\mfm^{r+1}) \xrightarrow{\overline \eta_r} I\] remains unaffected by the choice of $\eta$.

	Applying obstruction theory to $\zeta\in \Hom(R,A)$ (see for example, \cite[Example 3.6.9]{Lie}), we obtain
	\[ \phi(\zeta, A')= \overline\eta.\]
	By \eqref{eq:defeq}
	we have 
	\begin{align*}
		\phi\Big(\alpha^{(r)},S/(\mfm\cdot J_r+\mfm^{r+1})\Big) &= \sum_{\ell=1}^{q} \overline g_{\ell}^{(r)} \cdot \omega_{\ell}
	\end{align*}
	where $\overline g_\ell^{(r)}$ is the image of $g_{\ell}^{(r)}$ in $J_r/(\mfm \cdot J_r+\mfm^{r+1})$.
	By the functoriality of obstruction theory, we have
	\begin{align*}
		\phi\Big(f(\zeta),A'\Big)&= \sum_{\ell=1}^{q} \overline\eta_r(\overline g_{\ell}^{(r)}) \cdot \omega_{\ell}\\
		&= \sum_{\ell=1}^{q} \overline\eta(\overline g_{\ell}) \cdot \omega_{\ell}\\
		&= ob_f(\overline\eta)
	\end{align*}
as desired.
\end{proof}

\bibliographystyle{alpha}
\bibliography{v3paper} 	

\end{document}